\documentclass[reqno]{amsart}
\usepackage{amsfonts}
\usepackage{dsfont}
\usepackage{amssymb,amscd,amsthm, verbatim,amsmath,color, fancyhdr, mathrsfs}
\usepackage{comment}
\usepackage{graphicx}
\usepackage{turnstile}
\usepackage{cite}
\usepackage{tikz-cd}
\usepackage{thmtools}
\usepackage{mathtools}
\usepackage{thm-restate}

\usepackage{float}


\usepackage[margin=1.25in]{geometry}
\usepackage[font=small,labelfont=bf]{caption}
\usepackage{stmaryrd}
\usepackage{pict2e}
\usepackage[english]{babel}
\usepackage[hidelinks]{hyperref}
\hypersetup{colorlinks}
\definecolor{darkred}{rgb}{0.5,0,0}
\definecolor{darkgreen}{rgb}{0,0.5,0}
\definecolor{darkblue}{rgb}{0,0.3,0.8}
\hypersetup{colorlinks, linkcolor=black, filecolor=darkgreen, urlcolor=darkred, citecolor=darkblue}
\usepackage{cleveref}
\usepackage{kpfonts}
\usepackage[titletoc]{appendix}
\usepackage[numbers,square]{natbib} 
\usepackage{xcolor}
\usepackage[all,cmtip]{xy}
\usepackage{enumitem}
\usepackage{cases}


\numberwithin{equation}{section}
\begingroup\lccode`~=`& \lowercase{\endgroup
\newcommand{\spmat}[1]{%
  \left(
  \let~=&
  \begin{smallmatrix}#1\end{smallmatrix}
  \right)
}}

\makeatletter
\DeclareRobustCommand{\intprod}{%
  \mathbin{\mathpalette\int@prod{(0.1,0)(0.9,0)(0.9,0.8)}}%
}
\DeclareRobustCommand{\intprodr}{%
  \mathbin{\mathpalette\int@prod{(0.1,0.8)(0.1,0)(0.9,0)}}}

\newcommand{\int@prod}[2]{%
  \begingroup
  \sbox\z@{$\m@th#1+$}%
  \setlength\unitlength{\wd\z@}%
  \begin{picture}(1,1)
  \roundcap
  \polyline#2
  \end{picture}%
  \endgroup
}
\makeatother
\newtheorem*{exr*}{Exercise}
\newtheorem*{thm*}{Theorem}
\newtheorem{thm}{Theorem}[section]
\newtheorem{prop}[thm]{Proposition}
\newtheorem{lem}[thm]{Lemma}
\newtheorem{rem}[thm]{Remark}

\newtheorem{cor}[thm]{Corollary}

\newtheorem{defn}[thm]{Definition}

\theoremstyle{definition}
\newtheorem{example}[thm]{Example}


\newcommand{\Z}{\mathbb{Z}}
\newcommand{\R}{\mathbb{R}}
\newcommand{\U}{\mathbb{U}}
\newcommand{\Q}{\mathbb{Q}}
\newcommand{\C}{\mathbb{C}}
\newcommand{\bS}{\mathbb{S}}

\newcommand{\bb}{\boldsymbol{b}}

\newcommand{\CF}{\mathrm{CF}}

\newcommand{\cY}{\mathcal{Y}}
\newcommand{\bL}{\mathbb{L}}

\newcommand{\A}{\mathbb{A}}

\newcommand{\bM}{\mathcal{M}}
\newcommand{\cF}{\mathscr{F}}

\newcommand{\cS}{\mathcal{S}}

\newcommand{\cX}{\mathcal{X}}

\newcommand{\bF}{\mathbf{F}}
\newcommand{\op}{\textrm{op}}
\newcommand{\fr}{\textrm{fr}}

\newcommand{\Aut}{\mathrm{Aut}}

\newcommand{\Hom}{\mathrm{Hom}}

\newcommand{\Fuk}{\mathrm{Fuk}}

\newcommand{\GL}{\mathrm{GL}}

\newcommand{\cE}{\mathcal{E}}

\newcommand{\one}{\mathbf{1}}

\newcommand{\bP}{\mathbb{P}}

\newcommand{\cA}{\mathcal{A}}

\newcommand{\cL}{\mathcal{L}}

\newcommand{\MF}{\mathrm{MF}}
\newcommand{\Tw}{\mathrm{Tw}}

\newcommand{\val}{\mathrm{val}}

\newcommand{\Id}{\mathrm{Id}}

\newcommand{\lmod}{{\operatorname{-mod}}}

\graphicspath{ {./pics/} }
\vspace{-2em}

\title{Mirror construction for Nakajima quiver varieties}

\author{Jiawei Hu, Siu-Cheong Lau and Ju Tan}

\date{\today}

\begin{document}

\begin{abstract}
	In this paper, we construct the ADHM quiver representations and the corresponding sheaves as the mirror objects of formal deformations of the framed immersed Lagrangian sphere decorated with flat bundles.  More generally, we construct Nakajima quiver varieties as localized mirrors of framed nodal unions of Lagrangian spheres in dimension two.  This produces a mirror functor from the Fukaya category of a framed plumbing of surfaces to the dg category of complexes of bundles over the corresponding Nakajima quiver varieties.  
	
	For affine ADE quivers in specific multiplicities, the corresponding (unframed) Lagrangian immersions are homological tori, whose moduli of stable deformations are asymptotically locally Euclidean (ALE) spaces.  We show that framed stable Lagrangian branes are transformed into monadic complexes of framed torsion-free sheaves over the ALE spaces.
	
	A main ingredient is the notion of framed Lagrangian immersions and their Maurer-Cartan deformations. Moreover, using the formalism of quiver algebroid stacks, we find isomorphisms between the moduli of stable Lagrangian immersions and that of special Lagrangian fibers of an SYZ fibration in the affine $A_n$ cases.


\end{abstract}

\maketitle 
{\vspace{-2em} \hypersetup{hidelinks} \tableofcontents }

\section{Introduction} 

The celebrated work of Atiyah-Drinfeld-Hitchin-Manin \cite{ADHM78} showed that all anti-self dual solutions to Yang-Mills equations over the sphere $\bS^4$ for compact classical Lie groups can be encoded as linear data.  It reduces a highly non-trivial nonlinear partial differential equation to a much more manageable algebraic equation. Using the Penrose fibration, Donaldson \cite{Don84} established a one-to-one correspondence between the isomorphism classes of the linear data and the isomorphism classes of framed holomorphic vector bundles over the complex projective plane $\C \bP^2$ (where a framed bundle means the trivialization of the bundle is fixed at the line at infinity $l_\infty \subset \C \bP^2$). Subsequently, Nakajima formalized these linear data by introducing quiver representations \cite{Nak94} and use them \cite{Nak99} to construct framed torsion-free sheaves over $\C \bP^2$. These works explain the profound connections between Yang-Mills instantons, framed representations of the ADHM quiver $Q^{\textrm{ADHM}}$ and framed sheaves of $\C\bP^2$, , unifying methods from gauge theory, algebraic geometry, and representation theory.

A motivation of this work comes from an observation of the relation between the ADHM quiver $Q^{\textrm{ADHM}}$ and the framed immersed nodal sphere $\bL^\fr_{\textrm{ADHM}}$.  See Figure \ref{fig:framedImmSph}.  This connection provides a symplecto-geometric perspective on the quiver $Q^{\textrm{ADHM}}$ and its representations.

The moduli space of representations of $Q^{\textrm{ADHM}}$ (or generally a framed double quiver) is a hyperK\"ahler manifold constructed as a  hyperK\"ahler quotient using the complex moment map of the gauge group.  A central idea of this paper is to reconstruct this hyperK\"ahler manifold as the (unobstructed) stable deformation space of $\bL^\fr_{\textrm{ADHM}}$ (or generally a framed nodal surface) within its own symplectic neighborhood.
In particular, we establish a direct correspondence between the complex moment map in the hyperK\"ahler quotient construction and the obstructions that arise in Lagrangian deformation theory. 
This approach provides a new perspective on hyperK\"ahler geometry by situating it within the symplectic and Lagrangian deformation framework.

\begin{figure}[htb!]
	\includegraphics[scale=0.6]{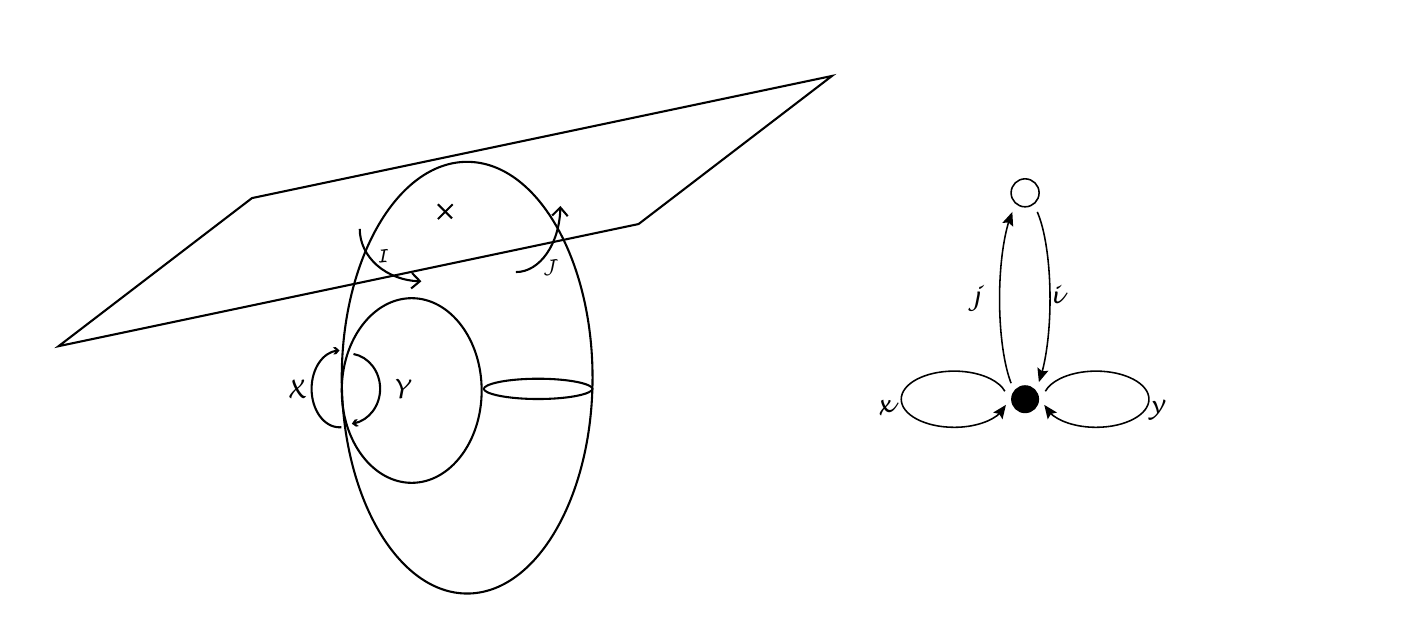}
	\caption{Framed immersed sphere and the ADHM quiver.}
	\label{fig:framedImmSph}
\end{figure}


\begin{thm}[Theorem \ref{thm:nc-framed}] \label{thm:intro-ADHM}
	Let $\Gamma Q^\mathrm{ADHM}$ be the completed path algebra of $ Q^\mathrm{ADHM}$. There exists a coordinate change on $\Gamma Q^\mathrm{ADHM}$ such that 
	the formal deformation space of the framed Lagrangian immersion $\mathbb{L}^{\mathrm{fr}}_{\mathrm{ADHM}}$ equals
	$$
	\A^\fr:=\Gamma Q^\mathrm{ADHM}\,\big/\,(xy-yx+ij).
	$$
\end{thm}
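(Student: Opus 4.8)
The plan is to identify $\A^\fr$ with the Maurer--Cartan (obstruction) algebra of the filtered $A_\infty$-structure on the Floer complex $\CF(\mathbb{L}^\fr,\mathbb{L}^\fr)$, in the localized-mirror formalism of \cite{CHL17,CHL21,CHL-glue}. Write $V_0$ for the (normalized) immersed-sphere component and $V_\infty$ for the framing section. First I would fix generators: the single node of the sphere contributes the two degree-one immersed generators $X,Y$ (the two self-loops at $V_0$, forming a reverse/doubled pair), and the single transverse point $V_0\cap V_\infty$ contributes the degree-one generators $I\in\CF(V_\infty,V_0)$ and $J\in\CF(V_0,V_\infty)$; together with the units $\one_{V_0},\one_{V_\infty}$ these match the arrows and idempotents of $Q^{\mathrm{ADHM}}$, so $\bb^\fr=xX+yY+iI+jJ$ is the universal degree-one deformation with coefficients in $\A_{\text{free}}^\fr$. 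By construction the formal deformation space is $\A_{\text{free}}^\fr$ modulo the ideal generated by the coefficients of the non-unit part of $\sum_{k\ge0}m_k(\bb^\fr,\dots,\bb^\fr)$, so the whole statement reduces to computing this obstruction.

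Second, I would locate the relations. The coefficients of $\one_{V_0}$ and $\one_{V_\infty}$ record the central potential and impose no relation; since $V_\infty$ is topologically trivial it carries no nontrivial obstruction, so the only relations come from the coefficient of the degree-two generator $\Theta$ supported on $V_0$. Extracting this coefficient, the problem becomes a count of rigid holomorphic polygons with boundary on $\mathbb{L}^\fr$, corners among $X,Y,I,J$, and output corner $\Theta$.

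Third comes the disc count, which I would organize by whether a polygon touches $V_\infty$. Polygons with boundary on $V_0$ alone reproduce the self-Floer computation of the bare immersed sphere in \cite{HKL23} and contribute the commutator $xy-yx$; the polygons whose corners include both $I$ and $J$ are the framing contributions and give the term $ij$. Thus the quadratic part of the $\Theta$-coefficient is $c_1\,xy-c_2\,yx+c_3\,ij$ with nonzero constants $c_\bullet$ fixed by the symplectic areas and the chosen flat holonomies; a first rescaling of $X,Y,I,J$ normalizes $c_1=c_2=c_3=1$.

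The main obstacle is everything of higher word-length, namely the contributions of $m_k$ for $k\ge3$, and proving that the full obstruction is formally equivalent to its quadratic part $xy-yx+ij$. Here I would use the coordinate change promised in the statement: working in the path algebra completed and filtered by word-length, at each order I choose a filtration-preserving automorphism of $\A_{\text{free}}^\fr$ fixing the vertex idempotents that cancels the newly produced higher terms, using that the quadratic form $xy-yx+ij$ is nondegenerate enough for finite determinacy. Proving that this inductive normalization converges in the completion, together with the orientation and sign bookkeeping that fixes the relative sign of $xy$ versus $yx$ and the sign of $ij$, is the crux. Once the normal form is reached, the obstruction ideal is generated by the single element $xy-yx+ij$, giving $\A^\fr=\A_{\text{free}}^\fr/(xy-yx+ij)$.
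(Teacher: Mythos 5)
Your setup matches the paper's: you correctly reduce everything to the coefficient of the unique degree-two generator (the minimum point on the sphere component; the framing contributes none since its Morse function has only a maximum), and you correctly identify the quadratic part $xy-yx+ij$ coming from the polygons at the self-intersection point and at the point $L\cap F$. The gap is in your final step, which you yourself flag as the crux. You treat the higher word-length contributions as unknown noise to be killed by an order-by-order automorphism, appealing to the quadratic part being ``nondegenerate enough for finite determinacy.'' No such finite-determinacy theorem is available for two-sided ideals in the completed path algebra, and the claim is not innocuous: the relation $xy-yx+ij$ has genuinely nontrivial deformation theory (the deformed ADHM relations $xy-yx+ij=\lambda e_0$ already define non-isomorphic algebras), so removability of corrections must be argued from the specific shape of the corrections that actually occur, not from abstract nondegeneracy. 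Your induction would need, at each order, that the new term is exact for the relevant Hochschild-type differential attached to $xy-yx+ij$, and you give no reason why that holds; moreover an infinite composition of order-by-order automorphisms need not converge in the completion $\A^{\fr}_{\text{free}}$, which is defined by norms with $|x|_{\mathfrak{v}},|y|_{\mathfrak{v}},|i|_{\mathfrak{v}},|j|_{\mathfrak{v}}\le 1$, not by word-length alone.

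The paper avoids this entirely by pinning down the higher-order terms geometrically \emph{before} any normalization. Since the output must be the point class $P$, every contributing pearl trajectory is a Morse flow line attached to a constant polygon at one of the two intersection points, so the full obstruction is exactly
\begin{equation*}
\Bigl(1+\sum_{k\ge1} a_k(xy)^k\Bigr)xy-\Bigl(1+\sum_{k\ge1} a_k(yx)^k\Bigr)yx+\Bigl(1+\sum_{k\ge1} b_k(ij)^k\Bigr)ij,
\end{equation*}
and, crucially, the anti-symmetric involution of the immersed sphere forces the \emph{same} coefficients $a_k$ in the two series. Hence the middle term equals $y\bigl(1+\sum_k a_k(xy)^k\bigr)x$, and the single explicit substitution $x\mapsto\bigl(1+\sum_k a_k(xy)^k\bigr)x$, $i\mapsto\bigl(1+\sum_k b_k(ij)^k\bigr)i$ --- a norm-preserving isomorphism of $\A^{\fr}_{\text{free}}$ because the series factors are invertible of norm one --- turns the obstruction into $xy-yx+ij$ in one shot, with no inductive scheme and no convergence issue. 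Without identifying this multiplicative structure and the coefficient-matching symmetry, your normalization argument has no foundation.
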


In above, $xy - yx + ij = 0$ gives the complex moment map equation.  HyperK\"ahler moduli of representations of $Q^\mathrm{ADHM}$ is a quotient of the affine variety ${xy - yx + ij = 0}$ by the gauge group of the quiver.  We show that this equation comes from obstructions of Lagrangian deformations.

This fits into the framework of mirror symmetry, which is a duality between symplectic geometry and complex geometry.  
According to homological mirror symmetry conjecture \cite{Kon95}, Lagrangian submanifolds in a symplectic manifold are dual to coherent sheaves on the mirror complex manifold.  The conjecture was proved for several important classes of geometries \cite{Polishchuk-Zaslow,Abo06, AKO08, Sei11,She11,FLTZ12, SeiK3,She15}. In this context, our work contributes to the study of mirror symmetry by connecting framed Lagrangian immersions to their mirror objects in the complex geometry setting.

In the above example of ADHM quiver, we establish the duality between the framed immersed sphere $\bL^\fr_{\textrm{ADHM}}$ and torsion-free sheaves of $\bP^2$ framed at the line at infinity $l_\infty$.  The duality is established using the localized mirror functor \cite{CHL21} constructed from the immersed sphere.  Notably, the functor directly maps framed Lagrangians to monads, which are essential tools in the study of torsion-free sheaves. For a review of the localized mirror functor, see Section \ref{section:nc mirror}.

\begin{thm}[Theorem \ref{thm:frash}]
	A stable framed Lagrangian brane supported on $\bL^\fr_{\mathrm{ADHM}}$ is transformed to a framed torsion-free sheaf over $(\bP^2,l_\infty)$ in the form of a monad under the localized mirror functor.
\end{thm}

An important concept in this theory is a framed Lagrangian brane.  
First, we will define a framing Lagrangian $F$ of a convex symplectic manifold $M$ to be a conical Lagrangian diffeomorphic to $\mathbb{R}^n$, which is equipped with a Morse function $f_F$ with exactly one maximal point (and no other critical point). 
This condition ensures the framing Lagrangian has a simple topological structure compatible with the convexity of $M$ (see Definition \ref{def:framing}).

A framed symplectic manifold is then defined as a convex symplectic manifold equipped with a collection $\bF$ of framing Lagrangians. Building on this, we define a framed Lagrangian brane as a union of two components: a compact Lagrangian immersion and the framing Lagrangian $\bF$, together with a formal deformation by a boundary cochain supported at the intersection points between the compact Lagrangian and the framing Lagrangians. When the Lagrangian submanifolds are equipped with flat vector bundles, these boundary cochains are represented by matrices. The underlying submanifold of the compact Lagrangian immersion is called to be the support of the Lagrangian brane.

\begin{rem}
	In Seidel's Picard-Lefschetz theory \cite{Sei01}, a framed Lagrangian sphere is defined as a Lagrangian submanifold together with a fixed diffeomorphism class to a smooth sphere.  To distinguish from this, we use the terms framed Lagrangian immersion and framed Lagrangian brane.  Our definition incorporates additional brane structures such as boundary cochains and flat bundles.  
\end{rem}


Now we move to general Nakajima quiver varieties, which are the moduli spaces of stable representations of a framed double quiver.  Nakajima found a geometric construction of representations of the quantum groups using Hecke correspondence of holomorphic Lagrangians in quiver varieties \cite{Nak94, Nak98, Nakajima-tensor, Nakajima-affine}, see also \cite{Beilinson-Lusztig-MacPherson, Rin90, Ginzburg91, Lusztig91, Grojnowski-Lusztig, Lus93, Kashiwara-Saito} for representations of quantum groups and canonical bases.

We will construct the hyperK\"ahler gauge theory using Lagrangian deformation theory of nodal surfaces. For each Nakajima's framed quiver, we construct a corresponding framed symplectic surface by plumbing.  First, we will show that Nakajima quiver varieties can be constructed as the unobstructed deformation space of framed Lagrangian nodal surfaces.

\begin{thm}[Theorem \ref{thm:preproj} and Proposition \ref{prop:nc-framed}]
	\label{thm:introplum}
	Let $D:=(I,E)$ be a graph and $\bL$ be the compact Lagrangian immersion obtained from the plumbing construction. Then the associated quiver $Q$ is the double quiver of the graph $D$. Moreover, up to a change of coordinates, the deformation space $\A$ of $\bL$ is the preprojective algebra of $Q$. Namely, $$\A= \A_{\text{free}}/J,$$ where $J$ is the two-sided ideal generated by $\sum_{t(a)=v} \epsilon(a)x_{\bar{a}}x_{a}$ for all $v \in I$. 
	
	Furthermore, the deformation space $\A^{\textrm{fr}}$ of the framed Lagrangian immersion $\bL^{\textrm{fr}}$ is $$\A^{\textrm{fr}}= \A^\fr_{\text{free}}/ J^{\textrm{fr}}, $$  where $J^{\textrm{fr}}$ is a two-sided ideal generated by $(\sum_{t(a)=v} \epsilon(a) x_{\bar{a}} x_{a})_v + (ij)_v$ for all $v \in I$.
	
	In particular, the Maurer-Cartan deformation space of a framed Lagrangian brane $(\bL^\fr,\cE)$ is a Nakajima quiver variety.
\end{thm}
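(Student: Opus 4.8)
The plan is to read the quiver $Q$ directly off the geometry of the plumbing and then to compute the weak Maurer--Cartan obstruction of the universal boundary deformation from the $A_\infty$-structure, recognizing the resulting relations as the (framed) preprojective ones. Following \cite{CHL21}, the vertices of $Q$ are the connected components of the normalization of $\bL$: the plumbing construction yields one Lagrangian sphere $S_v$ for each $v \in I$, so the vertex set is $I$ and the units $\one_v \in \CF^0(S_v, S_v)$ are the idempotents of $\A_{\text{free}} = \widehat{\C Q}$. Each edge $e \in E$ is realized by a single transverse double point: an edge between distinct vertices $u,v$ is a plumbing point of $S_u$ and $S_v$, while a self-loop at $v$ is a node of the immersed sphere $S_v$. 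In either case the double point contributes exactly two degree-one Floer generators $X_a \in \CF^1(S_{t(a)}, S_{h(a)})$ and $X_{\bar a} \in \CF^1(S_{h(a)}, S_{t(a)})$, with dual formal variables $x_a, x_{\bar a}$; these are precisely the two arrows of the double quiver of $D$. This identifies $Q$ with the double quiver and fixes $\bb = \sum_a x_a X_a$.

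Next I would compute the curvature $m^{\bb} := \sum_{k \geq 1} m_k(\bb, \dots, \bb)$. Since every immersed generator has degree one and $m_k$ has degree $2-k$, the output is concentrated in degree two, i.e. in $\bigoplus_{u,v} \CF^2(S_u, S_v)$; moreover $m_1$ vanishes on the immersed generators, so no linear relations appear, consistent with the purely quadratic preprojective relations. Following the formalism of \cite{CHL21}, the two-sided ideal $J$ defining $\A$ is generated by the coefficients of the point-class components $\pt_v \in \CF^2(S_v, S_v)$ of $m^{\bb}$. Because the plumbing is glued from standard local models $T^*S^2$ near each sphere and near each double point, the rigid holomorphic polygons contributing to these components are supported in the local pieces and may be enumerated as in the analysis underlying Theorem \ref{thm:intro-ADHM}. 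The quadratic term $m_2(X_{\bar a}, X_a)$ with $t(a) = v$ outputs $\pm \pt_v$, and collecting the orientation signs of the immersed sectors produces exactly $\sum_{t(a)=v} \epsilon(a)\, x_{\bar a} x_a$. The higher products $m_{k \geq 3}$ may contribute cubic and higher terms in the $x_a$; the key step is to show, just as in the ADHM case, that these are absorbed by a formal change of coordinates on $\A_{\text{free}}$, so that $J$ is generated precisely by the preprojective relations. I would verify this by a filtration argument: the quadratic part of each $\pt_v$-component is nondegenerate in the relevant sense, so the normalization procedure of \cite{CHL21} straightens the relations to their quadratic leading terms.

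For the framed statement I enlarge $\bb$ by the generators supplied by the framing Lagrangians $\bF$. Each framing component is conical, diffeomorphic to $\R^2$, topologically trivial, and carries no self-intersection, hence contributes no self-loops, matching the absence of loops at the framing vertex of the Nakajima quiver. It meets $S_v$, with multiplicity prescribed by the framing datum, in transverse points giving degree-one generators with dual variables $i_v, j_v$, enlarging the deformation to $\bb^{\fr}$ and the free algebra to $\A^{\fr}_{\text{free}} = \widehat{\C Q^{\fr}}$; the non-compact components are handled through the pearl/Morse model rather than by Hamiltonian wrapping. Recomputing $m^{\bb^{\fr}}$, the only new contribution to the $\pt_v$-component is the $m_2$-product of the two framing generators at $v$, which runs through the $F$--$S_v$ intersection points and outputs $(ij)_v$. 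Since the framing produces no further relations, the same coordinate change applies verbatim, so $J^{\fr}$ is generated by $(\sum_{t(a)=v} \epsilon(a)\, x_{\bar a} x_a)_v + (ij)_v$ for each $v \in I$, the framed preprojective ideal.

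The main obstacle will be the precise enumeration of the holomorphic polygons in the plumbing together with the proof that the higher $A_\infty$-products contribute only terms removable by a coordinate change --- that is, controlling $m_{k \geq 3}$ and pinning down the signs $\epsilon(a)$ uniformly over all vertices. The local reduction to the $T^*S^2$ model and to the established ADHM computation of Theorem \ref{thm:intro-ADHM} is what makes the count tractable; the genuine work lies in the global gluing and in the uniform orientation bookkeeping over the doubled quiver, including the treatment of the non-compact framing components.
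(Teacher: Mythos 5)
Your overall strategy is the same as the paper's: read the double quiver off the plumbing, compute $m_0^{\bb}$, note that the output lands on the degree-two point classes $P_v$, identify the quadratic term as the preprojective relation, and absorb the higher-order terms by a change of coordinates. However, there is a genuine gap at precisely the step you flag as the "key step." You treat the contributions of $m_{k\geq 3}$ as unknown cubic-and-higher terms and propose to remove them by a "filtration argument" appealing to nondegeneracy of the quadratic part and a "normalization procedure of \cite{CHL21}." No such general normalization is available, and it would not work in this generality: each variable $x_a$ appears in the relations at \emph{two} different vertices ($P_{t(a)}$ and $P_{h(a)}$), so any substitution must straighten all vertex relations simultaneously, and a priori the higher terms could mix different arrows meeting at a vertex, in which case no coordinate change of the claimed form exists.

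What actually closes the argument in the paper is a geometric determination of the higher terms, not a formal one. The pearl trajectories landing on $P_v$ are unions of Morse flow lines with \emph{constant} polygons at a single immersed point, so the relation at $P_v$ is exactly
$$\sum_{t(a)=v} \epsilon(a)\, x_{\bar a}x_a\Bigl(1+ \sum_j a_j (x_{\bar a}x_a)^j\Bigr),$$
i.e.\ each summand is a series in the single product $x_{\bar a}x_a$, with no cross terms between distinct arrows. Moreover, the anti-symmetric involution near each transverse intersection forces the \emph{same} coefficients $a_j$ to appear at $P_{t(a)}$ and $P_{h(a)}$, and the noncommutative identity $x_a x_{\bar a}\bigl(1+\sum_j a_j(x_a x_{\bar a})^j\bigr)= x_a\bigl(1+\sum_j a_j(x_{\bar a}x_a)^j\bigr)x_{\bar a}$ then shows that the single substitution $\tilde x_a = x_a\bigl(1+\sum_j a_j(x_{\bar a}x_a)^j\bigr)$, $\tilde x_{\bar a}=x_{\bar a}$ (for $\epsilon(a)=1$) straightens the relations at \emph{both} endpoints of $a$ at once. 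These two facts — the decoupled form of the higher terms and the endpoint symmetry — are what your filtration argument silently assumes; without them the absorption step fails. The same issue recurs in your framed statement: you claim the only new contribution is the $m_2$-product giving exactly $(ij)_v$, but constant polygons at the framing intersection contribute the series $\bigl(1+\sum_k b_k(i_vj_v)^k\bigr)i_vj_v$, so a further coordinate change on $i_v$ (done consistently at each vertex, as in the ADHM case) is needed before the relation takes the stated form.
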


In the study of quiver varieties, 
a main ingredient is the following monadic complex defined algebraically by Nakajima, see for instance the Equation 1.7 in \cite{Nak07}:

\begin{equation}
	\bigoplus \limits_{v\in I} \mathrm{Hom}(V^{1}_v,V^{2}_v) \to \bigoplus \limits_{a} \Hom(V^1_{t(a)},V^2_{h(a)}) \oplus \bigoplus_{v \in I} \mathrm{Hom}(V^{1}_v,W^{2}_v) \to \bigoplus \limits_{v\in I} \mathrm{Hom}(V^{1}_v,V^{2}_v), 
	\label{eq:monadic}
\end{equation} where $V^1_v$ is the tautological bundle associated with the vertex $v$ over the quiver variety $\bM(V^k,0)$, $V^2$ and $W^2$ are the trivial vector bundles.
Note that this complex always have three terms, even though the quiver variety is in higher dimensions.
It possesses profound applications and appears at various points in the study of quiver varieties, such as the ADHM construction of instantons over an asymptotically locally Euclidean (ALE) spaces space \cite{Kro89,KN90}, Kashiwara crystal structure \cite{Nak98}, and the definition of the Hecke correspondence \cite{Nak98}. 
More specifically, monad is a three-term complex that has cohomology concentrated in the middle term.  The cohomology of a monad yields a torsion-free sheaf, or a holomorphic vector bundle (with an induced connection from the trivial one) under a stronger fiberwise condition.  Moreover, quiver representations also naturally arise from monads. See for instance \cite{Don84,Nak99,KKO01,BGK02,Nak07,CLS11,Hen14,BBR15,BBLR17}.


\begin{thm}[Theorem \ref{thm:frmonad}]
	A framed Lagrangian brane supported on $\bL^{\textrm{fr}}$ is transformed to a monadic complex over the corresponding Nakajima quiver variety under the localized mirror functor $\cF^{\bL^\fr}$. 
\end{thm}

 In particular, our mirror construction provides a reason why three-term complexes are crucial for Nakajima quiver varieties, namely, Floer cochain complexes for a Lagrangian brane in a symplectic surface always have at most length three.


In summary, Nakajima framed quiver varieties are identified with the moduli spaces of stable unobstructed deformations of framed Lagrangian branes.  From this point of view, quiver theory arises to encode the stacky structure of deformation spaces of a Lagrangian immersion $\bL$, when $\bL$ has more than one component or when $\bL$ is equipped with a higher-rank flat bundle.   In such situations, the deformation space is a quotient stack by a non-compact gauge group, and quiver encodes this for bundles in all ranks simultaneously.

\begin{rem}
	The relation between symplectic topology and preprojective algebras of quivers is a very interesting subject. In \cite{BK16}, Bezrukavnikov-Kapranov studied the moduli space of microlocal sheaves on a nodal curve and found its intriguing connection to  multiplicative Nakajima quiver varieties.  In \cite{EL19}, Etg{\"u}-Lekili computed the wrapped Fukaya category of a plumbing of Riemann surfaces using Legendrian geometry and obtained a multiplicative preprojective algebra.  It has a close relation with the singular string topology introduced by \cite{CELN17}. Besides, there is a Koszul duality between compact and non-compact Lagrangians \cite{EL17,Hon23}.  Etg{\"u}-Lekili studied Koszul duality among these algebras in \cite{EL17}. This approach was further developed by \cite{RET22}.  More recently (after the first preprint version of the current work), Karabas and Lee \cite{KL24} computed the wrapped Fukaya category of the plumbings of spheres in general dimension along any quiver and proved the quasi-equivalence with the derived category of the multiplicative preprojective algebra.
	
	Our study in this paper is based on Lagrangian Floer theory \cite{FOOO09,AJ10} and moduli of local deformations of Lagrangian immersions, which is more geometric in the sense that 
	this uses gauge theory on nodal surfaces whose stable deformation spaces are naturally endowed with holomorphic symplectic forms.  The associated functor of \cite{CHL17,CHL21} produces monadic complexes over quiver varieties that are crucial in the study of torsion-free sheaves and the construction of Hecke correspondence for quantum groups. Note that since we work with local Lagrangian deformations, we obtain the original quiver varieties rather than the multiplicative ones. Nakajima's algebro-geometric results are readily applied in this setup. An important additional ingredient is framing on symplectic manifolds. 
	
\end{rem}




Among all quivers, the affine ADE quivers are particularly important. Kronheimer \cite{Kro89} constructed the ALE spaces using the moduli of the representations of affine ADE quivers. These spaces form an important class of resolutions of singularities and are the main objects of study in McKay correspondence.  Its three-dimensional analogs also provide a rich source of geometric surgeries, see for instance \cite{Fri86,KM92,Tia92, BKR01,STY02,MR2057015}.

From the perspective of Lagrangian Floer theory, affine ADE quivers in specific dimension vectors are special in the sense that the corresponding immersed Lagrangian branes have torus-type Lagrangian deformations (see Proposition \ref{prop:definite}).  In these cases, we can show that the moduli spaces are ALE spaces (Corollary \ref{prop:moduli}). From this perspective, ALE spaces are mirror to the Weinstein neighborhoods of affine ADE-type immersed Lagrangians. 

In addition, \cite{CS98,Sha05} found the connection of the (multiplicative) preprojective algebra for affine ADE quivers to Kleinian singularities. In particular, this implies that the loop algebras are commutative (which is not the case for general quivers). 

Motivated by these results, we expect that for affine ADE quivers, stable charts in a suitable rank (defined in Section \ref{Sec:Frcat}) are commutative.  We check this by explicit computations for affine $A_n$ and affine $D_4$ type quivers. 


\begin{prop}[Proposition \ref{Prop: D4}]
	Let $\cA_0$ be the path algebra of the affine $D_4$ type quiver and $\vec{\delta}=(2,1,1,1,1)$. Then there exists some stable commutative affine charts of the stable family $\cA_0$ over $\cA$ of rank $\vec{\delta}$, where $\cA$ is the noncommutative parameter space defined at the end of Section \ref{sec:stable}, which can be glued into the minimal resolution of $D_4$ singularity. 
\end{prop}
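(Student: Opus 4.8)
The plan is to construct the stable charts explicitly through the higher-rank stable nc chart formalism of Section \ref{sec:h-rank}, to reduce each chart by gauge-fixing, and then to match the resulting commutative coordinate rings with the standard affine cover of the minimal resolution of the $D_4$ Kleinian singularity. First I would fix notation for the affine $D_4$ quiver: write $0$ for the central vertex and $1,2,3,4$ for the four outer vertices, with arrows $a_k\colon k\to 0$ together with their doubles $\bar a_k\colon 0\to k$. By Theorem \ref{thm:preproj} the deformation algebra is the preprojective algebra, so at rank $\vec{\delta}=(2,1,1,1,1)$ a representation assigns to each $a_k$ a vector $v_k\in\C^2$ and to each $\bar a_k$ a covector $w_k$, subject to the leg relations $w_k v_k=0$ for $k=1,\dots,4$ and to the single $2\times 2$ matrix relation $\sum_{k}\epsilon(a_k)\,v_k w_k=0$ at the central vertex. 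The gauge group acting on this data is $\GL(2,\C)\times(\C^\times)^4$, modulo the global scalar $\C^\times$.

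Next I would impose the stability condition induced from the parameter space $\cA$ and cover the stable locus by affine charts. For generic stability the configuration $\{v_k\}$ must span $\C^2$, so I would take one chart for each pair of legs on which the corresponding two vectors form a basis; these charts cover the whole stable locus. On such a chart I use the $\GL(2,\C)$ factor to send the chosen pair of vectors to the standard basis and the residual torus to normalize the remaining covectors, so that the surviving gauge group is reduced to scalars. The free parameters are then the entries of the two remaining vectors and the four covectors, and the leg relations together with the central matrix relation solve for all but two of these entries.

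The third step is to read off commutativity and carry out the gluing. Because the chosen chart is stable, the universal representation is simple and its endomorphism algebra is $\C$; hence the ring of equivariant functions constituting the nc chart is in fact commutative, and once the preprojective relations eliminate the dependent entries it is the polynomial ring in the two remaining coordinates, so each stable affine chart is isomorphic to $\C^2$. I would then compute the transition functions on the overlap of two charts, where two distinct pairs of legs are simultaneously bases, and verify that they reproduce the patching data of the minimal resolution of the $D_4$ surface singularity, as in the Cassens--Slodowy and Kronheimer description of the quiver variety at the minimal imaginary root $\vec{\delta}$; this identifies the glued space with that resolution.

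The hard part will be the central rank-two vertex. For the rank-one legs the arrow compositions are automatically scalar and commute, but at vertex $0$ the relation $\sum_k\epsilon(a_k)\,v_k w_k=0$ is a genuine matrix equation, and I must check that the $\GL(2,\C)$ normalization can be performed compatibly across all charts and that, after it, every residual entry lies in a single commutative subalgebra with no surviving noncommutative relation. The crux is thus to confirm that stability indeed forces the endomorphism algebra down to scalars on each chart, so that the matrix relation collapses to commuting scalar equations, and that the resulting chart overlaps patch to the $D_4$ resolution rather than to some other rational surface.
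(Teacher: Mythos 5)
Your proposal has a genuine gap at its central step, the commutativity claim. The proposition is not about the commutative GIT moduli space of $\zeta$-stable representations (for which commutativity of charts is automatic and the identification with the minimal resolution is classical, due to Kronheimer and Cassens--Slodowy). It is about the \emph{noncommutative} charts of the stable family over the noncommutative parameter space $\cA$ defined at the end of Section \ref{sec:stable}: there, the matrix entries of the arrow representations are generators of a \emph{free} algebra, quotiented only by the relations induced from the preprojective relations, and an affine chart in the sense of Definition \ref{def: affine} is a single-vertex quiver algebra $\cA_2$ together with representations $G_{02},G_{20}$ that are mutually inverse up to gerbe terms. Your argument ``stable $\Rightarrow$ the representation is simple $\Rightarrow$ $\mathrm{End}=\C$ $\Rightarrow$ the ring of equivariant functions is commutative'' never touches this object: the nc chart is the base algebra of the family, not an endomorphism algebra or a ring of invariant functions on $\mathrm{Rep}(Q,\vec{\delta})$ (the latter is commutative by fiat, which begs the question). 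Worse, if your inference were valid it would apply verbatim to every quiver, since stability gives simplicity in general; but the paper's whole point (see the discussion before Proposition \ref{Prop: D4} and the section title of Section \ref{sec:D4-stable}) is that for \emph{general} preprojective algebras the stable nc charts remain noncommutative, and commutativity is a special consequence of the affine ADE relations at the dimension vector $\vec{\delta}$. The paper's mechanism is entirely different: after gauge-normalizing $(a_1\ a_2)$ to the identity and localizing, the chart generators are $X,Y,Z$, and one proves $G_{02}(XY)=G_{02}(YX)$ (and likewise for the other commutators) \emph{inside the localized preprojective algebra} $S^{-1}\cA_0$ by explicit manipulation of $\sum_i a_ib^i=0$, $b^ia_i=0$ and the localization identities (Example \ref{eg: D4}); since $G_{20}\circ G_{02}=\mathrm{Id}$, this forces $XY=YX$ in $\cA_2$. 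That computation is the irreplaceable content of the proof, and your outline has no substitute for it.

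A secondary, fixable issue: your six charts indexed by ``pairs of legs forming a basis'' do not suffice as written. Normalizing the residual covectors by the torus requires knowing which composite paths are invertible, i.e.\ finer localizations; this is exactly why the paper needs both $\cA_2$ (localizing $b^3a_1$, $b^4a_1$) and $\cA_{2'}$ (localizing $b^3a_2b^2a_1$ instead), whose overlap produces one exceptional $(-2)$-curve with the representation of $b^3$ as its homogeneous coordinate. The gluing data identifying the resulting surface with the $D_4$ resolution is read off from these explicit transition representations $G_{22'},G_{33'},G_{44'},G_{32}$ between localized nc algebras, not imported from the classical quiver-variety literature.
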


\begin{prop}[Theorem \ref{thm:qstack}]
	Let $\A$ be the path algebra of the affine $A_n$ type quiver and $\vec{\delta}=(1,\cdots,1)$. Then there exists some stable commutative affine charts of the stable family $\A$ over $\Lambda$ of rank $\vec{\delta}$, which can be glued into the minimal resolution of $A_n$ singularity.
\end{prop}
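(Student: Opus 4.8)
The plan is to exhibit $n+1$ explicit stable affine charts, show that each is isomorphic to $\C^2$, compute the transition maps, and match them with the toric description of the minimal resolution. First I fix notation: the double of the affine $A_n$ quiver is the cyclic quiver on the vertex set $\Z/(n+1)$ with arrows $a_i\colon i\to i+1$ and $b_i\colon i+1\to i$. By Theorem \ref{thm:introplum}, $\A=\A_{\text{free}}/J$ is the preprojective algebra, whose relation at each vertex $i$ equates the two loops based there, $a_{i-1}b_{i-1}=b_i a_i$. Since $\vec{\delta}=(1,\dots,1)$, a representation assigns scalars to the arrows, so in every chart the coordinate ring is automatically commutative; this is why no separate commutativity statement is required here, in contrast with Proposition \ref{Prop: D4}. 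Reading the relations as scalar identities shows that the local products $p:=a_i b_i$ agree for all $i$, and the gauge-invariant functions $p$, $u:=a_0a_1\cdots a_n$ and $v:=b_n b_{n-1}\cdots b_0$ satisfy $uv=p^{\,n+1}$, so the affinization (the $\theta=0$ case) is the $A_n$ singularity $\{uv=p^{\,n+1}\}$.

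Next I construct the charts. The effective gauge group is $(\C^\times)^{n}$ and there are $n$ independent preprojective relations, so the stable family over $\Lambda$ has two-dimensional fibers. For $k=0,\dots,n$, let $U_k$ be the stable chart on which the spanning tree omitting the edge $k$ is invertible: I invert $a_0,\dots,a_{k-1}$ and $b_{k+1},\dots,b_n$ and gauge-fix them to $1$. The relations $a_ib_i=p$ then force $b_0=\dots=b_{k-1}=p$, $a_{k+1}=\dots=a_n=p$ and $a_kb_k=p$, leaving the two free coordinates $(a_k,b_k)$. Hence $U_k\cong\mathrm{Spec}\,\C[a_k,b_k]\cong\C^2$, with $p=a_kb_k$, $u=a_k^{\,n-k+1}b_k^{\,n-k}$ and $v=a_k^{\,k}b_k^{\,k+1}$.

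To glue, I regauge between $U_k$ and $U_{k+1}$ by the vertex-scaling that sends $a_k\mapsto 1$; a direct computation then gives the monomial transition
$$
a_{k+1}=a_k^{\,2}\,b_k,\qquad b_{k+1}=a_k^{-1}
$$
on the overlap $\{a_k\neq 0\}=\{b_{k+1}\neq 0\}$. These are toric transition functions: the divisors $\{a_k=0\}\subset U_k$ and $\{b_{k+1}=0\}\subset U_{k+1}$ glue to a $\bP^1$, and the change of the normal coordinate shows its self-intersection is $-2$. The $n+1$ charts therefore glue along consecutive overlaps into a chain $E_1,\dots,E_n$ of $(-2)$-curves; comparing the resulting fan with the Hirzebruch--Jung resolution, or identifying the contraction to $\{uv=p^{\,n+1}\}$, shows that the glued surface is the minimal resolution of the $A_n$ singularity, proper over its affinization.

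The main obstacle is the stability analysis rather than these computations. I must verify that, for a generic stability parameter in the appropriate chamber (encoded by $\Lambda$ and the notion of stable chart from Section \ref{sec:stable}), the stable locus is \emph{exactly} covered by $U_0,\dots,U_n$: every stable scalar representation lies in some $U_k$, while the zero representation and the other strictly semistable loci are excluded. This reduces to the closed-subrepresentation analysis for the dimension vector $(1,\dots,1)$, together with a check that the chosen chamber is the one producing the \emph{full} minimal resolution rather than a partial resolution coming from a non-generic parameter. Matching this chamber with the Kronheimer wall-and-chamber structure for the affine $A_n$ root system then completes the identification.
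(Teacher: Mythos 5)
Your computational core is correct and, at the level of formulas, reproduces the paper's: your charts $U_k$ (invert a spanning tree of arrows, gauge-fix them to $1$, keep the two coordinates $(a_k,b_k)$) are exactly the localizations $\A_0(U_{0i})$ at $\{v_1^L,\dots,v_{i-1}^L,u_{i+1}^L,\dots,u_{n+1}^L\}$ appearing in Theorem \ref{thm:qstack}, and your transition $a_{k+1}=a_k^2b_k$, $b_{k+1}=a_k^{-1}$ is precisely the paper's $G_{i,i+1}$ ($u_{i+1}\mapsto v_i^{-1}$, $v_{i+1}\mapsto u_iv_i^2$) up to relabeling, which the paper likewise identifies with the toric transition functions of the minimal resolution. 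Where you genuinely diverge is in framing: you run a classical gauge-fixing computation on scalar representations and conclude by toric geometry, whereas the paper never gauge-fixes; it works with noncommutative families, replacing your gauge choices by pairs of mutually inverse representations $G_{0i},G_{i0}$ together with gerbe terms $c_{0i0}$, and the charts arise geometrically as Floer deformation spaces of the immersed spheres $\cS_i$ (and tori $T_i$), with the gluing maps extracted from solving the Fukaya isomorphism equations for the pairs $(\alpha_i,\beta_i)$. What the paper's route buys is the identification of the charts with honest Lagrangian deformation spaces and a stack that also receives the SYZ fibers; what your route buys is brevity.

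Two soft spots remain. First, your claim that commutativity is ``automatic'' because $\vec\delta=(1,\dots,1)$ inverts the paper's logic: in the framework of Section \ref{sec:stable}, the parameter algebra in rank $(1,\dots,1)$ is the \emph{free} algebra on the arrows modulo the preprojective relations, and each chart is a priori a noncommutative quiver algebra on two generators; commutativity (e.g.\ $u_iv_i=v_iu_i$) is a \emph{consequence} of the existence of the mutually inverse localized representations, which is exactly the point stressed in Theorem \ref{thm:qstack} and in Proposition \ref{Prop: D4}, and it is what your gauge-fixing implicitly uses. Second, you defer the stability analysis as the ``main obstacle'' and never carry it out; but for the literal statement this is more than is needed. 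Stability of each chart in the sense of Definition \ref{def:h-rk} follows in one line from invertibility of the spanning-tree arrows (any submodule $M$ with $e_1M=\cA$ propagates along the invertible arrows to every vertex, exactly as in the paper's $D_4$ stable-family example), while the stronger claim you worry about --- that a suitable chamber makes the stable locus \emph{exactly} the union of the charts --- is not part of this proposition: the paper handles that covering separately, in the final proposition of the section, using the character $\zeta$ with $\zeta\cdot\vec\delta=0$ and only $\zeta_1>0$. Replacing your deferred chamber-matching by this one-line chart-stability check makes your argument complete.
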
 	


Stability conditions play an important role in resolving stacky singularities of moduli spaces. In our setup, we take families of stable unobstructed deformations of a Lagrangian brane to construct mirror affine charts.  On the other hand, differential geometric meaning of the stable deformation poses a highly challenging problem, see for instance the formulation of Joyce \cite{Joy15}.  

In \cite{TY02}, Thomas-Yau compared special Lagrangian spheres and stable objects for smoothing of an $A_n$ singularity.  
In this paper, we consider the affine $A_n$ case, in which Lagrangian tori and their degenerations play an important role.  We find a family of Fukaya isomorphisms between stable objects and special Lagrangians.

\begin{thm}[Theorem \ref{thm:qstack} and Theorem \ref{thm:qstack2}]
	Consider the affine $A_n$ surface $$X=\left\{(x,y,z)\in \mathbb{C}^3\mid xy = \prod_{k=1}^{n+1}(z-a_{k}) \text{ and }z\neq 0\right\},$$ where $-\infty < a_{n+1}<\cdots < a_{1}<0$ are distinct real numbers. This space admits a special Lagrangian fibration. Let $\bL$ be a cycle of vanishing spheres, depicted by Figure \ref{fig:An}, $\cS_i$ be the $i-$th singular special Lagrangian fiber, and $T_i$ be a family of smooth  fibers, which are special Lagrangian tori intersecting the $i-$th vanishing spheres. Then there exist Fukaya-isomorphisms between stable deformations of $\bL$ and deformations of special Lagrangians $\cS_i$ and $T_i$ over a quiver stack $\hat{\cY}$ for $i=1, \cdots, n$. 
\end{thm}

In the above theorem, we apply the technique of solving Fukaya isomorphism equations over the quiver algebroid stack, see \cite{LNT23} or Section \ref{sec:qstack} for the notion of quiver algebroid stack. A similar result was found in the case of deformed conifolds and Atiyah flop \cite{FHLY17}.

\subsection*{Notations} 
We will work over Novikov ring (field). We use the following notations: 
\begin{align*}
	\Lambda_{+} & =\left\{\sum\limits_{i=0}^{\infty}a_{i}T^{A_{i}}\mid A_{i}> 0\text{ increases to } +\infty, a_{i}\in \mathbb{C}\right\};\\
	\Lambda_{0} & =\left\{\sum\limits_{i=0}^{\infty}a_{i}T^{A_{i}}\mid A_{i}\geq 0\text{ increases to } +\infty, a_{i}\in \mathbb{C}\right\};\\
	\Lambda & =\left\{\sum\limits_{i=0}^{\infty}a_{i}T^{A_{i}}\mid A_{i}\text{ increases to } +\infty, a_{i}\in \mathbb{C}\right\}.
\end{align*}

They admit a non-archimedean valuation map (resp. norm) $$\mathrm{val}(x)=\min\{\lambda_{i}\mid a_{i}\neq 0\} \quad (\text{resp. } |x|_{\mathfrak{v}}=e^{-\mathrm{val}(x)}). $$

We will extend  Floer group and Fukaya algebra over path algebras. Recall a quiver is a quadruple $$Q = (I,E,h,t)$$where $I$ (and $E$) is the set of vertices (resp. arrows), and $$h,t:E\rightarrow I$$ are the maps assigning an arrow to its head (resp. tail). The functions $h,t$ easily generalize to paths in $Q$. 

The path algebra $kQ$ of a quiver $Q$ is the $k$-algebra with its basis consisting of all paths in $Q$ and multiplication defined by concatenation of paths extended by linearity. Paths are read from right to left as in the composition of functions.  

\subsection*{Acknowledgment} 

The first-named author thanks Xinyu Zhou for helpful discussions on non-Archimedean geometry.
The second-named author expresses his gratitude to Cheol-Hyun Cho, Hansol Hong and Yoosik Kim for numerous useful discussions and the collaborations on immersed Lagrangians and mirror functors, which form one of the main motivations for this paper.  The third-named author wants to express his gratitude to Ki Fung Chan, Kwokwai Chan, Bohan Fang, Mingyuan Hu, Yan-Lung Leon Li, Yu-Shen Lin, Chiu-Chu Melissa Liu, Kaoru Ono, Eriz Zaslow for valuable discussions.  We are grateful to Naichung Conan Leung for  enlightening discussions on 3D mirror symmetry and moduli theory during our visits to The Chinese University of Hong Kong. 

\section{Review of NC mirror functor and quiver stacks}

\subsection{Review of noncommutative mirror functor} 
\label{section:nc mirror}
Given a symplectic manifold $M$ and a compact spin oriented Lagrangian immersion $\bL \subset M$, we  associate a quiver $Q$ to $\bL$, whose path algebra encodes the boundary deformation of $\bL$. Let's recall the construction in \cite{CHL21}.

\begin{enumerate}
	\item 	
	Associate a quiver $Q$ to $\CF^1(\bL)$. Namely, each component of (the domain of) $\bL$ is associated with a vertex, and each generator in $\CF^1(\bL)$ is associated with an arrow.
	\item Let $\Lambda_0 Q$ be the free path algebra of $Q$ over Novikov ring. Each arrow $a$ in $Q$ is associated with a Novikov valuation such that $\text{val}(a) >0$. The valuation induces a filtration on $\Lambda_0 Q$. Take the completion of $\Lambda_0 Q$ with respect to this filtration. Abuse the notation, we will denote the completion by $\Lambda_0 Q$. 
	\item 
	Extend the Fukaya algebra $A$ of $\bL$ over the path algebra $\Lambda_0 Q$ and obtain a non-commutative $A_\infty$-algebra $$\tilde{A}^\mathbb{L} = \Lambda_0 Q\otimes_{\Lambda_0^{\oplus}} \CF(\mathbb{L}),$$ 
	whose unit is $\one_{\mathbb{L}} = \sum \one_{L_i}$.  $\Lambda_0^{\oplus} \subset \Lambda_0 Q$ denotes $\bigoplus_i \Lambda_0 \cdot e_i$ where $e_i$ are the trivial paths at vertices of $Q$.  The fibered tensor product means that an element $a \otimes X$ is non-zero only when tail of $a$ corresponds to the source of $X$.  The $A_\infty$-operations are defined by
	\begin{equation} \label{eq:mk}
		m_k (f_1 X_1,\ldots,f_k X_k) := f_k \ldots f_1 \, m_k (X_1,\ldots,X_k)
	\end{equation}
	where $X_l \in \CF(\bL)$  and $f_l \in \Lambda_0 Q$.
	\item Extend the formalism of bounding cochains of \cite{FOOO09} over $\Lambda_0 Q$, that is, we take 
	\begin{equation} \label{eq:b}
		\bb = \sum_l b_l B_l
	\end{equation}
	where $B_l$ are the generators of $\CF^1(\bL)$, and $b_l$ are the corresponding arrows in $Q$.  Then define the deformed $A_\infty$-structure $m_k^{\bb}$ as in \cite{FOOO09} and via Equation \eqref{eq:mk}.
	\item Quotient out the quiver algebra by the two-sided ideal $R$ generated by coefficients of the obstruction term $m_0^{\bb}$, so that $m_0^{\bb} = W\cdot \one_\bL$ over 
	$$\A := \Lambda_0 Q/R.$$  $\A$ is the space of noncommutative weakly unobstructed deformations of $\bL$.
	We call $(\A,W)$ to be the noncommutative local mirror of $X$ probed by $\bL$.
	\item Extend the Fukaya category over $\A$, and enlarge the Fukaya category by including the noncommutative family of objects $(\bL,\bb)$ where $\bb$ in \eqref{eq:b} is now defined over $\A$.  This means for $L_1,L_2$ in the original Fukaya category, the morphism space is now extended as $\A \otimes \CF(L_1,L_2)$.  The morphism spaces between $(\bL,\bb)$ and $L$ are enlarged to be $\CF((\bL,\bb),L) := \A\otimes_{\Lambda_0^{\oplus}} \CF(\mathbb{L},L)$ (and similarly for $\CF(L,(\bL,\bb))$).  We already have $\CF((\bL,\bb),(\bL,\bb))$ in Step 2 (except that $\Lambda_0 Q$ is replaced by $\A$).  The $m_k$ operations are extended in a similar way as \eqref{eq:mk}.
\end{enumerate}

\begin{thm}[\cite{CHL21}]
	Consider the boundary deformation $\bb$ of $\bL$. There exists a well-defined $A_\infty$-functor
	$$\cF^{(\bL,\bb)}: \Fuk(X) \to \MF(\A,W).$$
\end{thm}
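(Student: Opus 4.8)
The plan is to construct $\cF^{(\bL,\bb)}$ as a Yoneda-type functor built from the $\bb$-deformed Floer complexes, and then to check the two defining properties of an $A_\infty$-functor landing in matrix factorizations: that each object goes to a genuine matrix factorization of $W$, and that the structure maps satisfy the $A_\infty$-functor equations. On objects I would set
$$\cF^{(\bL,\bb)}(L) := \left(\CF((\bL,\bb),L),\ m_1^{\bb}\right),$$
the extended Floer complex of Step 6 equipped with the $\bb$-deformed differential. For a chain of morphisms $x_i \in \CF(L_{i-1},L_i)$ I would define the $k$-th component by inserting them into the deformed higher products,
$$\cF_k(x_k,\dots,x_1) := m_{k+1}^{\bb}(-,\,x_1,\dots,x_k)\colon \CF((\bL,\bb),L_0)\to \CF((\bL,\bb),L_k),$$
so that the entire functor is governed by the single tower $m_\bullet^{\bb}$.

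The object-level statement is the heart of the matter. First I would record that $W$ lies in the center of $\A$, which is what makes $\MF(\A,W)$ meaningful over the noncommutative base; this follows from the weak-unobstructedness relation $m_0^{\bb}=W\cdot\one_\bL$ (Step 5) together with the $A_\infty$ unit axioms, since $W$ is read off as the coefficient of $\one_\bL$. Taking $L$ unobstructed, so that $m_0^{L}=0$, the quadratic $A_\infty$-relation applied to a single input $y\in\CF((\bL,\bb),L)$ gives
$$m_1^{\bb}\big(m_1^{\bb}(y)\big)+m_2^{\bb}\big(m_0^{\bb},\,y\big)=0,$$
and substituting $m_0^{\bb}=W\cdot\one_\bL$ and using the unit axiom $m_2^{\bb}(\one_\bL,y)=\pm y$ yields $(m_1^{\bb})^2=\pm\,W\cdot\Id$. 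With the evident $\Z/2$-grading on $\CF((\bL,\bb),L)$ this is exactly the datum of a matrix factorization of $W$, so $\cF^{(\bL,\bb)}(L)\in\MF(\A,W)$. (For weakly unobstructed $L$ with potential $\lambda$ the same computation lands the object in $\MF(\A,W-\lambda)$.)

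At the level of morphisms, the $A_\infty$-functor equations unwind into instances of the $A_\infty$-relations for $m_\bullet^{\bb}$: the terms that produce a curvature $m_0^{\bb}=W\cdot\one_\bL$ reassemble precisely into the $W$-twisted differentials of the matrix factorizations $\cF^{(\bL,\bb)}(L_i)$, while the remaining terms reproduce composition in $\MF(\A,W)$. In particular the $k=1$ equation says that $\cF_1(x)$ is a morphism of matrix factorizations up to the differential, and the full tower then follows formally. Here the ``read right to left'' convention of the path algebra and the Koszul signs must be tracked consistently, but this is bookkeeping rather than substance.

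The main obstacle is foundational rather than formal. One must guarantee that every $m_k^{\bb}$ is well defined over the completed noncommutative algebra $\A$: since each arrow of $Q$ carries positive Novikov valuation (Step 2) and holomorphic polygons have positive area, each series $m_k^{\bb}(y)=\sum_{\ell}m_{k+\ell}(\bb,\dots,\bb,y,\dots)$ converges in the combined energy-and-arrow filtration and descends to $\A=\Lambda_0 Q/R$. One must also know that these deformed operations are independent of the auxiliary data (almost complex structure, and the Morse/pearl perturbation data on the immersed and framing components) up to $A_\infty$-homotopy, and are invariant under gauge changes of the bounding cochain. These are the genuinely nontrivial inputs from the immersed Floer package of \cite{FOOO09,AJ10,CHL17,CHL21}, and together with the centrality of $W$ they are exactly what legitimizes the formal computation above.
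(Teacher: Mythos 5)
The paper gives no proof of its own here---the theorem is quoted directly from \cite{CHL21}---and your proposal correctly reconstructs the construction used in that reference: the object map $L \mapsto \left(\CF((\bL,\bb),L),\, m_1^{\bb}\right)$, the matrix-factorization property $(m_1^{\bb})^2 = \pm W\cdot\mathrm{Id}$ obtained from the quadratic $A_\infty$-relation together with $m_0^{\bb}=W\cdot\one_\bL$ and the unit axiom, the higher functor components built from $m_{k+1}^{\bb}$, and the convergence/well-definedness over the completed quiver algebra. The one minor imprecision is your claim that centrality of $W$ follows from the unit axioms alone: it actually requires applying the $A_\infty$-relation $m_1^{\bb}(m_0^{\bb})=0$ (whose expansion via the unit axioms yields $[W,b_a]=0$ for every arrow $b_a$), which is precisely how it is established in the cited source.
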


\begin{rem}
	$m_0^{\bb}=W\cdot \one_\bL$ has degree $2$.  Thus in the $\Z$-graded situation, $W=0$, and the above $\MF(\A,W)$ reduces to the dg category of complexes of $\A$-modules.
\end{rem}

\begin{rem}
	The formal boundary deformation is part of the data of the original Fukaya algebra, which is a formalism to extract the stacky deformation space from the original Fukaya algebra. Here, stacky means it's natural to consider the isomorphism classes of Maurer-Cartan deformations, which is a quotient stack if we don't throw away unstable points. 
\end{rem}

\subsection{Quiver Algebroid Stack} \label{sec:qstack}

Philosophically, the noncommutative deformation space of $\bL$ forms a local neighborhood of the mirror. It's natural to ask how to glue the local mirrors. It is worth noting that the noncommutative deformation space of distinct Lagrangian immersions could be path algebras with varying numbers of vertices. Hence, the gluing is complicated in general and can not be achieved by the usual algebra isomorphisms.  This issue can be addressed by the notion of quiver algebroid stack introduced by the second, third authors and their collaborator \cite{LNT23}. Quiver algebroid stack is essential for the gluing of the quiver algebras.

The algebroid stack and twisted complexes were initially introduced by Bressler-Gorokhovsky-Nest-Tsygan \cite{BGNT07, BGNT08}. In the recent work \cite{LNT23}, the authors generalize these concepts for the purpose of mirror construction. More precisely, the authors introduce the localization of a path algebra and quiver algebroid stack, which provides a local-to-global approach to understand the path algebra of a quiver. Here we first recall the definitions of affine local chart and quiver algebroid stack. Following that, we will recall the mirror functors based on these constructions. Throughout this section, we will use $Q$ to denote a connected quiver and $I$ to denote the set of vertices. Let $k$ be a field.

\begin{defn} \label{def: loc1}
	Let $S \subset \cA=k Q/R$ be a finite subset of paths such that each summand of an element in $S$ has the same head and tail vertices. For each $\gamma \in S$, we add one arrow, denoted by $\gamma^{-1}$, with $s(\gamma^{-1})=t(\gamma)$ and $t(\gamma^{-1})=s(\gamma)$, to the quiver $Q$.  Moreover, we take the ideal $\hat{R}$ generated by $R$ and $\gamma \gamma^{-1} - e_{t(\gamma)}, \gamma^{-1} \gamma - e_{s(\gamma)}$ to be the new ideal of relations.  The new quiver with relations $k \hat{Q} / \hat{R}$ is called the localized algebra at $S$, and is denoted as $\cA (S^{-1})$.
\end{defn}

\begin{defn}
	Let $B$ be a topological space.  A quiver algebroid stack consists of the following data.
	\begin{enumerate}
		\item An open cover $\{U_i: i \in I\}$ of $B$.
		\item
		A sheaf of algebras $\mathcal{A}_i$ over each $U_i$, coming from localizations of a quiver algebra $\cA_i(U_i) = k Q^{(i)}/ R^{(i)}$.
		\item
		A sheaf of representations $G_{ij}$ of $Q^{(j)}_{V}$ over $\mathcal{A}_i(V)$ for every $i,j$ and $V \stackrel{\mathrm{open}}{\subset} U_{ij}$.
		\item
		An invertible element $c_{ijk}\left(v\right)\in \left(e_{G_{ij}(G_{jk}(v))}\cdot\mathcal{A}_{i}(U_{ijk})\cdot e_{G_{ik}(v)}\right)^{\times}$ for every  $i,j,k$ and $v \in Q^{(k)}_0$, that satisfies
		\begin{equation}
			G_{ij}\circ G_{jk}\left(a\right)=c_{ijk}\left(h(a)\right)\cdot G_{ik}\left(a\right)\cdot c_{ijk}^{-1}\left(t(a)\right)
			\label{eq:cocycle-gen}
		\end{equation}
		such that for any $i,j,k,l$ and $v$,
		\begin{equation}
			c_{ijk}(G_{kl}(v)) c_{ikl}(v) = G_{ij}(c_{jkl}(v))c_{ijl}(v).
			\label{eq:c-gen}
		\end{equation}
		In this paper, we always set $G_{ii}=\Id, c_{jjk}\equiv 1 \equiv c_{jkk}$.
	\end{enumerate}
	
\end{defn}

In particular, we can define an affine chart of a quiver algebra using quiver algebroid stack:
\begin{defn} \label{def:chart}
	An affine chart of a quiver algebra $\A$ is $$\left(\cA=k Q'/R',G_{01},G_{10}\right)$$
	where $Q'$ is a quiver with a single vertex and $R'$ is a two-sided ideal of relations; $$G_{01}: \cA \to \A_{\mathrm{loc}} \textrm{ and } G_{10}: \A_{\mathrm{loc}} \to \cA$$ 
	are representations that satisfy
	\begin{align*}
		G_{10} \circ G_{01} =& \mathrm{Id};\\
		G_{01} \circ G_{10}(a) =& c(h(a)) \,a \,c(t(a))^{-1} 
	\end{align*}
	for some function $c: I \to (\A_{\mathrm{loc}})^{\times}$
	that satisfies $c(v) \in e_{v_0} \cdot \A_{\mathrm{loc}} \cdot e_{v}$, where $v_0$ denotes the image vertex of $G_{01}$.  Here, $\A_{\mathrm{loc}}$ is a localization of $\A$ at certain arrows and $e_v$ denotes the trivial path at the vertex $v$.
\end{defn}

The quiver algebroid stack arises naturally when we want to glue the noncommutative deformation space of a collection of immersed Lagrangians using (pre)isomorphism pairs.

\begin{defn}
	Let $(L_1,\bb_1)$, $(L_2,\bb_2)$ be two Lagrangian immersions. An isomorphism pair is a pair of degree 0 intersection points $(\alpha,\beta)$, where $\alpha \in CF^0((L_1,\bb_1),(L_2,\bb_2)), \beta \in CF^0((L_2,\bb_2),(L_1,\bb_1)),$ satisfying 
	\begin{center}
		$m_1^{\bb_1,\bb_2}(\alpha)=m_1^{\bb_2,\bb_1}(\beta)=0$, \quad
		$m_2^{\bb_1,\bb_2,\bb_1}(\alpha,\beta)=\one_{L_1}$, \quad $m_2^{\bb_2,\bb_1,\bb_2}(\beta,\alpha)=\one_{L_2}.$
	\end{center} 
\end{defn}

Let $\cL_1,\cdots,\cL_n$ be compact spin oriented immersed Lagrangians. We denote their nc unobstructed deformations by $\cA_i$. Now we can transform an nc family of objects $(\cL_j,\bb_j)$.  Let's define
\begin{equation} 
	\mathscr{F}^{(\cL_i,\bb_i)}((\cL_j,\bb_j)) := \left(\cA_i \otimes_{(\Lambda^\oplus)_i} \CF^\bullet(\cL_i,\cL_j) \otimes_{(\Lambda^\oplus)_j} \cA_j^\op, d = (-1)^{|\cdot|}m_1^{\bb_i,\bb_j}(\cdot)\right).
\end{equation}
For an algebra $\A$, recall that $\A^\op$ is the opposite algebra which is the same as $\A$ as a set (and the corresponding elements are denoted as $a^\op$), with multiplication $a^\op b^\op := (ba)^\op$. The concatenation is read from right to left with $h(a^\op)=t(a).$
$\mathscr{F}^{(\cL_i,\bb_i)}((\cL_j,\bb_j)) $ is a (graded) $(\cA_i,\cA_j)$-bimodule, where the right $\cA_j$-module structure on $\cA_j^\op$ is by taking
$ a^\op \cdot b := (ab)^\op = b^\op a^\op$.  The tensor product over $(\Lambda^\oplus)_i$ and $(\Lambda^\oplus)_j$ means that an element $ a_i X a_j^\op$ is non-zero only if $t(X)=t(a_i)$ and $h(X)=t(a_j^\op)=h(a_j)$.


Indeed, as a generalization of Step (6) in localized mirror construction, we shall extend the whole Fukaya category over
$$T(\cA_1,\ldots,\cA_n):=\widehat{\bigoplus_{m\ge 0}} \bigoplus_{|I|=m} (\cA_{i_0}\otimes\cdots\otimes\cA_{i_m})
$$ 
that is understood as a product of the deformation spaces. $\widehat{\bigoplus}_{k\geq 0}$ means that we take the completion over $\Lambda_0$, meaning that we allow infinite series with valuation in $\Lambda$ increasing to infinity. One can extend the Fukaya category over $T(\cA_1,\ldots,\cA_n)$ in an analogue way. Namely, the space of Floer chains and $A_\infty$ operations have been extended over $T(\cA_1,\ldots,\cA_n)$.  For two Lagrangians $L_0,L_1$ that are not any of these $\cL_i$'s, the morphism space is $T(\cA_1,\ldots,\cA_n)\otimes\CF(L_0,L_1)$.  The morphism spaces involving $(\cL_i,\bb_i)$ are extended as $(\cA_j\otimes T(\cA_1,\ldots,\cA_n) \otimes \cA_i)  \otimes_{(\Lambda_0^\oplus)_i \otimes (\Lambda_0^\oplus)_j} \CF^\bullet(\cL_i,\cL_j),$ $T(\cA_1,\ldots,\cA_n)\otimes \cA_i \otimes_{(\Lambda_0^\oplus)_i} \CF^\bullet(\cL_i,L)$, and $\cA_i \otimes T(\cA_1,\ldots,\cA_n) \otimes_{(\Lambda_0^\oplus)_i} \CF^\bullet(L,\cL_i)$.  All coefficients can be pulled to the left. More details and discussions about the coefficients can be found in the Chapter three of \cite{LNT23}.

Given a collection of Lagrangian immersions, quiver algebroid stack naturally emerges as a geometric mirror as shown in \cite{LNT23}. 
\begin{thm}[\cite{LNT23}]
	Given symplectic manifold $M$ and a collection of immersed Lagrangians $\cL=\{\cL_0, \cdots, \cL_N \}$. Suppose there exist isomorphism pairs among these Lagrangians. Then
	there exists an $A_\infty$ functor $$\cF^\cL:\Fuk(M) \to \Tw(\cX),$$ where $\cX$ is a quiver stack constructed by gluing the noncommutative deformation spaces of $\cL$, $\Tw(\cX)$ is the dg category of twisted complexes over $\cX$.
\end{thm}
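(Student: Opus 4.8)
The plan is to build $\cF^\cL$ chart-by-chart from the single-Lagrangian mirror functors of Section \ref{section:nc mirror} and then glue the outputs into twisted complexes by means of the isomorphism pairs. First I would recall that for each reference brane $(\cL_i,\bb_i)$, Step (6) of the localized construction yields a deformed $\AI$ functor $\cF^{(\cL_i,\bb_i)}\colon \Fuk(M)\to \MF(\cA_i,W_i)$ sending an object $L$ to the complex $(\CF((\cL_i,\bb_i),L),\,m_1^{\bb_i})$, with higher components $\cF^{(\cL_i,\bb_i)}_k(x_1,\ldots,x_k)(y) = m_{k+1}^{\bb_i}(y,x_1,\ldots,x_k)$, the deformation $\bb_i$ being inserted on the $\cL_i$-input. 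Over the Novikov ring these are well defined after completion, and in the $\Z$-graded case $W_i=0$ so that the local target is the dg category of $\cA_i$-modules. This produces the restriction of the desired functor to each chart $U_i$.

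Next I would convert the isomorphism pairs into the gluing data of a quiver algebroid stack. Given an isomorphism pair $(\alpha_{ij},\beta_{ij})$ between $(\cL_i,\bb_i)$ and $(\cL_j,\bb_j)$, I would define the transition representation $G_{ij}$ on the extended Floer complexes by $m_2$-composition against $\alpha_{ij},\beta_{ij}$ (with higher $m_k$-corrections where needed). The defining relations $m_1^{\bb}(\alpha_{ij})=m_1^{\bb}(\beta_{ij})=0$, $m_2(\alpha_{ij},\beta_{ij})=\one_{\cL_i}$, $m_2(\beta_{ij},\alpha_{ij})=\one_{\cL_j}$, combined with the $\AI$ relations, give that $G_{ij}$ is a representation of $Q^{(j)}$ over $\cA_i$ with $G_{ji}\circ G_{ij}=\Id$ and $G_{ij}\circ G_{ji}$ equal to conjugation by an invertible $c$, as in Definition \ref{def:chart}. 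The failure of $G_{ij}\circ G_{jk}$ to coincide strictly with $G_{ik}$ is measured by the invertible element $c_{ijk}$ extracted from $m_2(\alpha_{ij},\alpha_{jk})$ compared with $\alpha_{ik}$; I would verify that $(G_{ij},c_{ijk})$ satisfy the cocycle conditions \eqref{eq:cocycle-gen} and \eqref{eq:c-gen}, which is exactly the assertion that they assemble into a quiver algebroid stack $\cX$ whose charts are the $\cA_i$.

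With $\cX$ constructed, for a fixed $L\in\Fuk(M)$ I would assemble the local images $\cF^{(\cL_i,\bb_i)}(L)$ into a twisted complex over $\cX$: over each chart the complex is $(\CF((\cL_i,\bb_i),L),m_1^{\bb_i})$, and over overlaps the pieces are identified through the $G_{ij}$ together with the isomorphism-pair data, the twisting component recording the discrepancy. Checking the twisted-complex (Maurer–Cartan) equation reduces to the $\AI$ relations of the Fukaya category extended over $T(\cA_1,\ldots,\cA_n)$, using that all coefficients can be consistently pulled to the left under the conventions of \cite{LNT23}. Finally I would promote the assignment to an $\AI$ functor on morphisms by taking the higher components $\cF^\cL_k$ to be the deformed operations $m_{k+1}^{\bb_\bullet}$ in this extended category; the $\AI$ functor equations then follow formally from the extended $\AI$ relations, landing the functor in $\Tw(\cX)$.

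The hard part will be the gluing rather than the construction of the individual charts. Because distinct $\cL_i$ carry path algebras with different numbers of vertices, the transitions are representations $G_{ij}$ rather than algebra isomorphisms, so I expect the principal technical difficulties to be (i) proving that the $c_{ijk}$ obtained from triple products $m_2(\alpha_{ij},\alpha_{jk})$ are genuinely invertible and satisfy the associativity relation \eqref{eq:c-gen}, and (ii) controlling the noncommutative coefficients over $T(\cA_1,\ldots,\cA_n)$ — in particular convergence after completion over $\Lambda_0$ and the consistency of pulling coefficients to the left — so that the glued data is a bona fide twisted complex and $\cF^\cL$ is well defined and $\AI$.
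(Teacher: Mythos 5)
Your proposal is correct and follows essentially the same route as the construction this paper recalls from \cite{LNT23} (the theorem is only cited here, not reproved): chart-wise localized functors $\cF^{(\cL_i,\bb_i)}$ as in Section \ref{section:nc mirror}, transition representations $G_{ij}$ and gerbe terms $c_{ijk}$ obtained from the isomorphism pairs by forcing the coefficients of the Fukaya isomorphism equations to vanish (exactly as in the affine $A_1$ example and Theorem \ref{thm:qstack}), and assembly of the local images into twisted complexes over the extended coefficients $T(\cA_1,\ldots,\cA_n)$. The only caution is one of ordering rather than substance: in this formalism the equations $m_1^{\bb_i,\bb_j}(\alpha_{ij})=0$ are only meaningful once localizations and transition representations have been chosen, so the stack $\cX$ and the pairs are solved for simultaneously (the $G_{ij}$, $c_{ijk}$ are read off from coefficient matching) rather than being formally extracted afterwards from $m_2$-products as your write-up suggests, but this matches your own identification of the gluing as the hard part and does not change the argument.
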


Furthermore, there exists a universal twisted complex $\mathbb{U}$, which induces a functor $\cF^{\U}:=\U \otimes -: \mathrm{dg \,\A- mod} \xrightarrow{} \Tw(\cX)$. In some interesting cases, $\U$ corresponds to the universal bundle over the moduli space of stable $\A$-module. And it enables us to compare these two $A_\infty$ functors.

\section{Framed Fukaya category}\label{Sec:Frcat}

\subsection{Framed symplectic manifold}
Let $(M,\omega)$ be a (non-compact) convex symplectic manifold of dimension $2n$ (see for instance \cite[(7b)]{Sei08} in the exact setting, and \cite[Section 3]{RS2017} in the monotone setup). In other words, $M$ is assumed to be conical at infinity: there exists a compact subset $M_c \subset M$ such that its complement can be identified with an infinite cone by a symplectomorphism 
\begin{align*}
	\phi: \left(M\setminus M_{c}^{\circ},\omega\right)  \cong  \left(\partial M_{c}\times [1,\infty), d(r\alpha)\right)
\end{align*}
where $M_{c}^{\circ}$ is the interior of $M_{c}$, $(\partial M_{c},\alpha)$ is a contact manifold and $r$ is the coordinate of $[1,\infty)$.  Moreover,
\begin{enumerate}
	\item The Liouville vector field $Z$ characterized by $\omega(Z,\cdot) = \theta$ points strictly outwards along $\partial M_{c}$. 
	
	\item $\phi$ is induced by the flow of $Z$ for time $\log r$.

	\item Outside the compact set $M_{c}$, the symplectic form is exact: $\omega = d\theta$ where $\theta = \phi^*(r\alpha)$.  

\end{enumerate}

A conical almost complex structure $J$ is taken, which is an $\omega$-compatible almost complex structure on $M$ satisfying $J^*\theta = dr$ for $r$ large enough. 

We are going to define framed Lagrangian immersions, which consist of compact Lagrangian immersions, together with a  non-compact Lagrangian submanifold which we call the framing, with fixed gauge and formal perturbations.

\begin{defn}[Framing Lagrangian] \label{def:framing} A framing Lagrangian of a convex symplectic manifold $M$  is a Lagrangian $F\subset M$ diffeomorphic to $\mathbb{R}^n$  equipped with a Morse function $f_F$ with exactly one maximal point and no other critical point.

Moreover, we require the framing is conical which means $F$ is of the form $F = F_{c} \cup_{\partial F_{c}}(\partial F_{c}\times [1,\infty))$ where 
\begin{enumerate}
    \item $F_{c}=F\cap M_c$ intersects $\partial M_{c}$ transversally, 
    \item $\theta|_{F-F_{c}}$ vanishes near the boundary $\partial F_{c} = \partial M_{c}\cap F_{c}.$  In particular, the complement $F-F_c$ is an exact Lagrangian with a constant primitive function.
\end{enumerate} 
\end{defn}

\begin{defn}
    A framed convex symplectic manifold is a tuple $(M,\bF)$ where $M$ is a convex symplectic manifold and $\bF$ is a collection of framing Lagrangians that are disjoint with each other.  We denote by $|\bF|$ the cardinal number of the framing collection. 
    \end{defn}



\begin{example} \label{exmp:plumbing}
	Let's recall the  plumbing  construction of an exact symplectic manifold (see for example \cite[(2.3)]{MR2786590}), which is automatically a Liouville domain, whose completion is a convex symplectic manifold. 
	It is obtained from gluing by a symplectomorphism between two disk bundles $D^*S_{1}$ and $D^*S_{2}$ over two spheres that identifies the fiber and base in opposite directions.

Let $M_1,M_2$ be Riemannian manifolds, and we consider the total spaces of their disk cotangent bundles $D^*M_1, D^*M_2$.  Consider open balls $B_i \subset D_i$ such that the disk bundles $D^*B_i \subset D^*M_i$ for $i=1,2$ are symplectomorphic under  $(x_{1},y_{1})  \mapsto (-y_{2},x_{2})$, where $x_i,y_i$ denote the base and fiber coordinates respectively.
The glued manifold 
can be completed to a Liouville manifold (after rounding off the corner).  Moreover, $M_{i}$ are are embedded exact Lagrangian submanifolds. 
In the resulting Liouville manifold, we can take a cotangent fiber of $M_i$ to be a framing Lagrangian. 
\end{example}

\begin{example} \label{exmp:conic}
Consider the self-plumbing of $\bS^2$, which is obtained by symplectic gluing of the total space $T^*\bS^2$ along the disc bundles $D^*B_i$ for open balls around the north and south poles similar to the above example.  The glued manifold is completed to a Liouville manifold.  The zero-section becomes an immersed Lagrangian, and we can take a cotangent fiber of a point in $\bS^2$ away from the north and south poles to be a framing $F$ for the space.
	
By the Weinstein neighborhood theorem, this can be identified as a neighborhood of the immersed sphere constructed by $\bS^1$-reduction as follows, which has an advantage of possessing a Lagrangian torus fibration.  
Let $\epsilon \neq 0$ in $\C$ and consider the conic fibration $\pi:X:=\{(a,b)\in \C^2: ab \neq \epsilon\} \to \C \setminus \{\epsilon\}$ defined by $(a,b) \mapsto ab$. This fibration admits a Hamiltonian $S^1$-action on $X$ given by $$S^1 \times X \to X, \quad (\theta, (a,b)) \mapsto (e^{i\theta}a, e^{-i\theta}b),$$ whose moment map is $\mu(a,b)=\frac{1}{2}(|a|^2-|b|^2).$  We have the Lagrangian torus fibration $(\mu,\log |ab-\epsilon|): X \to \R^2$.  

The fiber at $\mu=0, \log |ab-\epsilon|=\log |\epsilon|$ is an immersed sphere.  Moreover, 
under such an identification, the framing $F$ is given as a section of the Lagrangian fibration.  In particular, it intersects with the immersed sphere at a single point.



\end{example}


\subsection{Framed Fukaya category} 
\label{sec:framedfukayacategory}
\begin{defn}[Framed Lagrangian immersions]
	\label{defn:FramedLag}
	Let $(M,\bF)$ be a framed convex symplectic manifold.  Let $L \subset M$ be a compact Lagrangian immersion that satisfies the following conditions. 
	\begin{enumerate}
		\item $L \cap \bF \not= \emptyset$ and $L$ intersects $\bF$ transversely.
		\item $L \cap \bF$ does not contain the maximum point of the Morse function $f_\bF$ on $\bF$.
	\end{enumerate}
	Without loss of generality, we can just assume $L \subset M_c$.  The union $L \cup \bF$ is called a framed Lagrangian immersion.
\end{defn}

The above conditions on $L$ can be easily achieved by a Hamiltonian perturbation.  Next, we follow the arguments of \cite[Section 7]{abouzaid_open_2010} to make sure that the moduli space of pseudo-holomorphic discs bounded by $L\cup \bF$ is compact by using the convexity assumption.  
Without loss of generality, $L$ is assumed to be contained in $M_c$, and hence 
all the intersections $L \cap \bF$ are contained in $M_c$.

\begin{lem}[Convexity argument]

Let $(M,\bF)$ be a framed convex symplectic  manifold, and $L$ a compact Lagrangian immersion.  All the pseudo-holomorphic curves $u:(\mathbb{S},\partial \mathbb{S})\rightarrow (M,L\cup \bF)$ satisfying $u(\partial \mathbb{S})\subset L\cup \bF$ must be contained in $M_c$.
\end{lem}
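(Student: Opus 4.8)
The plan is to run the standard maximum-principle (convexity) argument for $J$-holomorphic curves, following \cite[Section 7]{abouzaid_open_2010}. First I would work on the conical end $M\setminus M_c^{\circ}\cong \partial M_c\times[1,\infty)$ with its radial coordinate $r$, and record the consequence of the conical hypothesis $J^*\theta = dr$, namely $dr\circ J = -\theta$ wherever $r$ is large. For a $J$-holomorphic map $u:(\mathbb{S},\partial\mathbb{S})\to(M,L\cup\bF)$ I set $\rho := r\circ u$ on the preimage $u^{-1}(\{r\geq 1\})$. Using $du\circ j = J\circ du$ together with $dr\circ J = -\theta$ one computes $d\rho\circ j = -u^*\theta$, and therefore $d(d\rho\circ j) = -u^*\omega$, which is a non-positive $2$-form since $u^*\omega$ is the non-negative energy density of a $J$-holomorphic curve. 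Equivalently $\rho$ is subharmonic on $u^{-1}(\{r>1\})$, so by the strong maximum principle it admits no interior maximum there unless $u$ is locally constant.

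Next I would analyze the two kinds of boundary arcs. Along the part of $\partial\mathbb{S}$ mapping into $L$, the assumption $L\subset M_c$ (which also places all intersection points $L\cap\bF$ inside $M_c$) forces $r\circ u\leq 1$, so these arcs never enter $\{r>1\}$ and provide no candidate for the maximum. Along the part of $\partial\mathbb{S}$ mapping into a framing $F\in\bF$, I would invoke the conicality of $F$: on the end $F-F_c=\partial F_c\times[1,\infty)$ the primitive of $\theta|_{F-F_c}$ is constant, so $\theta$ vanishes on $TF$ there. Writing $\tau$ for the positively oriented boundary tangent and $\nu=j\tau$ for the outward normal at a boundary point $z_0$ with $u(z_0)\in F$, the Lagrangian boundary condition $du(\tau)\in TF$ and the identity $d\rho\circ j=-u^*\theta$ give $d\rho(\nu)=-\theta(du(\tau))=0$. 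The Hopf boundary point lemma then excludes a strict boundary maximum of the non-constant subharmonic function $\rho$ at such $z_0$ with $r(u(z_0))>1$, since that would require the outward normal derivative to be strictly positive.

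Combining these, the supremum of $\rho$ over $u^{-1}(\{r\geq 1\})$ can only be realized on the level set $\{r=1\}=\partial M_c$, whence $\rho\leq 1$ and $u(\mathbb{S})\subset M_c$, as claimed. I expect the main obstacle to be the boundary estimate along $F$: one must make precise the vanishing of $\theta|_{TF}$ near infinity (this is exactly the constant-primitive condition in Definition \ref{def:framing}), verify that the transverse crossing $F\cap\partial M_c$ does not create a corner that obstructs the Hopf lemma, and confirm that the conical $J$ is arranged so that $dr\circ J=-\theta$ holds on the entire region into which $u$ could escape. The interior subharmonicity of the first step is routine; the real care lies in handling the Lagrangian boundary on the non-compact framing and in splicing the interior and boundary applications of the maximum principle into a single exclusion of the region $\{r>1\}$.
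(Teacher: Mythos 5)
Your proof is correct, but it runs the \emph{pointwise} maximum-principle version of the argument, whereas the paper runs the \emph{integrated} Stokes/energy version; the two routes rest on exactly the same geometric inputs (the contact-type condition $J^*\theta = dr$ on the end, the vanishing of $\theta$ along the conical part of each framing, and $L \subset M_c$) but use different analytic machinery. The paper picks a regular value $1+\epsilon$ of $r\circ u$ (Sard), sets $\Sigma = (r\circ u)^{-1}[1+\epsilon,\infty)$, and applies Stokes' theorem: the energy $\int_\Sigma u^*\omega$ equals the boundary integral of $\theta\circ du$, the $\bF$-portion of which vanishes by exactness of $\theta$ along the framing, while the interior-level-set portion equals $\int_{\partial_{in}\Sigma} -d(r\circ u)\circ j \le 0$ because $j\xi$ points into $\Sigma$; non-negativity of energy then forces $u|_\Sigma$ to be constant. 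You instead prove $\rho = r\circ u$ is subharmonic (your sign computation $d\rho\circ j = -u^*\theta$, hence $d(d\rho\circ j) = -u^*\omega \le 0$, is right), observe that the Lagrangian boundary condition on $F$ together with $\theta|_{TF}=0$ gives a zero Neumann condition, and conclude via the strong maximum principle plus the Hopf boundary lemma. Your route buys a sharper pointwise statement ($\rho \le 1$ directly, with no choice of regular value or limit $\epsilon \to 0$) at the cost of invoking the Hopf lemma and attending to smoothness/corners where $\partial\mathbb{S}$ crosses $\{r=1\}$ (harmless, since those points sit at level exactly $1$); the paper's route needs only one integral identity and no boundary-point lemma. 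Note that both arguments share the same implicit caveat: constant curves sitting at a point of $\bF$ outside $M_c$ must be excluded (the paper assumes $u$ non-constant at the outset; in your argument they appear as the ``locally constant'' exception in the strong maximum principle), and the concerns you flag at the end --- the vanishing of $\theta|_{TF}$ near infinity and the identity $dr\circ J = -\theta$ --- are exactly the paper's standing hypotheses in Definition \ref{def:framing} and the conical-$J$ assumption, so no extra work is needed there.
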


\begin{proof}
Assume there is such a non-constant pseudo-holomorphic curve $u$ that intersects $\partial M_c\times(1,\infty)$. Choose a regular value $1+\epsilon$ of $r\circ u$ and let $\Sigma$ be the preimage of $[1+\epsilon,\infty)$ under $r\circ u$. 
The boundary $\partial \Sigma$ is stratified into  $\partial_{F}\Sigma$ and $\partial_{in}\Sigma$ which lie in $\bF$ and the interior of the Riemann surface $\bS$ respectively. For convenience, we can additionally assume the primitives over $F-F_c$ are $0$ for all framings $F$.  Then we can rewrite the  energy of $u|_{\Sigma}$ as
\begin{align*} \int_{\Sigma}u^*\omega & = \int_{\partial \Sigma }\theta\circ(du)\\
	& = \int_{\partial_{in}\Sigma} \theta\circ (du) = \int_{\partial_{in}\Sigma}-d(r\circ u)\circ j  
\end{align*}
where the last equality follows from the contact-type condition of $J$ (and $j$ is the complex structure on the domain $\bS$). Now $j(\xi)$ points inwards along the $\partial_{in} \Sigma$ where $\xi$ is tangent to $\partial_{in} \Sigma $ and respecting its orientation. Take $\epsilon\rightarrow 0$, the energy of $u|_{\Sigma}$ is non-positive which is impossible. Therefore we conclude the image of $u$ must be contained in $M_c$. 
\end{proof}

The above ensured the compactness of moduli spaces of pseudoholomorphic discs, and so we can define Fukaya category and Floer cohomologies.
There are different choices of models for chains, including the de Rham model, singular chain model and Morse model.  In this paper, we use the Morse model \cite{BC-pearl,BC12,OZ11} for clean intersections, which counts pearl trajectories, namely certain unions (fiber products) of gradient flow trajectories and pseudo-holomorphic discs.  \cite{FOOO-can} constructed a Morse model via homotopy from their singular chain model that incorporated with their virtual perturbation techniques.  See also \cite[Section 2]{KLZ} for a review and the construction of a homotopy unit.

Consider $(L_0,\ldots,L_k)$, where $L_i$ are either a component of $\bF$ or a compact Lagrangian immersion satisfying the conditions in Definition \ref{defn:FramedLag}.  When $L_i$ is not intersecting transversely with $L_{i+1}$, we need to choose a perturbation of $L_{i+1}$.  This can occur when $L_i,L_{i+1}$ are both compact Lagrangian immersions or both are the same framing Lagrangian $\bF$.  When the intersection is clean (in particular for the case that $L_i=L_{i+1}$), we can take a Morse perturbation: namely we choose a Morse function on the clean intersection and take its critical points as Floer generators.  For the case that $L_i,L_{i+1}$ are both compact Lagrangian immersions, we additionally require that the Morse function (on the clean intersection of $L_i$ and $L_{i+1}$) does not have a critical point in $L_i \cap L_{i+1} \cap \bF$ (which can be empty).  When the intersection is not clean, one can take a Hamiltonian perturbation on $L_{i+1}$.  We additionally require that the Hamiltonian perturbation preserves $\bF$ and $L_i \cap \phi(L_{i+1}) \cap \bF = \emptyset$.

For the case $L_i=L_{i+1} = F$, we always use the prefixed Morse function $f_F$ as the perturbation, which has a unique degree-zero critical point that represents the unit $\one_F$.  After fixing these perturbations between $L_i$ and $L_{i+1}$ (starting with $i=0$ and proceeding to $i=k-1$), we can then consider the moduli spaces of pearl trajectories bounded by $(L_0,\ldots,L_k)$ and their virtual perturbations to define the $m_k$ operations.  

As in \cite{Sei08} and \cite{FOOO09}, the perturbations have to be carried out inductively on $(L_0,\ldots,L_k)$ and the disc classes.  Namely, the codimension-one boundaries of disc moduli are fiber products of moduli spaces of discs that have lower energies or fewer numbers of Lagrangian boundary labels.  The Hamiltonian and virtual perturbations on disc moduli need to be chosen inductively on $(E,k)$ (where $E$ stands for energy and $k+1$ is the number of boundary labels) to be compatible with those chosen on boundaries.  This gives the $m_k$ operations for framed Lagrangian immersions that satisfy the $A_\infty$ equations.

To extend the use of formal perturbations and Morse theory, let's first enlarge the category by equipping a function on every compact Lagrangian immersion, whose differential gives a formal Hamiltonian perturbation to the Lagrangian.  For a pair $((L,f),(L,g))$ where $f$ and $g$ are functions equipped on a Lagrangian $L$ and $(g-f)$ is Morse, we take the critical points and gradient flow trajectories of $(g-f)$ in the definition of Floer theory (for every tuple of Lagrangians that have $((F,f),(F,g))$ as adjacent pairs).  If $(g-f)$ is not Morse, we further take a slight perturbation $g+\epsilon h$ such that $(g+\epsilon h-f)$ is Morse.  This is regarded as a formal perturbation on $(L,g)$.  Proceeding in this way, we have $A_\infty$ operations on Lagrangians equipped with (possibly zero) functions.

On a framing Lagrangian $F$, we consider either $(F,0)$ or $(F,f_F)$ where $f_F$ is the prefixed Morse function on $F$.  $(F,f_F)$ will be used as a part of the reference Lagrangian later.  In the consideration of Floer theory, we have the pairs $((F,0),(F,0))$, $((F,f_F),(F,f_F))$, $((F,0),(F,f_F))$, and $((F,f_F),(F,0))$.  For the first two cases, we have clean intersections and we proceed like before by taking a formal perturbation by $f_F$ on the second Lagrangian.  In the first three cases, we take $f_F$ as the governing Morse function which has a unique maximum as the degree zero critical point.  On the other hand, for the last case, we take $0-f_F = -f_F$ as the governing Morse function which has a unique minimum as the degree $n$ critical point.  Note that the minimum point can only appear as an output of a pearl trajectory.

\begin{rem}
  A rich and powerful method to study non-compact Lagrangians is provided by Wrapped or partially wrapped Floer theory \cite{NZ09,abouzaid_open_2010,GPS2,GPS1,GPS3}. However,  for the purpose of mirror symmetry for framed torsion-free sheaves over a compactification, we treat the corresponding non-compact Lagrangians in a similar way to the compact ones without wrapping.  In particular, they have a finite-dimensional Floer cohomology.  For consistency with our Morse theoretical treatment for immersed Lagrangians, we will fix a Morse function on the non-compact Lagrangian component, and use pearl trajectory models \cite{OZ11,BC12} instead of actual Hamiltonian perturbations.
\end{rem}

\subsection{Formal families of framed Lagrangian branes}
\label{sec:Lagrangianbranes}
Let's fix a reference framed Lagrangian immersion $\bL\cup (\bF,-f_\bF)$.  We use this to construct a mirror functor from the framed Fukaya category to the derived category of framed quiver representations.

We carry out the quiver construction of \cite{CHL21} for $\bL \cup \bF$ to give a framed quiver $Q^\fr$.  In this setup, the reference framed Lagrangian has more than one component.  The moduli space of its deformations is a stacky quotient by a non-compact Lie group.  Quiver path algebra provides a canonical way to encode the stacky moduli.

For convenience of descriptions, we shall assume $\bL$ and $\bF$ are graded and relatively spin, so that the associated Floer cochains have integer degrees and the $m_k$ operations have well-defined signs respectively.  Without these assumptions, Floer cochains have degrees in $\Z_2$, and $m_k$ are defined up to signs respectively.  We will also take canonical models, namely we take representatives of a basis of the Floer cohomology of $\bL \cup \bF$.

In other words, each framing Lagrangian in $\bF$ (which is disjoint to each other) corresponds to a framing vertex, and each component of $\bL$ corresponds to a vertex of the quiver.  Each degree one Floer generator between different components of $\bL \cup \bF$ corresponds to an arrow of $Q^\fr$ between corresponding vertices.  On each component of $\bL$, we fix a basis of its degree-one cohomology and associate self loops $z,w$ with relation $zw=1$ at the corresponding vertex to each basic element.  (This means these are invertible self loops.)  These self loops encode the homolomies around corresponding loops for families of flat connections.

Let $\Lambda Q^\fr$ denote the path algebra of $Q^\fr$ over $\Lambda$.  We take 
$$\CF(\bL \cup \bF,\bL \cup \bF)_{\Lambda Q^{fr}} := \Lambda Q^{fr} \otimes \CF(\bL \cup \bF,\bL \cup \bF)$$ 
and extend the $A_\infty$ operations $m_k$ on it.  In particular, let $\bb = \sum_a a \otimes X_a \in \CF(\bL \cup \bF,\bL \cup \bF)_{\Lambda_0 Q^{fr}}$, where the sum is over the arrows $a$ corresponding to jumps between corresponding Lagrangian branches, and consider $m_0^{\bb,\nabla} := \sum_{k \geq 0} m_k^\nabla(\bb,\ldots,\bb)$ for the family of flat connections $\nabla$ parametrized by the self loops $z$ introduced above, which is the obstruction of defining the Floer cohomology.

\begin{defn} \label{def:univ}
	Let $\A^\fr := \Lambda_0 Q^\fr / I$ where $I$ is the two-sided ideal generated by the coefficients of $m_0^{\bb,\nabla}$.  $(\bL \cup \bF,\A^\fr)$ is called to be a universal family of framed Lagrangian branes supported on $\bL \cup \bF$.
\end{defn}

\begin{rem}
	In above, we have assumed $\bL \cup \bF$ has a $\Z$-grading and $m_0^{\bb}$ has degree two.  In general, $I$ is the ideal generated by the coefficients of $m_0^{\bb,\nabla}$ mod out the span of the unit $1_{\bL \cup \bF}$ (which is the sum of the unit on each component of $\bL \cup \bF$). The coefficient $W$ of $1_{\bL \cup \bF}$ in $m_0^{\bb,\nabla}$ is called the disc potential.  $(\A^\fr,W)$ is called to be a Landau-Ginzburg model.
\end{rem}

Below, we will define general families of framed Lagrangian branes which involve a choice of dimension vectors.  In such a general formulation, a Lagrangian brane may not have commutative unobstructed deformations, and we have to consider noncommutative families.

First, we fix dimension vectors $\vec{w}$ and $\vec{v}$ for the components of the framing Lagrangian $\bF$ and 
 for the components of $\bL$ respectively.  We equip the framing Lagrangian $\bF$ with the trivial vector bundle in the rank specified by $\vec{w}$ on each component.  Moreover, the components of $\bL$ are decorated with local systems $\mathcal{E}$ of corresponding ranks specified by $\vec{v}$. 
We call $\bL\cup \bF$ equipped with these bundles to be  a Lagrangian brane.
Later, the local systems will be specified by the representing matrices of the self loops $z$ introduced above.  $\vec{v}$ has positive entries, while $\vec{w}$ has non-negative entries.  When an entry of $\vec{w}$ is zero, we delete the corresponding component of $\bF$ in the union $\bL\cup\bF$.  In particular, we can consider unframed Lagrangian branes by setting $\vec{w}=0$.

The morphisms between  $(\bF,\overrightarrow{v} ,\overrightarrow{w})$-framed Lagrangian brane $L$ and $(\bF,\overrightarrow{v}' ,\overrightarrow{w}')$-framed Lagrangian brane $L'$ is required to satisfy the following condition: suppose $p$ is an intersection point between $F_{i}$ and $F_{i}'$, which is represented by a maximal point of the Morse function,  then $\gamma \in Hom(\mathcal{F}_{i}|_{p},\mathcal{F}_{i}'|_{p})$ will not only preserve the basis but also preserve the order. This implies if $\gamma$ is nontrivial, then $\overrightarrow{w} = \overrightarrow{w}'$. 



Conceptually, a family of (framed) Lagrangian branes over $\cA$ (or $\cA^\fr$) comes from the pull-back of the universal family by a map between $\cA$ (or $\cA^\fr$) and $\A$ ( or $\A^\fr$ resp.). Such a map is given by a rank $\vec{v}$ (resp. $(\vec{v},\vec{w})$) representation of the quiver algebra $\A$ (resp. $ \A^\fr$) over another quiver algebra $\cA$ (or $\cA^\fr$).  

\begin{defn}[Representation by another quiver algebra]
A representation $G$ in rank $\vec{v}$ of a quiver algebra $\A$ by another quiver algebra $\cA$ consists of an assignation $f:I_{\A}\rightarrow I_{\cA}$, together with a family of maps $g_{h,t}:E_{h,t}\rightarrow Mat_{v_{h} \times v_{t}}(\cA)$ indexed by the ordered pairs $(h,t)\in V\times V$, where $E_{h,t}$ is set of all arrows with head $h$ and tail $t$. Moreover, for an arrow $a\in E_{h,t}$, the entries of $g_{h,t}(a)$ are required to be  arrows from $f(t(a))$ to $f(h(a))$.   The representation $G$ is required to preserve relations of $\A$ and $\cA$.
	
A framed representation $G$ in rank $(\vec{v},\vec{w})$ of a framed quiver algebra $\A^\fr$ by another framed quiver algebra $\cA^\fr$ is a representation in the above sense, with the further condition that $f: I_{\A^\fr} \to I_{\cA^\fr}$ maps a framing vertex to a framing vertex.
\end{defn}

In other words, each vertex $v_i$ of the quiver for $\A$ is mapped to a vertex of $\cA$, and each arrow of $\A$ is represented by a matrix of arrows in $\cA$. This representation is done in a manner that respects head and tail vertices, together with the relations for algebras mod idempotents at vertices; any expression in arrows of $\A$ that equals to zero is still zero in $\cA$ upon substitution according to the representation. 


For simplicity, we will denote the representation defined above by $G:\A \to Mat(\cA)$.  We also think of this representation as defining a family $\A$ over $\cA$.


Now we define a noncommutative family of Lagrangian brane $(\bL,\cE)$.

\begin{defn}
	Let $\cA$ be an algebra (possibly noncommutative). A family of Lagrangian branes $(\bL,\cE)$ over $\cA$ is a representation $G:\A \to Mat(\cA)$ in the above sense, where $\A$ is the universal family associated with $(\bL,\cE)$ (see Definition \ref{def:univ}).  This means there's a function that maps generators of the degree-one cohomology of the normalization of $\bL$ to  elements in $GL(\cA)$, and maps other generators of $CF^1(\bL)$ to elements in $Mat(\cA)$ of suitable ranks. 
\end{defn}

One can extend the Fukaya algebra of $(\bL, \cE)$ over $\cA$ and consider the formal boundary deformations of $(\bL, \cE)$. 
	An $A_\infty$-functor $\cF^{(\bL,\cE)}: \Fuk(X) \to \mathrm{dg} \,\A\lmod$ given by the family $(\bL,\cE)$ over $\cA$ 
(in the $\Z$-graded case) can be obtained by employing constructions similar to those in \cite{CHL21} (by replacing $\A$ with its representation in $Mat(\cA)$). 




One would like to compare the formal deformation spaces of two (framed) Lagrangian immersions, which motivates the notion of a (framed) isomorphism.  First, we define isomorphisms between framed quiver algebras.  Next, we define isomorphisms between formal deformations of framed Lagrangian branes.


\begin{defn}
	Two framed quiver path algebras $\cA_1^{\fr}$ and $\cA_2^{\fr}$ are framed isomorphic if there exist framed representations $G_{21}:\cA_1^{\fr} \to \cA_2^{\fr}$, $G_{12}:\cA_2^{\fr} \to \cA_1^{\fr}$ and gerbe terms $c_{121}$, $c_{212}$, satisfying the following conditions: \begin{enumerate}
		\item $G_{12}$ and $G_{21}$ are mutually inverse up to gerbe terms. More precisely, $G_{ij}\circ G_{ji}\left(a\right)=c_{iji}\left(h(a)\right)\cdot a \cdot c_{iji}^{-1}\left(t(a)\right)$ for $(i,j)=(1,2)$ or $(i,j)=(2,1)$. 
		\item The gerbe terms at the framing vertices are trivial, i.e. $c(v_f)=e_{v_f}$ for all gerbe terms $c$ and framing vertices $v_f$.
	\end{enumerate}
\end{defn}

A framed quiver algebroid stack is locally glued by framed quiver path algebras, which is motivated from taking compactification of the (unframed) quiver algebroid stack.
\begin{defn}
	Let $B$ be a topological space.  A framed quiver algebroid stack consists of the following data:
		\begin{enumerate}
			\item An open cover $\{U_i: i \in I\}$ of $B$.
			\item
			A sheaf of framed quiver algebras $\mathcal{A}^\fr_i$ over each $U_i$, coming from localizations of a quiver algebra $\cA^\fr_i(U_i) = k Q^{(i)}/ R^{(i)}$.
			\item
			A sheaf of framed representations $G_{ij}$ of $\mathcal{A}^\fr_j(V)$ over $\mathcal{A}^\fr_i(V)$ for every $i,j$ and $V \stackrel{\mathrm{open}}{\subset} U_{ij}$.
			\item
			An invertible element $c_{ijk}\left(v\right)\in \left(e_{G_{ij}(G_{jk}(v))}\cdot\mathcal{A}^\fr_{i}(U_{ijk})\cdot e_{G_{ik}(v)}\right)^{\times}$ for every  $i,j,k$ and $v \in Q^{(k)}_0$, that satisfies
			
			$$	G_{ij}\circ G_{jk}\left(a\right)=c_{ijk}\left(h(a)\right)\cdot G_{ik}\left(a\right)\cdot c_{ijk}^{-1}\left(t(a)\right)$$ such that for any $i,j,k,l$ and $v$,
			$$	c_{ijk}(G_{kl}(v)) c_{ikl}(v) = G_{ij}(c_{jkl}(v))c_{ijl}(v).$$
			Furthermore, the gerbe terms at the framing vertices are trivial. In this paper, we always set $G_{ii}=\Id, c_{jjk}\equiv 1 \equiv c_{jkk}$.
			
		\end{enumerate}
\end{defn}

\begin{rem}
	Given any framed quiver algebroid stack $\hat{\cY}^\fr$, there exists an induced unframed quiver stack $\hat{\cY}$, whose local charts and transition maps are the same as those in $\hat{\cY}^\fr$ after setting the arrows whose head or tail vertex is a framing vertex to be zero.
\end{rem}

\begin{defn}
	Let $(L_1^\fr,\bb^\fr_1)$, $(L_2^\fr,\bb^\fr_2)$ be two framed Lagrangian immersions. A pair of degree 0 intersection points $(\alpha^\fr,\beta^\fr)$, where $\alpha^\fr \in CF^0((L_1^\fr,\bb^\fr_1),(L^\fr_2,\bb^\fr_2)), \beta^\fr \in CF^0((L_2^\fr,\bb^\fr_2),(L_1^\fr,\bb^\fr_1)),$ is called a framed isomorphism pair if there exists a framed quiver algebroid stack $\hat{\cY}^\fr$ over which $(\alpha^\fr,\beta^\fr)$ satisfies
	\begin{center}
		$m_1^{\bb^\fr_1,\bb^\fr_2}(\alpha^\fr)=m_1^{\bb^\fr_2,\bb^\fr_1}(\beta^\fr)=0$, \quad
		$m_2^{\bb^\fr_1,\bb^\fr_2,\bb^\fr_1}(\alpha^\fr,\beta^\fr)=\one_{L_1^\fr}$, \quad $m_2^{\bb^\fr_2,\bb^\fr_1,\bb^\fr_2}(\beta^\fr,\alpha^\fr)=\one_{L_2^\fr}.$
	\end{center} 
\end{defn}

In general, the gluing of nc deformation spaces of framed Lagrangian immersions can be restricted to the gluing of compact Lagrangian immersions.

\begin{prop} \label{prop:framediso}
	Let $L_s^\fr$ be framed Lagrangian immersions with nontrivial topology and $\bb_s$ (resp. $\bb_s^\fr$) be the formal boundary deformation of $L_s$ (resp. $L_s^\fr$) for $s=1,2$. Suppose there exists a quiver algebroid stack $\hat{\cY}^\fr$ and an isomorphism pair $(\alpha^\fr,\beta^\fr)$ between $(L_1^\fr,\bb_{1}^\fr)$ and $(L_2^\fr,\bb_{2}^\fr)$ that solves the Fukaya isomorphism equations over $\hat{\cY}^\fr$. Then the restriction of $(\alpha^\fr, \beta^\fr)$ to the compact Lagrangian immersions solves the Fukaya isomorphism equations between $(L_1,\bb_1)$ and $(L_2,\bb_2)$ over the unframed quiver algebroid stack $\hat{\cY}$. Here, $\hat{\cY}$ is a quiver stack induced by $\hat{\cY}^\fr$. 
\end{prop}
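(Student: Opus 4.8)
The plan is to realize the passage from the framed equations to the unframed ones as the application of a single algebra projection, namely the projection $\pi$ described in the Remark following the definition of a framed quiver algebroid stack, which sends every arrow incident to a framing vertex to zero and thereby carries $\hat{\cY}^\fr$ onto the induced unframed stack $\hat{\cY}$. First I would record the decomposition of all the data with respect to $\pi$. Writing $L_s^\fr = L_s \cup \bF$, the Floer complex $\CF(L_1^\fr,L_2^\fr)$ splits as the compact--compact part $\CF(L_1,L_2)$ together with the pieces that involve the shared framing $\bF$, and the unit decomposes as $\one_{L_s^\fr} = \one_{L_s} + \one_{\bF}$. The boundary deformation splits as $\bb_s^\fr = \bb_s + \bb_s^{\mathrm{fr}}$, where $\bb_s$ is supported on the self-intersections of $L_s$ (compact arrows) and $\bb_s^{\mathrm{fr}}$ is supported on $L_s \cap \bF$ (framing arrows). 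By construction $\pi(\bb_s^\fr)=\bb_s$ and $\pi(\one_{L_s^\fr})=\one_{L_s}$, and the restriction $\alpha:=\pi(\alpha^\fr)$, $\beta:=\pi(\beta^\fr)$ to the compact Lagrangian immersions is precisely the compact--compact component of the morphism with its coefficients projected into $\hat{\cY}$; these are still of degree $0$.

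The key step, and the one I expect to be the main obstacle, is the compatibility of the extended $A_\infty$-operations with $\pi$: I would prove that $\pi$ intertwines the framed operation $m_k^{\bb^\fr}$ with the unframed operation $m_k^{\bb}$ on the compact--compact part. Algebraically this is nearly automatic, since $\pi$ is a homomorphism of the coefficient algebra and, by the defining formula \eqref{eq:mk}, the coefficients of the two objects merely multiply along each operation while the geometric disc count is unchanged. The content is geometric: I must show that no pearl trajectory meeting the framing $\bF$ can contribute to a compact--compact output with a coefficient that survives $\pi$. This follows from the observation that, for a disc bounded by the framed Lagrangians, every boundary transition between a compact branch $L_s$ and the framing $\bF$ occurs at a point of $L_s \cap \bF$ and is therefore recorded either by a $\bb_s^{\mathrm{fr}}$-insertion or by an input/output corner of framing type, whose coefficient is a path running through a framing vertex; in all cases a framing arrow appears and is killed by $\pi$. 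The perturbation data fixed in Section \ref{sec:framedfukayacategory}, in particular the conditions $L_i \cap \phi(L_{i+1}) \cap \bF = \emptyset$ together with the convexity lemma confining all discs to $M_c$, guarantee that the moduli of discs with boundary entirely on the compact parts coincides with the unframed moduli, so that after discarding framing-touching contributions one recovers exactly $m_k^{\bb}$.

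With this compatibility in hand the conclusion is immediate by applying $\pi$ to each of the three framed isomorphism equations. Applying $\pi$ to $m_1^{\bb_1^\fr,\bb_2^\fr}(\alpha^\fr)=0$ and extracting the compact--compact component yields $m_1^{\bb_1,\bb_2}(\alpha)=0$, since every cross-framing contribution to a compact--compact output vanishes under $\pi$; likewise $m_1^{\bb_2^\fr,\bb_1^\fr}(\beta^\fr)=0$ gives $m_1^{\bb_2,\bb_1}(\beta)=0$. Applying $\pi$ to $m_2^{\bb_1^\fr,\bb_2^\fr,\bb_1^\fr}(\alpha^\fr,\beta^\fr)=\one_{L_1^\fr}$ and using $\pi(\one_{L_1^\fr})=\one_{L_1}$ gives $m_2^{\bb_1,\bb_2,\bb_1}(\alpha,\beta)=\one_{L_1}$, and symmetrically $m_2^{\bb_2,\bb_1,\bb_2}(\beta,\alpha)=\one_{L_2}$. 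These are exactly the Fukaya isomorphism equations for $(\alpha,\beta)$ between $(L_1,\bb_1)$ and $(L_2,\bb_2)$. Finally I would verify that these equations are genuinely posed over the induced unframed stack $\hat{\cY}$: because $\pi$ preserves the cocycle data of the framed stack, the gerbe terms $c_{ijk}$ at framing vertices being trivial by definition and those at compact vertices carrying over unchanged to $\hat{\cY}$, the projected morphisms $\alpha,\beta$ solve the isomorphism equations over $\hat{\cY}$, completing the argument. The hypothesis that the $L_s^\fr$ have nontrivial topology is used to ensure that the compact components $L_s$ are distinguished from the topologically trivial framing, so that the compact--compact projection is well defined and the unframed deformation theory is nontrivial.
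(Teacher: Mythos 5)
There is a genuine gap, and it sits exactly at the step you yourself flag as the main obstacle. Your claim that every boundary transition between a compact branch and the framing leaves a trace that is killed by $\pi$ is true for the $m_1$ equations (where the only framing-type insertions available are the $\bb^\fr$-insertions $I_k, J_k$, whose coefficients are the framing arrows $i_k, j_k$), but it is false for the $m_2$ (unit) equations. The framed isomorphism pair decomposes as $\alpha^\fr=\alpha+\sum_l \one_{F_l}+\sum_m M_m+\sum_n N_n$ and $\beta^\fr=\beta+\sum_l \one_{F_l}+\sum_p P_p+\sum_q Q_q$ with cross-terms such as $M_m \in \CF^0((L_1,\bb_1),\bF)$ and $Q_q \in \CF^0(\bF,(L_1,\bb_1))$. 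These are \emph{input corners}, not $\bb^\fr$-insertions, and their coefficients need not contain any framing arrow: they can be scalars or invertible paths supported entirely in the compact part of the quiver (compare the paper's affine $A_1$ example, where components of $\alpha^\fr_{10}$ carry coefficients like $(u_2^L)^{-1}$). Now a polygon (e.g.\ a bigon or constant polygon at a point of $L_{1j}\cap F_k$) whose only two corners are $M_m$ and $Q_q$, with boundary running along $L_{1j}$ and along $F_k$, contributes to $m_2^{\bb_1^\fr,\bb_2^\fr,\bb_1^\fr}(M_m,Q_q)$ a multiple of $\one_{L_{1j}}$ with \emph{no} $\bb^\fr$-insertion anywhere; such a term survives your projection $\pi$, so applying $\pi$ to $m_2^{\bb_1^\fr,\bb_2^\fr,\bb_1^\fr}(\alpha^\fr,\beta^\fr)=\one_{L_1^\fr}$ does not by itself yield $m_2^{\bb_1,\bb_2,\bb_1}(\alpha,\beta)=\one_{L_1}$.

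The paper closes precisely this loophole with a topological argument, and this is where the "nontrivial topology" hypothesis is actually used (not, as you suggest, merely to distinguish the compact components from the framing). One first observes that if $M_m$ and $Q_q$ connect $L_{1j}$ to \emph{different} framing components, then the output coefficient is forced to be a multiple of the framing arrows and dies; the dangerous case is $M_m \in \CF^0((L_{1j},\bb_1),F_k)$ and $Q_q \in \CF^0(F_k,(L_{1j},\bb_1))$ for the \emph{same} $k$. In that case $M_m$ and $Q_q$ are $m_1$-closed, and an identity $m_2(M_m,Q_q)=\one_{L_{1j}}$ would induce an injective map $HF(L_{1j}) \hookrightarrow HF(F_k)$ on Floer cohomology; since $F_k$ is diffeomorphic to $\R^n$ while $L_{1j}$ has nontrivial topology, this is impossible, so no such contribution can occur and the restricted pair $(\alpha,\beta)$ does satisfy the unit equations over $\hat{\cY}$. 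Your proposal is missing this step entirely, and without it (or a substitute argument constraining the coefficients of the cross-terms $M_m, N_n, P_p, Q_q$) the passage from the framed to the unframed isomorphism equations does not go through.
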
 

\begin{proof}
	Let $\bF:=\{F_1, \cdots, F_r\}$ be the set of framing Lagrangians and $I_k, J_k$ be the immersed sectors, where $I_k \in \CF^1(F_k, (L_1^\fr,\bb_{1}^\fr))$ and $J_k \in \CF^1((L_1^\fr,\bb_{1}^\fr),F_k)$ for $k=1, \cdots,r$.  By definition, $\bb_1^\fr=\bb_1+ \sum_k i_{k} I_{k} + \sum_k j_{k} J_{k}$, which is similar for $\bb_2^\fr$. Observe that the preisomorphism pair $(\alpha^\fr,\beta^\fr)$ between $(L_1^\fr,\bb_{1}^\fr)$ and $(L_2^\fr,\bb_{2}^\fr)$ is of the following form:
	$$\alpha^\fr=\alpha+ \sum_l \one_{F_l} + \sum_m M_m+ \sum_n N_n, \quad \beta^\fr= \beta + \sum_l \one_{F_l}+ \sum_p P_p+ \sum_q Q_q,$$
	where $\alpha \in \CF^0((L_1,\bb_{1}),(L_2,\bb_2))$, $\beta \in \CF^0((L_2,\bb_{1}),(L_1,\bb_2))$,  $\one_{F_l}$ is the fundamental class of the framing Lagrangian $F_l$, $M_m \in \CF^0((L_1,\bb_{1}),\bF)$, $N_n \in \CF^0(\bF,(L_2,\bb_{2}))$, $P_p\in \CF^0((L_2,\bb_{2}),\bF)$ and $Q_q \in \CF^0(\bF,(L_1,\bb_{1}))$. We want to show $(\alpha,\beta)$ forms an isomorphism pair between $(L_1,\bb_1)$ and $(L_2,\bb_2)$ over $\hat{\cY}^\fr$. 
	
	First, we show $m_1^{\bb_1,\bb_2}(\alpha)=0$. Since $N_n \in \CF^0(\bF,L_2)$, the outputs of $m_1^{\bb_1^\fr,\bb_2^\fr}(N_n)$ at $\CF(L_1,L_2)$ are contributed by the polygons, which have $J_k \in \CF(L_1,\bF)$ as corners. Thus, the coefficients of the outputs are multiples of $j_k$ in $m_1^{\bb_1^\fr,\bb_2^\fr}(N_n)$. The situation is similar for the outputs of $m_1^{\bb_1^\fr,\bb_2^\fr}(\sum_l \one_{F_l} + \sum_m M_m)$ at $\CF(L_1,L_2)$. Therefore, the outputs of $m_1^{\bb_1,\bb_2}(\alpha)$ are the same as those of $m_1^{\bb_1^\fr,\bb_2^\fr}(\alpha^\fr)$ after setting $i_k=j_k=0$ for all $k$. Hence, $m_1^{\bb_1,\bb_2}(\alpha)=0$ over $\hat{\cY}.$ Using the same idea, one can show $m_1^{\bb_{2},\bb_{1}}(\beta)=0$. Furthermore, the outputs at other components have similar properties. One can also obtain $m_1^{\bb_{1},\bb_{2}}(M_m)=0$ and $m_1^{\bb_{2},\bb_{1}}(Q_q)=0$ over $\hat{\cY}$ for all $m$ and $q$.
	
	Next, we consider $m_2^{\bb_1,\bb_2,\bb_1}(\alpha,\beta)$. Since $(\alpha^\fr,\beta^\fr)$ is an isomorphism pair over $\hat{\cY}^\fr$, we have $m_2^{\bb_1^\fr,\bb_2^\fr,\bb_1^\fr}(\alpha^\fr,\beta^\fr)=\sum_j \one_{L_{1j}}+ \sum_l \one_{F_l}.$ Similar to the above argument, it suffices to show $m_2^{\bb_1^\fr,\bb_2^\fr,\bb_1^\fr}(M_m, Q_q)$ is not a multiple of $\one_{L_{1j}}$ for all $m,q,j$.
	
	Suppose $m_2^{\bb_1^\fr,\bb_2^\fr,\bb_1^\fr}(M_m, Q_q)= \one_{L_{1j}}.$ We have $M_m \in \CF^0((L_{1j},\bb_1),F_k)$ and $Q_q \in \CF^0(F_k,(L_{1j},\bb_1))$ for some $k$. Otherwise if the intersections happen in the different components, the coefficient of the output will be a multiple of $i$ and $j$. Notice that $M_m$ and $Q_q$ are closed. The existence of $M_m$ and $Q_q$ induces an injective homomorphism from $HF(L_{1j})$ into $HF(F_q),$ but $F_q$ is diffeomorphic to $\R^n$. This contradicts to the fact that $L_{1j}$ has nontrivial topology. Hence, $m_2^{\bb_1,\bb_2,\bb_1}(\alpha,\beta)=\one_{L_1}$. Similarly, we obtain  $m_2^{\bb_2,\bb_1,\bb_2}(\beta,\alpha)=\one_{L_2}.$ Thus the prposition holds.
\end{proof}

\begin{example}
	Let $L_0$ be the union of two spheres, and $L_i$ be the i-th immersed spheres in affine $A_1$ case for $i=1,2$. Let $\bF=\{F_1, F_2\}$ be the collection of framing Lagrangians and $L_j^\fr$ be the corresponding framed Lagrangian immersions for $j=0,1,2$ as shown in the Figure \ref{fig:aff A1}.  Denote the deformation parameter of $L_j^\fr$ by $\bb_j^\fr$ with formal deformation space $\cA_j^\fr$ for $j=0,1,2$.
	
	\begin{figure}
		[htb!]
		\includegraphics[scale=0.85]{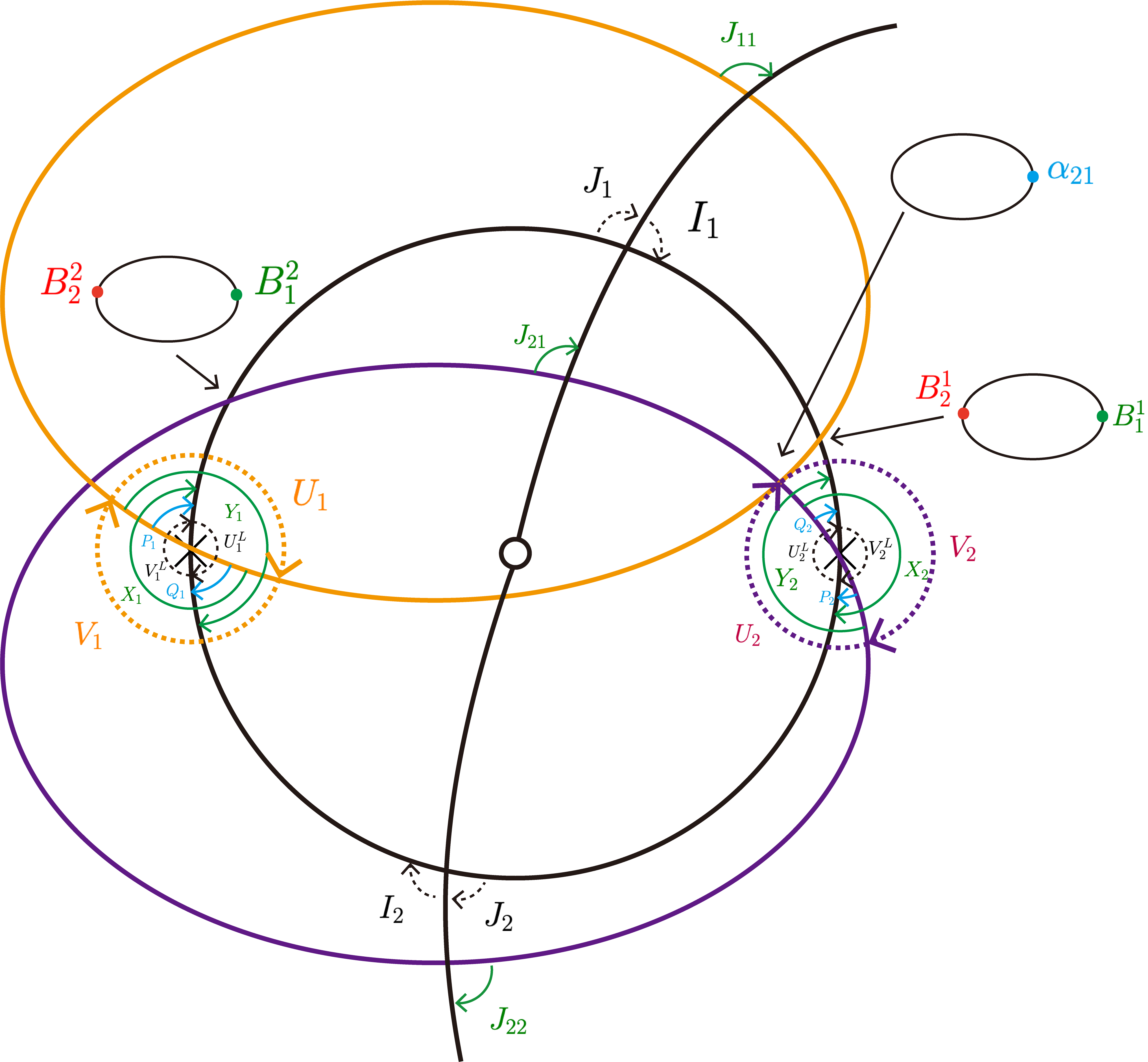}
		\caption[\small Framed Lagrangian immersions in affine $A_1$]{Framed Lagrangian immersions in affine $A_1$. The framed Lagrangian immersions are labeled by $L_1^\fr, L_0^\fr, L_2^\fr$ from top to bottom. The generators of $\CF^1(L_0^\fr,L_0^\fr)$ are $U_i^L \in \CF^1(\bS^2_i, \bS^2_{i+1}), V_i^L \in \CF^1(\bS^2_{i+1}, \bS^2_{i})$, $I_k \in \CF^1(F_k, \bS^2_{k})$ and  $J_k \in \CF^1( \bS^2_{k},F_k)$ for $k=1,2$; the generators of $\CF^1(L_m^\fr,L_m^\fr)$ are the self-immersed sectors $U_m, V_m$ of $L_m$ and immersed sectors $I_{mk}\in \CF^1(F_k,L_m),  J_{mk} \in \CF^1(L_m,F_k)$ for $m,k=1,2$. We denote the maximal point of the clean intersect circle between $L_0$ and $L_k$ for $k=1,2$ (resp. $L_m$ and $L_n$ for $(m,n)=(1,2)$ or $(m,n)=(2,1)$) by $\beta_k^1 \in \CF^0(L_0,L_k)$ (resp. $B_{mn}^1 \in \CF^0(L_m,L_n)$.}
		\label{fig:aff A1}
	\end{figure}

	There exists a framed quiver algebroid stack $\hat{\cY}^\fr$ and framed isomorphism pairs $(\alpha_{ij}^\fr, \alpha_{ji}^\fr)$ that solve Fukaya isomorphism equations over $\hat{\cY}^\fr$. More precisely, we have the following preisomorphism pairs: $$\alpha_{10}^\fr=P_1+ (u_2^L)^{-1}Q_1+\one_{F_1}+\one_{F_2}, \quad \alpha_{01}^\fr=\beta_1^1+\one_{F_1}+\one_{F_2}$$
	$$\alpha_{20}^\fr=(v_1^L)^{-1}P_2+ Q_2+\one_{F_1}+\one_{F_2}, \quad \alpha_{02}^\fr=\beta_2^1+\one_{F_1}+\one_{F_2}$$
	$$\alpha_{12}^\fr= B_{12}^1+\one_{F_1}+\one_{F_2}, \quad \alpha_{21}^\fr=B_{21}^1+\one_{F_1}+\one_{F_2}.$$
	
	We construct the quiver stack $\hat{\cY}^\fr$ as follows:
	\begin{enumerate}
		\item The underlying topological space of $\hat{\cY}^\fr$ is the polyhedral set  $P$ of $K_{\bP^1}$. We endow $P$ with a topology $\left\{\emptyset,\{U_{i}\}_{0\leq i \leq 2}\right\}$  where $U_{i}$ is the complements of the $i$-th facet. 
		\item We associate each open set $U_{i}$ a sheaf which will be obtained by localizing the corresponding global sections $\cA_{0}$ for $i=0$ or $\cA_{i}$ for $1\leq i\leq n$. If there is no ambiguity we will use $\cA_{0}$ or $\cA_{i}$ to denote the sheaves. Then $\cA_0(U_{1})$ (resp. $\cA_0(U_{2})$) is defined by localizing $\cA_0$ at $\{u_2^L\}$ (resp. $\{ v_1^L \}$); on the intersection, it's the algebra localized at the union of corresponding elements.  
		\item The transition maps  $G_{i0}:\cA_{0}|_{U_{0i}}\rightarrow\cA_{i}|_{U_{0i}}$ and $G_{0i}$ for $i=1,2$ are defined by $$G_{10}:= \begin{cases} 
			u_{1}^L\rightarrow  u_1 \\
			v_{1}^L\rightarrow  v_1 \\ 
			u_{2}^L\rightarrow 1  \\
			v_{2}^L\rightarrow v_1u_1+i_{11}j_{11}  \\
			i_k \rightarrow i_{1k} & k=1,2  \\
			j_k \rightarrow  j_{1k} & k=1,2
		\end{cases},\quad G_{01}:=\begin{cases}
			u_{1}\rightarrow u_{2}^L u_{1}^L   \\
			v_{1}\rightarrow v_{1}^L (u_{2}^L)^{-1} \\
			i_{11} \mapsto i_1 \\
			j_{11} \mapsto j_1 \\
			i_{12} \mapsto u_2^L i_2 \\
			j_{12} \mapsto j_2 (u_2^L)^{-1}
		\end{cases}.$$
		
		$$G_{20}:= \begin{cases} 
			u_{2}^L\rightarrow  u_2 \\
			v_{2}^L\rightarrow  v_2 \\ 
			u_{1}^L\rightarrow  u_2v_2-i_{21}j_{21} \\
			v_{1}^L\rightarrow 1  \\
			i_k \rightarrow i_{2k} & k=1,2  \\
			j_k \rightarrow  j_{2k} & k=1,2
		\end{cases},\quad G_{02}:=\begin{cases}
			u_{2}\rightarrow u_{2}^L (v_{1}^L)^{-1}   \\
			v_{2}\rightarrow v_{1}^L v_{2}^L \\
			i_{21} \mapsto i_1 \\
			j_{21} \mapsto j_1 \\
			i_{22} \mapsto v_1^L i_2 \\
			j_{22} \mapsto j_2 (v_1^L)^{-1}
		\end{cases}.$$
		\item The gerbe terms are defined by  $$c_{010}(v_1)= e_1   \quad c_{010}(v_2)=u_2^L \quad c_{020}(v_1)= e_1   \quad c_{020}(v_2)=v_1^L. $$
		
		\item We define the transition maps between the first chart and second chart to be $G_{ik}(x):=G_{i0}\circ G_{0k}(x)$, and the corresponding gerbe terms are trivial. More precisely, \begin{align*}
			G_{21}=G_{20}\circ G_{01}=\begin{cases}
				u_{1}\rightarrow u_2(u_2v_2-i_{21}j_{21}) &  \\
				v_{1}\rightarrow u_2^{-1} \\
				i_{11} \mapsto i_{21} \\
				j_{11} \mapsto j_{21} \\
				i_{12} \mapsto u_2 i_{22} \\
				j_{12} \mapsto j_{22} u_2^{-1} 
			\end{cases}, \quad G_{12}=G_{10}\circ G_{02}=\begin{cases}
				u_{2}\rightarrow v_1^{-1} &  \\
				v_{2}\rightarrow v_1(v_1u_1+i_{11}j_{11}) \\
				i_{21} \mapsto i_{11} \\
				j_{21} \mapsto j_{11} \\
				i_{22} \mapsto v_1 i_{12} \\
				j_{22} \mapsto j_{12} v_1^{-1} 
			\end{cases}.
		\end{align*}
	\end{enumerate}

	The above data indeed defines a quiver algebroid stack $\hat{\cY}^\fr$. We will take $G_{01} \circ G_{10}(u_1^L)$ as an example. The rest is similar: 
	$$G_{01} \circ G_{10}(u_1^L)= G_{01}(u_1)= u_2^Lu_1^L= c_{010}(v_2) \cdot u_1^L \cdot c_{010}(v_1)^{-1}.$$
	
	The framed quiver algebroid stack is constructed by analyzing the coefficients in the Fukaya isomorphism equations.  For example, by direct calculation, we obtain $$m_1^{\bb^\fr}(P_1+ Q_1+\one_{F_1}+\one_{F_2})= (j_1-j_{11})J_{11}+ (u_1^L-u_1)X_1+ (v_1^L-v_1)Y_1+ (u_2^L-1)B_1^1+ (j_2-j_{12})J_{12}+(i_1-i_{11})I_{11}+(i_2-i_{12})I_{12}.$$ Therefore, we construct a framed quiver algebroid stack over which the coefficients vanish. This provides the transition maps $G_{10}$ and $G_{01}$ in $\hat{\cY}^\fr$. And $m_1^{\bb^\fr} (\alpha_{10}^\fr)$ vanishes over $\hat{\cY}^\fr$. Similarly, one can check that $\alpha_{ij}$ satisfies the Fukaya isomorphism equations over $\hat{\cY}^\fr$. Furthermore, we also have $m_2^{\bb_i,\bb_j,\bb_k}(\alpha_{ij},\alpha_{jk})=\alpha_{ik}$ for $i,j,k=0,1,2$.
	
	By Proposition \ref{prop:framediso}, the framed isomorphism restricts to an isomorphism pair between compact Lagrangian immersions. The resulted unframed quiver algebroid stack $\hat{\cY}$ is the quiver algebroid stack corresponding to minimal resolution of $A_1$ singularity. Readers can find more details about the quiver stack corresponds to $A_n$ resolution in Theorem \ref{thm:qstack}.
	
\end{example}

\subsection{Localization and charts in general ranks}\label{sec:h-rank}

Given a Lagrangian brane $(\bL,\cE)$. The noncommutative deformation space is in general stacky. In order to analyze the local structure of the nc deformation space, we generalize the localized path algebra. We define the localization of a path algebra at a matrix of arrows. We denote a path algebra with relations by $\A=k Q/R$. In this case, even though a single path may not be invertible, a matrix of arrows can possess both left and right inverse. 

\begin{defn}[Higher rank localization of a quiver algebra] \label{def: loc2}
	Let $S=(s_{ij})_{1 \leq i \leq m, 1 \leq j \leq n}$ be a matrix of paths satisfying the following condition:
	\begin{enumerate}
		\item The paths in the same row (column) have the same heads (tails). 
		\item Let $v$ be a column of paths such that the head of $i$-th entry is the  tail of $i$-th column of $S$, then $Sv= 0$ implies $v=0$. The similar condition holds for rows of paths $w$ and $wS=0$. 
	\end{enumerate}
	Then we define the localized algebra at $S$ in the following way:
	
	For each entry $s_{ij}$, we add one path, denoted by $\gamma_{ji}$, with $h(\gamma_{ji})=t(s_{ij})$ and $t(\gamma_{ji})=h(s_{ij})$, to the quiver $Q$. This forms a matrix of paths $\gamma$. Moreover, we take the ideal $\hat{R}$ generated by $R$ and $\left(\gamma S - diag(e_{t(s_{11})}, \cdots, e_{t(s_{1n})}), S \gamma - diag(e_{h(s_{11})}, \cdots, e_{h(s_{m1})})\right)$ to be the new ideal of relations.  The new quiver algebra with relations $k \hat{Q} / \hat{R}$ is called the localized algebra at $S$, and is denoted as $S^{-1}\A$.
\end{defn}
 
\begin{rem}
	Definition \ref{def: loc1} is a special case of Definition \ref{def: loc2} by taking $S$ to be a diagonal matrix of arrows.
\end{rem}

Similarly to definition \ref{def:chart}, we can define an affine chart of rank $\vec{w}$ for a path algebra.

\begin{defn} \label{def: affine}
	Let $\A$ be a quiver algebra with a reference vertex $v_1$ and $\vec{w}$ be a dimension vector which equals $1$ at $v_1$.  An affine chart of rank $\vec{w}$ centered at $v_{1}$ is a triple $$\left(\cA_i=k Q_i/R_i,G_{0i},G_{i0}\right)$$
	where $Q_{i}$ is a quiver with a single vertex and $R_{i}$ is a two-sided ideal of relations; $$G_{0i}: \cA_i \to S^{-1}\A \textrm{ and } G_{i0}: S^{-1}\A \to Mat(\cA_i)$$ 
	are representations with $v_1$ being the image vertex of $G_{0i}$ that satisfy 
	\begin{align*}
		G_{i0} \circ G_{0i} =& \mathrm{Id},\\
		G_{0i} \circ G_{i0}(a) =& c(h_a) \,a \,c(t_a)^{-1}.
	\end{align*}
	The map $c$ assigns each vertex $v$ an invertible matrix of paths (which we call Gerbe terms) in $S^{-1}\A$  with the properties:
	\begin{enumerate}
		\item each entry is a path from $v$ to $v_1$,
		\item the number of rows equals the rank of $v$.
	\end{enumerate}
	Here, $S^{-1}\A$ is the localization of $\A$ at some matrix of paths $S$ as described in Definition \ref{def: loc2}.
\end{defn}

Let's give an example of affine chart of rank $\vec{w}$ for a quiver algebra $\A$.

\begin{example} 
\label{eg: D4}
Let $Q$ be the quiver of the  doubling of affine $D_4$ as shown in the Figure \ref{fig:affD_4} (a). We consider the path algebra $\cA_{0}$ of $Q$ restricted to the complex moment map level zero, in other words, the elements in $\cA_{0}$ satisfy $\sum_{i=1}^{4} a_i b^i=0$ and $b^ia_i=0$ for $i=1,\cdots, 4$. In the following, we will write down an affine local chart for $\cA_{0}$ explicitly, whose dimension vector $\vec{w}$ is $(2, 1,1,1,1)$ where the order of components agree with subscripts of vertices.

\begin{center}
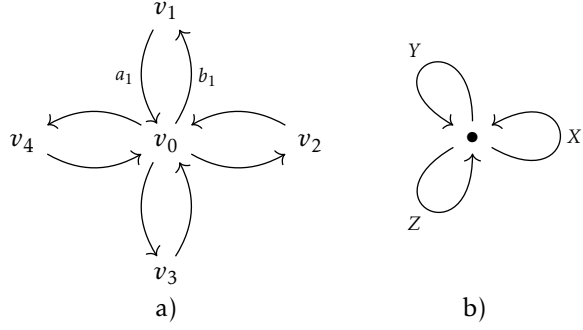

	\begin{tabular}{cc}
		\begin{tikzcd}[row sep=huge,column sep=large]
			& v_{1} \arrow[d, "a_{1}"', bend right] &                    \\
	v_{4} \arrow[r,  bend right] &  v_{0} \arrow[u, "b_{1}"', bend right] \arrow[r, bend right] \arrow[d, bend right] \arrow[l, bend right] & v_{2} \arrow[l, bend right] \\& v_{3} \arrow[u, bend right]&                                      
	\end{tikzcd} &
	\begin{tikzcd}[row sep=huge]
		\bullet \arrow["X"', loop, distance=4em, in=30, out=330] \arrow["Y"', loop, distance=4em, in=150, out=90] \arrow["Z"', loop, distance=4em, in=270, out=210]
		\end{tikzcd} \\
	 a) & b)\\
	 \end{tabular}
	\captionof{figure}{Double quiver of affine $D_{4}$ and the underlying quiver of $\cA_{2}$}
	\label{fig:affD_4}
	\end{center}

	We localize $\cA_{0}$ at the matrix of arrows $\begin{pmatrix}
		a_1 & a_2 
	\end{pmatrix}$, that is, we insert the reverse arrows $\alpha^1$ and $\alpha^2$ so that \begin{equation} \label{eq:inv1}
		\begin{pmatrix}
			a_1 & a_2
		\end{pmatrix} \cdot \begin{pmatrix}
			\alpha^1 \\
			\alpha^2
		\end{pmatrix} =e_0,
	\end{equation}
	\begin{equation} \label{eq:inv2}
		\begin{pmatrix}
			\alpha^1 \\
			\alpha^2
		\end{pmatrix} \cdot \begin{pmatrix}
			a_1 & a_2
		\end{pmatrix}= \begin{pmatrix}
			e_1 & 0\\
			0 & e_2\\
		\end{pmatrix}.
	\end{equation}
	We then localize it at $diag(b^2a_1,b^3a_1,b^4a_1$), the above construction is equivalent to localize $\cA_{0}$ at a block matrix $S$ and we will denote it by $S^{-1}\cA_0$. 
	
	Let $\cA_2$ be the quiver algebra with relations $\C \langle X,Y,Z \rangle/([X,Y], [X,Z],[Y,Z], Z(X+1)-XY)$ whose  underlying quiver is depicted in  Figure \ref{fig:affD_4} (b). Thus, $\cA_2 \cong \C[X,Y,Z]/(XY-Z(X+1)).$ We define the following representations: 
	$$G_{02}: \cA_2 \to S^{-1}\cA_{0}, \quad  G_{02}:=
	\begin{cases}
		X \mapsto (b^2a_1)^{-1}(\alpha^2a_3)(b^3a_1) \\
		Y \mapsto (b^3a_1)^{-1}(b^3a_2)(b^2a_1) \\
		Z \mapsto (b^4a_1)^{-1}(b^4a_2)(b^2a_1) 
	\end{cases}.$$
    This is a well-defined representation. In other words, it represents the relation by zero. Let's show $G_{02}(XY)=G_{02}(YX)$:
    \begin{align*}
    	G_{02}(X)G_{02}(Y)&= (b^2a_1)^{-1}(\alpha^2a_3)(b^3a_2)(b^2a_1)\\
    	&=- (b^2a_1)^{-1}\alpha^2(a_1b^1+ a_2b^2+ a_4b^4)a_2(b^2a_1)\\
    	&=- (b^2a_1)^{-1}(\alpha^2a_4b^4a_2b^2a_1) \\
    	&=  (b^2a_1)^{-1}(\alpha^2a_4b^4(a_1b^1+ a_3b^3+ a_4b^4)a_1)= (b^2a_1)^{-1}(\alpha^2a_4b^4a_3b^3a_1)\\
    	&=-(b^2a_1)^{-1}(\alpha^2 a_2b^2a_3b^3a_1)\\ &= -(b^2a_1)^{-1}(b^2a_3)(b^3a_1).
    \end{align*}
    In the second line, we use $\sum_{i=1}^{4} a_ib^i=0$. In the third line, we use $b^2a_2=0$ and the Equation \eqref{eq:inv2}: $\alpha^2 a_1=0$. Similarly, we get the fourth and fifth line. Finally, we apply  $\alpha^2 a_2 = e_2$ to obtain the last line.
    \begin{align*}
    	G_{02}(Y)G_{02}(X)&= (b^3a_1)^{-1}(b^3a_2)(\alpha^2a_3)(b^3a_1) \\
    	&= (b^3a_1)^{-1}b^3(e_5-a_1 \alpha^1)a_3(b^3a_1) \\
    	&= - (b^3a_1)^{-1}(b^3a_1\alpha^1 a_3)(b^3a_1)= -(\alpha^1 a_3)(b^3a_1).
    \end{align*}
    In the second line, we use the Equation \eqref{eq:inv1}: $a_1 \alpha^1+ a_2 \alpha^2 = e_5$. The last line comes from $b^3 a_3=0$. Notice that using the same relation \begin{align*}
    	&-(\alpha^1 a_3)(b^3a_1)=- (b^2a_1)^{-1}(b^2a_1)(\alpha^1 a_3)(b^3a_1) \\
    	=& -(b^2a_1)^{-1}b^2(e_5- a_2 \alpha^2)a_3b^3a_1 = -(b^2a_1)^{-1}b^2a_3b^3a_1.
    \end{align*} This shows $G_{02}(X)G_{02}(Y)= G_{02}(Y)G_{02}(X)$. One can check $G_{02}$ respects other relations by similar calculations. Hence, $G_{02}$ is a well-defined representation.

    Conversely, we define $G_{20}: S^{-1}\cA_{0} \to Mat(\cA_2)$ by first assigning $G_{20}(a_1):=\begin{psmallmatrix}
    	1 \\
    	0
    \end{psmallmatrix}$ and $G_{20}(a_2):= \begin{psmallmatrix}
    	0 \\
    	1
    \end{psmallmatrix}$. Using the complex moment map equation, we can solve the other terms:  $$G_{20}: S^{-1}\cA_{0} \to Mat(\cA_2), \quad G_{20}:=
    \begin{cases}
    	a_1 \mapsto \begin{psmallmatrix}
    		1 \\
    		0
    	\end{psmallmatrix} \\
    	a_2 \mapsto \begin{psmallmatrix}
    		0 \\
    		1
    	\end{psmallmatrix} \\
    	a_3 \mapsto \begin{psmallmatrix}
    		-YX \\
    		X
    	\end{psmallmatrix} \\
    	a_4 \mapsto \begin{psmallmatrix}
    		-YX \\
    		-X-1
    	\end{psmallmatrix} \\
    	b^1 \mapsto \begin{psmallmatrix}
    		0 & YXY-ZXY
    	\end{psmallmatrix} \\
    	b^2 \mapsto \begin{psmallmatrix}
    		1 & 0
    	\end{psmallmatrix} \\
    	b^3 \mapsto \begin{psmallmatrix}
    		1 & Y
    	\end{psmallmatrix} \\
    	b^4 \mapsto \begin{psmallmatrix}
    		1 & Z
    	\end{psmallmatrix}
    \end{cases}.$$
    
    Additionally, we have the following unavoidable gerbe terms: $c(v_1)=e_{1}, c(v_2)=(b^2a_1)^{-1}, c(v_3)= (b^3a_1)^{-1}, c(v_4)= (b^4a_1)^{-1}, c(v_0)=\begin{psmallmatrix}
    	\alpha^1 \\
    	(b^2a_1)^{-1} \alpha^2
    \end{psmallmatrix}.$ 
    
    One can check that the representations $G_{20}$ and $G_{02}$ are mutually inverse up to gerbe terms. We will take $a_3$ as an example. The other terms can be verified similarly. 
    $$G_{02} \circ G_{20}(a_3)=G_{02}(\begin{psmallmatrix}
    	-YX \\
    	X
    \end{psmallmatrix})= \begin{psmallmatrix}
    	-(b^3a_1)^{-1}(b^3a_2\alpha_2 a_3b^3a_1) \\ (b^2a_1)^{-1}(\alpha_2a_3b^3a_1)
    \end{psmallmatrix}= \begin{psmallmatrix}
    \alpha_1a_3b^3a_1 \\ (b^2a_1)^{-1}(\alpha_2a_3b^3a_1)
\end{psmallmatrix}, $$ where we use $\alpha_2a_2=e_5- \alpha_1 a_1$ and $b^3a_3=0$.
    Meanwhile, $$c(v_0) \cdot a_3 \cdot (c(v_3))^{-1}= \begin{psmallmatrix}
    	 \alpha_1 \\
    	  (b^2 a_1)^{-1} \alpha_2
    \end{psmallmatrix} \cdot a_3 \cdot b^3a_1. $$
    
    

Therefore, $\cA_2$ is a commutative affine chart of rank $\vec{w}$. In fact, $\cA_2$ is the affine 2-space $\A^2_\C$, since $\cA_2= \C[X,Y,Z]/(XY-(X+1)Z) \cong \C[X,T],$ where $T:=Y-Z.$ 
	
	In the Proposition \ref{Prop: D4}, we will explain how to construct the affine chart in rank $\vec{w}$ of $\cA_0$ using stability condition. 
	
\end{example}

\subsection{$\zeta_\R$-stable Lagrangian} \label{sec:stable}
In this subsection, we first recall the stability condition of quiver representations, and then define $\zeta_\R$-stable Lagrangians. This definition is motivated by the relations between (formal) deformation spaces of a Lagrangian immersion and quiver representations.
 
Let's fix notations for quiver representations. Recall that we denote a quiver by $Q=(I,E)$ where $I$ (resp. $E$) denotes the set of vertices (resp. arrows). Let $V=\sum_{i\in I} V_i$ be an $I$-graded vector space, we define its dimension vector by $\dim V:= (\dim V_i)_{i \in I} \in \Z^I_{\geq 0}$. 
Fix a dimension vector $\dim V$ and a weight $\zeta_{\R}$  with respect to $\dim V$, quiver representations possess explicit stability conditions (Definition \ref{def:quiverstability}), see for instance \cite{Kin94} and \cite{Nak07}.

 We start this by recalling the stability condition of quiver representations.

 If $V^1, V^2$ are $I$-graded vector spaces, we introduce vector spaces $$L(V^1, V^2):=\oplus_{i \in I}\Hom(V^1_i, V^2_i)$$ which is the space of linear maps between the vector spaces over the same vertices. We also introduce $$E(V^1, V^2):=\oplus_{a \in E} \Hom(V^1_{t(a)},V^2_{h(a)}).$$

Let $V, W$ be $I$-graded vector spaces. We define $$M(V,W):=E(V,V) \oplus L(V,W) \oplus L(W,V),$$ 
which is the representation space of the framed quiver $Q^{\textrm{fr}}$.

We now define the stability conditions. For $\zeta_\R= (\zeta_{\R,i})_{i \in I} \in R^I,$ let $\zeta_\R \cdot \dim V:= \sum_i \zeta_{\R,i} \dim V_i.$
\begin{defn}
	\label{def:quiverstability}
	A point $(B,a,b) \in M$ is $\zeta_\R$-semistable if the following two conditions are satisfied:
	\begin{enumerate}
		\item If an $I$-graded subspace $S$ of $V$ is contained in $Ker\, b$ and $B$-invariant, then $\zeta_\R \cdot \dim S \leq 0.$
		\item If an $I$-graded subspace $T$ of $V$ contains in $Im\, a$  is $B$-invariant, then $\zeta_\R \cdot \dim T \leq \zeta_\R \cdot \dim V.$
	\end{enumerate}
	We say $(B,a,b)$ is $\zeta_\R$-stable if the strict inequalities hold in $1,2$ unless $S=0$, $T=V$ respectively.
\end{defn}

When $W=0$ i.e. $B\in E(V,V),$ the stability conditions are slightly different.

\begin{defn}
	Suppose that $\zeta_\R \cdot \dim V=0.$ A point $B \in E(V,V)$ is $\zeta_\R$-semistable if it satisfies the following condition: if an $I$-graded subspace $S$ of V  is $B-$invariant, then $\zeta_\R \cdot \dim S \leq 0.$ 
	
	A point $B$ is $\zeta_\R$-stable if the strict equality holds unless $S=0$ or $S=V.$
\end{defn}

This definition coincides with the case $W\neq 0$ for $\zeta_\R$-semistability, but not for $\zeta_\R$-stability.

Let $(L^\fr,\tilde{\cE_1})$ and $(L^\fr,\tilde{\cE_2})$ be two families of Lagrangian branes over a $\C$-algebra $\cA$. Denote the compact components of $L^\fr$ by $\cL_j$ and framing components by $F_j$, for $j=1, \cdots, n$. We also denote $$L:=\cup \cL_j \quad F:=\cup F_j \quad \cE_i:= \tilde{\cE_i}|_L \quad w_i:= \tilde{\cE_i}|_F,$$ for $i=1,2$. The Floer complex $CF^\bullet((L^\fr,\tilde{\cE_1}),(L^\fr,\tilde{\cE_2}))$ has the following interesting components. In particular, these provide a nice decomposition of the Floer complex of the framed Lagrangian branes obtained by  the plumbing.

First, we introduce the following notion to represent certain degree $i$-th Floer complex of each corresponding compact component:

$$L^i((L,\cE_1),(L,\cE_2)):= \bigoplus_j CF^i((\cL_{j},\cE_{1}), (\cL_{j},\cE_{2}))=\bigoplus_j \bigoplus_{p \in \mathcal{X}^i(\cL_{j},\cL_{j})} \cA \otimes \Hom_\C(\cE_1|_p, \cE_2|_p),$$where $ \mathcal{X}^i(\cL_j,\cL_j)$ denotes the degree $i$ generators of the Floer complex. Since we use the Morse model for clean intersections, the generators are critical points of the Morse functions.



Besides, the framing Lagrangian $F$ intersects transversally with $L$. These transversally intersection points are also of interest. Similarly, we denote 
$$L^i((L,\cE_1),(F,w_2))= \bigoplus_{j \in I} CF^i((\cL_{j},\cE_{1}), (F_{j},w_{2}))$$
$$L^i((F,w_1),(L,\cE_2))= \bigoplus_{j \in I} CF^i( (F_{j},w_{1}),(\cL_{j},\cE_{2})).$$

Degree $1$ intersection points are special, which are used for localized mirror constructions. We introduce
$$E((L^\fr,\tilde{\cE}_1),(L^\fr,\tilde{\cE}_2)):= \bigoplus_{p \in E}  \cA \otimes \Hom_\C(\tilde{\cE}_1|_p, \tilde{\cE}_2|_p),$$ where $E$ is the set of arrows in the associated quiver $Q$.

In particular, $E((L^\fr,\tilde{\cE}),(L^\fr,\tilde{\cE}))$ is the representation space of the associated quiver $Q$, which is of great interest since it encodes the Maurer-Cartan space of $(L^\fr,\tilde{\cE})$. In the plumbing construction, we show that $CF^1((L^\fr,\tilde{\cE}_1),(L^\fr,\tilde{\cE}_2))=E((L^\fr,\tilde{\cE}),(L^\fr,\tilde{\cE}))= E((L,\cE_1),(L,\cE_2)) \oplus L^1((L,\cE_1),(F,w_2)) \oplus L^1((F,w_1),(L,\cE_2))$ in Section \ref{sec:preproj}.

Now we can define stability conditions. Given an unframed Lagrangian immersion $\cL$ with a rank $\Vec{v}$ trivial vector bundle $\cE$. Denote its associated quiver by $Q$. For $\zeta_\R=(\zeta_{\R,i})_{i \in I} \in \R^{|I|},$ let $\zeta_\R \cdot rank \, \cE:= \sum_{i \in I} \zeta_{\R,i} v_i.$

\begin{defn}
	Given a Lagrangian brane $(\cL,\cE)$ of rank $\Vec{v}$. Suppose that $\zeta_\R \cdot \Vec{v}=0.$ A deformation $B \in MC((\cL,\cE)) \subset E((\cL,\cE),(\cL,\cE))$ is $\zeta_\R-$semistable if the following is satisfied:
	\begin{itemize}
		\item If an $I$-graded trivial subbundle $S$ of $\cE$ is $B$-invariant, then $\zeta_\R \cdot rank \, S \leq 0.$ 
	\end{itemize}
	A deformation $B$ is $\zeta_\R$-stable if the strict inequality holds unless $S=0$ or $S=\cE.$
\end{defn}

The stability condition for the framed Lagrangian immersions is slightly different. Let $(L^\fr,\tilde{\cE})$ be a framed Lagrangian brane of rank $(\vec{v},\vec{w})$.

\begin{defn}
	Given $\zeta_\R= (\zeta_{\R,i})_{i \in I} \in \R^I$, where $I$ is the index of compact components. A deformation $(B,a,b) \in MC((L^\fr,\tilde{\cE})) \subset E((L^\fr,\tilde{\cE}),(L^\fr,\tilde{\cE}))$ is $\zeta_\R-$semistable if the following two conditions are satisfied:
	\begin{enumerate}
		\item If an $I$-graded trivial subbundle $S$ of $\cE$ is contained in $Ker(b)$ and $B$-invariant, then $\zeta_\R \cdot rank \, S \leq 0.$
		\item If an $I$-graded trivial subbundle $T$ of $\cE$  contains $Im(a)$ and is $B$-invariant, then $\zeta_\R \cdot rank \, T \leq \zeta_\R \cdot rank \, \cE.$
	\end{enumerate}
	We say $(B,a,b)$ is $\zeta_\R$-stable deformation if the strict inequalities hold in $(1), (2)$ unless $S=0$, $T=V$ respectively.
\end{defn}

\begin{defn}
	In both cases, a Lagrangian $\cL$ with a $\zeta_\R$-stable deformation is called $\zeta_\R$-stable Lagrangian.
\end{defn}

\begin{rem}
	\begin{enumerate}
		\item  In most of the cases we are interested in, we take $\cA$ to be the complex number $\C$, Novikov ring $\Lambda_0$ or Novikov field $\Lambda$.
		\item We will omit the trivial vector bundles if the trivial bundles are clear in the context.
	\end{enumerate}
\end{rem}

When performing localized mirror construction, the following stability condition is of great interest, which can be achieved by taking certain character $\zeta_{\R}$.

By definition of noncommutative family, if the quiver of $\cA$  has a single vertex, an $\A$-family over $\cA$  rank $\vec{w}$ can be identified with an $(\A, \cA)$-bimodule $\cA^{\oplus (\sum w_i)}$, where the module structure is given by the representation $G: \A \to Mat(\cA)$. If there is a vertex whose rank is one (WLOG, we assume it's $v_{1}$), then $e_{1}(\cA^{\oplus (\sum w_i)})\cong \cA$. Under the above assumptions, we now introduce a special class of representations called stable families, which can be seen as a noncommutative analogue of the stable locus. 

\begin{defn}[Stable family] \label{def:h-rk}
	Let $\mathbb{A}$ be a family over $\cA$ of rank $\Vec{w}$ where the vertex $v_1$ of $\A$ is of rank one and the quiver of $\cA$ has a single vertex. We say a family $\A$ over $\cA$ of rank $\vec{w}$ is stable if there's no proper $\A$-submodule $M$ of $\cA^{\oplus (\sum w_i)}$ such that $e_{1}M = \cA$. 
\end{defn}

\begin{example}[Example of a stable family]
	Let's show the family $S^{-1}\cA_{0}$ over $\cA_{2}$ of rank $\vec{w}$ constructed in example \ref{eg: D4} is indeed stable. 
	
	Let $M$ be a $S^{-1}\cA_{0}$-submodule of $\cA_2^{\oplus 6}$, which satisfies $e_1 M = \cA_2$. Because $b^2a_1$ is invertible in $S^{-1}\cA_{0}$, $e_2 M \cong \cA_2$. Similarly, vertices $v_3$ and $v_4$ also admit an invertible path from the vertex $v_1$, which implies $e_3 M \cong e_4 M \cong \cA_2$. Furthermore, since $M$ is a $S^{-1}\cA_0$-module, it's closed under $a_1$ and $a_2$ action. Thus, $e_0M \cong \cA_2^{\oplus 2}$. Hence, $M \cong \cA_2^{\oplus 6},$ which is stable.
	
\end{example}

\begin{defn}
	Let $(\bL,\cE)$ be a Lagrangian brane of rank $\vec{v}$ where   $\cE|_{\bL_{v_0}}$ has rank 1. A nc family of Lagrangian brane $(\bL,\cE)$ over $\cA$ is called stable if the deformation space $M$ is a stable family over $\cA$. 
\end{defn}

Given a dimension vector $\vec{v}$, we can define a canonical nc family of $\A$-representations of rank $\vec{v}$ as follows.  Let $k$ be a unital algebra. First, we define $\cA'$ to be the free $k$-algebra generated by the variables corresponding to the matrix entries of $\Hom_k(k^{ v_{t(a)}}, k^{ v_{h(a)}})$ for all arrows $a$.  The number of variables is $\sum_{a \in E} v_{t(a)}v_{h(a)}$. $\cA'$ can be understood as a quiver algebra with a single vertex.  If we take quotient of $\cA'$ by commuting relations, we would obtain the coordinate ring of the representation space $\mathrm{Rep}(Q,\vec{v})$. 

For a quiver algebra $\A:= kQ/I= k Q/ \langle r_1, \cdots, r_s \rangle$, 
we define the associative $k$-algebra $\cA:=\cA'/R$, where $R$ is the ideal generated by the induced relations on the matrix entries according to $r_1,\ldots,r_s$.
$\cA$ will be called the noncommutative parameter space corresponding to the subvariety $M(Q,\vec{v}) \subset \mathrm{Rep}(Q,\vec{v})$ of 
$\A$-representations.

Given a stable family of $\A$-representations over $\cA$ in rank $\vec{v}$, we need to suitably localize $\cA$ according to the stability condition (Definition \ref{def:h-rk}) to get stable subfamilies.  We will take the collection of all localizations $\cA_{\mathrm{loc}}$ of $\cA$ (in the sense of Definition \ref{def: loc2}) such that the sub-families of $\A$-representations over $\cA_{\mathrm{loc}}$ are stable.  A sub-collection of them will form stable affine charts by setting the invertible matrices to be identity.

We will carry out this procedure for the affine $D_4$ quiver in Section \ref{sec:D4-stable}.

\section{ADHM quiver as a mirror to the framed immersed two-sphere} \label{sec:ADHM}

In this section, we aim to interpret the ADHM construction as a mirror symmetry phenomenon. Specifically, we will accomplish this by constructing the ADHM quiver representations and corresponding framed sheaves using the framed mirror construction, which is applied to the framed nodal sphere $\bL$ in $\mathbb{C}^2 \setminus \{ab=\epsilon\}.$

Our study of this local model fits well in the Strominger-Yau-Zaslow program \cite{SYZ96}. More broadly, the SYZ program proposes that mirror manifolds can be constructed as dual special Lagrangian torus fibrations.  
A crucial ingredient in the SYZ program is wall-crossing of open Gromov-Witten invariants bounded by Lagrangian torus fibers.  Wall-crossing was extensively studied, see for instance \cite{KS01, GS11, GHK, GHKK, GS21} from the algebraic aspects and \cite{Aur07, CLL12,AAK16} from the symplectic aspects. Additionally, Family Floer theory \cite{Fuk02,Tu,Abouzaid17} applied to smooth SYZ fibers established a more direct connection between wall-crossing and the Fukaya category of Lagrangian submanifolds.

The nodal sphere $\bL$ is a crucial object in the study of mirror symmetry since it provides an important class of singular SYZ fibers. Singular fibers of a SYZ fibration are the sources of walls of holomorphic discs of Maslov-index zero.  Among them, $\bL \times T^{n-2}$ for the nodal sphere $\bL$ (where $n$ is the dimension and $T$ denotes a torus) is the most generic one.

Now let's introduce the notations. As described in Example \ref{exmp:conic}, the zero section gives rise to an immersed sphere $\mathbb{L}$ with two immersed generators $X$ and $Y$ following the Floer theory for immersed Lagrangians developed by \cite{AJ10};  we can take a cotangent fiber of a point away from the north and south poles to be the framing $F$. 

An alternative description is to consider the $S^1$-reduced space of $\mathbb{C}^2 \setminus \{ab=\epsilon\}$ which is a conic fibration over $\mathbb{C}$; $\mathbb{L}$ is the fiber of  $(\mu,\log |ab-\epsilon|): M \to \R^2$ at $(0, \log |\epsilon|)$, and the framing $F$ is given as a section of this fibration. Throughout this section we will primarily use this perspective, only resorting to the plumbing construction for topological considerations.  

We fix a reference framed Lagrangian immersion $\bL\cup (F,-f_F)$ and denote it by $\mathbb{L}^{\mathrm{fr}}$. It's evident that the framing $F$  intersects $\mathbb{L}$ only once.  Any degree-one generators in the reference framed Lagrangian along with their corresponding arrows in the quiver will be labeled with a subscript $0$. See Figure \ref{fig:symplecticreduction}.

\begin{figure}[htb!]
	\includegraphics[scale=0.7]{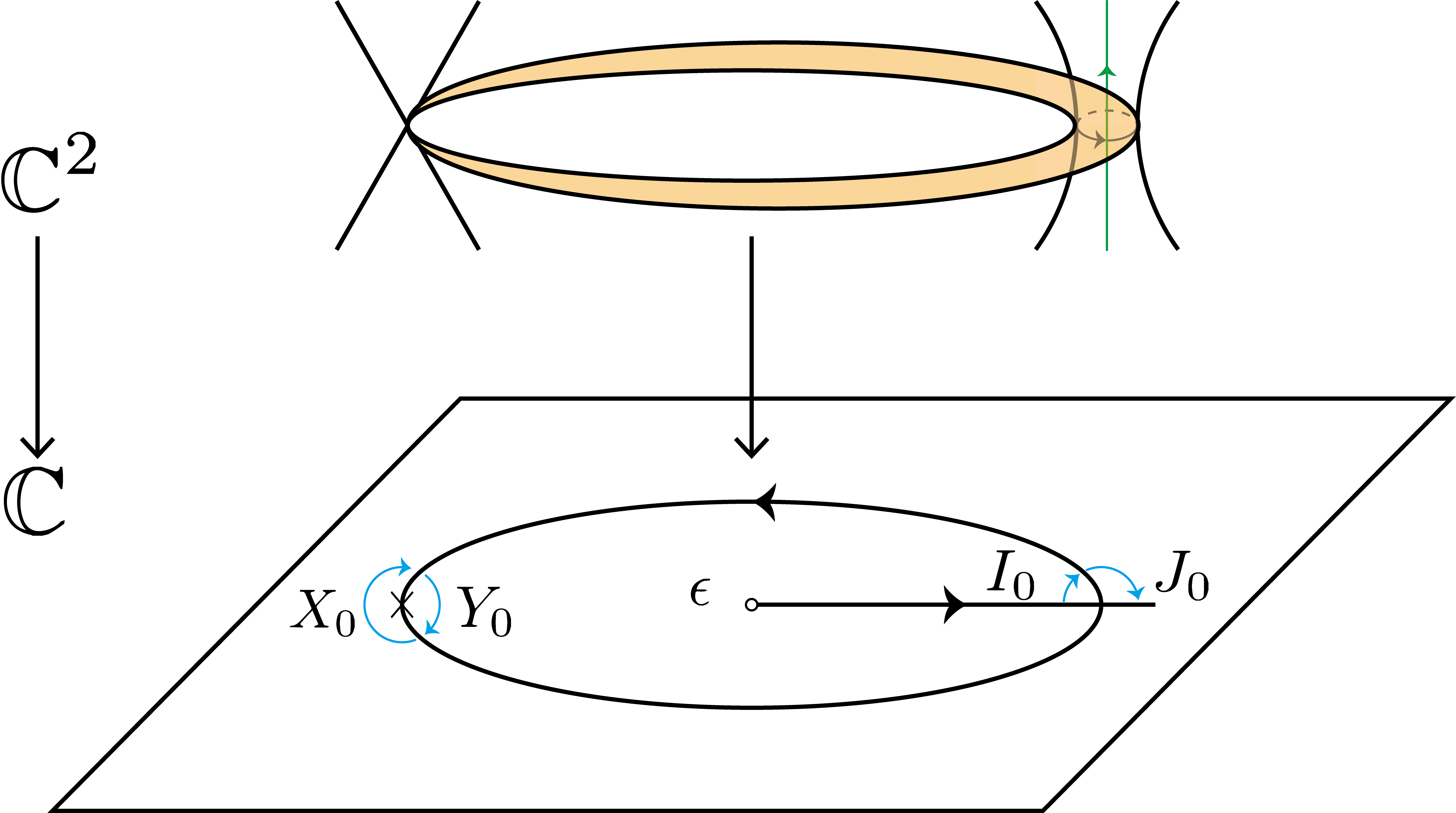}
	\caption[\small Lagrangian torus with orientations]{Lagrangian torus with orientations.}
	\label{fig:symplecticreduction}
\end{figure}

 The associated quiver $Q^{\mathrm{ADHM}}$(resp. $Q$) of $\mathbb{L}^{\mathrm{fr}}$ (resp. $\mathbb{L}$) is shown in Figure \ref{fig:ADHMquiver}.

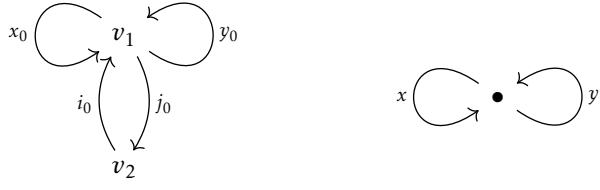
\begin{figure}[hpt!]
	\begin{tikzcd}[row sep=huge]
		v_{1} \arrow[d, "j_{0}", bend left,] \arrow["y_{0}"', loop, distance=4em, in=35, out=325] \arrow["x_{0}"', loop, distance=4em, in=215, out=145] \\ 
		v_{2} \arrow[u, "i_{0}", bend left]  
		\end{tikzcd}
		\hspace{50pt} 
		\begin{tikzcd}
			\bullet \arrow["y"', loop, distance=4em, in=35, out=325] \arrow["x"', loop, distance=4em, in=215, out=145]
			\end{tikzcd}
		\caption[ADHMquiver]{\small $Q^{\mathrm{ADHM}}$ and $Q$ are the quivers related to $\mathbb{L}^{\mathrm{fr}}$ and  $\mathbb{L}$, where $x,y$ correspond to the immersed sectors $X$ and $Y$. } 
		\label{fig:ADHMquiver}
\end{figure}

	The construction of the framed Lagrangian immersion $\bL^{\fr}$ can be intuitively justified as follows.  First, by Kontsevich's Homological mirror symmetry conjecture \cite{Kon95}, coherent sheaves are mirror to Lagrangian submanifolds.   In this case, let's consider rank-one framed torsion-free sheaves over $\C\bP^2$, which correspond to 
ideal sheaves of points on $\C^2$.



On the other hand, by the conjecture of Strominger-Yau-Zaslow  \cite{SYZ96}, the mirror space admits a dual torus fibration. A Lagrangian torus should correspond to a skyscraper sheaf, and a Lagrangian section should correspond to a line bundle over the mirror.  
Combining the pictures, the connected sum of a Lagrangian torus and a Lagrangian section should be mirror to an ideal sheaf of points up to degree. 
Hence, stable deformations of $\bL^{\fr}$ ought to correspond to framed torsion-free sheaves.

More generally, we observe that the quiver associated with $\bL^{\fr}$ coincides precisely with the ADHM quiver.

\subsection{Completion of path algebras}
Because of the existence of constant polygons, in general we have series like $1+a_1xy +a_2(xy)^2+\cdots$, in order to make sense of this, we shall consider the completion of the path algebra.  Recall the valuation of the Novikov ring is defined by: $$\text{val:} \sum\limits_{i=1}^{\infty}a_{i}T^{A_{i}} \mapsto A_{1} \text{ and }\text{val}(0)=+\infty.$$ The valuation induces a non-Archimedean norm on $\Lambda_{0}$ given by $$|c|_{\mathfrak{v}}:= e^{-\text{val}(c)}.$$  

Let $\Lambda_0 Q$  be the free path algebra generated by arrows $x_a$.  Let's fix a function $\mathfrak{v}$ from the arrow set to $[0,1]$, $x_a\mapsto |x_a|_{\mathfrak{v}}\in [0,1]$.  The Gauss norm on $\Lambda_0 Q$ is defined as $$\left\lVert\sum\limits_{I}a_{I}\zeta^I\right\rVert_\mathfrak{v}=\max_{I}\{|a_{I}\zeta^I|_{\mathfrak{v}}\}$$ where $\zeta^I$ is an ordered monomial of $x_a$ and $|\zeta^I|$ is the corresponding product of $|x_a|_{\mathfrak{v}}$. 
One can associate a corresponding valuation on $\Lambda_0 Q$ by setting $\text{val}(z)=-\log \lVert z \rVert$. 

The valuation induces a filtration $$F^{p}\Lambda_0 Q=\{z \in \Lambda_0 Q: |z|_{\mathfrak{v}} \leq e^{-p}\}.$$
In the following, we will always take the completion of $\Lambda_0 Q$ with respect to this filtration and denote it by $\widehat{\Lambda_0 Q}_\mathfrak{v}.$ The above construction makes $\widehat{\Lambda_0 Q}_\mathfrak{v}$ a complete topological space with respect to the Gauss norm. 

When we take the tensor product of two complete path algebras, it's always assumed to be the complete tensor product. 

\subsection{Unobstructedness and deformation space}\label{sec:unobs}
We will utilize linear combinations of immersed generators $X,Y$ to deform the immersed sphere $\bL$ (respectively, $X_{0}$, $Y_{0}$, $I_{0}$, $J_{0}$ for $\mathbb{L}^{\mathrm{fr}}$).

Let $\widehat{\Lambda_{0}Q}_{(a,b)}$ be the completed path algebra of $\Lambda_{0} Q$, where $|x|_{\mathfrak{v}} =a, |y|_{\mathfrak{v}}=b$ for $a,b \in [0,1]$. We take $$\A_{\textrm{free}} := \bigcap_{0\leq a,b \leq 1, 0\leq ab<1}\widehat{\Lambda_{0}Q}_{(a,b)}$$
which contains the series that converge with respect to all the norms satisfying $|x|_{\mathfrak{v}},|y|_{\mathfrak{v}}\leq 1$ and $|xy|_{\mathfrak{v}}<1$.

We rephrase Lemma 3.3 of \cite{HKL23} as the following proposition. 
We find a coordinate transformation that renders $m_0^{\bb}$ into the commutator. 

\begin{prop}\label{lem:HKL20}
	Let $\bb= xX+ yY$.  The obstruction term of $\bL$ equals
	$$m_0^{\bb}=\left(\left(1+ \sum_{i=1}^{\infty}  a_i(xy)^i\right)xy-\left(1+ \sum_{1}^{\infty}  a_i(yx)^i\right) y x\right)P$$
	for some $a_i \in \Q$, where $P$ is the minimum point of the Morse function. There exists a change of coordinates on $(x,y)$ such that the unobstructed deformation space of $\bL$ can be written as
	\begin{equation}
		\A:= \frac{\A_{\text{free}}}{(xy-yx)} \cong \Lambda_0 [x,y][[xy]].
	\end{equation} 
\end{prop}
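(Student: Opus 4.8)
The plan is to separate the statement into its genuinely Floer-theoretic input and its purely algebraic consequences. The formula for $m_0^{\bb}$ is the geometric content, and I would take it directly from Lemma 3.3 of \cite{HKL23}: the immersed generators $X,Y$ sit in degree one, so $m_0^{\bb}=\sum_k m_k(\bb,\ldots,\bb)$ lands in degree two, which for $\bL$ is spanned by the minimum $P$ of the Morse function. The leading contributions are the two basic holomorphic triangles at the node, producing $xy$ and $-yx$; the higher terms $a_i(xy)^i\,xy$ and $-a_i(yx)^i\,yx$ come from polygons with additional corners (together with constant sphere components in the pearl model). The equality of the two families of coefficients $a_i$, and hence the commutator-type shape, reflects the symmetry of the local conic model exchanging the two branches of $\bL$, while $a_i\in\Q$ records that the relevant disc counts are rational. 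I would cite this computation rather than redo it.

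Granting the formula, the substantive step is the change of coordinates. Writing $g(t)=1+\sum_{i\ge 1}a_i t^i$, the obstruction is $m_0^{\bb}=\big(g(xy)\,xy-g(yx)\,yx\big)P$. The key algebraic identity is $(xy)^i x = x (yx)^i$, which gives $g(xy)\,x = x\,g(yx)$. Setting $x'=x$ and $y'=y\,g(xy)$, I would compute
\begin{align*}
x'y' &= xy\,g(xy)=g(xy)\,xy, \\
y'x' &= y\,g(xy)\,x = y\,x\,g(yx)=g(yx)\,yx,
\end{align*}
so that $x'y'-y'x'=m_0^{\bb}/P$; hence in the new coordinates the obstruction is exactly the commutator. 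I would then verify that $y\mapsto y\,g(xy)$, $x\mapsto x$ is an automorphism of $\A_{\text{free}}$: since $g(0)=1$ and $\mathrm{val}(xy)>0$, the series $g(xy)$ converges in every $\widehat{\Lambda_0 Q}_{(a,b)}$ with $ab<1$, so the map is a continuous endomorphism, and a formal inverse $y\mapsto y\,h(xy)$ exists by solving $h(t)\,g(t\,h(t))=1$ recursively with $h(0)=1$.

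It then follows that the ideal generated by the coefficients of $m_0^{\bb}$ becomes, after this coordinate change, the two-sided ideal $(xy-yx)$, so the unobstructed deformation space is $\A=\A_{\text{free}}/(xy-yx)$. Finally I would identify this quotient with $\Lambda_0[x,y][[xy]]$: imposing $xy=yx$ makes the algebra commutative, so every monomial $x^m y^n$ equals $(xy)^{\min(m,n)}$ times a pure power of $x$ or of $y$. The defining norm conditions $|x|_{\mathfrak v},|y|_{\mathfrak v}\le 1$ with $|xy|_{\mathfrak v}<1$ translate precisely into allowing arbitrary power series in $xy$ but only polynomial dependence on $x$ and on $y$ separately, which is the description of $\Lambda_0[x,y][[xy]]$.

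The main obstacle, if one does not simply quote \cite{HKL23}, is the geometric computation establishing the precise form of $m_0^{\bb}$ — in particular the matching of the $xy$- and $yx$-series coefficients and their rationality. Granting that input, the remaining difficulty is the bookkeeping needed to check that the coordinate change respects the intersection-of-completions structure defining $\A_{\text{free}}$ and that the final identification with $\Lambda_0[x,y][[xy]]$ is compatible with the Gauss-norm topology; both become routine once the convergence of $g(xy)$ on the region $|xy|_{\mathfrak v}<1$ is observed.
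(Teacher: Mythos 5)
Your proposal is correct and follows essentially the same route as the paper: quote Lemma 3.3 of \cite{HKL23} for the shape of $m_0^{\bb}$ (with the matching of coefficients coming from the involution exchanging the two branches), then use the identity moving the series $g$ past a generator to absorb $g$ into one coordinate, turning the obstruction into the commutator, and finally identify the quotient with $\Lambda_0[x,y][[xy]]$ via the norm conditions. The only differences are cosmetic — the paper rescales $x \mapsto g(xy)\,x$ while you rescale $y \mapsto y\,g(xy)$ — and your recursive construction of the inverse substitution is in fact slightly more careful than the paper's one-line appeal to invertibility of $g(xy)$.
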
 
\begin{proof}
	The above expression of  $m_{0}^{\bb}$ follows from Lemma 3.3 of \cite{HKL23}.  The coefficients of $xy$ and $yx$ are equal due to the anti-symmetric involution on $\bL$.
	The coefficients $a_{i}$ depend on the choice of Kuranishi structures on the moduli spaces of constant polygons.  This can be rewritten as $$m_0^{\bb}=\left(\left(1+ \sum_{i=1}^{\infty}  a_i(xy)^i\right)x y-y\left(1+ \sum_{1}^{\infty}  a_i(xy)^i\right) x\right)P.$$ We take $(1+ \sum_{i=1}^{\infty} a_i(xy)^i)x$ as a new coordinate, which is still denoted by $x$ by abuse of notation.  $y$ is kept to be the same.  Note that $(1+ \sum_{i=1}^{\infty} a_i(xy)^i)$ is invertible and has norm $1$ in $\A_{\text{free}}$. This is a norm-preserving isomorphism on $\A_{\text{free}}$.
	Then the unobstructed relation becomes $(xy-yx)$, and hence the unobstructed deformation space is $\A:= \frac{\A_{\text{free}}}{(xy-yx)}$.  This is isomorphic to $\Lambda_0 [x,y][[xy]]$ since $|x|_{\mathfrak{v}},|y|_{\mathfrak{v}}\leq 1$ and $|xy|_{\mathfrak{v}}<1$.
	



\end{proof}

\begin{cor}
The immersed $2$-sphere $(\bL, b=xX+yY, \A)$ is unobstructed and its deformation space is commutative. 
\end{cor}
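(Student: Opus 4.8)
The plan is to deduce both assertions formally from Proposition \ref{lem:HKL20}, so that essentially no new computation is needed. First I would recall the definition of the deformation space from Step (5) of the localized mirror construction: $\A$ is the quotient of $\A_{\text{free}}$ by the two-sided ideal generated by the coefficients of the obstruction $m_0^{\bb}$, arranged precisely so that $m_0^{\bb} = W \cdot \one_\bL$ holds over $\A$. By Proposition \ref{lem:HKL20}, after the norm-preserving change of coordinates the obstruction takes the normal form $m_0^{\bb} = (xy - yx)\,P$, whose single coefficient is the commutator $xy - yx$. Hence the defining ideal is exactly $(xy-yx)$, consistent with $\A = \A_{\text{free}}/(xy-yx)$.

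For unobstructedness, I would then observe that over $\A$ the relation $xy = yx$ holds by construction, so that $m_0^{\bb} = (xy-yx)\,P = 0$ in $\A$. Thus the obstruction vanishes identically, not merely up to a multiple of the unit; in particular its $\one_\bL$-component, the disc potential $W$, is zero. This shows that $(\bL, \bb = xX+yY, \A)$ is (weakly) unobstructed with $W = 0$, which is also consistent with the $\Z$-graded remark following the functor theorem of \cite{CHL21}.

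For commutativity, I would invoke the explicit isomorphism $\A \cong \Lambda_0[x,y][[xy]]$ supplied by Proposition \ref{lem:HKL20}. Since $x$ and $y$ commute in this ring, $\A$ is a commutative $\Lambda_0$-algebra, which is exactly the second claim.

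I do not expect a genuine obstacle here, since the corollary is purely a repackaging of the Proposition: all of the analytic content — that the infinitely many Kuranishi-dependent corrections $a_i(xy)^i$ can be absorbed into a single invertible, norm-preserving reparametrization of $x$ so that the obstruction becomes \emph{exactly} the commutator — has already been established there. The only point worth stating carefully is that vanishing of the full obstruction (and not just of its unit-component) is what justifies calling the brane unobstructed rather than merely weakly unobstructed; once the normal form $m_0^{\bb} = (xy-yx)P$ is in hand, this is immediate from $xy - yx = 0$ in $\A$.
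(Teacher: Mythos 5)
Your proposal is correct and matches the paper's treatment: the corollary is stated there as an immediate consequence of Proposition \ref{lem:HKL20}, with unobstructedness following because the defining ideal of $\A$ kills the single coefficient $xy-yx$ of $m_0^{\bb}$ (so $W=0$ in this graded setting), and commutativity following from the explicit identification $\A \cong \Lambda_0[x,y][[xy]]$. Nothing further is needed.
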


\begin{rem}
	The mirror construction for $\mathbb{C}^2 \setminus \{ab=\epsilon\}$ via wall-crossing has been studied by Auroux \cite{Aur07} using Lagrangian torus fibers. The mirror manifold glued from the deformation spaces of tori is $\{(u,v,z) \in (\C^2 - (0,0)) \times \C^\times \mid uv = 1+z\}.$ The missing point $(u,v)=(0,0)$ that needs to be filled in is dual to the singular fiber $\bL$. In \cite{HKL23}, Hong-Kim-Lau computed Maurer-Cartan deformations of $\bL$, accompanied by a localized mirror functor constructed by \cite{CHL17}. Following the formalism developed in \cite{CHL-glue},  \cite{HKL23} also derived the gluing between the deformation space of the immersed sphere and that of the tori using Floer-theoretical isomorphisms. In this subsection, we rephrase the noncommutative local mirror of $\bL$ following \cite{CHL21}, with a focus on the construction of framed torsion-free sheaves. 
\end{rem}


Now we consider the framed immersed sphere $\bL^\fr$.  Below, we show that the unobstructed relation coincides with the ADHM relation up to a coordinate change.

We will take a similar completion $\A_{\text{free}}^\fr$ of $\Lambda_0 Q^\mathrm{ADHM}$ by taking the intersection over all the valuations $\mathfrak{v}$ that have
$|x_0|_{\mathfrak{v}},|y_0|_{\mathfrak{v}}, |i_0|_{\mathfrak{v}}, |j_0|_{\mathfrak{v}} \leq 1$ and $|x_0y_0|_{\mathfrak{v}}, |i_0j_0|_{\mathfrak{v}}< 1$.

\begin{thm}
	 \label{thm:nc-framed}
	 Let $\bb^\fr:= x_{0}X_{0}+y_{0}Y_{0}+i_{0}I_{0}+j_{0}J_{0}$.  There exists a coordinate change on $\A_{\text{free}}^\fr$ such that 
	 the formal deformation space of the framed Lagrangian immersion $\mathbb{L}^{\mathrm{fr}}$ equals
	\begin{equation}
		\A^\fr:=\frac{\A_{\text{free}}^\fr}{(x_0y_0-y_0x_0+i_0j_0)}.
	\end{equation}
\end{thm}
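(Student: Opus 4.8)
The plan is to compute the obstruction $m_0^{\bb^\fr}=\sum_{k\ge 0}m_k(\bb^\fr,\ldots,\bb^\fr)$ and to read off the two-sided ideal generated by the coefficients of its non-unit components. Assuming, as in Example~\ref{exmp:conic}, that $\bL$ and $F$ are graded, the complex $\CF(\bL^\fr,\bL^\fr)$ splits as $\CF(\bL,\bL)\oplus\CF(F,F)\oplus\CF(\bL,F)\oplus\CF(F,\bL)$. Since $F\cong\R^n$ is exact and meets $\bL$ transversally in a single point, the last three summands are of rank one, concentrated in degrees $0$, $1$ and $1$ respectively (with $\CF(F,F)$ generated by $\one_F$). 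As $m_0^{\bb^\fr}$ has degree $2$, its components in these three summands vanish for degree reasons, so the whole obstruction lands in $\CF(\bL,\bL)$ in degree two, i.e. it is a single coefficient times the minimum $P$. Thus the relation ideal is generated by this one coefficient, which automatically lives at the vertex $v_1$.

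Next I would evaluate the coefficient of $P$ by separating the contributing polygons into those whose corners are only the immersed generators $X_0,Y_0$ and those that additionally visit $F$ through corners at $I_0$ and $J_0$. The framing-free polygons reproduce exactly the obstruction of the unframed sphere, which by Proposition~\ref{lem:HKL20} equals $\bigl((1+\sum_i a_i(x_0y_0)^i)x_0y_0-(1+\sum_i a_i(y_0x_0)^i)y_0x_0\bigr)P$. For the polygons meeting $F$, I would use that each excursion $\bL\to F\to\bL$ must enter through $J_0\in\CF(\bL,F)$ and leave through $I_0\in\CF(F,\bL)$ at the unique intersection point; the minimal such configuration is the holomorphic triangle computing $m_2(J_0,I_0)$. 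The key geometric input is that this triangle realizes the Floer duality of the single intersection point, $m_2(J_0,I_0)=\pm P$, so that by \eqref{eq:mk} the term $m_2(j_0J_0,i_0I_0)=i_0j_0\,m_2(J_0,I_0)$ contributes $\pm i_0j_0$ to the coefficient of $P$ (while $m_2(i_0I_0,j_0J_0)$ lands in $\CF(F,F)$ in degree two and hence vanishes).

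Finally I would invoke the norm-preserving change of variables $x_0\mapsto(1+\sum_i a_i(x_0y_0)^i)x_0$ of Proposition~\ref{lem:HKL20} to normalize the framing-free part to $x_0y_0-y_0x_0$, and fix the orientation conventions so that the triangle term is $+i_0j_0$; the coefficient of $P$ then becomes $x_0y_0-y_0x_0+i_0j_0$, yielding $\A^\fr=\A^\fr_{\text{free}}/(x_0y_0-y_0x_0+i_0j_0)$. The main obstacle I anticipate lies in the polygons that visit $F$ while also carrying several $X_0,Y_0$ corners, which a priori produce higher correction terms (such as multiples of $x_0i_0j_0$ or of $(i_0j_0)^2$). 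The crux is to show that these are either forced to vanish by the exactness and contractibility of $F$ together with the single-intersection condition, or else absorbed by the same change of coordinates, so that a single substitution simultaneously straightens the commutator part and leaves precisely $i_0j_0$ with the correct sign. Verifying this compatibility, and pinning down the sign, is the delicate part of the argument.
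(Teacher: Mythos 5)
Your overall skeleton matches the paper's proof: both arguments identify that the degree-two part of $\CF(\bL^\fr,\bL^\fr)$ is spanned by the minimum point $P$ of the sphere component (the framing carries no degree-two generator), so there is exactly one relation, and both normalize it by a change of coordinates in the spirit of Proposition~\ref{lem:HKL20}. However, the step you yourself flag as ``the delicate part'' is a genuine gap, and your proposed way out would not work as stated.

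First, the framing contribution is not just the single triangle term $m_2(j_0J_0,i_0I_0)=\pm i_0j_0$. In the paper, the pearl trajectories hitting $P$ are unions of a Morse flow line with a \emph{constant} polygon based either at the self-intersection point of $\bL$ (corners $(x_0y_0)^{k+1}$ or $(y_0x_0)^{k+1}$) or at the point $\bL\cap F$ (corners $(i_0j_0)^{k+1}$); hence the framing part of the coefficient of $P$ is $\left(1+\sum_{k\geq 1} b_k(i_0j_0)^k\right)i_0j_0$, with coefficients $b_k$ depending on the Kuranishi perturbation. These higher terms genuinely occur and are not forced to vanish by exactness or contractibility of $F$. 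Second, it is precisely this locality of constant polygons --- a constant polygon sits at a single point, so its corners are either all in $\{X_0,Y_0\}$ or all in $\{I_0,J_0\}$ --- that rules out the mixed terms such as $x_0i_0j_0$ you worry about; the ``single intersection point'' condition alone does not give you this. Third, the higher $(i_0j_0)^{k+1}$ terms cannot be absorbed by ``the same change of coordinates'' that straightens the commutator: the substitution $x_0\mapsto\left(1+\sum_k a_k(x_0y_0)^k\right)x_0$ does not touch them. The paper performs a second, independent normalization $i_0\mapsto\left(1+\sum_k b_k(i_0j_0)^k\right)i_0$ (keeping $y_0$ and $j_0$ fixed), and only after both substitutions does the relation become $x_0y_0-y_0x_0+i_0j_0$. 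Once you replace your triangle claim by the constant-polygon description and add this second coordinate change, your argument coincides with the paper's.
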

\begin{proof}
	
	The unobstructed relations are coefficients of $m_0^{\bb^\fr}$ which have degree two. In this case,  the only degree two generator is the minimal point $P$ of the immersed sphere component. Thus, we only have a single relation, which is contributed from pearl trajectories to $P$.  Such a trajectory is a union of a Morse flow line to $P$ and either constant polygons $(x_0y_0)^{k+1}, (y_0x_0)^{k+1}$ or $(i_0j_0)^{k+1}$.  Thus the coefficient of  $m_0^{\bb^\fr}$ equals 
	\begin{equation}
		\left(1+ \sum_{k=1}^{\infty}  a_k(x_0y_0)^k\right)x_{0} y_{0}-\left(1+ \sum_{k=1}^{\infty}  a_k(y_0x_0)^k\right) y_{0} x_{0}+ \left(1+\sum_{k=1}^{\infty}  b_k(i_0j_0)^k \right)i_{0} j_{0},
	\end{equation} 
	for some constants $a_k$ and $b_k$ depending on the perturbation of the Kuranishi structure. This is the same as\begin{align*}
		& \left(1+ \sum_{k=1}^{\infty}  a_k(x_0y_0)^k\right)x_{0} y_{0} - y_0\left(1+ \sum_{k=1}^{\infty}  a_k(x_0y_0)^k\right)x_{0} + \left(1+\sum_{k=1}^{\infty}  b_k(i_0j_0)^k \right)i_{0} j_{0}.
	\end{align*}     
We take $(1+ \sum_{k=1}^{\infty} a_k(x_0y_0)^k)x_0$ (resp. $(1+\sum_{k=1}^{\infty}  b_k(i_0j_0)^k )i_{0}$) as a new coordinate, which is still denoted by $x_0$ (resp. $i_0$) by abuse of notation.  $y_0$ and $j_0$ are kept to be the same.
Then the unobstructed equation becomes the ADHM equation: \begin{align*}
	&x_0y_0- y_0x_0+ i_0j_0=0.
\end{align*}  
Hence, the noncommutative deformation space of $\mathbb{L}^{\fr}$ is $\A^\fr:=\frac{\A_{\text{free}}^\fr}{(x_0y_0-y_0x_0+i_0j_0)}$.
\end{proof}

\begin{cor}
	The framed immersed $2$-sphere $(\bL^{\fr}, \bb^\fr=x_{0}X_{0}+y_{0}Y_{0}+i_{0}I_{0}+j_{0}J_{0}, \A^\fr)$ is unobstructed. 
\end{cor}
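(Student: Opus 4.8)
The plan is to obtain this as an immediate consequence of Theorem \ref{thm:nc-framed}, since ``unobstructed'' here means precisely that the curvature term $m_0^{\bb^\fr}$ of the deformed $A_\infty$-structure vanishes (equivalently, reduces to $W\cdot\one_{\bL^\fr}$ with disc potential $W=0$) once we pass to the deformation algebra $\A^\fr$. So first I would simply recall the definition of unobstructedness from Step (5) of the localized mirror construction in Section \ref{section:nc mirror}, where $\A^\fr$ is by construction the quotient that kills the coefficients of the obstruction.

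The key steps are then short. I would invoke Theorem \ref{thm:nc-framed}, which computes $m_0^{\bb^\fr}$ explicitly: after the coordinate change, the only contributing pearl trajectories land at the unique degree-two generator $P$ (the minimal point of the Morse function on the sphere component), and the single coefficient there is exactly $x_0y_0-y_0x_0+i_0j_0$, so that $m_0^{\bb^\fr}=(x_0y_0-y_0x_0+i_0j_0)\,P$. Since $\A^\fr = \A_{\text{free}}^\fr\big/(x_0y_0-y_0x_0+i_0j_0)$ is defined by quotienting $\A_{\text{free}}^\fr$ precisely by the two-sided ideal generated by this coefficient, the coefficient is zero in $\A^\fr$, and hence $m_0^{\bb^\fr}=0$ over $\A^\fr$. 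This is the same pattern as the unframed Proposition \ref{lem:HKL20}, now with the extra framing contribution $i_0j_0$ folded into the single relation.

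Finally, to confirm strong (rather than merely weak) unobstructedness in the $\Z$-graded setting, I would observe that the obstruction is concentrated at $P$, which represents the point class and has degree two, whereas the unit $\one_{\bL^\fr}$ is the degree-zero generator (the maximum). Thus the coefficient of $\one_{\bL^\fr}$ in $m_0^{\bb^\fr}$ is already zero, so the disc potential vanishes and $(\bL^\fr,\bb^\fr,\A^\fr)$ is genuinely unobstructed, not just weakly so.

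There is essentially no substantive obstacle: all the analytic and combinatorial content lives in Theorem \ref{thm:nc-framed}, and this corollary is a formal reformulation of that computation together with the definition of $\A^\fr$. The only point requiring a word of care is the grading bookkeeping that identifies $P$ as a non-unit top-degree generator, ensuring $W=0$ rather than some nonzero potential; this follows immediately from the assumption that $\bL$ and $F$ are graded, so that $m_0^{\bb^\fr}$ has degree two and cannot hit the degree-zero unit.
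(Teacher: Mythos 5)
Your proposal is correct and follows exactly the paper's (implicit) argument: the corollary is an immediate consequence of Theorem \ref{thm:nc-framed}, since $\A^\fr$ is by definition the quotient by the two-sided ideal generated by the sole coefficient of $m_0^{\bb^\fr}$, which therefore vanishes over $\A^\fr$. Your closing degree-counting remark (the obstruction lands on the degree-two generator $P$, not the unit, so $W=0$) matches the paper's own grading convention and adds nothing beyond what the paper already assumes.
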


\begin{cor} 
	\label{cor:rep}
	Let $\cE_1$ be a trivial vector bundle of rank $k$ over the immersed sphere, and $\cE_2$ a trivial vector bundle of rank $r$ over the framing. We restrict to the subcategory generated by $\bL^\fr$.  The Maurer-Cartan space of the framed Lagrangian brane $(\bL^{\fr},\cE)$ is 
	\begin{equation}
		\mathcal{M}(V,W):=\left\{(B_1,B_2, i, j)\in Rep(Q^\mathrm{ADHM},V,W) \otimes \Lambda_0 \mid \, [B_1,B_2]+ij=0\right\}/GL(k),
	\end{equation}
	where $Q^\mathrm{ADHM}$ is the ADHM quiver, $rank \, V= k$ and $rank \, W=r.$
\end{cor}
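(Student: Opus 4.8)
We want to identify the Maurer-Cartan space of the rank $(k,r)$ framed brane $(\bL^{\fr},\cE)$ with the ADHM variety $\mathcal{M}(V,W)$.

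The plan is to combine Theorem \ref{thm:nc-framed} with the notion of a rank $(\vec v,\vec w)$ family of framed Lagrangian branes (Section \ref{sec:Lagrangianbranes}). First I would recall that by Theorem \ref{thm:nc-framed} the universal noncommutative deformation space of $\bL^{\fr}$ is the completed ADHM path algebra $\A^{\fr}=\A^{\fr}_{\text{free}}/(x_0y_0-y_0x_0+i_0j_0)$, where the single relation is the coefficient of $m_0^{\bb^{\fr}}$ at the degree-two point $P$. Decorating $\bL$ with a rank-$k$ trivial bundle $\cE_1$ and the framing $F$ with a rank-$r$ trivial bundle $\cE_2$ means, by the definition of a framed representation $G:\A^{\fr}\to Mat(\cA)$, that each arrow of $Q^{\mathrm{ADHM}}$ is represented by a matrix of the appropriate size over $\cA$. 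With $\cA=\Lambda_0$ (a single-vertex, commutative coefficient algebra), the two self-loops $x_0,y_0$ at the sphere-vertex $v_1$ become endomorphisms $B_1,B_2\in \End(V)\otimes\Lambda_0$ with $V=\Lambda_0^{k}$, while the arrows $j_0:v_1\to v_2$ and $i_0:v_2\to v_1$ become $b\in \Hom(V,W)\otimes\Lambda_0$ and $a\in\Hom(W,V)\otimes\Lambda_0$ with $W=\Lambda_0^{r}$. This is exactly a point of $Rep(Q^{\mathrm{ADHM}},V,W)\otimes\Lambda_0 = M(V,W)\otimes\Lambda_0$.

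Next I would show that the weak unobstructedness (Maurer-Cartan) condition $m_0^{\bb^{\fr}}\in \Lambda_0\cdot\one_{\bL^{\fr}}$ translates, under this representation, into the single matrix equation $[B_1,B_2]+ab=0$. Indeed, substituting the matrices into the relation from Theorem \ref{thm:nc-framed} and using $m_k^{\bb}$-compatibility \eqref{eq:mk}, the coefficient of the degree-two generator $P$ becomes $B_1B_2-B_2B_1+ab$; setting it to zero (the framing vertex has no degree-two output, so there is no separate $ij$-relation at $v_2$) gives precisely the ADHM equation $[B_1,B_2]+ij=0$ in the statement. Here I write $i,j$ for $a,b$ to match the corollary's notation.

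Finally I would account for the quotient by $GL(k)$. This is the gauge freedom coming from the change-of-basis automorphisms of the rank-$k$ local system $\cE_1$ on the single sphere component; such automorphisms act by conjugation on $B_1,B_2$ and on $(a,b)$ in the standard ADHM fashion, and as noted in the paper's remarks the natural object is the \emph{isomorphism class} of a Maurer-Cartan element, i.e. the quotient stack. The framing bundle $\cE_2$ carries the fixed trivialization and contributes no gauge group (consistent with the framing vertex having trivial gerbe terms), so only $GL(k)=GL(V)$ survives. I expect the main obstacle to be the bookkeeping in the second step: verifying carefully that under the matrix substitution the higher constant-polygon corrections $a_k(x_0y_0)^k$, $b_k(i_0j_0)^k$ are absorbed by exactly the coordinate change of Theorem \ref{thm:nc-framed}, so that no extra correction terms survive in the matrix equation and one lands on the clean commutator relation rather than a deformed one.
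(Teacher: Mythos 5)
Your proposal is correct and follows essentially the same route as the paper's proof: both reduce to the pearl-trajectory count of Theorem \ref{thm:nc-framed}, substitute matrix values for the arrows so that the single degree-two relation at $P$ becomes $[B_1,B_2]+ij=0$ after the same coordinate change, and pass to isomorphism classes (the $GL(k)$ quotient) with $\Lambda_0$-valued deformations in the subcategory generated by $\bL^\fr$. Your version merely makes explicit the framed-representation bookkeeping and the gauge-group origin of the quotient, which the paper leaves implicit.
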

\begin{proof}
	By definition, an element $\bb:=B_1X_0+B_2Y_0+iI_0+jJ_0 \in CF^{1}((\bL^{\fr},\cE),(\bL^{\fr},\cE))$ is Maurer-Cartan if $$m_0^{\bb}=\sum m_k(\bb, \cdots,\bb)=0,$$ which counts the same pearl trajectories considered in Theorem \ref{thm:nc-framed}. Notice that for matrices, the map $B_1 \mapsto (1+\sum_{k=1}^{\infty}a_k(B_1B_2)^k )B_1$ is still invertible. Besides, this coordinate change is also gauge equivariant, since $g\cdot (1+\sum_{k=1}^{\infty}a_k(B_1B_2)^k )B_1 \cdot g^{-1}= (1+\sum_{k=1}^{\infty}a_k(g\cdot B_1 \cdot g^{-1} g\cdot B_2 \cdot g^{-1})^k )g \cdot B_1 \cdot g^{-1}.$
	
	Hence, up to coordinate change, the Maurer-Cartan space of $\bL^{\fr}$ is the isomorphic class of representations of $Q^\mathrm{ADHM}$, satisfying  $[B_1,B_2]+ij=0$. Restricting to the subcategory generated by $\bL^\fr$, $\bb$ can take $\C$-valued deformation, since there  is no convergence issue. Thus, the result holds. 
\end{proof}

\subsection{Framed torsion-free sheaves} \label{section:framed sheaves}

By Lemma \ref{lem:HKL20}, the deformation space of $\bL$ forms a formal neighborhood of the zero locus of $xy$ in the affine space $\A^2$, which is the localized mirror. Performing the localized mirror construction as in Section \ref{section:nc mirror}, we obtain an $A_\infty$-functor $\cF^{(\bL,\bb)}: \Fuk(X) \to \mathrm{dg} \, \A\lmod.$ We will restrict $\cF^{(\bL,\bb)}$ to the Fukaya subcategory generated by the framed Lagrangian immersion $\bL^{\fr}$. As a consequence, the mirror sheaves of the Lagrangian submanifolds can be extended to the affine space $\A^2_{\C}$, as $\C[x,y] \subset \A$ and there is no convergence issue after changing variables.  


In the following, we use the functor $\cF^{(\bL,\bb)}$ to transform the framed Lagrangian brane itself into a monad.  We denote the family of objects to be transformed by $(L^{\fr},\cE)$ to distinguish from the reference Lagrangian $\bL$. Here, $L$ is equipped with a trivial rank $n$ vector bundle $\cE_1$ and the framing $F$ is equipped with a Morse function $f_F$ and a trivial rank $r$ vector bundle $\cE_2$.

As described in Section \ref{sec:framedfukayacategory},  the Floer group is defined by counting pearl trajectories. An illustration of perturbed strip is depicted in Figure \ref{fig:noframedcase}, which we will primarily consider in the rest of this section for a better demonstration of strip counting. 


\begin{figure}[htb!]
	\includegraphics[scale=1]{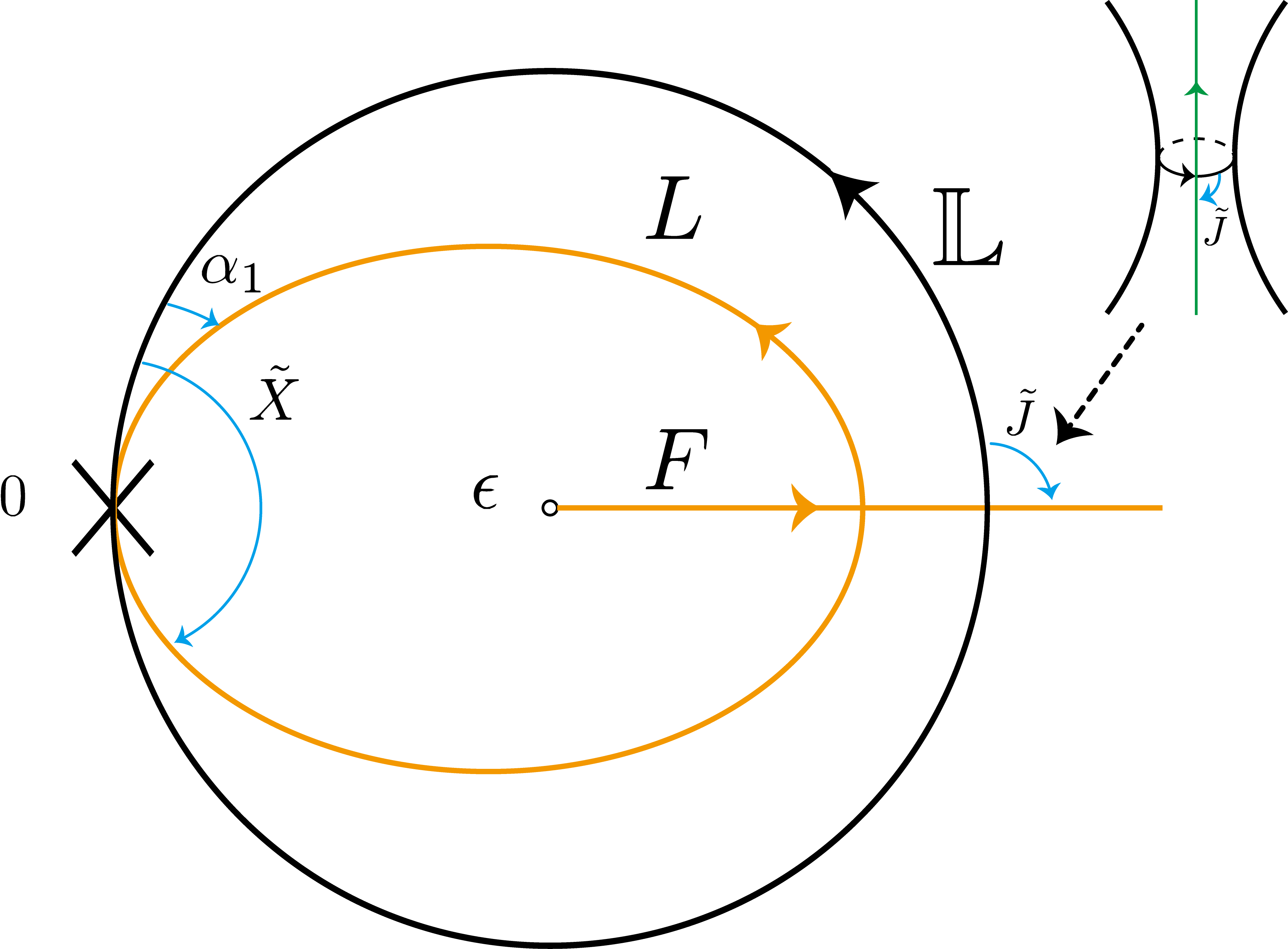}
	\caption[noframecase]{\small The self-immersed generators of $L^\fr$ are labeled in the same pattern as $\bL^\fr$ with subscript $1$. The degree $1$ generator between $\bL$ and framing $F$ is named $\tilde{J}$. The generator $\alpha_{1}$ (resp. $\tilde{X}$ ) corresponds to the branch jump from the upper half branch of $\bL$ to the upper half branch of $L^\fr$ (resp. the lower half branch of $L^\fr$). Similarly, $\alpha_{2}$ and $\tilde{Y}$ are defined as the jumps from the lower half of $\bL$ to the lower half and upper half branches of $L^\fr$ respectively, which are omitted in the picture.}
	\label{fig:noframedcase}
\end{figure}

\begin{thm} \label{thm:frash}
	$\cF^{(\bL,\bb)}$ transforms the framed Lagrangian brane $(L^{\fr},\cE)$ into the monad of the torsion-free sheaves over $\mathbb{C}^2$.  
\end{thm}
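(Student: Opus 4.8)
The plan is to compute $\cF^{(\bL,\bb)}(L^{\fr},\cE)$ explicitly as the Floer cochain complex $\left(\CF((\bL,\bb),(L^{\fr},\cE)),\,m_1^{\bb,\bb^{\fr}}\right)$ and to identify it with the ADHM monad over $\C^2$. Here $\bb^{\fr}=B_1X_1+B_2Y_1+iI_1+jJ_1$ is the bounding cochain of $(L^{\fr},\cE)$, whose matrix coefficients $(B_1,B_2,i,j)$ are the ADHM data produced by Corollary \ref{cor:rep}. First I would sort the generators by degree, using the Morse model together with the intersection pattern of Figure \ref{fig:noframedcase}. With $\mathrm{rank}\,\cE_1=\dim V$ and $\mathrm{rank}\,\cE_2=\dim W$, the degree-$0$ and degree-$2$ critical points of the sphere Morse function give $L^0((\bL,\bb),(\bL,\cE))\cong V$ and $L^2((\bL,\bb),(\bL,\cE))\cong V$; the two degree-$1$ branch-switching sectors $\tilde X,\tilde Y$ give $E((\bL,\bb),(\bL,\cE))\cong V\oplus V$, and the framing generator $\tilde J$ gives $L^1((\bL,\bb),(F,\cE))\cong W$. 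This reproduces the three-term shape
\[
0\to V \xrightarrow{\ \alpha\ } V\oplus V\oplus W \xrightarrow{\ \beta\ } V \to 0,
\]
matching the term structure of the general monad in Theorem \ref{thm:univb}.

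Second, I would determine the differential $m_1^{\bb,\bb^{\fr}}$ by counting the pearl trajectories that carry insertions of $\bb=xX+yY$ on the $\bL$-boundary and of $\bb^{\fr}$ on the $L^{\fr}$-boundary. The key point, already implicit in the strip analysis behind Proposition \ref{lem:HKL20}, is that a rigid trajectory from the degree-$0$ generator to $\tilde X$ (resp.\ $\tilde Y$) admits exactly one insertion of the reference immersed sector $X$ (resp.\ $Y$), contributing the coordinate $x$ (resp.\ $y$), and exactly one insertion of the object sector $X_1$ (resp.\ $Y_1$), contributing $-B_1$ (resp.\ $-B_2$); the framing trajectory analogously contributes $j$, and the degree-$1$-to-degree-$2$ trajectories produce $\beta$ in the same way. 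Assembling these I expect
\[
\alpha=\begin{pmatrix} x-B_1\\ y-B_2\\ j\end{pmatrix},\qquad
\beta=\begin{pmatrix} -(y-B_2) & x-B_1 & i\end{pmatrix},
\]
up to the signs fixed by the orientation data of Figure \ref{fig:noframedcase}. Because $x,y$ are central coordinate functions of $\A$, all terms linear in $x,y$ cancel in the composition, leaving $\beta\alpha=[B_1,B_2]+ij$.

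Third, the monad relation $\beta\alpha=0$ is then immediate. It is the $A_\infty$ identity $m_1^{\bb,\bb^{\fr}}\circ m_1^{\bb,\bb^{\fr}}=\pm\big(m_0^{\bb}\cdot(-)-(-)\cdot m_0^{\bb^{\fr}}\big)$, which vanishes since both $\bL$ and $L^{\fr}$ are weakly unobstructed with vanishing disc potential in the $\Z$-graded setting; concretely $m_0^{\bb^{\fr}}=0$ \emph{is} the ADHM equation $[B_1,B_2]+ij=0$ of Theorem \ref{thm:nc-framed} and Corollary \ref{cor:rep}. Extending scalars from the formal neighborhood $\A\supset\C[x,y]$ to all of $\A^2_{\C}=\C^2$ (with no convergence obstruction after the coordinate change, as in Section \ref{section:framed sheaves}), this three-term complex is precisely the ADHM monad over $\C^2$, whose middle cohomology is the associated torsion-free sheaf.

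The main obstacle will be the rigorous pearl-trajectory count of the second paragraph: verifying that only the single-insertion trajectories survive, so that $\alpha$ and $\beta$ are genuinely \emph{linear} in the deformation parameters and carry no residual higher-order corrections in $xy$ and $ij$, and that any such higher-order contributions are exactly the ones absorbed by the coordinate change of Theorem \ref{thm:nc-framed}. Pinning down the signs from the relative spin and orientation data, and handling the constant Morse part of $m_1$ (the gradient trajectories on the sphere with no disc insertion), are the remaining technical points. Finally, the exactness of the monad away from the middle term—injectivity of $\alpha$ and surjectivity of $\beta$ as sheaf maps, which upgrades the cohomology to an honest torsion-free sheaf—follows from the stability of the ADHM data as in the classical construction.
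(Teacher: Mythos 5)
Your proposal is correct and follows essentially the same route as the paper's proof: identify the generators $\alpha_1,\tilde X,\tilde Y,\tilde J,\alpha_2$ via the Morse model, write the differential as the linear maps $x_1\cdot(-)-(-)\cdot x$, $y_1\cdot(-)-(-)\cdot y$, $j_1\cdot$, $i_1\cdot$, check $\left(m_1^{\bb}\right)^2=0$ from unobstructedness (i.e.\ the ADHM equation), and then restrict the polynomial differentials to $\C[x,y]\subset\A$ to land in the monad over $\C^2$. The two obstacles you flag are resolved exactly as you anticipate: the higher-order corrections in $xy$ and $ij$ are absorbed by the coordinate change of Theorem \ref{thm:nc-framed} (made precise in the proof of Theorem \ref{thm:univb} via the fundamental-class property and $m_1^{\bb}=\partial m_0^{\bb}$), while injectivity/surjectivity of the outer maps under stability is deferred in the paper to the corollary following the theorem rather than being part of this statement.
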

\begin{proof}
	The labeling is as shown in Figure \ref{fig:ADHMquiver} and Figure \ref{fig:noframedcase}. The associated quiver of $L^{\fr}$ is exactly the ADHM quiver $Q^{\mathrm{ADHM}}$. The unobstructed equation of $L^{\fr}$ is $[x_{1},y_{1}]+i_{1}j_{1}=0$, as computed in Corollary \ref{cor:rep}. Using the Morse model, $\alpha_1$ (reps. $\alpha_2$) can be identified with the maximal (reps. minimal) point of the Morse function. Consequently, $L^{\fr}$ is transformed by $\bL$ to the following complex:
	\begin{equation} \label{eq:monad}
		(\cF^{(\bL,\bb)}((L^{\fr},\cE)), m_1^{\bb}): \xymatrix{
			  \langle \alpha_1 \rangle \ar[rrr]^-{\begin{psmallmatrix}
					x_1 \cdot - \cdot x & y_1 \cdot - \cdot y &  j_1 \cdot
			\end{psmallmatrix}} &&& 
			\langle \tilde{X}, \tilde{Y}, \tilde{J} \rangle \ar[rr]^-{\begin{psmallmatrix}
					-(y_1 \cdot - \cdot y) \\
					x_1 \cdot - \cdot x \\
					i_1 \cdot
			\end{psmallmatrix}} && \langle \alpha_2 \rangle 
 		},
	\end{equation}
	where the condition that the differential square equals zero can be verified directly using the unobstructedness of the Lagrangians. 
	
	Since $L$ is equipped with the rank $n$ trivial bundle $\cE_1$ and $F$ with a rank $r$ trivial bundle $\cE_2$, the coefficients of the Floer complex are of the following form:
	\begin{equation} \label{eq:monad1}
		\xymatrix{ \Lambda_0^n \otimes \A \langle \alpha_1 \rangle \ar[rrr]^-{\begin{psmallmatrix}
					x_1 \cdot - \cdot x & y_1 \cdot - \cdot y &  j_1 \cdot
			\end{psmallmatrix}} &&& 
			\Lambda_0^n \otimes \A \langle \tilde{X} \rangle \oplus \Lambda_0^n \otimes \A \langle \tilde{Y} \rangle \oplus \Lambda_0^r \otimes \A \langle \tilde{J} \rangle \ar[rr]^-{\begin{psmallmatrix}
					-(y_1 \cdot - \cdot y) \\
					x_1 \cdot - \cdot x \\
					i_1 \cdot
			\end{psmallmatrix}} && \Lambda_0^n \otimes \A \langle \alpha_2 \rangle}.
	\end{equation}
	Notice that the differentials are polynomials and $L^\fr$ is an object in the subcategory that generated by $\bL^\fr$. One can restrict the Complex \ref{eq:monad1} to $\C[x,y]$ and work over the complex coefficients. The resulting complex is the monad of torsion-free sheaves over $\C^2$.
\end{proof}

\begin{cor}
	Given a $\zeta_\R$-stable framed Lagrangian brane $(L^{\fr},\cE)$ over $\C$. The cohomology of $\cF^{(\bL,\bb)}((L^{\fr},\cE))$ is a torsion-free sheaf of rank $r$. Furthermore, the monadic complex $\cF^{(\bL,\bb)}((L^{\fr},\cE))$ can be extended trivially to $\C \mathbb{P}^2$ such that the cohomology  gives a framed torsion-free sheaf. 
\end{cor}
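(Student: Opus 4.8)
The plan is to reduce the statement to the classical ADHM correspondence by feeding in the two preceding results. First I would recall from Theorem~\ref{thm:frash} that $\cF^{(\bL,\bb)}((L^{\fr},\cE))$ is precisely the displayed three-term complex of $\C[x,y]$-modules, and from Corollary~\ref{cor:rep} that a Maurer--Cartan deformation of $(L^{\fr},\cE)$ is exactly an ADHM datum $(B_1,B_2,i,j)$ (with $B_1=x_1,\,B_2=y_1,\,i=i_1,\,j=j_1$ the representing matrices) satisfying $[B_1,B_2]+ij=0$. Specialising to $\cA=\C$ is permissible, since after the coordinate change of Theorem~\ref{thm:nc-framed} the differentials are polynomial and there is no convergence issue; here $\dim V=n$ (the rank of $\cE_1$) and $\dim W=r$ (the rank of $\cE_2$). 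By construction the $\zeta_\R$-stability of the brane is King's stability \cite{Kin94} of the corresponding quiver representation, which for the relevant sign of $\zeta_\R$ unwinds to the single condition (S): there is no proper $B$-invariant subspace $T\subsetneq V$ with $\Im i\subseteq T$. Thus the whole statement becomes the assertion that stable ADHM data produce, via this monad, a framed torsion-free sheaf.

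Next I would homogenise the complex to $\bP^2$. Writing $[z_0:z_1:z_2]$ with $\ell_\infty=\{z_0=0\}$ and $x=z_1/z_0,\,y=z_2/z_0$, the two differentials extend to the bundle maps
$$a(z)=\begin{psmallmatrix} z_0B_1-z_1\\ z_0B_2-z_2\\ z_0 j\end{psmallmatrix}\colon V\otimes\cO(-1)\to (V\oplus V\oplus W)\otimes\cO, \qquad b(z)=\begin{psmallmatrix} -(z_0B_2-z_2) & z_0B_1-z_1 & z_0 i\end{psmallmatrix},$$
and $b\circ a=z_0^2\bigl([B_1,B_2]+ij\bigr)=0$ by the ADHM relation, so this is a genuine monad on $\bP^2$ restricting to the complex of Theorem~\ref{thm:frash} over $\C^2$.

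The heart of the argument is the two pointwise statements. I would show $b$ is surjective at every point of $\bP^2$: on $\ell_\infty$ it equals $(z_2,-z_1,0)\otimes\Id_V$, which is surjective since $(z_1,z_2)\neq 0$; on $\C^2$, if $\eta\in V^*$ annihilated $\Im b(x,y)$ then $\eta B_1=x\eta$, $\eta B_2=y\eta$, $\eta i=0$, so $\Ker\eta$ would be a proper $B$-invariant subspace containing $\Im i$, contradicting (S). Hence $\Ker b$ is locally free of rank $n+r$ and the complex has no upper cohomology. Dually, $a$ is fibrewise injective on $\ell_\infty$ (there it is $(-z_1,-z_2,0)^{T}\otimes\Id_V$) and injective at the generic point of $\C^2$ (where $z_0B_1-z_1$ is invertible), so $\Ker a=0$ as a sheaf map and there is no lower cohomology; the rank count $(2n+r)-n-n=r$ then gives that $E:=\Ker b/\Im a$ has rank $r$. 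Restricting to $\ell_\infty$, both $a$ and $b$ reduce to their constant parts and one reads off $E|_{\ell_\infty}\cong W\otimes\cO_{\ell_\infty}$, the degree-$r$ trivialisation that furnishes the framing.

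The main obstacle, and the single place I would lean on the classical theory, is torsion-freeness of $E$ at the finitely many points of $\C^2$ where $a$ drops rank (these are precisely the points of the length-$n$ subscheme in the rank-one ideal-sheaf picture). Surjectivity of $b$ exhibits $E$ as a quotient of the locally free sheaf $\Ker b$, but $\Im a$ need not be saturated a priori, so torsion-freeness is not formal; here the ADHM equation together with (S) is essential, and I would invoke the standard monad analysis of Donaldson~\cite{Don84} and Nakajima~\cite{Nak07} to conclude that $\Im a$ is saturated and $E$ is torsion-free of rank $r$. Since $E$ is locally free near $\ell_\infty$ with the trivialisation above, the pair consisting of $E$ and the isomorphism $E|_{\ell_\infty}\xrightarrow{\ \sim\ }W\otimes\cO_{\ell_\infty}$ is a framed torsion-free sheaf on $\bP^2$, which proves both assertions.
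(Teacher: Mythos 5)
Your proof is correct and follows essentially the same route as the paper: both identify the $\zeta_\R$-stable brane with a stable ADHM datum via Corollary \ref{cor:rep} and Theorem \ref{thm:frash}, rely on the classical monad analysis of Donaldson/Nakajima for the injectivity, surjectivity, and torsion-freeness statements, and homogenize the monad to extend trivially to $\C\bP^2$ with the framing read off along $\ell_\infty$. The only difference is presentational: you homogenize first and spell out the stability-implies-surjectivity argument, whereas the paper performs the analysis over $\C^2$ by citing Chapter 2 of \cite{Nak99} and homogenizes afterwards.
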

\begin{proof}
	By Corollary \ref{cor:rep}, the Maurer-Cartan (deformation) space of $(L^{\fr},\cE)$ is equivalent to the representation of the ADHM quiver over $\Lambda_0$. Therefore, a $\zeta_\R$-stable framed Lagrangian brane $(L^{\fr},\cE)$ over $\C$ is the Lagrangian $L^{\fr}$ equipped with a stable framed quiver representation $b=(B_1,B_2,i,j) \in Rep(Q^{\mathrm{ADHM}})$.
	
	As demonstrated in the Chapter 2 of \cite{Nak99}, the first differential of the monad is injective at all but finitely many points, while the second differential is surjective for stable framed quiver representations. Consequently, upon restricting to complex coefficients, the cohomology yields a torsion-free sheaf $\cE$ over $\mathbb{A}^2_{\mathbb{C}}$. The second statement follows from homogenizing the monad, which trivially extends $\cE$ to $\mathbb{C}\mathbb{P}^2$.
\end{proof}

\subsection{Framed quiver representation as a mirror} \label{sec:ADHM-fr}

In this section, we consider the reference framed Lagrangian immersion $\mathbb{L}^{\fr}:=\bL\cup (F,-f_F)$ and restrict the noncommutative mirror functor $\cF^{(\bL^\fr,\bb^\fr)}$ to the subcategory generated by $\bL^\fr$. We prove that $\cF^{(\bL^\fr,\bb^\fr)}$ gives a framed representation of ADHM quiver $Q^{\mathrm{ADHM}}$. 


Recall the category of $kQ$-modules and the category of representation of $Q$ are equivalent. Hence, given a framed Lagrangian brane $(L^\fr,\cE)$ over $\C$, one will expect the cohomology of $\cF^{(\bL^\fr,\bb^\fr)}(L^\fr,\cE)$ provides a representation of $Q^{\mathrm{ADHM}}$. Furthermore, this representation should lie in the zero locus of the complex moment map. In other words, it has to satisfy $x_{0} y_{0}-y_{0} x_{0}+i_{0} j_{0}=0$. An explicit calculation shows this is indeed true. Let's denote the Maurer-Cartan space of $(L^\fr,\cE)$ by $\cA$, which is the representation space of $Q^{\mathrm{ADHM}}$ as computed above. 


\begin{figure}[htb!]
	\includegraphics[scale=1]{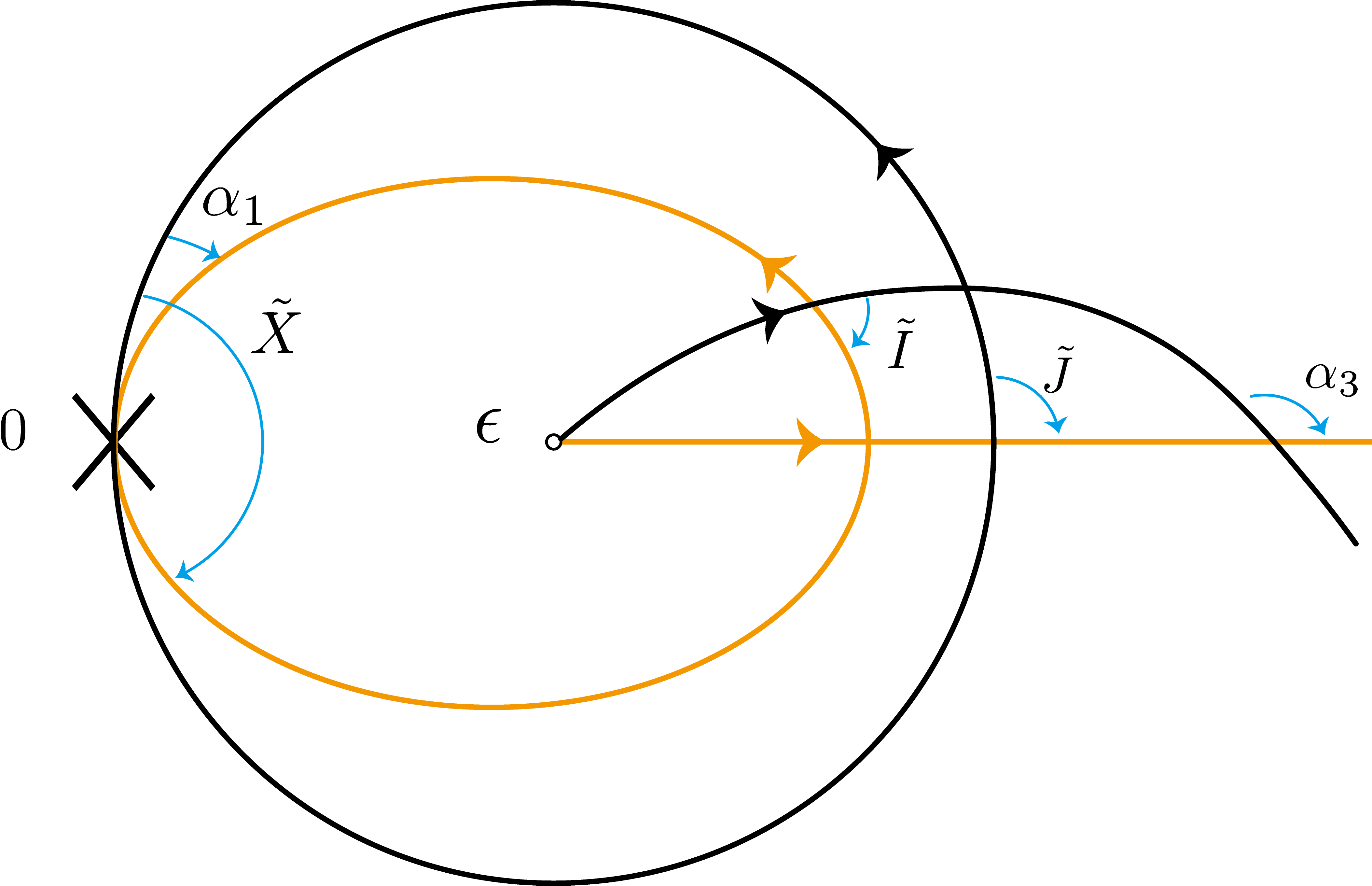}
	\caption[noframecase]{\small The labeling is same as in Figure \ref{fig:noframedcase}, besides we have two additional generators, $\tilde{I}$ and $\alpha_{3}$, arising from the intersection between the framing of $\bL^{\fr}$ and $L^{\fr}$. }
	\label{fig:framedcase}
\end{figure}

\begin{thm} \label{thm:frep}
	The cohomology of $\cF^{(\bL^\fr,\bb^\fr)}((L^\fr,\cE))$ gives a framed quiver representation. 
\end{thm}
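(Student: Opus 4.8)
The plan is to compute $\cF^{(\bL^\fr,\bb^\fr)}((L^\fr,\cE)) = \bigl(\CF((\bL^\fr,\bb^\fr),(L^\fr,\cE)),\, m_1^{\bb^\fr}\bigr)$ explicitly and to read off its cohomology as a module over the ADHM algebra $\A^\fr$. First I would enumerate the generators and their degrees following Figure \ref{fig:framedcase}: in degree $0$ the unit-type classes $\alpha_1 \in \CF^0(\bL,L)$ at the vertex $v_1$ and $\alpha_3 \in \CF^0(F,F)$ at the framing vertex $v_2$; in degree $1$ the branch jumps $\tilde X,\tilde Y \in \CF^1(\bL,L)$ and $\tilde J \in \CF^1(\bL,F)$ at $v_1$ together with $\tilde I \in \CF^1(F,L)$ at $v_2$; and in degree $2$ the point class $\alpha_2 \in \CF^2(\bL,L)$ at $v_1$. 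Extending the reference side by $\A^\fr$ and the target side by the bundles $\cE_1,\cE_2$, this presents the object as a three-term complex of free $\A^\fr$-modules whose differential is the pearl-trajectory count of Theorem \ref{thm:frash}, augmented by the polygons with a corner at the reference framing that output $\tilde I$ and $\alpha_3$. The result is a ``framed monad'': the monad of Theorem \ref{thm:frash} with the two framing generators adjoined, the commutative coordinate $x$ (resp.\ $y$) replaced by the self-loop $x_0$ (resp.\ $y_0$), and the framing arrows $i_0,j_0$ of $Q^{\mathrm{ADHM}}$ incorporated.

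Next I would extract the representation. Since the object is a complex of $\A^\fr$-modules, its cohomology is automatically an $\A^\fr$-module, hence a representation of $Q^{\mathrm{ADHM}}$; the substantive point is that on cohomology the arrows act through the Maurer--Cartan data $(B_1,B_2,i,j)$ of $(L^\fr,\cE)$ supplied by Corollary \ref{cor:rep}. The differentials contain the combinations $B_1\cdot(-)-(-)\cdot x_0$ and $B_2\cdot(-)-(-)\cdot y_0$, so on cohomology $x_0,y_0$ are forced to act by $B_1,B_2$ on the $v_1$-part; similarly $i_0,j_0$ induce the maps between the $v_1$-part and the framing $v_2$-part. Thus the surviving cohomology carries precisely the linear data of a framed $Q^{\mathrm{ADHM}}$-representation.

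The moment-map relation then holds because the cohomology is an $\A^\fr$-module: the defining relation $x_0y_0-y_0x_0+i_0j_0=0$ of $\A^\fr$, which by Theorem \ref{thm:nc-framed} is exactly the coefficient of the point class in the reference obstruction $m_0^{\bb^\fr}$, holds identically in $\A^\fr$. Under the identification of the arrow-actions with $(B_1,B_2,i,j)$ this becomes the ADHM equation $[B_1,B_2]+ij=0$, matching the Maurer--Cartan equation for $(L^\fr,\cE)$ in Corollary \ref{cor:rep}. Hence the induced representation lies in the zero locus of the complex moment map, which is what must be checked.

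The hard part will be showing that the cohomology is concentrated, so that it is a genuine representation and not a nontrivial complex; I expect the vanishing in the intermediate degree to be the real obstacle. Reducing the exactness to the linear algebra of the ordinary ADHM monad of Theorem \ref{thm:frash} --- injectivity of the first differential and surjectivity of the last --- I would then check that the two framing generators $\tilde I,\alpha_3$ create no extra cohomology at $v_2$ and that the noncommutative coefficients in $\A^\fr$ do not obstruct the required cancellations. Assembling the surviving pieces yields the framed representation asserted by the theorem.
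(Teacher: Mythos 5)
Your overall outline (write down the three-term complex, show the cohomology is concentrated, and read the framed representation off the cokernel with the arrows acting through $(B_1,B_2,i,j)$) matches the paper's, but two of your concrete steps contradict the paper's setup and would fail. First, the degree placement of $\alpha_3$ is wrong. By the conventions of Section \ref{sec:framedfukayacategory}, the pair (reference framing, object framing) is governed by a Morse function with a unique \emph{minimum}, a critical point of top degree $n=2$ which ``can only appear as an output of a pearl trajectory.'' So $\alpha_3$ sits in degree $2$ alongside $\alpha_2$, not in degree $0$: the complex is $\langle \alpha_1\rangle \to \langle \tilde X,\tilde Y,\tilde I,\tilde J\rangle \to \langle \alpha_2,\alpha_3\rangle$, and the second differential has the entries $j_1\cdot$ and $-\cdot\, i_0$ landing on $\alpha_3$. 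This is not cosmetic: the framing ($W$-)part of the representation is precisely the cokernel summand at $\alpha_3$ in top degree. With your placement (degree-$0$ $\alpha_3$ and top degree only $\langle\alpha_2\rangle$), the top cohomology could only produce the $v_1$-part, so no framed representation would emerge.

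Second, your plan for the ``hard part'' --- reducing exactness to the linear algebra of the ADHM monad of Theorem \ref{thm:frash} (injectivity of the first map, surjectivity of the last) --- is the wrong mechanism. Those injectivity/surjectivity facts are Nakajima's statements for \emph{stable} data $(B_1,B_2,i,j)$, and Theorem \ref{thm:frep} carries no stability hypothesis; moreover, here the cohomology concentrates at the \emph{top} degree, so the last differential is deliberately \emph{not} surjective --- its cokernel is the representation --- whereas the monad of Theorem \ref{thm:frash} has its cohomology in the middle. What actually carries the paper's proof is algebra over the noncommutative coefficient ring $\A^\fr$ on the reference side: the only zero divisors among the relevant elements are $x_0j_0=y_0j_0=0$ and $i_0x_0=i_0y_0=0$; injectivity of the first differential follows because $ki_0=0$ forces $k=0$; and exactness at degree $1$ is proved by solving $j_1a_3=a_4i_0$ as $a_3=ki_0$, $a_4=j_1k$, normalizing $a_1$ by subtracting $m_1^{\bb^\fr}(h\alpha_1)$ so that it has no term whose rightmost path is $x_0$, and deriving a contradiction from the reduced equation $-(y_1a_1'-a_1'y_0)+x_1a_2'-a_2'x_0=0$. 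None of this is linear algebra over $\C$, and your proposal contains no substitute for this path-algebra argument.
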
 
\begin{proof}
	$L^\fr$ is transformed by $\bL^\fr$ into the following complex:
	\begin{equation} 
		(\cF^{(\bL^\fr,\bb^\fr)}((L^\fr,\cE)),m_{1}^{\bb^\fr}): \xymatrix{
			\langle \alpha_1 \rangle \ar[rrr]^-{\begin{psmallmatrix}
			x_1 \cdot - \cdot x_0 & y_1 \cdot - \cdot y_0 & \cdot i_0 & j_1 \cdot
			\end{psmallmatrix}} &&& 
		\langle \tilde{X}, \tilde{Y}, \tilde{I}, \tilde{J} \rangle \ar[rr]^-{\begin{psmallmatrix}
					-(y_1 \cdot - \cdot y_0) & 0\\
					x_1 \cdot - \cdot x_0 & 0\\
					 -\cdot j_0 & j_1 \cdot \\
					i_1 \cdot & -\cdot i_0
		\end{psmallmatrix}} && \langle \alpha_2, \alpha_3 \rangle 
		},
	\end{equation} 
where $(m_1^{\bb^\fr})^2=0$ can be verified directly using unobstructedness of Lagrangians and $A_\infty$-equations. We claim the cohomology concentrates on the highest degree and it provides a framed quiver representation.

In view of the path concatenation relation, we find the only zero divisors are $x_{0}j_{0}=y_{0}j_{0}=0$ and $i_{0}x_{0}= i_{0}y_{0}=0$. Therefore, $ki_{0}=0$  implies that $k=0$, which means the first order differential is injective.  

We now prove the complex is exact at degree $1$. Let $a=(a_{1},a_{2},a_{3},a_{4})\in \ker m_{1}^{\bb^\fr} $  where $ a_{i} \in \cA \otimes \A^\fr$; more precisely, it means $m^{\bb^\fr}_{1}(a_1 \tilde{X}+ a_2 \tilde{Y} + a_3 \tilde{I} + a_4 \tilde{J})= 0$, thus we have the following equations:  
\begin{numcases}{}
	 -(y_{1}a_{1}-a_{1}y_{0})+ x_1a_2 -a_2x_0 - a_3 j_0 + i_1a_4=0 &  \label{equ:part1}
	\\
	j_1a_3-a_4i_0=0 & \label{equ:part2}.
 \end{numcases}
 Notice that by our convention of coefficients (see Section \ref{section:nc mirror}), $i_{0}$ has no left zero divisor in the coefficients of $\tilde{J}$. From the Equation \ref{equ:part2}, we can write $a_3=v+ k i_0, a_4= j_1k$ for some $v, k\in \cA \otimes \A^{\fr}$ such that $v$ doesn't contain a multiple of $i_0$ and $j_1v=0$. Let $a_{1}' = a_{1}-(x_{1}k-kx_{0})$ and $a_{2}'=a_{2}-(y_{1}k-ky_{0})$. Substituting $a_{1}$ and $a_{2}$ in Equation \ref{equ:part1} by $a_{1}'$ and $a_{2}'$ gives rise to the following reduced equation:
 $$\begin{psmallmatrix}
 	-y_1 & x_1
 \end{psmallmatrix} \cdot  \begin{psmallmatrix}
 	a_1' \\
 	a_2'
 \end{psmallmatrix}=\begin{psmallmatrix}
 	-a_1' & a_2'
 \end{psmallmatrix} \cdot \begin{psmallmatrix}
 	y_0 \\
 	x_0 
 \end{psmallmatrix}+vj_0.$$

 It suffices to show $a_1'=a_2'=v=0$. Notice that the relations of $\A^\fr$ preserve the length of the characters. Thus, the length of characters on the right is always one greater than the left. Hence, $a_1'=a_2'=0$, which further implies $v=0$. In conclusion,  $a$ lies in the image class of $k$; the complex is exact at degree $1$ and the cohomology concentrates on the highest degree. In conclusion, $a$ lies in the image class of $k$; the complex is exact at degree $1$ and the cohomology concentrates on the highest degree.

At the highest degree, the cohomology is the cokernel of the differential. Taking the cokernel of the first row $-(y_1 \cdot - \cdot y_0)$ in the differential matrix means we represent the arrow $y_0$ by the complex-valued matrix $y_1$, and similarly for $x_{0}$ and $x_{1}$. Hence, the cohomology is the framed quiver representation $ e_1 \cA \langle \alpha_1 \rangle  \oplus e_2 \cA \langle \alpha_2 \rangle$, where the arrows $y_0,x_0,j_0, i_0$ are represented by the matrices $y_1,x_1,j_1,i_1$ respectively.
\end{proof}

\begin{rem}
	We will focus on the family of framed Lagrangian brane $(L^\fr,\cE)$ over $\C$, but the above result also holds for the family over $\Lambda_0$.
\end{rem}

Since the morphisms between the trivial vector bundles over the Lagrangians are invariant under gauge change (see Proposition \ref{prop:definite}), the stable quiver representation constructed above descends to Nakajima quiver varieties.

\begin{cor}
	The $\zeta_\R$-stable framed Lagrangian branes $(L^\fr,\cE)$ over $\C$ are transformed into Nakajima quiver varieties. Namely, the cohomology of $\cF^{(\mathbb{L}^\fr,\bb^\fr)}((L^\fr,\cE))$ lands in $\mathcal{M}(V,W).$
\end{cor}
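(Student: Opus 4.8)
The plan is to assemble three results already in place---Theorem \ref{thm:frep}, Corollary \ref{cor:rep}, and the matching of stability conditions set up in Section \ref{sec:stable}---and to observe that together they force the output representation to be a stable point of the moment-map zero locus, hence a point of the quiver variety. First I would invoke Theorem \ref{thm:frep}: the cohomology of $\cF^{(\mathbb{L}^\fr,\bb^\fr)}((L^\fr,\cE))$ is the framed representation of $Q^{\mathrm{ADHM}}$ in which the arrows $x_0,y_0,i_0,j_0$ act by the complex matrices $(B_1,B_2,i,j)$ that record the Maurer--Cartan deformation data of $(L^\fr,\cE)$. Since a $\zeta_\R$-stable framed Lagrangian brane carries, by definition, a deformation $(B,a,b)\in MC((L^\fr,\tilde{\cE}))$, Corollary \ref{cor:rep} applies and shows that $(B_1,B_2,i,j)$ automatically satisfies the ADHM relation $[B_1,B_2]+ij=0$; equivalently, the representation lies in the zero locus of the complex moment map.

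Next I would check that the $\zeta_\R$-stability imposed on the brane $(L^\fr,\cE)$ is exactly the $\zeta_\R$-stability of the associated representation in the sense of Definition \ref{def:quiverstability}. This is designed to be immediate from the formulation in Section \ref{sec:stable}: a $B$-invariant $I$-graded trivial subbundle $S\subset\cE$ lying in $\Ker b$ (resp.\ a $B$-invariant $T\supset\Im a$) is the same datum as a $(B_1,B_2)$-invariant graded subspace $S\subset V$ contained in $\Ker j$ (resp.\ $T\supset\Im i$), and the numerical tests $\zeta_\R\cdot\rank S\le 0$ and $\zeta_\R\cdot\rank T\le\zeta_\R\cdot\rank\cE$ are precisely conditions $(1)$ and $(2)$ of Definition \ref{def:quiverstability}. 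The one point that genuinely requires argument---and which I expect to be the main (indeed the only) non-formal step---is that this dictionary is insensitive to the chosen trivialization: by Proposition \ref{prop:definite} the morphisms between the trivial bundles over the Lagrangians are gauge-invariant, so replacing $(B_1,B_2,i,j)$ by a $GL(V)$-isomorphic representation does not alter the set of invariant subbundles or the values of $\zeta_\R\cdot\rank$, and therefore leaves the stability test unchanged.

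Finally I would conclude by a standard GIT remark: a $\zeta_\R$-stable point of the moment-map zero locus descends to a well-defined point of the quotient $\mathcal{M}(V,W)$, which is the Nakajima quiver variety attached to $Q^{\mathrm{ADHM}}$ in rank $(\dim V,\dim W)$. Combining this with the first two steps shows that the cohomology of $\cF^{(\mathbb{L}^\fr,\bb^\fr)}((L^\fr,\cE))$ lands in $\mathcal{M}(V,W)$, as claimed. Everything except the gauge-invariance of the stability dictionary is bookkeeping built directly on Theorem \ref{thm:frep} and Corollary \ref{cor:rep}, so I would keep the write-up short and concentrate the detail on that single step.
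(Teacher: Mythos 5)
Your proposal is correct and follows essentially the same route as the paper: the paper likewise combines Theorem \ref{thm:frep} with Corollary \ref{cor:rep} and the gauge-invariance of morphisms between the trivial bundles (its one-sentence justification immediately preceding the corollary) to conclude that the stable representation descends to $\mathcal{M}(V,W)$. Your only addition is spelling out the dictionary between the two $\zeta_\R$-stability conditions, which the paper leaves implicit because the definitions in Section \ref{sec:stable} were set up precisely so that this match is verbatim.
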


Moreover, using the universal twisted complex introduced in \cite{LNT23}, we can compare these constructions systematically. On the objects level, it gives a correspondence between the framed quiver representation and the framed torsion-free sheaves, which is a version of ADHM construction.

Let $\U$ be the universal twisted complex $\U:=(\cF^{(\bL,\bb)}(\bL^\fr,\bb^\fr),m_1^{\bb,\bb^\fr}(-))$. $\U$ induces a dg-functor $\cF^{\U}:= \U \otimes - : \A^\fr\lmod \to \A\lmod.$

\begin{cor} \label{cor:compare}
	$\cF^{\U} \circ \cF^{(\bL^\fr,\bb^\fr)}((L^\fr,\cE))$ is quasi-isomorphic to $\cF^{(\bL,\bb)}((L^\fr,\cE)).$
\end{cor}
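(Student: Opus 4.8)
The plan is to realize the composite $\cF^{\U}\circ\cF^{(\bL^\fr,\bb^\fr)}$ and the functor $\cF^{(\bL,\bb)}$ as two exact functors linked by the Floer product, and to prove the comparison map is a quasi-isomorphism by reducing to the reference brane. First I would spell out the module-theoretic descriptions built into the localized construction of \cite{CHL21,LNT23}: for any object $N$ one has $\cF^{(\bL,\bb)}(N)=\CF((\bL,\bb),N)$ with differential $m_1^{\bb}$, one has $\cF^{(\bL^\fr,\bb^\fr)}(N)=\CF((\bL^\fr,\bb^\fr),N)$ with differential $m_1^{\bb^\fr}$, and by definition $\U=\cF^{(\bL,\bb)}(\bL^\fr,\bb^\fr)=\CF((\bL,\bb),(\bL^\fr,\bb^\fr))$ carries the structure of an $(\A,\A^\fr)$-bimodule. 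With these identifications the composite $\cF^{\U}\circ\cF^{(\bL^\fr,\bb^\fr)}(N)$ becomes $\U\otimes_{\A^\fr}\CF((\bL^\fr,\bb^\fr),N)$, and the claim is that the Floer product $m_2$ (taken in the order prescribed by the composition convention, and using the opposite-algebra structure on the right $\A^\fr$-factor as in \cite{LNT23}) defines a chain map $\Phi_N\colon\U\otimes_{\A^\fr}\CF((\bL^\fr,\bb^\fr),N)\to\CF((\bL,\bb),N)$.

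That $\Phi_N$ is a chain map is the $A_\infty$-relation among $m_1,m_2,m_3$ for the $\bb$- and $\bb^\fr$-deformed operations, and naturality in $N$ follows from associativity of $m_2$ up to the $m_3$-homotopy; this is exactly the natural transformation furnished by the universal twisted complex of \cite{LNT23}. Both functors being compared are exact: $\CF((\bL,\bb),-)$ and $\CF((\bL^\fr,\bb^\fr),-)$ send exact triangles to exact triangles, and $\cF^{\U}=\U\otimes_{\A^\fr}-$ needs no derived correction because $\U$ is free as a right $\A^\fr$-module (in each degree it is a direct sum of copies of $\A^\fr$ indexed by the Floer generators), so it is a bounded complex of frees and hence $K$-flat. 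I would then check $\Phi_{(\bL^\fr,\bb^\fr)}$ is a quasi-isomorphism: since the reference is sent by its own mirror functor to the free rank-one module, $\cF^{(\bL^\fr,\bb^\fr)}(\bL^\fr,\bb^\fr)\simeq\A^\fr$, whence the source collapses to $\U\otimes_{\A^\fr}\A^\fr=\U=\cF^{(\bL,\bb)}(\bL^\fr,\bb^\fr)$ and $\Phi$ becomes the identity up to the unit homotopy. By d\'evissage the class of $N$ on which $\Phi_N$ is a quasi-isomorphism is closed under shifts, cones and summands, so it contains the subcategory generated by $(\bL^\fr,\bb^\fr)$; as we work throughout in that subcategory and $(L^\fr,\cE)$ lies in it, the corollary follows.

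For a self-contained verification avoiding the generation argument I would instead compute directly. By Theorem \ref{thm:frep} the complex $\cF^{(\bL^\fr,\bb^\fr)}((L^\fr,\cE))$ has cohomology concentrated in top degree, equal to the framed quiver representation $M$ in which the arrows $x_0,y_0,i_0,j_0$ act by the matrices $x_1,y_1,i_1,j_1$; hence it is quasi-isomorphic, as a complex of $\A^\fr$-modules, to $M$ placed in that degree. Because $\U$ is free over $\A^\fr$, the functor $\cF^{\U}$ preserves this quasi-isomorphism, so $\cF^{\U}(\cF^{(\bL^\fr,\bb^\fr)}((L^\fr,\cE)))\simeq\U\otimes_{\A^\fr}M$. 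Finally $\U=\cF^{(\bL,\bb)}(\bL^\fr,\bb^\fr)$ is, by the very computation of Theorem \ref{thm:frash} applied to the universal target, the three-term complex $\langle\alpha_1\rangle\to\langle\tilde X,\tilde Y,\tilde J\rangle\to\langle\alpha_2\rangle$ of $(\A,\A^\fr)$-bimodules whose differentials involve the universal arrows $x_0,y_0,j_0,i_0$ on the right; tensoring over $\A^\fr$ with $M$ specializes these to $x_1,y_1,j_1,i_1$ and reproduces verbatim the monad $\cF^{(\bL,\bb)}((L^\fr,\cE))$ of Theorem \ref{thm:frash}, giving the desired quasi-isomorphism.

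The step I expect to be the main obstacle is the quasi-isomorphism on the reference brane, i.e. the identification $\cF^{(\bL^\fr,\bb^\fr)}(\bL^\fr,\bb^\fr)\simeq\A^\fr$ together with the precise sense in which $(L^\fr,\cE)$ is generated by $(\bL^\fr,\bb^\fr)$; in the direct route this obstacle is traded for confirming that $\U$ is genuinely free (not merely flat) over $\A^\fr$ and that the universal differentials of $\U$ specialize under the representation $M$ exactly to the monad differentials, matching signs and the left/right $\A$- versus $\A^\fr$-actions. Both reductions are bookkeeping around the opposite-algebra conventions of \cite{LNT23} rather than new geometric input, so the essential content is that $\U$ is the universal twisted complex interpolating the two reference branes.
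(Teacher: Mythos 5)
Your direct-computation route is essentially the paper's proof: the paper forms the double complex $\cF^{(\bL,\bb)}(\bL^\fr)\otimes_{\A^\fr}\cF^{(\bL^\fr,\bb^\fr)}(L^\fr,\cE)$ and runs its spectral sequence, whose $E_1$-page collapses to a single row by Theorem \ref{thm:frep} --- which is exactly your observation that $\U$ is degreewise free over $\A^\fr$, so $\cF^{\U}=\U\otimes_{\A^\fr}-$ preserves the quasi-isomorphism onto the top cohomology $M$, and $\U\otimes_{\A^\fr}M$ specializes the universal arrows to $x_1,y_1,i_1,j_1$, reproducing the monad of Theorem \ref{thm:frash}. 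Your first (d\'evissage) route is not the paper's argument and, as you yourself flag, hinges on the unestablished identification $\cF^{(\bL^\fr,\bb^\fr)}(\bL^\fr,\bb^\fr)\simeq\A^\fr$ and on a split-generation statement the paper never proves, but since you fall back on the direct computation the proposal is sound.
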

\begin{proof}
	For simplicity, we will omit the deformation parameter in the proof. First, using the same method as in theorem \ref{thm:frash}, we get the following complex:	$$(\cF^{\bL}(\bL^\fr), m_1^{\bb,\bb^\fr}): \xymatrix{
		 \langle \alpha_1' \rangle \ar[rrr]^-{\begin{psmallmatrix}
				x_0 \cdot - \cdot x & y_0 \cdot - \cdot y &  j_0 \cdot
		\end{psmallmatrix}} &&& 
		\langle \tilde{X}', \tilde{Y}', \tilde{J}' \rangle \ar[rr]^-{\begin{psmallmatrix}
				-(y_0 \cdot - \cdot y) \\
				x_0 \cdot - \cdot x \\
				i_0 \cdot
		\end{psmallmatrix}} && \langle \alpha_2' \rangle 
	},$$
	
	By definition, $\cF^{\U} \circ \cF^{\mathbb{L}^\fr}(L^\fr)= \cF^{\bL}(\bL^\fr) \otimes \cF^{\bL^\fr}(L^\fr)$, which can be made into a double complex in the classical way. We will use the spectral sequence to compute the total cohomology.
	
	Let's denote the double complex by $C^{\bullet,\bullet}= \oplus_{p,q}C^{p,q}:= \oplus_{p,q} \A \otimes \CF^p(\bL,\bL^\fr) \otimes (\A^\fr)^{op} \otimes_{\A^\fr} \A^\fr \otimes CF^q(\bL^\fr,(L^\fr,\cE)),$ and denote the total complex by $Tot(C)$. Hence, there exists a spectral sequence with the first page $E_1^{p,q}=H^q_{ver}(C^{\bullet,\bullet})$ converging to the total cohomology $H^{p+q}(Tot(C))$.
	
	By Theorem \ref{thm:frep}, the cohomology of $\cF^{\bL^\fr}(L^\fr)$ concentrates on the top degree. Hence, the first page $E_1^{p,q}$ consists of a single row. Since the cohomology of $\cF^{\bL^\fr}(L^\fr)$ gives a representation of the framed quiver, which represents $\A^\fr$ by the $\C$ valued representation, the single row is exactly the monad we obtained in theorem \ref{thm:frash}. Hence, the second page becomes stable, and $\cF^{\U} \circ \cF^{(\mathbb{L}^\fr,\bb^\fr)}((L^\fr,\cE))$ is quasi-isomorphic to $\cF^{(\bL,\bb)}((L^\fr,\cE))$. Moreover, if one extends the complex over $\A^2_\C$, the stable page has only one nontrivial term, which is the framed torsion-free sheaf.
\end{proof}

\section{Nakajima quivers as mirrors to two-dimensional Lagrangian immersions}

A Lagrangian immersion and its deformation theory is governed by a quiver algebra with relations (reviewed in Section \ref{section:nc mirror}).  In this section, we construct the preprojective algebra relations that define the Nakajima quiver varieties by Lagrangian deformation theory of union of (immersed) spheres in dimension two.  Moreover, we transform framed Lagrangian immersions to monadic complexes which are crucial objects in Nakajima's formulation of framed torsion-free sheaves.

\subsection{Warm-up: quiver algebras without relations}
Let us start with complex dimension one.  For each quiver algebra without relation, we can construct a symplectic two-fold together with a Lagrangian immersion whose formal deformation space gives back the quiver algebra.



\begin{prop}
	\label{prop:freequiveralgebras}
	Given a quiver $Q$, there exists a symplectic 2-fold $M$ with a Lagrangian immersion $\mathbb{L}$, such that the associated quiver to $\mathbb{L}$ is $Q$. Furthermore, the formal deformation space of $\bL$ is the free path algebra.
\end{prop}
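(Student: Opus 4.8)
The plan is to realize the combinatorics of $Q=(I,E)$ by an immersed curve configuration in a convex symplectic surface and then to exploit a degree count that is special to complex dimension one, which bypasses any analysis of holomorphic polygons. Concretely, I would take $M=\mathbb{C}$ with its standard symplectic form (a convex/Liouville surface) and attach to each vertex $v\in I$ a properly embedded, contractible Lagrangian arc $C_v\cong\mathbb{R}$, conical at infinity, with the $C_v$ in general position. Setting $\bL:=\bigcup_{v\in I}C_v$, the normalization of $\bL$ has exactly one component per vertex, so components of $\bL$ correspond to vertices of $Q$, as required by the quiver construction of \cite{CHL21} recalled in Section~\ref{section:nc mirror}.

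For the arrows I would arrange the crossings to match $E$: each loop at $v$ is produced by one transverse self-intersection of $C_v$, and each arrow $a$ between distinct vertices $t(a),h(a)$ is produced by a single transverse intersection point of $C_{t(a)}$ and $C_{h(a)}$. Any prescribed incidence pattern of this form can be drawn by immersed arcs in the plane, and transversality is generic, so this presents no difficulty. I would then fix a grading and relative spin structure on $\bL$ (which exist since $M=\mathbb{C}$) and orient the arcs so that at each crossing the degree-one branch-jump generator points in the direction dictated by $a$.

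The key observation, which replaces the disc analysis entirely, is a degree count. By Floer-theoretic Poincar\'e duality, the two branch-jump generators attached to a single transverse (self-)intersection of curves lie in complementary degrees summing to $n=1$; hence each double point contributes exactly one degree-zero and one degree-one generator. Together with the degree-zero units carried by the contractible arcs, this shows that $\CF^\bullet(\bL,\bL)$ is concentrated in degrees $0$ and $1$, so that $\CF^2(\bL,\bL)=0$. Since $m_0^{\bb}=\sum_{k}m_k(\bb,\ldots,\bb)$ always lands in degree two, it vanishes identically, independently of which holomorphic discs actually exist. Therefore the obstruction ideal $R$ of Step~(5) in Section~\ref{section:nc mirror} is trivial, the degree-one generators are in bijection with the arrows of $Q$, and the formal deformation space is $\A=\widehat{\Lambda_0 Q}/R=\widehat{\Lambda_0 Q}$, the (completed) free path algebra of $Q$.

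The main obstacle I anticipate is bookkeeping rather than hard analysis. I must verify that the grading and orientation choices yield exactly one degree-one generator per arrow, in the prescribed direction, with no spurious degree-one classes; in particular this is why contractible arcs are preferable to embedded circles, whose $H^1$ would contribute an extra invertible self-loop at every vertex and so change the associated quiver. I must also arrange the arcs to be conical so that $M$ is genuinely convex and $\bL$ is a legitimate object of the framed Fukaya category of Section~\ref{sec:framedfukayacategory}, ensuring the relevant moduli of pseudo-holomorphic curves are compact. Once these combinatorial and transversality choices are fixed, the degree argument makes both the unobstructedness of $\bL$ and the freeness of its deformation algebra immediate.
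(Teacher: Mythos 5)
Your Floer-theoretic core is exactly the paper's: in dimension one the two generators at any transverse (self-)intersection have degrees summing to $1$, so $\CF^\bullet(\bL,\bL)$ has no degree-two part, $m_0^{\bb}$ (which always has degree two) vanishes, and the deformation space is the free path algebra. The gap is in the geometric realization step. You place all the arcs in the single plane $M=\C$ and assert that ``any prescribed incidence pattern of this form can be drawn by immersed arcs in the plane.'' This is false. For properly immersed arcs in $\C$ that are conical (radial) at infinity, the mod-$2$ intersection number of two distinct arcs is a proper-homotopy invariant determined entirely by whether their two ends interleave on the circle at infinity; self-crossings and wiggling cannot change it. Hence realizing $Q$ with \emph{exactly} one crossing per arrow between distinct vertices, and none between non-adjacent vertices, forces the mod-$2$ reduction of the underlying multigraph to be an interlacement graph of a chord diagram, i.e.\ a circle graph. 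Not every graph is a circle graph: by Bouchet's vertex-minor characterization the wheel $W_5$ is not, so a quiver whose underlying graph is $W_5$ (one arrow per edge) admits no arrangement of arcs in $\C$ with the prescribed crossings. You also cannot repair this by allowing extra pairs of crossings, since every crossing contributes a degree-one generator (for the gradings you use) and would change the associated quiver; trying instead to grade extra crossings away from degree one produces degree-two generators and destroys the very vanishing argument you rely on.

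The paper's proof sidesteps this entirely by not fixing the ambient surface in advance: to each vertex it assigns $\R\subset\C\cong T^*\R$, and for each arrow it plumbs the two corresponding copies of $T^*\R$ at one point (self-plumbing for loops), as in Example \ref{exmp:plumbing}. The ambient Liouville surface $M$ is thus built out of the quiver itself -- it is in general not $\C$ but a surface with several ends and possibly positive genus -- so there is no planarity or realizability constraint, and one transverse intersection per arrow is obtained tautologically. Your degree count then applies verbatim to the plumbed $\bL$. So the fix is simply to replace ``draw everything in $\C$'' by the plumbing construction; the rest of your argument stands.
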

\begin{proof}
	


	The construction is similar to Example \ref{exmp:plumbing}. To each vertex $i$ we associate the graded Lagrangian $\R \subset \C$ and denote it by $L_i$.  They are plumbed together according to the directed graph $Q$ to produce $\bL$.  Then the degree-one immersed generators are one-to-one corresponding to the arrows of $Q$.  Since there is no degree-two generator, $m_0^{\bb}=0$ and hence the formal deformation space of $\bL$ equals the free path algebra.
\end{proof}



In the next subsection, we carry out the construction in dimension-two and produce a double quiver with relations.

\subsection{Preprojective quiver algebra coming from Lagrangian Floer theory} 
\label{sec:preproj}
	 
In this section, we construct framed quiver algebras with relations by using framed Lagrangian immersions. Given a graph $D$, we show that the (framed) preprojective algebra emerges as the noncommutative deformation space of the (framed) Lagrangian immersion obtained by plumbing of 2-spheres according to $D$.   In particular, we will show that the unobstructed equations for the (framed) Lagrangian immersions in Theorem \ref{thm:preproj} and \ref{prop:nc-framed} give the complex moment map equations for Nakajima quiver varieties.


Let $D:=(I,E)$ be a graph, where $I$ is the set of vertices and $E$ the set of edges. The corresponding double quiver $D^{\#}$ is the directed quiver with the same vertices and the set of arrows (oriented edges) 
\begin{align*}
	\mathfrak{A}:=\left\{(e,o(e))\mid e\in E, o(e)  \text{ is an orientation of } e\right\}.
\end{align*}
Thus, each edge $e$ connecting vertices $v_{i}$ and $v_{j}$ in $D$ gives rises to two arrows $a:v_{i}\rightarrow v_{j}$ and $\bar{a}:v_{j}\rightarrow v_{i}$ in $D^{\#}$. An orientation $\Omega$ of the double quiver is a subset of $\mathfrak{A}$ such that $\mathfrak{A}=\Omega \cup \bar{\Omega}$ and $\Omega \cap \bar{\Omega}=\emptyset$. The orientation defines a function $\epsilon:\mathfrak{A} \to \{\pm 1\}$ given by $\epsilon(a)=1$ if $a \in \Omega$ and $\epsilon(a)=-1$ if $a \in \bar{\Omega}.$

 Similar to Example \ref{exmp:plumbing} and Proposition \ref{prop:freequiveralgebras}, we obtain a Liouville manifold by plumbing 2-spheres according to $D$. Let $\bL$ be the  resulting Lagrangian immersion.  Then the associated quiver of $\bL$ is the double quiver $Q=D^{\#}$ of the graph $D$. Here we choose a perfect Morse function for each sphere, so the only degree-one generators are given by intersections between adjacent spheres, which appear twice for each transverse intersection of Lagrangian two-spheres. 
 
 For each $e\in E$, we label two corresponding degree-one immersed generators  by $X_a$ and $X_{\bar{a}}$, where $X_a \in CF^{1}(S^{2}_{t(a)},S^{2}_{h(a)})$ and $X_{\bar{a}} \in CF^{1}(S^{2}_{h(a)},S^{2}_{t(a)})$. 
 


As in Section \ref{sec:unobs}, we will take the completion of $\Lambda_0 Q$  by taking the intersection over all the valuations $\mathfrak{v}$ that have
$|x_a|_{\mathfrak{v}},|x_{\bar{a}}|_{\mathfrak{v}} \leq 1$ and $|x_{\bar{a}}x_{a}|_{\mathfrak{v}}< 1$, where $x_a$ is an arrow in $Q$ and $x_{\bar{a}}$ is the corresponding arrow with the reverse orientation.  We denote the completed algebra by $\A_{\text{free}}$.

\begin{thm} \label{thm:preproj}
	 There exists an automorphism on $\A_{\text{free}}$ such that the noncommutative deformation space $\A$ of $\bL$ is the preprojective algebra of $Q$, i.e. $$\A= \A_{\text{free}}/J$$ where $J$ is the two-sided ideal generated by $\sum_{t(a)=v} \epsilon(a)x_{\bar{a}}x_{a}$ for $v \in I$ for some assignment of signs $\epsilon$.
\end{thm}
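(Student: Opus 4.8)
The plan is to compute the obstruction term $m_0^{\bb}$ for the deformation $\bb = \sum_{a} x_a X_a$ (the sum ranging over all arrows $a$ of $Q = D^{\#}$) and to exhibit an automorphism of $\A_{\text{free}}$ carrying the obstruction ideal $R$ onto the preprojective ideal $J$. The whole argument runs parallel to the single–node computation of Proposition~\ref{lem:HKL20} and Theorem~\ref{thm:nc-framed}, now performed simultaneously at every vertex of the plumbing. First I would pin down where the obstruction lives by a degree count: each plumbed two–sphere $S^2_v$ carries a perfect Morse function, so it contributes a unique degree–two critical point $P_v$, while the remaining generators of $\CF(\bL)$ are the degree–zero units and the degree–one immersed generators $X_a$. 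Since $m_0^{\bb}$ has degree two, it is a combination of the $P_v$, so I may write
\[ m_0^{\bb} = \sum_{v \in I} W_v \cdot P_v, \qquad W_v \in \A_{\text{free}}, \]
and $R$ is generated by the coefficients $\{W_v\}_{v\in I}$. Note every summand $m_k(\bb,\dots,\bb)$ has degree two, so pearl trajectories of all lengths landing at $P_v$ can contribute, and $W_v$ is a priori a series in the arrow variables forming loops based at $v$.

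Next I would analyze which trajectories actually contribute to $W_v$. By the Weinstein neighborhood theorem each nodal intersection of $\bL$ is modeled on the conic/self-plumbing picture of Example~\ref{exmp:conic}, so the pseudo-holomorphic polygons relevant to the obstruction are localized at the nodes and the computation should reduce, node by node, to the immersed-sphere case. The leading quadratic contribution is $m_2(X_{\bar a}, X_a)$, producing the loop $x_{\bar a} x_a$ at $v = t(a)$; polygons wrapping the same node repeatedly contribute a prefactor which is a power series in that loop variable, exactly as the factor $1 + \sum_i a_i(xy)^i$ appears in Proposition~\ref{lem:HKL20}. I therefore expect
\[ W_v = \sum_{t(a)=v} \epsilon(a)\, g_a\, x_{\bar a} x_a, \]
where each $g_a$ is an invertible power series of Gauss norm one in the loop $x_{\bar a}x_a$, and the signs $\epsilon(a)\in\{\pm 1\}$ record the relative orientations of the pearl moduli. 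As in the single-sphere case, the signs are fixed by the relative spin data together with the anti-symplectic involution exchanging the two branches at each node — the same mechanism that forced the $xy$- and $yx$-coefficients to coincide in Proposition~\ref{lem:HKL20} — and one checks that the resulting assignment is consistent with an orientation $\Omega$, which is all the statement requires.

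Finally I would remove the prefactors by a coordinate change, as in Theorem~\ref{thm:nc-framed}. Since $g_a$ is a series in $x_{\bar a} x_a$, one has $g_a\, x_{\bar a} = x_{\bar a}\, h_a$ with $h_a$ the same series in $x_a x_{\bar a}$; hence the rescaling $x_a \mapsto h_a\, x_a$ turns the $a$-term of $W_v$ into $x_{\bar a} x_a$. Because each $h_a$ is invertible of norm one, this is a norm-preserving automorphism of $\A_{\text{free}}$, and after applying it to every arrow each $W_v$ becomes the preprojective generator $\sum_{t(a)=v}\epsilon(a)\, x_{\bar a} x_a$, giving $\A = \A_{\text{free}}/R = \A_{\text{free}}/J$.

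The \textbf{main obstacle} I anticipate is precisely making this last coordinate change global and simultaneous. Each arrow variable $x_a$ is shared between the two vertex relations $W_{t(a)}$ and $W_{h(a)}$ (it enters the loop $x_{\bar a}x_a$ at $t(a)$ and the loop $x_a x_{\bar a}$ at $h(a)$), so the naive edge-by-edge rescaling that normalizes $W_{t(a)}$ perturbs $W_{h(a)}$, and no independent choice decouples them. One must instead construct a single automorphism normalizing all the $W_v$ at once, which I would do by a formal normal-form argument, solving order by order along the Novikov/loop-length filtration and using the $a\leftrightarrow\bar a$ symmetry from the involution to guarantee that the corrections at each order can be chosen consistently. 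A subsidiary difficulty feeding into this is the \emph{locality} claim of the second step: one must rule out (or else absorb) contributions from polygons that traverse several distinct neighbors of $v$ and produce mixed monomials $x_{\bar{a}'}x_{a'}x_{\bar a}x_a$ rather than powers of a single loop, since only in the purely local situation do the prefactors $g_a$ depend on one loop variable apiece. Establishing this locality through the convexity and energy estimates of the preceding subsection, together with the explicit conic local model, is the step requiring genuine geometric input; the ensuing algebraic reduction is then routine.
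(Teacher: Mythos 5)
Your skeleton---the degree count placing $m_0^{\bb}$ on the minimum points $P_v$, the reduction of each coefficient $W_v$ to single-loop contributions $\epsilon(a)\,x_{\bar a}x_a f_a(x_{\bar a}x_a)$, and a final change of variables---is exactly the paper's, and the substitution you write down, $\tilde{x}_a = h_a x_a = x_a f_a(x_{\bar a}x_a)$, is literally the paper's coordinate change \eqref{eq:coord}. The problem is that your argument stops being a proof at the point you label the ``main obstacle,'' and that obstacle is mis-diagnosed: no order-by-order normal-form induction is needed. The paper's proof rests on two observations. First, the anti-symmetric involution at each node forces the \emph{same} series $f$ to appear at both endpoints of an edge: the pair $(a,\bar a)$ contributes $x_{\bar a}x_a f(x_{\bar a}x_a)$ to $W_{t(a)}$ and $x_a x_{\bar a}f(x_a x_{\bar a})$ to $W_{h(a)}$. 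Second, one rescales only \emph{one arrow per edge}, namely $\tilde{x}_a := x_a f(x_{\bar a}x_a)$ for $a\in\Omega$ while keeping $\tilde{x}_{\bar a} := x_{\bar a}$. Then the identity $x_a f(x_{\bar a}x_a) = f(x_a x_{\bar a})\,x_a$ normalizes \emph{both} vertex relations at once: the $a$-term of $W_{t(a)}$ becomes $x_{\bar a}x_a f(x_{\bar a}x_a) = \tilde{x}_{\bar a}\tilde{x}_a$, and the $\bar a$-term of $W_{h(a)}$ becomes $x_a x_{\bar a}f(x_a x_{\bar a}) = x_a f(x_{\bar a}x_a)\,x_{\bar a} = \tilde{x}_a\tilde{x}_{\bar a}$. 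There is no interference at all, so your claim that ``no independent choice decouples them'' is false precisely for this asymmetric choice---which is also why the orientation $\Omega$ and the signs $\epsilon$ appear in the statement of the theorem in the first place. Your proposal has all the ingredients (the involution symmetry, the rescaling with $x_{\bar a}$ fixed) but fails to assemble them into the one-shot substitution, and instead gestures at an induction that is never carried out.

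The subsidiary locality issue is likewise resolved more cheaply than you propose. You defer it to ``convexity and energy estimates'' ruling out polygons traversing several nodes; in the paper, the identification of the contributing configurations does this automatically: the pearl trajectories landing on $P_v$ are unions of Morse flow lines with \emph{constant} polygons at the immersed points (the same statement as in Theorem \ref{thm:nc-framed}, inherited from Lemma 3.3 of \cite{HKL23}, i.e.\ Proposition \ref{lem:HKL20}). A constant polygon sits at a single node, so its corners alternate between $X_a$ and $X_{\bar a}$ for one fixed edge, and each $W_v$ is automatically a sum over arrows at $v$ of series in the single loops $x_{\bar a}x_a$; mixed monomials such as $x_{\bar a'}x_{a'}x_{\bar a}x_a$ never arise. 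The convexity argument of Section 3 enters only to guarantee compactness of the moduli spaces, not this localization. In short: right route and right formulas, but the two steps you defer are the actual content of the proof, and both close by sharper use of the facts you only half-invoke.
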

\begin{proof}	
	

	Let $\bb=\sum_{a\in \mathfrak{A}} x_a X_a$. 
	We compute the obstruction term $m_0^{\bb}$ of $\bL$. The orientation and spin structure of $\bL$ correspond to an orientation $\Omega$ of the double quiver $Q$, which defines the function $\epsilon:\mathfrak{A}  \to \{\pm 1\}$. 
	

	
	The outputs of the obstruction term $m_0^{\bb}$ have degree 2. In this case, the degree-two generators are the minimum points $P_v$ of each sphere $S^2_v$. Thus, the relations are contributed from pearl trajectories to $P_v$. Such trajectories are unions of Morse flow lines to $P_v$ and constant polygons $(x_{\bar{a}}x_a)^{k}$ for all $x_a$ with $t(x_a)=v$.  
	Similarly to Theorem \ref{thm:nc-framed}, we obtain  
	\begin{align*}
		m_0^b & =\sum_k m_{2k}\left(\sum_{a} x_a X_a, \sum_{\bar{a}} x_{\bar{a}} X_{\bar{a}}, \cdots, \sum_{a} x_a X_a, \sum_{\bar{a}} x_{\bar{a}} X_{\bar{a}}\right) \\
		& = \sum_v \left(\sum_{t(a)=v} \epsilon(a)x_{\bar{a}}x_{a}\left(1+ \sum_j a_j(x_{\bar{a}}x_{a})^j \right)\right)P_v,
	\end{align*}
  where $a_j$ depends on the Kuranishi perturbation, and $(x_{\bar{a}}x_{a})^j$ is contributed by the constant polygon with corners being the immersed sectors $x_a$ and $x_{\bar{a}}$, ordered counterclockwise.
  
Because of the anti-symmetric involution in a neighborhood of the transverse intersection point, a pair of arrows contributes to the coefficients $x_{\bar{a}}x_{a}(1+ \sum_j a_j(x_{\bar{a}}x_{a})^j)$ at $P_{t(a)}$ and $x_{a}x_{\bar{a}}(1+ \sum_j a_j(x_{a}x_{\bar{a}})^j)= x_a (1+ \sum_j a_j(x_{\bar{a}}x_{a})^j)x_{\bar{a}}$ at $P_{h(a)}$. We define: 
	\begin{equation}
		\label{eq:coord}
		\tilde{x}_a= x_{a}\left(1+ \sum_j a_j(x_{\bar{a}}x_{a})^j\right), \quad \tilde{x}_{\bar{a}}= x_{\bar{a}},
	\end{equation} for all $a$ such that $\epsilon(a)=1$.
	
	We will abuse the notation and replace $\tilde{x}$ by $x$. Thus the noncommutative deformation space of $\bL$ is $\A:= \A_{\text{free}}/(\sum_{t(a)=v} \epsilon(a)x_{\bar{a}}x_{a})$.
\end{proof}


Similarly, we compute the weakly unobstructed equation of a framed Lagrangian immersion $\bL^{\textrm{fr}}:= \bL \bigcup_v F_v$, which is obtained by extending the construction in Example \ref{exmp:plumbing} to $D$. The associated quiver will be the framed quiver $Q^{\fr}$ of $Q$. Let $i_v$ (resp. $j_v$) be the arrow corresponding to the  branch jump $I_v$ from $F_v$ to $S^2_v$ (resp. $J_v$ from $S^2_v$ to $F_v$). We  take the completion $\A_{\text{free}}^\fr$ of $\Lambda_0 Q$ by taking the intersection over all the valuations $\mathfrak{v}$ that satisfy $|x_a|_{\mathfrak{v}},|x_{\bar{a}}|_{\mathfrak{v}}, |i_a|_{\mathfrak{v}}, |j_a|_{\mathfrak{v}} \leq 1$ and $|x_{\bar{a}}x_{a}|_{\mathfrak{v}}, |i_0j_0|_{\mathfrak{v}}< 1$. 

\begin{prop} 
	\label{prop:nc-framed}
	Let $\bb^{\fr}:=\sum_{a\in \mathfrak{A}} x_a X_a + \sum_{v\in I} i_vI_v + \sum_{v\in I} j_vJ_v$. There exists a coordinate change on $\A_{\text{free}}^\fr$ such that 
	the formal deformation space of the framed Lagrangian immersion $\mathbb{L}^{\mathrm{fr}}$ equals  $$\A^{\textrm{fr}}= \A_{\text{free}}^\fr/ J^{\textrm{fr}}, $$  where $J^{\textrm{fr}}$ is the two-sided ideal generated by $(\sum_{t(a)=v} \epsilon(a) x_{\bar{a}} x_{a})_v + (ij)_v =\vec{0}$ for $v \in I$.
\end{prop}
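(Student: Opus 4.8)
The plan is to run the computation of Theorem \ref{thm:preproj} again, now carrying along the extra contributions produced by the framing intersections, exactly in the way the passage from Proposition \ref{lem:HKL20} to Theorem \ref{thm:nc-framed} handled the single-sphere case. First I would write $\bb^{\fr} = \sum_{a} x_a X_a + \sum_{v} i_v I_v + \sum_{v} j_v J_v$ and compute the obstruction $m_0^{\bb^{\fr}} = \sum_{k} m_k(\bb^{\fr},\ldots,\bb^{\fr})$. Since each sphere $S^2_v$ carries a perfect Morse function, the only degree-two generators are the minimum points $P_v$, so $m_0^{\bb^{\fr}}$ is a sum $\sum_v C_v P_v$ and it suffices to identify each coefficient $C_v$.

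The key structural point is that, in the exact (Liouville) setting produced by the plumbing of Example \ref{exmp:plumbing}, every pearl trajectory landing at $P_v$ is a Morse flow line glued to constant polygons, and a constant polygon is supported at a single point of $M$. The self-intersection points of $\bL$, where $S^2_v$ meets an adjacent sphere, are geometrically distinct from the framing intersection point $F_v \cap S^2_v$, so no constant polygon can carry both immersed corners $X_a, X_{\bar a}$ and framing corners $I_v, J_v$. Hence $C_v$ splits with \emph{no mixed terms}: a preprojective part as in Theorem \ref{thm:preproj}, namely $\sum_{t(a)=v} \epsilon(a)\, x_{\bar a} x_a\bigl(1 + \sum_j a_j (x_{\bar a} x_a)^j\bigr)$, plus a framing part $i_v j_v \bigl(1 + \sum_k b_k (i_v j_v)^k\bigr)$ arising exactly from the constant polygons $S^2_v \to F_v \to S^2_v$ bounded at $F_v \cap S^2_v$, as in Theorem \ref{thm:nc-framed}. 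Because in the plumbing each $F_v$ meets only $S^2_v$, the framing contributes solely the diagonal loop $i_v j_v$ at $P_v$, with no cross-vertex interaction.

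It then remains to absorb the higher-order correction factors by a coordinate change, which I would do by superposing the two rescalings already used. Following Theorem \ref{thm:preproj}, for every $a$ with $\epsilon(a)=1$ I replace $x_a$ by $\tilde x_a = x_a\bigl(1 + \sum_j a_j (x_{\bar a} x_a)^j\bigr)$ (keeping $x_{\bar a}$), which simultaneously normalizes the contribution at $P_{t(a)}$ and at $P_{h(a)}$, since $x_a(1+\cdots)x_{\bar a}$ is the $P_{h(a)}$ contribution of the same pair. Following Theorem \ref{thm:nc-framed}, I replace $i_v$ by $\tilde i_v = \bigl(1 + \sum_k b_k (i_v j_v)^k\bigr) i_v$ (keeping $j_v$). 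These two substitutions are independent, one touching only the $x$-variables and the other only the $i$-variables, and each is an invertible norm-preserving automorphism of $\A_{\text{free}}^{\fr}$ because the correction factors have Gauss norm one and are invertible under the standing conditions $|x_{\bar a}x_a|_{\mathfrak{v}} < 1$ and $|i_v j_v|_{\mathfrak{v}} < 1$. After both changes, $C_v$ becomes precisely $\sum_{t(a)=v} \epsilon(a)\, x_{\bar a} x_a + i_v j_v$, yielding the stated ideal $J^{\fr}$.

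The main obstacle I anticipate is justifying rigorously the absence of mixed framing/immersed contributions and confirming that the framing polygons reproduce exactly the series $i_v j_v\bigl(1 + \sum_k b_k (i_v j_v)^k\bigr)$ without interacting with neighbouring spheres. This is essentially a Kuranishi-structure bookkeeping statement about which constant polygons contribute; it rests on the convexity argument (every disc stays inside $M_c$) together with the single-point support of constant polygons. Once that decomposition of $C_v$ is in hand, the coordinate change is routine and parallels the two earlier theorems verbatim.
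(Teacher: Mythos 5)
Your proposal is correct and follows essentially the same route as the paper: both identify the sphere minima $P_v$ as the only degree-two outputs (the framing contributing none), decompose the obstruction coefficient into the preprojective part of Theorem \ref{thm:preproj} plus the framing part of Theorem \ref{thm:nc-framed}, and normalize via the two independent coordinate changes $\tilde x_a = x_a(1+\sum_j a_j(x_{\bar a}x_a)^j)$ and $\tilde i_v = (1+\sum_k b_k(i_vj_v)^k)i_v$. The paper's proof is just a terse version of yours (it defers to the two earlier theorems), and your explicit justification of the absence of mixed terms via single-point support of constant polygons is exactly the bookkeeping the paper leaves implicit.
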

\begin{proof}
	The framing Lagrangians are equipped with a Morse function with exactly one maximal point, which has degree zero. Hence, the obstruction term $m_0^{\bb^\fr}$ only has outputs at the minimum points on each sphere $S_v^2$, for $v \in I$. Once we fix the perturbation of Kuranishi structure, we can change the coordinate of $i_v$ at each vertex consistently. The rest of the proof is similar to Theorem \ref{thm:nc-framed} and Theorem \ref{thm:preproj}.
\end{proof}

Each component $\bL_i$ is equipped with a trivial vector bundle of rank $r_i$.  We take the vector space $M(V,W) \times \prod_i \Hom(\pi_1(\bL_i), \GL(r_i,\C))$ and define a family of Fukaya algebras over it.  Since Floer theory, namely the vector space of chains together with the $m_k$-operators, is equivariant under gauge transformations, the family indeed descends to its quotient by the gauge group $\prod_i \GL(r_i,\C)$, which in this case (with the above preprojective algebra relations) gives a Nakajima quiver variety by choosing a suitable GIT stability condition.  As a consequence, we have a family of stable Lagrangian deformations of $\bL^\fr$ parametrized by a Nakajima quiver variety.

\begin{cor} 
	Let $\bL^\fr$ be a framed Lagrangian immersion and $\cE$ be a trivial vector bundle of rank $\vec{v}$ (resp. rank $\vec{w}$) over the compact components (resp. framing components) of $\bL^\fr$. Then restricting to the subcategory generated by $\bL^\fr$, the Maurer-Cartan space of the framed Lagrangian brane $(\bL^{\fr},\cE)$ is a quotient stack
	\begin{equation}
		\mathcal{M}(V,W):=\left\{(B, i, j)\in Rep(Q^\fr,V,W) \otimes \Lambda_0 \mid \, (\sum\epsilon(a) B_{\bar{a}} B_{a} + ij)_v =\vec{0}  \right\}/GL(\vec{v}).
	\end{equation}
\end{cor}
\begin{proof}
	Notice that the coordinate change is gauge equivariant as in the Corollary \ref{cor:rep}. The remaining parts are similar.
\end{proof}

\subsection{ALE spaces: when the Lagrangian immersions are homological tori}

From the point of view of Lagrangian deformations, an affine ADE quiver is special because the Euler characteristic of the Floer cohomology of its associated Lagrangian brane is semi-positive definite as a  quadratic form on rank.

First, let us deduce a formula for Euler characteristic of the Floer cohomology of a Lagrangian immersion in general dimensions.

\begin{lem}
	The Euler characteristic of the Floer cohomology of a Lagrangian immersion $\bL$ equals
	$$ \chi^{\textrm{LF}}(\bL) = \sum_i \chi(\bL_i) + 2 \sum_{e:\, \textrm{codim}(\bL_e) \textrm{ is even}} (-1)^{\deg u_e}\chi(\bL_e) $$where
	$\bL_i$ denotes the normalization of the $i$-th component of $\bL$, $e$ are the labeling of the clean intersections $\bL_e$, $u_e$ is one of the two branch jumps associated with $\bL_e$ (and the parity of $\deg u_e$ is independent of the choice),
	and $\chi(\bL_i), \chi(\bL_e)$ are the usual Euler characteristics.
\end{lem}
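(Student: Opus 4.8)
The plan is to read off $\chi^{\textrm{LF}}(\bL)$ from the graded Floer cochain space, since the Euler characteristic of a complex equals the alternating sum of the dimensions of its terms and hence does not see the differential (in particular I will need neither unobstructedness nor any $m_1$-closedness). Recall from the Morse (canonical) model for clean intersections used in this paper that the Floer cochain space of $\bL$ splits as a \emph{diagonal} part together with the \emph{branch-jump} sectors,
$$
\CF^\bullet(\bL,\bL)\;=\;\Big(\bigoplus_i C^\bullet(\bL_i)\Big)\ \oplus\ \Big(\bigoplus_e\big(U_e\oplus\bar U_e\big)\Big),
$$
where $C^\bullet(\bL_i)$ is the Morse cochain complex of the normalization $\bL_i$, and $U_e,\bar U_e$ are the two branch-jump sectors attached to each clean self-intersection $\bL_e$, each being a copy of the Morse cochain complex of $\bL_e$ placed in a shifted grading. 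Writing $\chi^{\textrm{LF}}(\bL)=\sum_{\text{generators}}(-1)^{\deg}$, it suffices to evaluate the contribution of each summand.

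First I would treat the diagonal part: a generator sits at a critical point $p$ of the chosen Morse function on $\bL_i$ in degree $\mathrm{ind}(p)$, so its total contribution is $\sum_{p}(-1)^{\mathrm{ind}(p)}=\chi(\bL_i)$, which accounts for the term $\sum_i\chi(\bL_i)$. For a branch-jump sector, a critical point of index $\lambda$ on $\bL_e$ contributes a generator in degree $\lambda+s_e$, where $s_e\in\Z$ is a base degree shift depending only on the sector; hence $U_e$ contributes $(-1)^{s_e}\sum_p(-1)^{\mathrm{ind}(p)}=(-1)^{s_e}\chi(\bL_e)$, and likewise $\bar U_e$ contributes $(-1)^{\bar s_e}\chi(\bL_e)$ for the base shift $\bar s_e$ of the opposite sector.

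The crux is the grading relation
$$
s_e+\bar s_e=\textrm{codim}(\bL_e),
$$
which I would derive from Poincar\'e duality for the self-Floer complex, $\CF^k(\bL,\bL)\cong\CF^{\,n-k}(\bL,\bL)^{*}$. The associated perfect pairing interchanges the two branch-jump sectors $U_e$ and $\bar U_e$, and on the Morse model it restricts to the Poincar\'e duality pairing on $\bL_e$, sending an index-$\lambda$ critical point to an index-$(\dim\bL_e-\lambda)$ one. Matching the degree of a class in $U_e$ with that of its dual in $\bar U_e$ then forces $s_e+\bar s_e=n-\dim\bL_e=\textrm{codim}(\bL_e)$; alternatively one may invoke the Robbin--Salamon index formula for clean intersections. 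In particular $s_e\equiv\bar s_e\pmod 2$ precisely when $\textrm{codim}(\bL_e)$ is even, which is the content of the parenthetical assertion that $\deg u_e$ has a well-defined parity.

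Finally I would combine the two sectors: the contribution of $\bL_e$ is
$$
(-1)^{s_e}\chi(\bL_e)+(-1)^{\bar s_e}\chi(\bL_e)=(-1)^{s_e}\,\chi(\bL_e)\big(1+(-1)^{\textrm{codim}(\bL_e)}\big),
$$
which equals $2(-1)^{s_e}\chi(\bL_e)$ when $\textrm{codim}(\bL_e)$ is even and vanishes when it is odd. Setting $\deg u_e:=s_e$, the degree of the bottom branch-jump generator (whose parity is independent of the choice of sector by the previous step), yields exactly the stated formula. I expect the main obstacle to be the grading relation $s_e+\bar s_e=\textrm{codim}(\bL_e)$—pinning down the conventions for the branch jumps and verifying the identity with the correct signs—while the remaining bookkeeping is routine once the shifts are identified.
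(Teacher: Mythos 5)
Your proposal is correct and follows essentially the same route as the paper: compute the Euler characteristic on the chain level, split the Morse-model generators into the diagonal parts $C^\bullet(\bL_i)$ and the two shifted copies of $C^\bullet(\bL_e)$ for each clean intersection, and use the relation $\deg u_e + \deg v_e = \mathrm{codim}(\bL_e)$ to see that odd-codimension sectors cancel while even ones contribute $2(-1)^{\deg u_e}\chi(\bL_e)$. The only difference is that you supply a justification (Poincar\'e duality on the self-Floer complex, or Robbin--Salamon indices) for the degree relation, which the paper simply asserts; this is a welcome addition rather than a different argument.
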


\begin{proof}
First, note that the Euler characteristic of the total cohomology equals that of the chain complex.  It suffices to compute the Euler characteristic of the Floer complex formed by the Morse generators of clean intersections of $\bL$. 

The Floer complex is generated by the following.  Codimension zero clean intersections are the connected components $\bL_i$ in the normalization of $\bL$, which contribute $\sum_i\chi(\bL_i)$ in $\chi^{\textrm{LF}}(\bL).$  Additionally, the clean intersections of positive codimension are smooth manifolds $\bL_e$. Each degree $k$ critical point of $\bL_e$ contributes two generators, which have degree $ u_e +k$ and $v_e+k$ respectively, in the Floer complex, where $u_e$ and $v_e$ are the two branch jumps associated with the transverse intersection point. Denote the number of degree $k$ critical points of $\bL_e$ by $a_k$. The Euler character of $\bL$ is
\begin{align*}
	\chi^{\textrm{LF}}(\bL)=& \sum_i \chi(\bL_i)+ \sum_e \sum_k (-1)^k a_k\left((-1)^{\deg u_e}+(-1)^{\deg v_e}\right) \\
	= & \sum_i \chi(\bL_i)+ \sum_e \left(\sum_k (-1)^k a_k (-1)^{\deg u_e} \left(1+(-1)^{\deg v_e + \deg u_e}\right)\right).
\end{align*}
Notice that $\deg v_e +\deg u_e  =\textrm{dim} \bL - \textrm{dim} \bL_e$. Hence, \begin{align*}
	\chi^{\textrm{LF}}(\bL)=& \sum_i \chi(\bL_i)+2 \sum_{e:\, \textrm{codim}(\bL_e) \textrm{ is even}} \left(\sum_k (-1)^k a_k (-1)^{\deg u_e} \right) \\ 
	=& \sum_i \chi(\bL_i) + 2 \sum_{e:\, \textrm{codim}(\bL_e) \textrm{ is even}} (-1)^{\deg u_e}\chi(\bL_e).
\end{align*}
\end{proof}

Note that $\bL$ is non-displaceable if $\chi(\bL)\not=0$.

For a Lagrangian brane $(\bL,\cE)$ of rank $\vec{r}$, where $\vec{r}=(r_i)_{i \in I}$ records the rank of the bundle over each component, the Euler characteristic of its Floer cohomology equals
$$ \chi_\bL^{\textrm{LF}}(\vec{r}) = \sum_i \chi(\bL_i) \cdot r_i^2 + 2 \sum_{e:\, \textrm{codim}(\bL_e) \textrm{ is even}} (-1)^{\deg u_e}\chi(\bL_e) \cdot r_{h(e)}r_{t(e)}$$
where $h(e),t(e)$ label the two components involved in the intersection $\bL_e$.  $\chi_\bL^{\textrm{LF}}$ is a quadratic form on integer vectors $\vec{r}$.

\begin{defn}
	A Lagrangian immersion $\bL$ is said to be (semi-)positive definite if $\chi_\bL^{\textrm{LF}}$ is (semi-)positive definite.
\end{defn}

\begin{prop} \label{prop:definite}
	Suppose $\bL$ is of dimension two and every component of the normalization of $\bL$ is a sphere.  Then $\bL$ is positive definite if and only if its intersection graph is of ADE type.  $\bL$ is strictly semi-positive definite if and only if it is of affine ADE type.  Moreover, $\chi_\bL^{\textrm{LF}}$ is always even.
\end{prop}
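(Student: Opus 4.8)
The plan is to evaluate the general formula for $\chi_\bL^{\textrm{LF}}$ in the two-dimensional sphere case, recognize the outcome as (twice) the Tits form of the intersection graph, and then invoke the classical classification of graphs by the definiteness of this form.

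First I would substitute the relevant Euler characteristics and degrees. Every normalization component is a sphere, so $\chi(\bL_i) = \chi(S^2) = 2$. In complex dimension two a positive-codimension clean intersection of Lagrangian surfaces is a transverse double point: each $\bL_e$ is a point, so $\chi(\bL_e) = 1$ and $\textrm{codim}(\bL_e) = 2$ is even (hence the term is counted). At such a point $\deg u_e + \deg v_e = \dim \bL - \dim \bL_e = 2$, and by the grading convention used in the plumbing construction both branch jumps are degree-one generators, so $\deg u_e = 1$ and $(-1)^{\deg u_e} = -1$. Substituting,
$$ \chi_\bL^{\textrm{LF}}(\vec{r}) = 2\sum_{i} r_i^2 - 2\sum_{e \in E} r_{h(e)} r_{t(e)} = \vec{r}^{\,T} C\, \vec{r},$$
where $C = 2\,\Id - A$ is the symmetric generalized Cartan matrix of the intersection graph $D = (I,E)$ and $A$ is its adjacency matrix (so a loop lowers a diagonal entry and a multiple edge enlarges an off-diagonal entry). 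The evenness claim falls out immediately: every term carries an explicit factor $2$, so $\chi_\bL^{\textrm{LF}}(\vec{r}) \in 2\Z$ for all integral $\vec{r}$.

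With the identity $\chi_\bL^{\textrm{LF}} = \vec{r}^{\,T} C \vec{r}$ established, the two equivalences are exactly the classical dichotomy for symmetric generalized Cartan matrices, which I would cite from the standard theory (for instance Kac, \emph{Infinite-Dimensional Lie Algebras}): a connected such matrix is positive definite precisely when its diagram is a finite-type simply-laced Dynkin diagram $A_n, D_n, E_6, E_7, E_8$, and it is positive semidefinite with nontrivial radical precisely when its diagram is a simply-laced affine (Euclidean) diagram $\widetilde{A}_n, \widetilde{D}_n, \widetilde{E}_6, \widetilde{E}_7, \widetilde{E}_8$, the radical being spanned by the imaginary root. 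Interpreting ``strictly semi-positive definite'' as positive semidefinite but not positive definite (i.e.\ degenerate), reading off the definiteness of $C$ yields ``positive definite $\iff$ ADE'' and ``strictly semi-positive definite $\iff$ affine ADE.''

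I expect the genuine care to lie in the bookkeeping rather than in any deep step. The main point to nail down is the uniform sign $(-1)^{\deg u_e} = -1$, which rests on the grading normalization for transverse double points of Lagrangian surfaces; I would cross-check it against the degree-one intersection generators already appearing in the $A_1$ and ADHM computations. The other point is to confirm that the degenerate low-rank diagrams are absorbed correctly into the same formula: a single self-intersecting sphere ($\widetilde{A}_0$) yields the identically zero form $2r^2 - 2r^2 = 0$, while two spheres meeting in two points ($\widetilde{A}_1$) yield $2(r_1 - r_2)^2$, both semi-positive definite and degenerate, in agreement with their affine type. Once loops and multiple edges are handled this way, the classical classification closes the proof.
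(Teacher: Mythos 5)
Your proposal is correct and follows essentially the same route as the paper: substitute $\chi(S^2)=2$, $\chi(\bL_e)=1$, $(-1)^{\deg u_e}=-1$ into the general Euler-characteristic formula to get twice the Tits form of the intersection graph, then invoke the classical classification of graphs with positive (semi-)definite Tits form (the paper cites Gabriel and Nazarova where you cite Kac, but this is the same classical fact). Your additional sanity checks on $\widetilde{A}_0$ and $\widetilde{A}_1$ and the explicit sign bookkeeping are consistent with, though not spelled out in, the paper's argument.
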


\begin{proof}
	In this case,
	$$ \chi_\bL^{\textrm{LF}}(\vec{r}) = 2 \left(\sum_i r_i^2 - \sum_{e}  r_{h(e)}r_{t(e)} \right) $$
	and $\sum_i r_i^2 - \sum_{e}  r_{h(e)}r_{t(e)}$ is the Tits quadratic form on $\vec{r}$ associated to the intersection graph with vertices $i$ and edges $e$.  by the well-known theorems of Gabriel \cite{Gab72} and Nazarova \cite{Naz73}, the Tits quadratic form is positive definite (resp. semi-positive definite) if and only if the graph is of ADE type (resp. affine ADE type) .
\end{proof}

Now consider an affine ADE diagram. Let $\bL$ be the corresponding Lagrangian immersion.  Then the associated matrix $\textbf{C}$ of $\chi_\bL^{\textrm{LF}}(\vec{r})$ is a Cartan matrix of an affine type, which has an one-dimensional kernel. Let $\vec{\delta} \in \mathbb{Z}^I$ be a vector in the kernel of $\textbf{C}$ such that $\delta_i >0$ and $\min \, \delta_i=1$. This uniquely determines $\vec{\delta}.$ Choose a generic parameter $\zeta^0 \in \R^I$ such that $\zeta^0 \cdot \vec{\delta}=0$. As a consequence of Proposition \ref{prop:nc-framed}, the localized mirror of $(\bL,\cE)$ gives an asymptotically locally Euclidean (ALE) space found by \cite{Kro89}.

\begin{cor}\label{prop:moduli}
	Let $\bL$ be the Lagrangian immersion obtained by plumbing according to the affine ADE diagram. Then the moduli space of $\zeta^0$-stable Lagrangian branes $(\bL,\cE)$ of rank $\vec{\delta}$ over $\C$ admits an asymptotically locally Euclidean metric.
\end{cor}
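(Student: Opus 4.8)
The plan is to reduce the statement to one of Kronheimer's hyperk\"ahler quotients and then invoke his existence theorem for ALE metrics. First I would use Theorem \ref{thm:preproj} to read off the Maurer--Cartan locus explicitly. For a Lagrangian brane $(\bL,\cE)$ of rank $\vec{\delta}$ over $\C$, a deformation is an element $B \in E(V,V) = \bigoplus_{a \in \mathfrak{A}} \Hom(V_{t(a)},V_{h(a)})$ with $\dim V_i = \delta_i$, and the weak unobstructedness equation $m_0^{\bb} = 0$ becomes precisely the preprojective relation $\sum_{t(a)=v} \epsilon(a)\,x_{\bar a}x_a = 0$ at each vertex $v \in I$. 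In representation-theoretic terms this is the complex moment map equation $\mu_{\C}(B)=0$ for the action of $G = \prod_{i} \GL(\delta_i,\C)$ on $E(V,V)$, so the Maurer--Cartan space is modelled by $\mu_{\C}^{-1}(0) \subset \mathrm{Rep}(Q,\vec{\delta})$, and the moduli space of $\zeta^0$-stable branes is the quotient of its $\zeta^0$-stable locus by $G$.

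Next I would make the stability precise and pass to a hyperk\"ahler quotient. Following King \cite{Kin94}, the $\zeta^0$-stability of Section \ref{sec:stable} is GIT stability for the linearization given by the character $g \mapsto \prod_i \det(g_i)^{\zeta^0_i}$; the defining inequality $\zeta^0 \cdot \mathrm{rank}\,S \le 0$ on $B$-invariant subbundles $S$ is exactly King's inequality for the corresponding module over the preprojective algebra. The normalization $\zeta^0 \cdot \vec{\delta}=0$ guarantees that this character is trivial on the central scalars $\C^\times \subset G$ (which act trivially by conjugation), so the linearization descends to the effective group $G/\C^\times$. By the Kempf--Ness theorem the resulting GIT quotient $\mu_{\C}^{-1}(0)^{\zeta^0\text{-st}}/\!\!/\,G$ is homeomorphic, and biholomorphic on the stable locus, to the hyperk\"ahler quotient $\big(\mu_{\R}^{-1}(\zeta^0) \cap \mu_{\C}^{-1}(0)\big)/\prod_i U(\delta_i)$, where $\mu_{\R}$ is the real moment map for the unitary gauge group acting on the flat hyperk\"ahler vector space $E(V,V)$, whose quaternionic structure pairs each arrow $x_a$ with its reverse $x_{\bar a}$, and where the hermitian metric is the one furnished by the unitary local system $\cE$.

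Finally I would appeal to Kronheimer \cite{Kro89}. By Proposition \ref{prop:definite} the intersection graph is affine ADE, so $\chi_{\bL}^{\textrm{LF}}$ is the doubled Tits form of an affine Cartan matrix $\mathbf{C}$, whose radical is spanned by the minimal positive imaginary root $\vec{\delta}$. For dimension vector $\vec{\delta}$, complex level $\mu_{\C}=0$, and a generic real level $\zeta^0$ in the wall-free chamber of the hyperplane $\{\zeta^0 \cdot \vec{\delta}=0\}$, Kronheimer's analysis shows that every $\zeta^0$-semistable point is in fact stable with trivial stabilizer, so the quotient is a smooth four-manifold, and that the induced hyperk\"ahler metric is complete and asymptotically locally Euclidean, asymptotic to $\C^2/\Gamma$ for the finite subgroup $\Gamma \subset \SU(2)$ attached to the affine ADE type by the McKay correspondence. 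Transporting this metric back through the identifications above equips the moduli space of $\zeta^0$-stable Lagrangian branes with an ALE metric, as claimed.

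The hard part will be the second step, namely matching the hyperk\"ahler data. Floer theory delivers only the complex-symplectic side directly---the preprojective relation $\mu_{\C}=0$ and the complex gauge group---whereas the ALE metric depends essentially on the real moment map $\mu_{\R}$ and on a compatible flat unitary structure, which are additional choices. I would therefore have to verify that the quaternionic pairing of the arrows $x_a, x_{\bar a}$ dictated by the orientation and spin data used in Theorem \ref{thm:preproj} coincides with Kronheimer's quaternionic structure on $E(V,V)$, and that the hermitian metric induced by the unitary holonomy of $\cE$ is the one defining $\mu_{\R}$. Once this compatibility is in place, the genericity of $\zeta^0$ (giving stable $=$ semistable and a free action) is routine, and Kronheimer's theorem applies without further change.
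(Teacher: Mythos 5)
Your proposal is correct and follows essentially the same route as the paper: identify the Maurer--Cartan space of $(\bL,\cE)$ with the rank-$\vec{\delta}$ representations satisfying the preprojective (complex moment map) relation via Theorem \ref{thm:preproj}, pass to the quotient of the $\zeta^0$-stable locus by $GL(\vec{\delta})/\C^*$, and invoke Kronheimer \cite{Kro89} for the ALE hyperK\"ahler metric. Your concluding worry about matching the quaternionic pairing and hermitian data is not actually needed for the statement as phrased---the corollary only asserts that the moduli space \emph{admits} an ALE metric, so once it is identified with Kronheimer's quotient as a space, the metric transports for free, which is exactly how the paper concludes.
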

\begin{proof}
    By Theorem \ref{thm:preproj}, a Maurer-Cartan element $b=\sum B_a X_a \in \CF^1((\bL,\cE),(\bL,\cE))$ is a rank $\vec{\delta}$ representation of $Q$ over $\C$ satisfying the complex moment map equation: $$\left(\sum_{t(a)=v} \epsilon(a) B_{\bar{a}} B_{a}\right)_{v}=\vec{0},$$ where $v \in I$ and $B_a$ is the linear map corresponding to the arrow $a$. 
    
    Let $X_{\zeta^0}:= \{(B) \in Rep(Q,\vec{\delta}) \mid (\sum_{t(a)=v} \epsilon(a) B_{\bar{a}} B_{a})_v=\vec{0}\}\sslash_{\zeta^0} (GL(\vec{\delta})/\C^*)$, where $‘{\sslash_{\zeta^0} }’$ means the GIT quotient with respect to the parameter $\zeta^0$. As we assume $\zeta^0$ is generic, $X_{\zeta^0}$ is the quotient of the set of $\zeta^0$-stable representations by $GL(\vec{\delta})/\C^*$, see \cite{Nak07}, which is 1-1 corresponding to the isomorphism class of $\zeta^0$-stable Lagrangian branes $(\bL,\cE)$. Hence, $X_{\zeta^0}$ is the moduli space of $\zeta^0$-stable Lagrangian branes $(\bL,\cE)$. By the work of Kronheimer \cite{Kro89}, $X_{\zeta^0}$ has a hyperK\"ahler ALE metric.
\end{proof}

 \subsection{Quiver relation implies commutativity of charts for $D_4$}\label{sec:D4-stable}


Before the general construction of framed torsion-free sheaves, let's give an example of how to find an affine chart of a stable family. Using quiver algebroid stack, one can obtain a local-to-global description of the noncommutative deformation spaces.

Let $\bL$ be the Lagrangian immersion produced by plumbing according to the affine Dynkin diagram $D_4$. By Theorem \ref{thm:preproj}, the quiver $Q$ associated to $\bL$ is the double quiver of affine $D_4$, see Figure \ref{fig:affD_4}. The nc deformation space is the path algebra $\cA_0$ introduced in Example \ref{eg: D4}.

Let $(\bL,\cE)$ be a family of the Lagrangian branes of rank $\vec{\delta}=(2,1,1,1,1)$ over the noncommutative parameter space $\cA$, see the end of Section \ref{sec:stable}. In the following, we will show there exists some stable charts and prove that these stable charts are indeed commutative.

\begin{prop} \label{Prop: D4}
	Let $\cA_0$ be the noncommutative deformation space of $(\bL,\bb)$. Take $v_1$ as the reference vertex. Then there exists some stable affine charts of the stable family $\cA_0$ over $\cA$ of rank $\vec{\delta}$, which can be glued into the minimal resolution of $D_4$ singularity. In particular, these stable affine charts are commutative.
\end{prop}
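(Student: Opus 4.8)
The plan is to produce an explicit finite collection of single-vertex quiver algebras that serve as affine charts of $\cA_0$ in the sense of Definition \ref{def: affine}, to show each is commutative and isomorphic to $\A^2_\C$, and to check that together they form an atlas of the minimal resolution. The guiding principle is that the target space is already known: by Corollary \ref{prop:moduli} the moduli of $\zeta^0$-stable branes of rank $\vec\delta=(2,1,1,1,1)$ is the GIT quotient $X_{\zeta^0}$, which Kronheimer's theorem identifies, as a complex surface, with the minimal resolution $\widetilde S$ of the $D_4$ Kleinian singularity $S=\C^2/\Gamma$. Hence it suffices to exhibit commutative affine charts covering $X_{\zeta^0}$, and the chart constructed in Example \ref{eg: D4} is exactly the prototype.

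First I would construct the charts. Fixing $v_1$ as the reference (rank-one) vertex, for each choice of a second leg $j\in\{2,3,4\}$ I localize $\cA_0$ at the $1\times 2$ row $(a_1\ a_j)$ --- inserting inverse arrows $\alpha^1,\alpha^j$ as in \eqref{eq:inv1}--\eqref{eq:inv2} so that $(a_1\ a_j)$ becomes an isomorphism $V_1\oplus V_j \xrightarrow{\sim} V_0$ --- and then at the diagonal $\mathrm{diag}(b^k a_1)_{k\neq 1}$ to identify the remaining rank-one vertices with $v_1$. Setting these invertible matrices to the identity presents the localized algebra as a single-vertex algebra $\cA_{1j}$ on a handful of loop generators, equipped with representations $G_{0,1j},G_{1j,0}$ and the forced gerbe terms, exactly as in Example \ref{eg: D4} (the case $j=2$). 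The three charts are produced uniformly using the $S_3$-symmetry of $\widehat{D_4}$ that permutes the legs $2,3,4$ while fixing $v_1$.

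The hard part --- and the content of the ``in particular'' --- will be commutativity. Each $\cA_{1j}$ is a localization of a path algebra and is a priori highly noncommutative; what forces it to collapse to $\A^2_\C$ is the interplay of the two preprojective relations $\sum_k a_k b^k=0$ and $b^k a_k=0$ with the inversion relations $\alpha^i a_k=\delta_{ik}e_i$. Concretely, I would verify that the loop generators $X,Y,Z$ of $\cA_{1j}$ commute by the relation-chase carried out in Example \ref{eg: D4} for the identity $G_{02}(X)G_{02}(Y)=G_{02}(Y)G_{02}(X)$: the decisive step substitutes $\sum_k a_k b^k=0$ to trade one leg against the others, and then uses $b^k a_k=0$ together with the inversion relations to collapse the resulting products. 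This yields $\cA_{1j}\cong \C[X,Y,Z]/(XY-(X+1)Z)\cong \C[X,T]=\A^2_\C$, and the $S_3$-symmetry transports the computation to all three charts.

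Finally I would assemble the atlas and verify the cover. The transition maps $G_{1j,1j'}:=G_{1j,0}\circ G_{0,1j'}$ and their gerbe terms are read off the localizations and must be checked against the cocycle conditions \eqref{eq:cocycle-gen}--\eqref{eq:c-gen} of a quiver algebroid stack, as in the affine $A_1$ example; the stability argument of the stable-family example following Example \ref{eg: D4} shows each $\cA_{1j}$ is a stable subfamily. That the charts cover reduces to a subrepresentation argument: on the $\zeta^0$-stable locus $a_1(V_1)$ is a nonzero line $\ell\subset V_0$, and if every $a_j(V_j)\subseteq \ell$ then $(\ell,V_1,V_2,V_3,V_4)$ would be a proper subrepresentation whose component at $v_1$ is all of $V_1$, contradicting the stability of Definition \ref{def:h-rk}; hence some $(a_1\ a_j)$ spans $V_0$. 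The subtlety I expect to confront here is that the prototype chart also inverts the scalars $b^k a_1$, an extra open condition, so covering the loci where some $b^k a_1$ vanishes may require a few additional charts of the same type, obtained by inverting longer paths $b^k a_j b^j a_1$ out of $v_1$; these are built and shown commutative by the identical mechanism. Gluing the resulting commutative charts over $X_{\zeta^0}=\widetilde S$ then realizes the minimal resolution.
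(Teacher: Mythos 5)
Your proposal follows essentially the same route as the paper's proof: stability forces $a_1\neq 0$ and some $a_j$, $j\in\{2,3,4\}$, to be independent of it; the charts are obtained by localizing at $(a_1\ a_j)$ together with scalar paths such as $b^ka_1$ (with extra charts inverting longer paths like $b^3a_2b^2a_1$, exactly the fix you anticipate); and commutativity is forced by the pair of mutually inverse representations via the relation-chase of Example \ref{eg: D4}. The one genuine divergence is the endgame: you identify the glued space with the minimal resolution by appealing to Kronheimer through Corollary \ref{prop:moduli}, whereas the paper does this self-containedly by computing the transition maps $G_{22'}$, $G_{33'}$, $G_{44'}$, $G_{32}$ explicitly and exhibiting four $(-2)$-curves (with the representations of the arrows $b^k$ serving as homogeneous coordinates) whose dual graph is the $D_4$ diagram --- your shortcut is legitimate, but it makes the conclusion rest on the external identification of the GIT quotient with the resolution, which the paper's explicit gluing computation avoids.
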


\begin{proof}
	By Theorem \ref{thm:preproj}, $\cA_0$ satisfies
	\begin{center}
		$\sum_{i=1}^{4} a_ib^i=0$ and $b^ia_i=0$ for $i=1, \cdots,4$.
	\end{center} 
	
	In the following, we will write down some affine charts of the stable $(\cA_{0}, \delta)$-family over $\cA$ explicitly.

	Since the family $\cA_0$ over $\cA$ of rank $\vec{\delta}$ is stable, there's no proper $\cA_0$-submodule $M$ of $\cA^{\oplus (\sum \delta_i)}$ such that $e_{1}M = \cA$. Hence, the representation of $a_1$ is nonzero.  For the same reason, one of the images of $a_2, a_3, a_4$ is linearly independent with $a_1$. Otherwise, there exists a proper $\cA_0$-submodule violating the definition of stable family. Without loss of generality, we can assume the image of $a_2$ is linearly independent with $a_1$. In other words, $\begin{psmallmatrix}
	G_{20}(a_1) & G_{20}(a_2)
	\end{psmallmatrix}$ is invertible. Hence, using the automorphism $GL_{\delta_2}$, one can set $\begin{psmallmatrix}
	G_{20}(a_1) & G_{20}(a_2)
\end{psmallmatrix}= \begin{psmallmatrix}
1 & 0 \\
0 & 1
\end{psmallmatrix}$. Besides, $b^2$ is also nonzero with $b^2a_2=0$. Thus, the image of $b^2$ can be normalized as $\begin{psmallmatrix}
		1 & 0
	\end{psmallmatrix}$. Similarly, the restriction on vertices $v_3$ and $v_4$ implies that $b^3$ and $b^4$ are nonzero. Assume $b^3a_1$ and $b^4a_1$ are not zero. Using the weakly unobstructed equation, the representation of $\cA_{0}$ over $\cA$ is determined by three parameters $X$, $Y$ and $Z$. Let $\cA_2$ be the $\C$-algebra generated by $X$, $Y$ and $Z$. Notice that $\cA_2$ is a quiver algebra, whose quiver has a single vertex, since $\cA$ only has a single vertex. One obtain the representation $G_{20}$ exactly as shown in Example \ref{eg: D4}.  Conversely, the inverse representation $G_{02}$ can be found by localizing a matrix of paths. Keeping track of the automorphisms at each vertex gives rise to the inverse representation $G_{02}$ up to gerbe terms. 
	
	Observe that even though $\cA_2$ is not assumed to be commutative, the existence of such representations $G_{02}$ and $G_{20}$ forces $\cA_2$ to be commutative. This is because $G_{02}(XY)=G_{02}(YX)$ as shown in Example \ref{eg: D4}. Applying $G_{20}$, one has $XY=YX$. Similarly, one can show $[Y,Z]=[X,Z]=0$. Thus, $\cA_{2}$ has to be a $\C$-subalgebra of $\C[X,Y,Z]/(XY-(X+1)Z).$ In particular, one can take $\cA_2$ to be $\C[X,Y,Z]/(XY-(X+1)Z) \cong \C[X,T],$ where $T:=Y-Z$. This provides a commutative affine chart of the stable $(\cA_{0},\delta)$-family over $\cA$, which is the affine space $\A_{\C}^2$. In the following, we will denote the generators of $\cA_2$ by $X_2, Y_2, Z_2$, indicating $\cA_2$ is constructed by localizing $\cA_0$ at the matrix $\begin{psmallmatrix}
		a_1 & a_2
	\end{psmallmatrix}$.  
	
	In general, different choices of localization correspond to different local affine charts. If instead of assuming $b^3a_1$ is nonzero, we assume $b^3a_2b^2a_1$ is invertible. Denote the corresponding localization of $\cA_0$ by $\cA_{0,loc}$. We have 
	$$G_{02'}: \cA_{2'}:= \C[X_2',Y_2',Z_2']/(X_2'Y_2'-(X_2'Y_2'^2-1)Z_2') \to \cA_{0,loc}, \quad  G_{02'}:=
	\begin{cases}
		X_2' \mapsto (\alpha_1a_3)(b^3a_2b^2a_1) \\
		Y_2' \mapsto (b^3a_2 b^2a_1)^{-1}(b^3a_1)   \\
		Z_2' \mapsto (b^4a_1)^{-1}b^4a_2b^2a_1
	\end{cases}$$
	$$G_{2'0}: \cA_{0,loc} \to Mat(\cA_{2'}), \quad G_{2'0}:=
	\begin{cases}
		a_1 \mapsto \begin{psmallmatrix}
			1 \\
			0
		\end{psmallmatrix} \\
		a_2 \mapsto \begin{psmallmatrix}
			0 \\
			1
		\end{psmallmatrix} \\
		a_3 \mapsto \begin{psmallmatrix}
			X_2' \\
			-Y_2'X_2'
		\end{psmallmatrix} \\
		a_4 \mapsto \begin{psmallmatrix}
			-Z_2'(Y_2'X_2'Y_2'-1) \\
			Y_2'X_2'Y_2'-1
		\end{psmallmatrix} \\
		b^1 \mapsto \begin{psmallmatrix}
			0 & Z_2'X_2'Y_2'-X_2'
		\end{psmallmatrix} \\
		b^2 \mapsto \begin{psmallmatrix}
			1 & 0
		\end{psmallmatrix} \\
		b^3 \mapsto \begin{psmallmatrix}
			Y_2' & 1
		\end{psmallmatrix} \\
		b^4 \mapsto \begin{psmallmatrix}
			1 & Z_2'
		\end{psmallmatrix} \\
	\end{cases}.$$
	$G_{02'}$ and $G_{2'0}$ are inverse to each others up to gerbe terms, which can be found in a similar way. Composing $G_{20}$ and $G_{02'}$, one has the transition map $G_{22'}:= G_{20} \circ G_{02'}: \cA_{2'} \to \cA_{2}:$
	$$G_{22'}:=
	\begin{cases}
		X_2' \mapsto (\alpha_1a_3)(b^3a_2b^2a_1) \mapsto -X_2Y_2^2\\
		Y_2' \mapsto (b^3a_2 b^2a_1)^{-1}(b^3a_1) \mapsto Y_2^{-1} \\
		Z_2' \mapsto (b^4a_1)^{-1}b^4a_2b^2a_1 \mapsto Z_2
	\end{cases}.$$
	This provides the local gluing of one $(-2)$-curve and the representation of the arrow $b^3$ serves as the homogeneous coordinate of this exceptional $\mathbb{P}^1$.
	
	By making alternative choices for localization, one can construct other $(-2)$-curves. If the image of $a_3$ is linearly independent with $a_1$, we define  $\begin{pmatrix}
		\alpha^1 \\
		\alpha^3
	\end{pmatrix}$ to be the inverse of $\begin{pmatrix}
		a_1 &
		a_3
	\end{pmatrix}$. Besides, localizing at $\mathrm{diag}(b^2a_1, b^3 a_1, b^4 a_1)$, we obtain another commutative chart $\cA_3:=\C[X_3,Y_3,Z_3]/(X_3Z_3-(X_3+1)Y_3).$ The representations can be found similarly:
	
	$$G_{03}: \cA_{3}:= \C[X_3,Y_3,Z_3]/(X_3Y_3-(X_3+1)Z_3) \to \cA_{0,loc}, \quad  G_{03}:=
	\begin{cases}
		X_3 \mapsto (b^3a_1)^{-1}(\alpha_3a_4)(b^4a_1) \\
		Y_3 \mapsto (b^2 a_1)^{-1}(b^2a_3)(b^3a_1) \\
		Z_3 \mapsto (b^4a_1)^{-1}b^4a_3b^3a_1
	\end{cases}$$
	$$G_{30}: \cA_{0,loc} \to Mat(\cA_{3}), \quad G_{30}:=
	\begin{cases}
		a_1 \mapsto \begin{psmallmatrix}
			1 \\
			0
		\end{psmallmatrix} \\
		a_2 \mapsto \begin{psmallmatrix}
			X_3Z_3 \\
			-X_3-1
		\end{psmallmatrix} \\
		a_3 \mapsto \begin{psmallmatrix}
			0 \\
			1
		\end{psmallmatrix} \\
		a_4 \mapsto \begin{psmallmatrix}
			-Z_3X_3 \\
			X_3
		\end{psmallmatrix} \\
		b^1 \mapsto \begin{psmallmatrix}
			0 & X_3Y_3^2-Z_3X_3Y_3
		\end{psmallmatrix} \\
		b^2 \mapsto \begin{psmallmatrix}
			1 & Y_3
		\end{psmallmatrix} \\
		b^3 \mapsto \begin{psmallmatrix}
			1 & 0
		\end{psmallmatrix} \\
		b^4 \mapsto \begin{psmallmatrix}
			1 & Z_3
		\end{psmallmatrix} \\
	\end{cases}.$$
	If we localize at $b^2a_1, b^3a_1, b^4a_3b^3a_1$ instead, we obtain the commutative chart $\cA_{3'}:=\C[X_3',Y_3',Z_3']/(X_3'Z_3'-(X_3'(Z_3')^2-1)Y_3')$. The transition map between $\cA_3$ and $\cA_{3'}$ can be found similarly:
	$$G_{33'}:=
	\begin{cases}
		X_3' \mapsto (\alpha_1a_4)(b^4a_3b^3a_1) \mapsto -X_3Z_3^2 \\
		Y_3' \mapsto (b^2a_1)^{-1}b^2a_3b^3a_1 \mapsto Y_3 \\
		Z_3' \mapsto (b^4a_3b^3a_1)^{-1}b^4a_1 \mapsto Z_3^{-1}
	\end{cases}$$
	which provides the second $(-2)$-curve and the representation of the arrow $b^4$ serves as the homogeneous coordinate.
	
	In addition, we can also obtain the transition maps between $\cA_4:= \C[X_4,Y_4,Z_4]/(X_4Y_4-(X_4+1)Z_4)$ and $\cA_4':=\C[X_4',Y_4',Z_4']/(X_4'Y_4'-(X_4'(Y_4')^2-1)Z_4'):$ $$G_{44'}:=
	\begin{cases}
		X_4' \mapsto (\alpha_1a_2)(b^2a_4b^4a_1) \mapsto -X_4(Y_4)^2 \\
		Y_4' \mapsto (b^2a_4b^4a_1)^{-1}b^2a_1 \mapsto Y_4^{-1}\\
		Z_4' \mapsto (b^3a_1)^{-1}b^3a_4b^4a_1 \mapsto Z_4
	\end{cases}$$
	where $\cA_4$ (resp. $\cA_4'$) comes from the normalization of $\cA_{0}$ after localizing at $\begin{psmallmatrix}
		a_1 & a_4
	\end{psmallmatrix}$ and $\mathrm{diag}\begin{psmallmatrix}
		b^2a_1& b^3a_1& b^4a_1
	\end{psmallmatrix}$ (resp. $\begin{psmallmatrix}
		a_1 & a_4
	\end{psmallmatrix}$ and $\mathrm{diag}\begin{psmallmatrix}
		b^2a_4b^4a_1& b^3a_1& b^4a_1
	\end{psmallmatrix}$). This gives the third $(-2)-$curve. 
	
	The fourth exceptional curve, which intersects the previous three, comes from the transition maps $G_{32}: \cA_2 \to \cA_3$:
	$$G_{32}:=
	\begin{cases}
		X_2 \mapsto (b^2a_1)^{-1}(\alpha_2 a_3)(b^3a_1) \mapsto -(X_3+1)^{-1}\\
		Y_2 \mapsto (b^3a_1)^{-1}(b^3a_2)(b^2a_1) \mapsto -Y_3(X_3+1) \\
		Z_2 \mapsto (b^4a_1)^{-1}(b^4a_2)(b^2a_1) \mapsto Z_3
	\end{cases}.$$ Notice that gluing between $\cA_2$ and $\cA_3$ happens at the open subset $X_3 \neq -1.$ Thus, the closure of $Y_3=0$ indeed provides the fourth $(-2)$-curve. The dual graph of these four $(-2)$-curves form the $D_4$ Dynkin  diagram. The stable family $\cA_0$ over $\cA$ of rank $\vec{\delta}$ provides a quiver algebroid stack corresponding to the minimal resolution of $D_4$-singularity.	
\end{proof}

\begin{rem}
	As in Example \ref{eg: D4}, $\cA_k \cong \C[X,T]$ for $k=2,3,4$. Besides, $\cA_{k'} \cong \C[X',Y']_{(X'Y'^2-1)}$, since $X_k'Y_k'^2-1=0$ would imply $X_k'Y_k'=0$ in $\cA_{k'}$, which is a contradiction. All these charts are open subsets of $\A_\C^2.$
\end{rem}

\subsection{Mirror functor and monadic complexes} \label{sec: MF}

In this section, we compute the images of framed Lagrangian branes under the mirror functor and show that they coincide with the monadic complexes algebraically constructed by Nakajima \cite{Nak07}.  Restricting to the affine ADE cases, we show that they produce the framed torsion-free sheaves over the mirror asymptotically locally Euclidean (ALE) spaces.

Let $\bL$ (resp. $\bL^{\fr}$) be a (framed) Lagrangian immersion obtained by applying 2-sphere plumbing construction as in Section \ref{sec:preproj}. In order to obtain the monad of framed torsion-free sheaves, we consider the family of framed Lagrangian branes $(\bL^{\textrm{fr}},\cE)$ of rank $\vec{r}$.  By replacing the formal universal deformations $(\bL^{\textrm{fr}},\bb^{\fr})$ by the family of branes $(\bL^{\textrm{fr}},\cE)$ in Proposition \ref{prop:nc-framed} and using the same method, we obtain the Maurer-Cartan space of $(\bL^\fr,\cE)$ as below.

\begin{lem} \label{lem:wk 2}
	Let $b=\sum_a B_a X_a + \sum_v i_v I_v + \sum_v j_v J_v$ where $a$ denotes an arrow and $v$ denotes a vertex; $B_a, i_v, j_v$ are matrices of corresponding ranks.  The unobstructed equation for the family $(\bL^{\textrm{fr}},\cE)$ is 
	$$\sum_{t(a)=v} \epsilon(a) B_{\bar{a}} B_{a} + i_vj_v =0$$
	for all $v \in I$. In particular, in the subcategory generated of $\bL^\fr$, the Maurer-Cartan space of the family  $(\bL^{\textrm{fr}},\cE)$ over $\Lambda_0$ is the stack $[M(Q^\fr,\vec{r})/GL(\vec{r})]$, where $M(Q^\fr,\vec{r}) \subset Rep(Q^\fr,\vec{r})$ is the subvariety of $\A$-representations.
\end{lem}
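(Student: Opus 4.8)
The plan is to run the pearl-trajectory computation of Proposition~\ref{prop:nc-framed} essentially verbatim, replacing the formal arrow variables of the universal algebra by the matrix coefficients $B_a,i_v,j_v$ of the family. Writing $b=\sum_a B_a X_a + \sum_v i_v I_v + \sum_v j_v J_v$, I would examine $m_0^b=\sum_{k\ge 0} m_k(b,\ldots,b)$. As in Proposition~\ref{prop:nc-framed}, the framing Morse functions have their unique critical point in degree zero, so the only degree-two generators of the Floer complex are the minima $P_v$ of the perfect Morse functions on the spheres $S^2_v$; hence it suffices to read off the coefficient of each $P_v$. The contributing pearl trajectories are exactly those of Theorem~\ref{thm:preproj} and Proposition~\ref{prop:nc-framed}: a Morse flow line into $P_v$ glued to a constant polygon whose corners are either the pair $X_a,X_{\bar a}$ for an arrow $a$ with $t(a)=v$, or the framing pair $I_v,J_v$.

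Next I would fix the ordering conventions. By \eqref{eq:mk} the coefficients factor out of $m_k$ in the reverse order of the inputs, so a constant bigon (and its higher corrected versions) with corners $X_{\bar a},X_a$ produces the matrix product $B_{\bar a}B_a$ at $P_{t(a)}$ and $B_aB_{\bar a}$ at $P_{h(a)}$, carrying the sign $\epsilon(a)$ dictated by the spin structure and the orientation $\Omega$, while the framing polygon produces $i_vj_v$. Collecting terms, the coefficient of $P_v$ is
\begin{equation*}
	\sum_{t(a)=v}\epsilon(a)\,B_{\bar a}B_a\Big(1+\sum_{j\ge1}a_j(B_{\bar a}B_a)^j\Big) + i_vj_v\Big(1+\sum_{k\ge1}b_k(i_vj_v)^k\Big),
\end{equation*}
the correction series being the matrix specializations of the universal ones. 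Setting $m_0^b=0$ and performing the matrix coordinate change $\tilde B_a=B_a(1+\sum_j a_j(B_{\bar a}B_a)^j)$, $\tilde B_{\bar a}=B_{\bar a}$, together with the analogous rescaling of $i_v$, exactly as in Proposition~\ref{prop:nc-framed}, reduces the relation to $\sum_{t(a)=v}\epsilon(a)B_{\bar a}B_a+i_vj_v=0$ for every $v\in I$. Here I would invoke that over $\Lambda_0$ each entry of $B_a$ has positive valuation, so $1+\sum_j a_j(B_{\bar a}B_a)^j$ is a convergent, invertible matrix series; this guarantees the substitution is a genuine automorphism of the representation space, performed simultaneously at all vertices.

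For the ``in particular'' assertion I would identify the solution set of these equations with the affine variety $M(Q^\fr,\vec r)\subset Rep(Q^\fr,\vec r)$ of $\A$-representations, since the displayed relations are precisely the matrix (preprojective-plus-framing) relations cutting out $M(Q^\fr,\vec r)$, in parallel with Corollary~\ref{cor:rep}. Within the subcategory generated by $\bL^\fr$ there is no convergence obstruction, so the deformations may be taken with coefficients in $\Lambda_0$. Finally, two Maurer--Cartan data differing by a change of frame of $\cE$ on the compact components define the same brane; since the chain groups and all $m_k$-operations are equivariant under these gauge transformations (as recorded after Proposition~\ref{prop:nc-framed}), the Maurer--Cartan data together with their gauge equivalences form the quotient stack $[M(Q^\fr,\vec r)/GL(\vec r)]$, with $GL(\vec r)=\prod_i GL(r_i)$ acting by change of frame on the compact components $\bL_i$ while the framing trivialization is held fixed.

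The main obstacle I anticipate is the noncommutative bookkeeping: unlike the universal case the coefficients $B_a$ no longer commute, so one must verify that the reversed-order rule \eqref{eq:mk} genuinely outputs $B_{\bar a}B_a$ at $P_{t(a)}$ and $B_aB_{\bar a}$ at $P_{h(a)}$ with consistent signs, and that the universal coordinate change of Proposition~\ref{prop:nc-framed} descends to a well-defined invertible substitution on the full representation space $Rep(Q^\fr,\vec r)$ rather than merely on the single-variable completion. Once these orderings and the invertibility of the correction series over $\Lambda_0$ are established, the remainder follows formally from Proposition~\ref{prop:nc-framed} and the gauge-equivariance of Floer theory.
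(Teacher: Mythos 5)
Your overall strategy coincides with the paper's: the paper proves this lemma in one line, by re-running Proposition \ref{prop:nc-framed} with the formal arrow variables replaced by the matrix coefficients $B_a, i_v, j_v$, and then obtains the stack quotient from the gauge-equivariance of the chain groups and $m_k$-operations (stated right after Proposition \ref{prop:nc-framed}). Your pearl-trajectory bookkeeping, the reading-off of the coefficient of each minimum $P_v$, the coordinate change $\tilde B_a = B_a\bigl(1+\sum_j a_j (B_{\bar a}B_a)^j\bigr)$, and the gauge quotient all match the paper's argument.

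There is, however, one step that is false as written and that matters for the ``in particular'' clause. You assert that ``over $\Lambda_0$ each entry of $B_a$ has positive valuation,'' and you use this to conclude that the correction series $1+\sum_j a_j(B_{\bar a}B_a)^j$ converges, so that the substitution is a genuine automorphism of the representation space. Elements of $\Lambda_0$ have \emph{non-negative} valuation, not positive valuation; positive valuation characterizes $\Lambda_+$. The $\Lambda_0$-points of $M(Q^\fr,\vec r)$ that the lemma (and its later applications to ALE spaces and torsion-free sheaves) cares about most are precisely the $\C$-valued representations, whose entries have valuation zero. For those, the products $B_{\bar a}B_a$ and $i_vj_v$ have valuation zero, and since the coefficients $a_j$ come from constant polygons carrying no area weight $T^{A}$, the series $\sum_j a_j(B_{\bar a}B_a)^j$ need not converge $T$-adically. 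So your justification only identifies the Maurer--Cartan space with the quiver relations on the locus where these products have positive valuation, which is strictly weaker than the statement. The fix, which is what the paper does implicitly (see the remarks in Corollary \ref{cor:rep} and Theorem \ref{thm:frash}), is to perform the coordinate change once and for all at the level of the completed universal algebra of Proposition \ref{prop:nc-framed} --- a reparametrization of the family, not a pointwise substitution --- after which the unobstructedness relations and the differentials in the subcategory generated by $\bL^\fr$ are \emph{polynomial} in the standardized variables, so they can be evaluated at arbitrary $\Lambda_0$-points (including valuation-zero ones) with no convergence issue. With that replacement, the remainder of your argument, including the gauge-quotient step, goes through as in the paper.
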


\begin{rem}
	The above unobstructed equation exactly coincides with the complex moment map equation.  Thus, below we consider the Nakajima quiver variety in zero complex moment map level.  The real parameters of Nakajima quiver variety correspond to choices of stability conditions.
\end{rem}


Next, we restrict the functor $\cF^{(\bL,\bb)}$ to the branes $(\bL^\fr,\cE)$ and produce the monadic complex of the corresponding framed double quiver. We write $\bb=\sum_a x_a X_a$ for $(\bL,\bb)$ where $a$ are arrows of $Q$.  The deformation parameter for $(\bL^\fr,\cE)$ is denoted as $b$ given in Lemma \ref{lem:wk 2}.


\begin{thm} \label{thm:univb}
	$\cF^{(\bL,\bb)}$ transforms the framed Lagrangian branes $(\bL^{\textrm{fr}},\cE)$ into the following complexes:
	\begin{equation} \label{eq:univb}
		(\cF^{(\bL,\bb)}(\bL^{\textrm{fr}},\cE),m_1^{\bb}):  L^0((\bL,\bb),(\bL,\cE)) \to E((\bL,\bb),(\bL,\cE)) \oplus L^1((\bL,\bb), (F,\cE)) \to L^2((\bL,\bb),(\bL,\cE)),
	\end{equation}
which coincides with the monad constructed algebraically by Nakajima \cite{Nak07}. 
The complex is spanned by Floer generators $M_v \in L^0((\bL,\bb),(\bL,\cE)), P_v \in L^2((\bL,\bb),(\bL,\cE))$, and the immersed sectors $J_v \in L^1((\bL,\bb), (F,\cE))$, $X_a \in E((\bL,\bb),(\bL,\cE))$ for all vertices $v$ and arrows $a$ of $Q$. Here, $M_v$, $P_v$ are the maximal points and minimum points of the Morse function $f_v$ on $S^2_v$ respectively, while $X_a \in \CF^1(S^2_{t(a)},S^2_{h(a)})$ and $J_v \in \CF^1(\bL_v, F_v)$ are degree $1$ immersed sectors. 
Furthermore, the first differential takes the form $$m_{1}^{\bb}(\eta M_v)= \sum_{t(\bar{a})=v} (B_{\bar{a}} \eta) X_{\bar{a}} + \sum_{h(a)=v} ( \eta x_a )X_a + j_v\eta J_v,$$ and the second one is $$m_{1}^{\bb}(\eta' X_a)=  \eta' x_{\bar{a}} P_{h(a)}+ B_{\bar{a}} \eta' P_{t(a)}; \,\,  m_{1}^{\bb}(\eta'' J_v) = i_v \eta'' P_v.$$
\end{thm}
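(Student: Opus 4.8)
The plan is to unwind the definition of the mirror functor $\cF^{(\bL,\bb)}$ and compute directly the Floer complex $\CF((\bL,\bb),(\bL^{\fr},\cE))$ together with its deformed differential. First I would decompose the morphism space according to the components of the object $(\bL^{\fr},\cE)=(\bL\cup\bF,\cE)$, writing
\[
\CF((\bL,\bb),(\bL^{\fr},\cE)) = \CF((\bL,\bb),(\bL,\cE)) \oplus \CF((\bL,\bb),(\bF,\cE)).
\]
Using the Morse model for the clean self-intersections of $\bL$, the first summand is generated by the critical points of the perfect Morse function $f_v$ on each $S^2_v$ --- the maxima $M_v$ (degree $0$) and minima $P_v$ (degree $2$) --- together with the transverse immersed sectors $X_a\in\CF^1(S^2_{t(a)},S^2_{h(a)})$ (degree $1$) between adjacent spheres. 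The second summand is generated by the single transverse intersection point $J_v$ of each $S^2_v$ with its framing $F_v$, which carries degree $1$. This reproduces exactly the stated three-term grading: $L^0((\bL,\bb),(\bL,\cE))$ spanned by the $M_v$, the middle term $E\oplus L^1((\bL,\bb),(F,\cE))$ spanned by the $X_a$ and the $J_v$, and $L^2((\bL,\bb),(\bL,\cE))$ spanned by the $P_v$.

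Next I would compute each block of the differential (which deforms the reference side by $\bb$ and the object side by $b=\sum_a B_a X_a+\sum_v(i_v I_v+j_v J_v)$, abbreviated $m_1^{\bb}$) by enumerating the contributing pearl trajectories. The key point is that these are exactly the single-insertion polygons already counted in Theorem \ref{thm:preproj} and Proposition \ref{prop:nc-framed}: a triangle with one corner at the input generator, one corner carrying a single deformation insertion, and output at the next generator. For the map out of $M_v$ the output is degree one, and the insertion is either a reference arrow $x_a$ (contributing $\eta x_a$ to $X_a$ when $h(a)=v$), an object matrix $B_{\bar a}$ (contributing $B_{\bar a}\eta$ to $X_{\bar a}$ when $h(\bar a)=v$), or the framing insertion $j_v$ (contributing $j_v\eta$ to $J_v$). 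For the map out of $X_a$ the complementary sector closes up a polygon landing at $P_{h(a)}$ or $P_{t(a)}$, giving $\eta' x_{\bar a}P_{h(a)}+B_{\bar a}\eta'P_{t(a)}$; and for $J_v$ the insertion $I_v$ with coefficient $i_v$ yields $i_v\eta''P_v$. The orientation function $\epsilon$ and the spin structure fix the signs precisely as in the obstruction computation.

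Finally I would check that these blocks assemble into a genuine complex, i.e. that consecutive differentials compose to zero. The $A_\infty$-relations express $m_1^{\bb}\circ m_1^{\bb}$ in terms of the curvatures $m_0^{\bb}$ and $m_0^{b}$; unobstructedness of both $(\bL,\bb)$ and $(\bL^{\fr},\cE)$ --- the preprojective, respectively framed preprojective, moment-map relation of Theorem \ref{thm:preproj} and Proposition \ref{prop:nc-framed} --- makes these act as multiples of the unit, hence trivially, in the $\Z$-graded setting. Matching with Nakajima is then a direct comparison: reading off the differential matrices shows they coincide with the two boundary maps of the monad of \cite{Nak07} built from $B_{\bar a}$, $x_a$, $i_v$, $j_v$ on $L^0\to E\oplus L^1\to L^2$. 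The main obstacle I anticipate is the careful bookkeeping of which deformation variable (the reference-side $x_a$ versus the object-side $B_{\bar a}$, and likewise $i_v$ versus $j_v$) attaches to each polygon corner, together with the left/right placement of coefficients and the correct head/tail assignments; once the polygon count is pinned down to these single-insertion triangles, the remaining algebra is routine, but it is exactly in this combinatorial placement that the real care is needed.
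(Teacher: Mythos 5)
Your overall skeleton matches the paper's proof: decompose $\CF((\bL,\bb),(\bL^{\fr},\cE))$ by components, identify the generators $M_v$, $X_a$, $J_v$, $P_v$ in the Morse model, compute the differential blockwise, and use unobstructedness of both sides to get $(m_1^{\bb})^2=0$. However, there is a genuine gap in your central step: the claim that the contributing pearl trajectories are ``exactly the single-insertion polygons'' is not correct, and it also mischaracterizes what Theorem \ref{thm:preproj} counts. The constant polygons at each immersed point have arbitrarily many corners, and they contribute higher-order terms such as $\eta' x_{\bar{a}}(x_a x_{\bar{a}})^{j}$, with Kuranishi-dependent coefficients $a_j$, to $m_1^{\bb}(\eta' X_a)$ --- these are precisely the same contributions that produce the series $a_j(x_{\bar{a}}x_a)^j$ in the obstruction term $m_0^{\bb}$ of Theorem \ref{thm:preproj}. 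A naive polygon count therefore does not yield the clean quadratic differentials in the statement; those formulas hold only in the normalized coordinates obtained by the change of variables in Equation \eqref{eq:coord}, which your proposal never invokes.

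The paper closes this gap with two devices. For the differential out of $M_v$, it uses that $M_v$ is the fundamental class, so unitality ($m_2$ with $M_v$ acts as the identity and $m_k(\cdots,M_v,\cdots)=0$ for $k\geq 3$) kills all higher contributions --- here your single-insertion picture is correct, but for this reason, not by inspection of polygons. For the differential out of $X_a$ and $J_v$, it uses the identity that $m_1^{\bb}$ on a degree-one generator equals the formal derivative $\partial_x m_0^{\bb}$ of the obstruction term; after the coordinate change of Theorem \ref{thm:preproj} and Proposition \ref{prop:nc-framed}, $m_0^{\bb}$ is exactly the (framed) preprojective relation, so its derivative gives $\eta' x_{\bar{a}} P_{h(a)}+B_{\bar{a}}\eta' P_{t(a)}$ and $i_v\eta'' P_v$ with no higher corrections. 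Without this identity (or some equivalent argument controlling the constant-polygon contributions and normalizing coordinates), your computation of the second differential does not go through, and the resulting complex would not visibly coincide with Nakajima's monad.
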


\begin{proof}
	
	
	
	Recall that $\bL= \bigcup_{v \in I} S^2_v$ and $\bL^{\textrm{fr}}= \bigcup_{v \in I} (S^2_v \cup T^*_{p_v}S^2_v)$. By the construction of the localized mirror functor, $$\cF^{(\bL,\bb)}(\bL^{\textrm{fr}},\cE)= \A \otimes_{\Lambda^{\oplus}_0} \CF^{\bullet}(\bL, (\bL,\cE)) \oplus \A \otimes_{\Lambda^{\oplus}_0} \CF^{\bullet}(\bL, (\bF,\cE)),$$ where $\CF^{\bullet}(\bL, (\bF,\cE))=\bigoplus_{v\in I} \Lambda_0 \otimes \cE|_{J_v} \langle J_v \rangle$ and $\CF^{\bullet}(\bL, (\bL,\cE))= \bigoplus_{a \in \mathfrak{A}} \Lambda_0 \otimes \cE|_{X_a}\langle X_a \rangle \bigoplus_{v \in I}\oplus_{Y \in Crit(f_v)} \Lambda_0 \otimes \cE|_Y \langle Y \rangle$ is generated by $X_a$, $M_v$ and $P_v$ using the Morse model. Hence, $\cF^{(\bL,\bb)}(\bL^{\textrm{fr}},\cE)$  has the form as the complex \ref{eq:univb}. More conventions about the notations can be found in Section \ref{sec:stable}.
	
	
	It remains to compute the differential. For simplicity, we will omit the sign, which depends on the choices of orientations and spin structure.  
	
	Observe that $M_v$ is the fundamental class of the sphere $S^2_v$. We have $m_2(\sum_a X_a, M_v)=\sum_{ h(a)=v} m_2(X_a,M_v)=\sum_{h(a)=v} X_a$, $m_2( M_v,\sum_a X_a)=m_2(M_v,\sum_{t(a)=v} X_a)=\sum_{t(a)=v}X_a$ and $m_2( M_v, \sum_k J_k)=m_2(M_v,J_v)=J_v$. Besides, $m_k(\cdots, M, \cdots)$ vanishes for $k\geq 3.$ Hence, for the first differential, we have $$m_{1}^{\bb}(\eta M_v)= m_2(\sum_{h(a)=v} x_a X_a ,\eta M_v) + m_2(\eta M_v, \sum_{t(a)=v} B_aX_a)+ m_2(\eta M_v, j_vJ_v)= \sum_{h(\bar{a})=v} (B_{\bar{a}} \eta) X_{\bar{a}} + \sum_{h(a)=v} ( \eta x_a )X_a + j_v\eta J_v, $$which counts the bigons that pre- and post-compose formal boundary deformations of compact components. 
	
	For the second differential, we should use the fact that $m_1^{\bb} (X)=\sum m_k(\bb, \bb, \cdots, X, \bb,\cdots,\bb)$ is the same as $\partial_x m_0^{\bb}$. 
	Using the coordinate change in Theorem \ref{thm:preproj}, the second differential is the same as differential of the weakly unobstructed relations. In other words, we have $$m_{1}^{\bb}(\eta' X_a)=   m_2(\eta' X_a, B_{\bar{a}}X_{\bar{a}}) + m_2(x_{\bar{a}} X_{\bar{a}},\eta' X_a)= \eta' x_{\bar{a}} P_{h(a)}+ B_{\bar{a}} \eta' P_{t(a)}$$
	$$m_{1}^{\bb}(\eta'' J_v)= m_2(\eta'' J_v , i_vI_v)=i_v \eta'' P_v.$$
	Due to the unobstructed equations, $(m_1^{\bb})^2$ is zero, yielding the Floer complex \ref{eq:univb}. Notice that the projective $\A$-module $\A e_v$ in the coefficient corresponds to the tautological bundle geometrically, see Section 4.4 of \cite{AH99} for such correspondence. Therefore, $\cF^{(\bL,\bb)}(\bL^{\textrm{fr}},\cE)$ is the monad demonstrated in \cite{Nak07}. 
\end{proof}
Now, let's restrict to the affine ADE case so that $\chi_\bL^{\textrm{LF}}(\vec{r})$ is strictly semi-positive definite.
To construct framed torsion-free sheaves, let's first recall the chamber structure of the stability parameters.  Readers are refered to \cite{Nak94} for more details. Let $D=(I,E)$ be an affine ADE Dynkin diagram. Let $\textbf{A}$ be the adjacency matrix of the graph. Then $\textbf{C=2I-A}$ is a Cartan matrix of an affine type (which is also the matrix associated to $\chi_\bL^{\textrm{LF}}(\vec{r})$). Fix a dimension vector $\textbf{v}$. Let 
\begin{align*}
	R_+ & :=\{\theta=(\theta_i) \in \Z_{\geq 0}^I \mid  \theta^t\, \textbf{C} \,\theta \leq 2\} \setminus\{0\},\\
	R_+(\textbf{v}) & :=\{\theta=(\theta_i) \in R_+ \mid \theta_i \leq \dim_\C V_i \text{ for all }i\},\\
	D_\theta & :=\{x=(x_i)\in R^I \mid x\cdot \theta=0\}\text{ for  }\theta\in R_+.
\end{align*} For the graph of affine type, $R_+$ is the set of positive roots of the corresponding Dynkin diagram, and $D_\theta$ is the wall defined by the root $\theta.$ 

As in Corollary \ref{prop:moduli}, we pick the primitive vector $\vec{\delta}$ in the kernel of $\mathbf{C}$ and a generic parameter $\zeta^0 \in \R^I$. Namely, $\zeta^0$ is not contained in any $D_\theta$, where $D_\theta$ is the hyperplane defined by a real root. We take a parameter $\zeta_\R$ from the chamber containing $-\zeta^0$ in its closure with $\zeta_\R \cdot \vec{\delta} < 0$.


\begin{cor} \label{cor:ALE}
	Let $D$ be the affine ADE Dynkin diagram.  We take the stable family of Lagrangian branes $(\bL,\cE')$ of rank $\vec{\delta}$ over the coordinate ring $R$ of the subvariety $M(Q,\vec{\delta})\subset \mathrm{Rep}(Q,\vec{\delta})$ of $\A$-representations.  Then $\cF^{(\bL,\cE')}$ transforms the $\zeta_\R$-stable framed Lagrangian branes $(\bL^\fr,\cE)$ into the monad of the torsion-free sheaves over the mirror.
\end{cor}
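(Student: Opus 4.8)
The plan is to repeat the construction of Theorem~\ref{thm:univb} with the rank-$\vec\delta$ reference family $(\bL,\cE')$ in place of the universal deformation $(\bL,\bb)$, to identify the resulting complex with Nakajima's universal monad over the quiver variety, and then to pass to the GIT quotient. First I would record that the family $(\bL,\cE')$ of rank $\vec\delta$ over $R$ is nothing but a representation $G:\A\to Mat(R)$ of the preprojective algebra $\A$ of Theorem~\ref{thm:preproj}, in which each arrow $x_a$ is sent to the tautological $\vec\delta$-block of coordinate functions on $M(Q,\vec\delta)\subset\mathrm{Rep}(Q,\vec\delta)$. Feeding this representation into the localized mirror functor of Section~\ref{section:nc mirror} (replacing $\A$ by its image in $Mat(R)$) and applying it to the framed brane $(\bL^\fr,\cE)$, Theorem~\ref{thm:univb} produces verbatim the three-term complex~\eqref{eq:univb}, now with $\A$-coefficients replaced by $R$ and with the reference arrows acting through $G(x_a)$. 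The differentials are still $m_1^\bb$, combining the instanton data $(B_a,i_v,j_v)$ of $(\bL^\fr,\cE)$ (supplied by Lemma~\ref{lem:wk 2}) with the tautological coordinates $x_a$; using the identification of $R\,e_v$ with the tautological bundle (the remark in the proof of Theorem~\ref{thm:univb}, cf.~\cite{AH99}), this is exactly the monad Nakajima~\cite{Nak07} attaches to the universal framed representation over $M(Q,\vec\delta)$.

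Next I would descend to the mirror. By Proposition~\ref{prop:definite} the affine ADE diagram is the strictly semi-positive definite case, and by Corollary~\ref{prop:moduli} the mirror is the ALE space $X_{\zeta^0}=M(Q,\vec\delta)\sslash_{\zeta^0}(\GL(\vec\delta)/\C^\times)$. Because the $m_k$-operations of $(\bL,\cE')$ are equivariant under the gauge group $\GL(\vec\delta)$ acting by change of framing of $\cE'$, the complex of the previous step is $\GL(\vec\delta)$-equivariant over $M(Q,\vec\delta)$ and therefore descends to a three-term complex of coherent sheaves over $X_{\zeta^0}$, in which the projective modules $R\,e_v$ become the tautological bundles $\mathcal V_v$ on the ALE space. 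Finally, the $\zeta_\R$-stability of the Lagrangian brane $(\bL^\fr,\cE)$ matches, under Lemma~\ref{lem:wk 2}, Nakajima's stability of the framed quiver representation $(B,i,j)$; for such stable data \cite{Nak07} shows that the first differential is injective and the last is surjective, so the cohomology of the descended complex concentrates in the middle, is locally free off a subset of codimension $\geq 2$, and is hence a torsion-free sheaf, framed along infinity by the arrows $i_v,j_v$.

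The hard part will be this last step together with the stability dictionary: one must verify that the symplectic $\zeta_\R$-stability used to cut out stable Lagrangian branes coincides fiberwise with Nakajima's algebraic stability, so that his injectivity and surjectivity statements apply unchanged, and that the equivariant complex descends to a genuine monad of sheaves over the \emph{non-affine} quotient $X_{\zeta^0}$ rather than merely over the affine chart $\mathrm{Spec}\,R=M(Q,\vec\delta)$. Once this gauge-invariant descent and the stability comparison are in place, the exactness at the two ends --- and hence the torsion-freeness of the middle cohomology --- is purely the content of \cite{Nak07}, exactly as in the rank-one ADHM situation treated after Theorem~\ref{thm:frash}.
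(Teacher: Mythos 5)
Your proposal is correct and follows essentially the same route as the paper: apply the monadic complex of Theorem~\ref{thm:univb} with the stable rank-$\vec{\delta}$ family $(\bL,\cE')$ as the reference, use $\GL(\vec{\delta})$-equivariance of the $m_k$-operations to descend the complex to the ALE quotient, and identify the coefficients (tautological bundles tensored with the framing/representation data) with Nakajima's algebraically constructed monad, homogenizing to obtain the framed extension. The ``hard part'' you flag --- matching symplectic $\zeta_\R$-stability with Nakajima's algebraic stability --- is in fact immediate in the paper's framework, since $\zeta_\R$-stability of a Lagrangian brane is \emph{defined} (Section~\ref{sec:stable}, via Lemma~\ref{lem:wk 2}) as Nakajima stability of its Maurer--Cartan quiver data.
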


\begin{proof}
	Since $m_k$-operators are $GL(\vec{\delta})$-equivariant, the Floer complex $\cF^{(\bL,\cE')}((\bL^\fr,\cE))$ descends to the ALE space if $(\bL,\cE')$ is a stable family of rank $\vec{\delta}$ over $R$. The coefficients in the resulted Floer complex are the tensor product of the tautological bundles and the trivial representation bundles. Hence, this Floer complex coincides with the monad of torsion-free sheaves constructed algebraically by Nakajima \cite{Nak07}. Furthermore, one can homogenizing the monad and get the framed torsion-free sheaves by trivial extension.
\end{proof}

\begin{thm} \label{thm:frmonad}
	Let $(\bL^\fr,\cE_{1})$ and $(\bL^\fr,\cE_{2})$ be $\zeta_\R$-stable framed Lagrangian branes in rank $(\vec{v}_1,\vec{w})$ and $(\vec{v}_2,\vec{w})$ respectively. Denote the deformation parameters by $b_k= \sum_a B^k_a X_a + \sum_v i^k_v I_v + \sum_v j^k_v J_v$ for $k=1,2$. Then using the notations as in Theorem \ref{thm:univb}, $\cF^{(\bL^\fr,\cE_{1})}(\bL^\fr,\cE_{2})$ is a monadic complex over the Nakajima quiver variety $\bM_{\zeta_\R}(\vec{v}_1,\vec{w})$, which takes the following form:
	\begin{equation} \label{eq:frmonad}
		L^0((\bL,\cE_1),(\bL,\cE_2)) \to E((\bL,\cE_1),(\bL,\cE_2)) \oplus L^1((\bL,\cE_1), (F,\cE_2)) \oplus L^1((F,\cE_1),(\bL,\cE_2)) \to L^2((\bL^\fr,\cE_1),(\bL^\fr,\cE_2)).
	\end{equation}
    This complex has more generators in degrees 1 and 2, which are the immersed sectors $I_v \in L^1((F,\cE_1),(\bL,\cE_2))$ and the minimum points of the framing Lagrangians $P_{F_v}\in L^2((\bL^\fr,\cE_1),(\bL^\fr,\cE_2))$.Furthermore, the first differential takes the form $$m_{1}^{\bb}(\eta M_v)= \sum_{t(\bar{a})=v} (B^2_{\bar{a}} \eta) X_{\bar{a}} + \sum_{h(a)=v} ( \eta B^1_a )X_a + j^2_v\eta J_v+  \eta i^1_v I_v,$$ and the second one is $$m_{1}^{\bb}(\eta' X_a)=  \eta' B^1_{\bar{a}} P_{h(a)}+ B^2_{\bar{a}} \eta' P_{t(a)}; \,\,  m_{1}^{\bb}(\eta'' J_v) = \eta'' i^1_v P_{F_v}+ i^2_v \eta'' P_{v}  ; \,\, m_{1}^{\bb}(\eta''' I_v)= \eta''' j_v^1 P_v+ j_v^2 \eta''' P_{F_v}.$$
\end{thm}

	Using the anti-symmetric involution in Theorem \ref{thm:preproj}, the proof of this theorem is similar to Theorem \ref{thm:univb}.

In the affine $A_n$ case, we have the following nice property of localized mirror functor $\cF^{\bL^\fr}$.
\begin{thm}\label{thm:fr rep}
	Let $(\bL^{\textrm{fr}},\bb^\fr)$ be the framed Lagrangian immersion in affine $A_n$. Then the cohomology of $\cF^{(\bL^{\textrm{fr}},\bb^\fr)}(\bL^{\textrm{fr}},\cE)$ gives a framed quiver representation.
\end{thm}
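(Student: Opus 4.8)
The plan is to transcribe the argument of Theorem~\ref{thm:frep} from the local $\C^2$ model to the cyclic affine $A_n$ configuration, carrying along the extra vertex grading. First I would write down the complex explicitly. By the same framed mirror construction used in Theorem~\ref{thm:frep}, applying $\cF^{(\bL^{\fr},\bb)}$ to $(\bL^{\fr},\cE)$ yields a three-term complex whose generators are vertex-graded copies of those in the $\C^2$ case: the bottom term is spanned by the maximal points $\alpha_1^v$ of the Morse function on each sphere $S^2_v$; the middle term by the immersed sectors $\tilde X_a$ for arrows $a$ of the double quiver $Q$ together with the framing sectors $\tilde I_v,\tilde J_v$; and the top term by the minimal points $\alpha_2^v$ and the framing-output points $\alpha_3^v$. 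The differentials are the deformed $m_1^{\bb}$, which respect the vertex grading and whose blocks around each vertex reproduce the differential matrices of Theorem~\ref{thm:frep}. I would first record $(m_1^{\bb})^2=0$, which follows from the weakly unobstructed relation of Proposition~\ref{prop:nc-framed} together with the $A_\infty$-equations.

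Next I would establish injectivity at the bottom degree. As in Theorem~\ref{thm:frep}, the only path-concatenation zero divisors couple the sphere arrows with the framing arrows (the compositions $x_a j_v$ and $i_v x_a$ vanish when the vertices match), so the component of $m_1^{\bb}$ landing on $\tilde J_v$ is left multiplication by $j_v$; since $i_v$ has no left zero divisor in the coefficient algebra, injectivity follows vertex by vertex.

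The main work, and the main obstacle, is exactness at the middle degree. Here I would use the observation recorded earlier that the affine $A_n$ complex is the $\C^2$ complex ``smashed with'' the deck group $\Z/(n+1)\Z$ of the ALE cover: the vertex grading on Floer generators is precisely the character decomposition of the $\C^2$ Floer complex under the cyclic action. Concretely, I would take a kernel element $\xi=\sum_a p_a\tilde X_a+\sum_v q_v\tilde I_v+\sum_v s_v\tilde J_v$ and expand $m_1^{\bb}(\xi)=0$ into equations indexed by the outputs $\alpha_2^v,\alpha_3^v$. The $\alpha_3^v$-equations would yield $q_v=k_v i_v$ and $s_v=j_v k_v$ for some $k_v$ in the coefficient algebra, exactly as in Theorem~\ref{thm:frep}. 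Subtracting the appropriate multiple $m_1^{\bb}(h\,\alpha_1^v)$ of the bottom generators and substituting, I would reduce to the purely ``sphere'' equations at the $\alpha_2^v$-outputs, which are the differentiated preprojective relations of Theorem~\ref{thm:preproj}. The cyclic arrangement couples each reduced equation to its two neighbours, so the delicate point is to show that no nonzero reduced solution survives; I would propagate the no-left-zero-divisor argument around the loop, using that a closed self-intersection generator cannot split off a trivial factor, to conclude that $\xi$ lies in the image of the $k_v$ and hence in the image of the first differential.

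Finally, the top-degree cohomology is the cokernel of the second differential. Taking this cokernel represents each arrow of $Q^{\fr}$ by its matrix of Maurer--Cartan coefficients, so the resulting cohomology is the framed quiver representation supported on $\bigoplus_v e_v\cA$, with the arrows acting through the deformation data of $(\bL^{\fr},\cE)$. I expect the cyclic propagation in the middle-exactness step to be the only genuinely new difficulty beyond the $\C^2$ case of Theorem~\ref{thm:frep}; the remainder is vertex-graded bookkeeping and a verification that the reduced equations force triviality all the way around the loop.
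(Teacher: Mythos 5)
Your overall scaffolding matches the paper's proof: the same three-term vertex-graded complex, injectivity at the bottom via the framing sectors and the no-left-zero-divisor property, reduction of middle-degree exactness to the ``sphere'' equations after solving the framing-output equations ($c_k=j_k^c d_k'$, $d_k=d_k' i_k$) and subtracting $m_1^{\bb^\fr}(h_{k+1}M_{k+1})$, and finally reading off the framed representation from the top cokernel. However, there is a genuine gap exactly at the step you yourself flag as the main difficulty. After the reduction one is left with the cyclically coupled system
$$y_k^c a_k'+ b_k'x_k+ a_{k-1}'y_{k-1} + x_{k-1}^c b_{k-1}'=0, \qquad k \in \Z/(n+1)\Z,$$
and your plan to ``propagate the no-left-zero-divisor argument around the loop'' is not an argument: unlike a linear chain, a cycle has no end vertex to serve as a base case, and a single equation of this system does not determine $(a_k',b_k')$ from $(a_{k-1}',b_{k-1}')$ (for instance, $y_k^c$ is a matrix of Maurer--Cartan coefficients and may be singular, so $y_k^c a_k'=0$ does not force $a_k'=0$). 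The paper resolves this with a different, and crucial, idea: \emph{sum} all $n+1$ reduced equations. Because a product of coefficients vanishes whenever the head and tail vertices fail to match, the sum factorizes as
$$\Bigl(\sum_l y_l^c\Bigr)\Bigl(\sum_k a_k'\Bigr)+ \Bigl(\sum_k b_k'\Bigr)\Bigl(\sum_l x_l\Bigr) + \Bigl(\sum_k a_k'\Bigr)\Bigl(\sum_l y_l\Bigr)+ \Bigl(\sum_l x_l^c\Bigr)\Bigl(\sum_k b_k'\Bigr)=0,$$
which is formally the single $\C^2$-type equation in the aggregated variables $\sum_k a_k'$, $\sum_k b_k'$, $\sum_l x_l$, $\sum_l y_l$. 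The $\C^2$ argument (no left zero divisors of $\sum_l x_l$, plus the normalization that no $a_k'$ has rightmost path $x_k$) then forces $\sum_k a_k'=\sum_k b_k'=0$, and the vertex grading --- each $a_k'$ has its tail at a different vertex --- recovers $a_k'=b_k'=0$ individually. This summation-and-factorization step is the actual content behind the ``smash with the cyclic group'' heuristic you invoke, and without it your middle-exactness argument does not close.

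A minor additional slip: in your injectivity paragraph you say the relevant output lands on $\tilde J_v$ via left multiplication by $j_v$ but then appeal to $i_v$ having no left zero divisor; in the paper the output used is at $I_v$, namely $m_2(i_v I_v, \sum_k \eta_k M_k)=\eta_v i_v\, I_v$, and it is the product $\eta_v i_v$ to which the no-zero-divisor property is applied. This is cosmetic, but worth fixing for consistency with the coefficient conventions of Section \ref{section:nc mirror}.
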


\begin{proof}
	In this proof, the minimum points of the framing components will be denoted by $F_k$. We write the formal boundary deformation of $(\bL^{\textrm{fr}},\bb^\fr)$ by $\bb^\fr=\sum x_k X_k + y_k Y_k +i_k I_k +j_k J_k$, while the deformation of $(\bL^\fr,\cE)$ by $b^\fr=\sum x_k^c X_k + y_k^c Y_k+ i_k^c I_k+ j_k^c J_k$. Here $X_k \in \CF^1(\bS^2_{k}, \bS^2_{k+1})$, $Y_k \in \CF^1(\bS^2_{k+1}, \bS^2_{k})$, $I_k \in \CF^1(F_{k}, \bS^2_{k})$ and $J_k \in \CF^1(\bS^2_{k}, F_{k})$ are the degree 1 immersed sectors for all $k$ mod $n+1$. Besides, we will disregard the sign of the complex, which depends on the choice of orientation and spin structure. As before, $x_k, y_k, i_k, j_k$ are arrows in $Q^\fr$, while $x_k^c, y_k^c,i_k^c,j_k^c$ are matrices of corresponding ranks. The complex $(\cF^{(\bL^{\textrm{fr}},\bb^\fr)}(\bL^{\textrm{fr}},\cE), m_1^{\bb^\fr})$ is in the form of Complex \ref{eq:frmonad}, which can be computed as in Theorem \ref{thm:univb}.

    We will prove that the cohomology concentrates on the highest degree and it provides a framed quiver representation. The idea is to make use of the symmetry of the affine $A_n$ diagram and reduce to Theorem \ref{thm:frep}.  
    
    First, we observe that the first differential is injective. The output of $m_1^{\bb}(\sum \eta_k M_k)$ at $I_v$ is given by $m_2(i_v I_v, \sum \eta_k M_k)= \eta_v i_v I_v$ for all vertices $v$. Since $\eta_v \in \cE|_{M_v} \otimes \A e_v$ and $i_v$ have no left zero divisors in $\cE|_{M_v} \otimes \A e_v$,we have  $\eta_v i_v=0$ if and only if $\eta_v=0$ for all $v$.
    
    Let's show the complex is exact at degree $1$. Let $U:= \sum_{k\in I} a_k X_k + b_k Y_k+ c_k J_k + d_k I_k$ be an element in the kernel. The outputs at $P_k$ and $F_k$ are zero for $k=0, 1, \cdots n$. In other words, 
    \begin{numcases}{}
    	(y_k^c a_k + b_k x_k + a_{k-1}y_{k-1}+ x_{k-1}^c b_{k-1}+i_k^c c_k+ d_k j_k)P_k=0 &  \label{equ:part3}
    	\\
    	(c_k i_k + j_k^c d_k)F_k=0. & \label{equ:part4}
    \end{numcases}
    
    Notice that $i_k$ has no left zero divisors. Thus $c_k=j_k^c d_k'$, $d_k= v_k+ d_k' i_k$ for some $d_k' \in \cE|_{M_k} \otimes \A e_k$ and $v_k \in \cE|_{M_k} \otimes \A e_{f_k}$ such that $v_k$ doesn't contain a multiple of $i_k$ and $j_k^cv_k=0$, $k=0, \cdots, n$. Here $e_{f_k}$ is the idempotent at the $k$-th framing vertex. Hence the Equation \eqref{equ:part3} becomes 
    \begin{equation} \label{eq: ker 1}
    	\begin{aligned}
    		&y_k^c a_k + b_k x_k + a_{k-1}y_{k-1}+ x_{k-1}^c b_{k-1}+i_k^c c_k+ d_k j_k\\
    		=& y_k^c a_k + b_k x_k + a_{k-1}y_{k-1}+ x_{k-1}^c b_{k-1}+ (y_k^c x_k^c+ x^c_{k-1}y^c_{k-1})d_k'+ d_k'(y_kx_k+ x_{k-1}y_{k-1})+v_k j_k=0,
    	\end{aligned}
    \end{equation} where we used the unobstructed equations.

     In order to prove the exactness, it suffices to show  $a_{k}=x_k^c d_k'+ d_{k+1}' x_k$ and $b_k=d_k' y_k+ y_k^c d_{k+1}'$ for $k=1, \cdots, n$. Let $a_{k} =  a_{k}'+(x_k^c d_k'+ d_{k+1}' x_k)$ and $b_{k}=b_{k}'+(d_k' y_k+ y_k^c d_{k+1}')$. 
     Substituting $a_{k}$ and $b_{k}$ in Equation \eqref{equ:part3} by the above expressions, we have the following reduced equation:
     \begin{equation}\label{eq:part6}
     	y_k^c a_k'+ b_k'x_k+ a_{k-1}'y_{k-1} + x_{k-1}^c b_{k-1}'+v_kj_k=0
     \end{equation} for $k=0, 1, \cdots, n.$ 
 
     By the convention of coefficients, $h(a_k')=v_{k+1}$ (resp. $h(b_k')=v_k$), while $t(a_k')=v_k$ (resp. $t(b_k')=v_{k+1}$). Besides, since $v_k \in \cE|_{M_k} \otimes \A e_{f_k}$, $v_k j_m= 0$ for $k \neq m$. Therefore, the sum of these $n+1$ Equations (\ref{eq:part6}) is equal to 
     \begin{equation} \label{eq:part5}
     	\left(\sum_l y_l^c\right)\left(\sum_k a_k'\right)+ \left(\sum_l x_l^c\right)\left(\sum_k b_k'\right)= -\left(\sum_k b_k'\right)\left(\sum_l x_l\right) - \left(\sum_k a_k'\right)\left(\sum_l y_l\right) -\left(\sum_k v_k\right)\left(\sum_l j_l\right),
     \end{equation} since the product is zero if the tail and head vertices do not match.
 
     Notice that the relations of $\A^\fr$ preserve the length of the characters. Thus, the length of characters on the right is always one greater than the left. Hence, $\sum_k a_k'=\sum b_k'=0$, which further implies $\sum_k v_k=0$.
 
     Notice that if $\sum_k a_k'=0$, $a_k'$ is zero for all $k$, since $a_k'$ has tails at different vertex for each $k$. Similarly, $b_k'$ and $v_k$ also possess this property. Hence, $a_k'=b_k'=v_k=0$ for all $k$. The complex is exact in degree one. 

    The only nontrivial cohomology appears at the top degree, which gives the framed equiver representation. In particular, this implies Theorem \ref{thm:frep}, the self-plumbing case, when $k=0$. 
\end{proof}

\begin{cor}
	In the affine $A_n$ case, $\cF^{\U} \circ \cF^{(\bL^{\textrm{fr}},\bb)}(\bL^{\textrm{fr}},\cE)$ is quasi-isomorphic to $\cF^{(\bL,\bb)}((\bL^{\textrm{fr}},\cE)).$
\end{cor}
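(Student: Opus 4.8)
The plan is to follow the strategy of Corollary \ref{cor:compare} verbatim, substituting the affine $A_n$ concentration result (Theorem \ref{thm:fr rep}) for the self-plumbing one (Theorem \ref{thm:frep}). First I would unwind the definitions: since $\cF^\U = \U \otimes -$ with $\U = (\cF^{(\bL,\bb)}(\bL^\fr,\bb^\fr), m_1^{\bb,\bb^\fr})$, the composite $\cF^\U \circ \cF^{(\bL^\fr,\bb)}(\bL^\fr,\cE)$ is the tensor product $\cF^{(\bL,\bb)}(\bL^\fr,\bb^\fr) \otimes_{\A^\fr} \cF^{(\bL^\fr,\bb)}(\bL^\fr,\cE)$, which I would organize into a double complex $C^{\bullet,\bullet}$ with two commuting differentials coming from the two Floer complexes, exactly as in the proof of Corollary \ref{cor:compare}.

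Next I would set up the spectral sequence of this double complex, taking the vertical differential (the one induced by $m_1^{\bb^\fr}$ on $\cF^{(\bL^\fr,\bb)}(\bL^\fr,\cE)$) first, so that $E_1^{p,q} = H^q_{\mathrm{ver}}(C^{\bullet,\bullet})$. The decisive input is Theorem \ref{thm:fr rep}: in the affine $A_n$ case the vertical cohomology of $\cF^{(\bL^\fr,\bb)}(\bL^\fr,\cE)$ is concentrated in the top degree and is a framed quiver representation of $Q^\fr$. Hence $E_1$ is supported in a single row, so the spectral sequence degenerates at the second page.

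Finally I would identify the surviving row. Because the concentrated vertical cohomology is a $\Lambda_0$-valued (equivalently $\C$-valued) representation of $\A^\fr$, substituting it into $\U = \cF^{(\bL,\bb)}(\bL^\fr,\bb^\fr)$ reproduces exactly the monadic complex computed in Theorem \ref{thm:univb}, namely $\cF^{(\bL,\bb)}(\bL^\fr,\cE)$. Since the spectral sequence is concentrated in one row, the total cohomology of $C^{\bullet,\bullet}$ agrees with the cohomology of that row, yielding the asserted quasi-isomorphism $\cF^\U \circ \cF^{(\bL^\fr,\bb)}(\bL^\fr,\cE) \simeq \cF^{(\bL,\bb)}(\bL^\fr,\cE)$.

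The main obstacle is not in the spectral sequence formalism, which is routine, but in the concentration statement on which it rests — and that has already been dispatched in Theorem \ref{thm:fr rep}, whose proof exploited the cyclic symmetry of the affine $A_n$ diagram to reduce exactness at the middle degree to the $\C^2$ computation of Theorem \ref{thm:frep}. Granting that, the only point requiring care is checking that the horizontal differential induced on $E_1$ coincides with the differential $m_1^\bb$ of the monad of Theorem \ref{thm:univb}; this follows from the explicit forms of the two differentials recorded there together with the $A_\infty$-relations, so no genuinely new estimate is needed.
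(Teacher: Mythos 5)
Your proposal is correct and matches the paper's intended argument: the paper's proof of this corollary is precisely an invocation of the proof of Corollary \ref{cor:compare}, with the concentration input replaced by Theorem \ref{thm:fr rep} and the surviving row identified with the monad of Theorem \ref{thm:univb}, exactly as you spell out. No discrepancy to report.
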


	Using the spectral sequence, the proof is similar to the Corollary \ref{cor:compare}.

	

\subsection{Stable family of affine $A_n$ Lagrangian immersions and special Lagrangians}
The affine $A_n$ surface is particularly well-studied.  This local model admits a special Lagrangian fibration constructed by $\bS^1$-symplectic reduction \cite{Gm01,Gol01}.  Its Fukaya category was known by \cite{Sei08} and stability conditions were studied by \cite{Thomas,TY02}.  SYZ and homological mirror symmetry in this case is well-known \cite{CLL12, LLW12, Cha13, AAK16, Kanazawa-Lau}.  An explicit expression of the non-Archimedean dual fibration was recently found by \cite{yuan}.

In this subsection, we revisit mirror symmetry and stability conditions for the affine $A_n$ surface.  The new discovery is the gluing relation for the affine $A_n$ Lagrangian skeleton (which is a union of vanishing spheres) with SYZ fibers via the language of quiver algebroid stacks, and the identification between its stable deformations and special Lagrangian torus objects.




Now we consider the surface 
$$X=\{(x,y,z)\in \mathbb{C}^3\mid xy = p(z) \text{ and }z\neq 0\}$$ where $p(z)=\prod_{k=1}^{n+1}(z-a_{k})$ is a monic polynomial and $-\infty < a_{n+1}<\cdots < a_{1}<0$ are distinct real numbers. This is a smoothing of the $A_n$ singularity. $X$ is equipped with the K\"ahler form inherited from the standard form on $\mathbb{C}^3.$ As is known, the projection map $\pi: (x,y,z)\mapsto z$ can be viewd as a Lefschetz fibration. One may regard Example \ref{exmp:conic} as its special case. It admits a natural Hamiltonian $\bS^1$-action given by \begin{align*}
	e^{it}\cdot(x,y,z) =  (e^{it} x, e^{-it} y, z)
\end{align*}
whose associated moment map is $\mu(x,y,z)=\frac{1}{2}(|x|^2-|y|^2)$. 

$X$ admits a special Lagrangian torus fibration 
$$F:X \to \R^2, \, F(x,y,z)=(\mu(x,y,z), \ln \, |z|^2).$$ 
We denote the singular fibers $F^{-1}(0, \ln |a_i|^2)$ by $\cS_i$, and the special Lagrangian tori $F^{-1}(0, \ln \, |r|^2)$ by $T_{r,0}$ for $r \neq a_i$ and $i=1,\cdots, n+1$.  In particular, we have the families of special Lagrangian tori $T_{r,0}$ for $|a_i|<r<|a_{i+1}|$. For simplicity, we denote these families by $T_i$.

Let $C_{q}=\left\{(x,y,z)\in X\mid \mu(x,y,z) = 0 \text{ and }z= q\right\}$.  Consider $\bL := \bigcup_{q\in c}C_{q}$, where $c$ is a loop passing through all the zeros of $p(z)$ as shown in Figure \ref{fig:An}. $\bL$ is a union of $n+1$ Lagrangian spheres $\bigcup_{i=1}^{n+1} \bS^2_i$.  $\bL$ corresponds to the affine $A_n$ quiver shown in Figure \ref{fig:quiver-An}.

\begin{figure}[hpt!]
\begin{tikzcd}
	&                                                      & {\bullet} \arrow[rrd, harpoon', shift left] \arrow[lld, harpoon', shift right=2] &                                                      &                                                                        \\
{\bullet} \arrow[r, harpoon] \arrow[r, harpoon] \arrow[rru, harpoon', shift left] & {\bullet} \arrow[r, harpoon] \arrow[l, harpoon, shift left] & {\cdots \cdots} \arrow[l, harpoon, shift left] \arrow[r, harpoon]                      & {\bullet} \arrow[r, harpoon] \arrow[l, harpoon, shift left] & {\bullet} \arrow[l, harpoon, shift left] \arrow[llu, harpoon', shift right=2]
\end{tikzcd}
	\caption{Quiver of affine $A_{n}$ type}
	\label{fig:quiver-An}
	\end{figure}
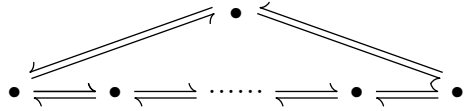

The generators of $\CF^1(\bL,\bL)$ are the immersed sectors $U_i^L \in \CF^1(\bS^2_i, \bS^2_{i+1}), V_i^L \in \CF^1(\bS^2_{i+1}, \bS^2_{i})$; the generators of $\CF^1(\cS_i,\cS_i)$ are the self-immersed sectors $U_i,V_i$ for $i=1,\cdots n+1$ as shown in Figure \ref{fig:An}. Besides, the holonomy variables of $T_i$ will be denoted by $x_i, y_i$. Let's denote the deformation parameter of $\bL$ (resp. $\cS_i$) by $\bb_0= \sum_i u_i^LU_i^L+ v_i^LV_i^L$ (resp. $\bb_i= u_iU_i + v_iV_i$) and the formal deformation space by $\A_0$ (resp. $\cA_i$).

We find that there exists a quiver algebroid stack $\hat{\cY}$, over which the Fukaya isomorphism equations between $(\bL,\bb_0)$ and immersed spheres $(\cS_i,\bb_i)$ as well as $(T_i,\nabla^{(x_i,y_i)})$ can be solved. This induces a map from the Maurer-Cartan space of $\bL$ to that of $\cS_i$ and $T_i.$


\begin{figure}[htb!]
	\includegraphics[scale=2]{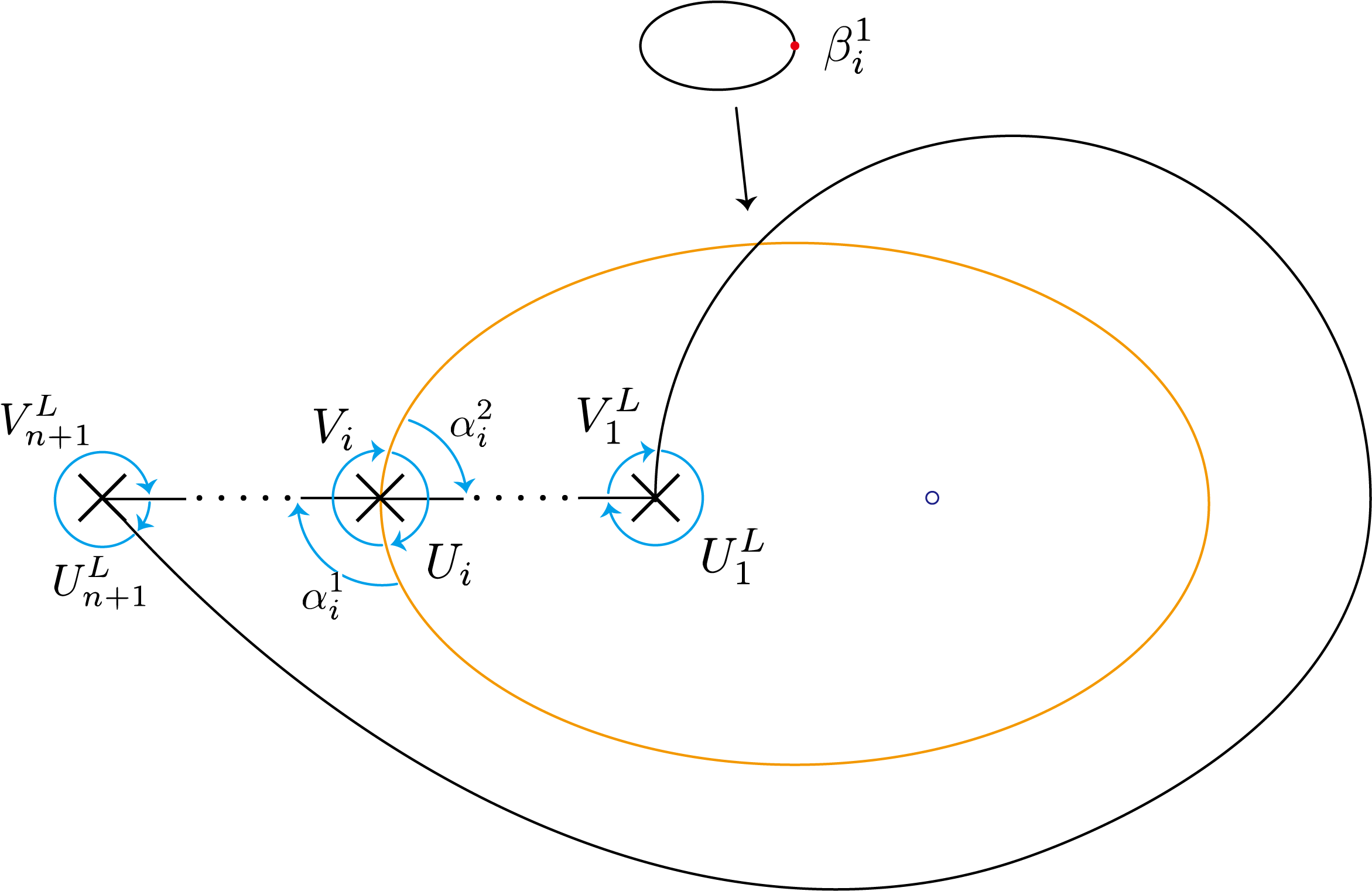}
	\caption{Base of the conic fibration in affine $A_{n}$ case}
	\label{fig:An}
  \end{figure}

\begin{thm}\label{thm:qstack}
	There exist preisomorphism pairs between $(\bL,\bb_0)$ and $(\cS_i,\bb_i),i=1,\cdots, n+1$:
	$$\alpha_i \in \CF_{\A_0(U_{0i})\otimes \cA_i}((\bL,\bb_0),(\cS_i,\bb_i)),\quad \beta_i \in \CF_{\cA_i \otimes \A_0(U_{0i})}((\cS_i,\bb_i),(\bL,\bb_0))$$
	and a quiver stack $\hat{\cY}$ corresponding to the minimal resolution of $A_n$-singularity over the Novikov field $\Lambda$, whose charts are $\A_0$ and $\cA_i,i=1,\cdots,n+1$, that solves the isomorphism equations for $(\alpha_i,\beta_i)$:
	\begin{align}
		\label{equ:stackrelation1}
		m_{1,\hat{\cY}}^{\bb_0,\bb_i}(\alpha_i) = 0,& \quad  m_{1,\hat{\cY}}^{\bb_i,\bb_0}(\beta_i) = 0;\\
		\label{equ:stackrelation2}
		m_{2,\hat{\cY}}^{\bb_0,\bb_i,\bb_0}(\alpha_i,\beta_i)  = \one_\bL,&\quad  m_{2,\hat{\cY}}^{\bb_i,\bb_0,\bb_i}(\beta_i,\alpha_i) = \one_{\cS_i}.
	\end{align}
	In above, $\A_0(U_{0i})$ is the localization of $\A_0$ at the set of arrows
	$\{v_1^{L},\cdots, v_{i-1}^L, u_{i+1}^{L}, \cdots, u_{n+1}^L\}$ for $i=1,\cdots,n+1$ respectively. In particular, the existence of the isomorphism pairs imposes commutativity on the local affine charts of $\A_0$.
\end{thm}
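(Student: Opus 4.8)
The plan is to generalize the explicit gluing scheme already illustrated in the affine $A_1$ example and in the $D_4$ computation of Proposition \ref{Prop: D4} to the $(n+1)$-cycle quiver. The reference Lagrangian $\bL = \bigcup_{i=1}^{n+1} \bS^2_i$ has deformation space $\A_0$, the preprojective algebra of the cyclic affine $A_n$ double quiver by Theorem \ref{thm:preproj}, while each singular fiber $\cS_i$ is a single self-plumbed sphere whose deformation space $\cA_i$ is the commutative algebra $\Lambda_0[x,y][[xy]]$ of Proposition \ref{lem:HKL20}. First I would fix, for each $i$, the localization $\A_0(U_{0i})$ by inverting the arrows $\{v_1^L,\ldots,v_{i-1}^L,u_{i+1}^L,\ldots,u_{n+1}^L\}$; geometrically these are exactly the immersed generators along the two chains of spheres joining $\bL$ to the $i$-th vertex from either side, so that after localization the single sphere $\cS_i$ becomes visible as a chart.

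Next I would write down the preisomorphism pairs explicitly. Modeling on $\alpha_{i0}^\fr,\alpha_{0i}^\fr$ in the $A_1$ example, $\alpha_i$ is built from the maximal point of the clean intersection circle between $\bL$ and $\cS_i$ together with products of the inverted arrows supplying the parallel transport from the $i$-th vertex around the cycle, and $\beta_i$ is built symmetrically from the minimal point. I would then compute $m_1^{\bb_0,\bb_i}(\alpha_i)$ and $m_1^{\bb_i,\bb_0}(\beta_i)$ by counting the bigons and constant polygons feeding each output generator. Requiring every coefficient to vanish forces a system of relations among the arrows of $\A_0$ and the generators $u_i,v_i$ of $\cA_i$; reading these off defines the transition representations $G_{i0}\colon \A_0(U_{0i}) \to \cA_i$ and $G_{0i}\colon \cA_i \to \A_0(U_{0i})$ together with the unavoidable gerbe terms $c_{0i0}$, exactly as the vanishing of $m_1^{\bb^\fr}(\alpha_{10}^\fr)$ produced $G_{10},G_{01}$ in the $A_1$ example.

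Having the pairwise data, I would assemble the quiver algebroid stack $\hat{\cY}$ by setting the transition maps on the overlaps to be the compositions $G_{i0}\circ G_{0j}$ and checking the cocycle relation \eqref{eq:c-gen} for the induced gerbe terms. The second isomorphism equation $m_2^{\bb_0,\bb_i,\bb_0}(\alpha_i,\beta_i)=\one_\bL$ and $m_2^{\bb_i,\bb_0,\bb_i}(\beta_i,\alpha_i)=\one_{\cS_i}$ confirms that each $(\alpha_i,\beta_i)$ is a genuine isomorphism up to gerbe, and the triangle relations among the compositions supply the cocycle. Identifying the glued space with the minimal resolution then amounts to checking that the transition between adjacent charts $\cA_i$ and $\cA_{i+1}$ is the standard birational gluing whose exceptional locus is a single $\bP^1$ carrying the homogeneous coordinate supplied by the arrow $u_{i+1}^L$ (or $v_i^L$); running over $i=1,\ldots,n$ produces the $A_n$ chain of $(-2)$-curves. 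Finally, commutativity of the charts follows as in Proposition \ref{Prop: D4}: since $G_{0i}\circ G_{i0}$ is the identity up to gerbe conjugation, any identity $G_{0i}(XY)=G_{0i}(YX)$ forced by the preprojective relations of $\A_0$ pulls back under $G_{i0}$ to $XY=YX$ in $\cA_i$, and through the isomorphism this commutativity descends onto the local affine chart of $\A_0$.

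The main obstacle I anticipate is twofold. First, solving the $m_1=0$ equations requires knowing that the only disc contributions are the expected bigons and geometric series of constant polygons at the immersed points; controlling these counts in the cyclic quiver, where several spheres meet and the wall-crossing structure is richer than in the $A_1$ case, is the delicate Floer-theoretic input. Second, verifying the global cocycle condition \eqref{eq:c-gen} across all triples of charts and matching the resulting gerbe twist with the nontrivial gerbe on the minimal resolution is a bookkeeping-heavy but conceptually essential check; here the localizations $\A_0(U_{0i})$ must be chosen compatibly so that each composition $G_{i0}\circ G_{0j}$ is defined on the correct overlap.
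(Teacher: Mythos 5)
Your proposal follows essentially the same route as the paper's proof: explicit preisomorphism pairs built from the intersection generators normalized by inverted arrows (and area terms), transition representations $G_{i0},G_{0i}$ and gerbe terms $c_{0i0}$ read off from the vanishing of the $m_1$-coefficients, composite transition maps $G_{ik}=G_{i0}\circ G_{0k}$ with trivial gerbe terms assembling the stack, identification of adjacent-chart gluing with the standard $A_n$-resolution transition functions, and commutativity of $\cA_i$ forced by $G_{0i}(u_iv_i)=G_{0i}(v_iu_i)$ together with the left inverse $G_{i0}$. The only cosmetic difference is in the description of the pairs themselves (the paper's $\alpha_i$ combines two normalized immersed sectors $\alpha_i^1,\alpha_i^2$ while $\beta_i$ is a single generator $\beta_i^1$), which does not affect the structure of the argument.
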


\begin{proof}
	Let $A_{i}$ and $A_{i}'$  denote areas of the polygons with vertices $\alpha_{i}^{1},u_{i+1}^{L},\cdots ,u_{n+1}^L,\beta_{i}^1$ and $\alpha_{i}^2,v_{i-1}^{L},\cdots, v_{1}^L,\beta_{i}^1$. The isomorphism pairs are defined using normalized immersed sectors $\alpha_{i}^1,\alpha_{i}^2$ and $\beta_{i}^1$(see Figure \ref{fig:An}), 
	$$
	\alpha_{i}= T^{-A_{i}}(u_{n+1}^L\cdots u_{i+1}^L)^{-1}\alpha_{i}^1+ T^{-A_{i}'} (v_{1}^L\cdots v_{i-1}^L)^{-1}\alpha_{i}^2, \quad \beta_i = \beta_{i}^1.$$ 
	It's easy to check they satisfy Equations \eqref{equ:stackrelation1} and \ref{equ:stackrelation2}.
	
	For simplicity, we assume that the area terms $A_i=A_i'=0$. The only difference this assumption makes is that the transition map will be defined up to a constant $T^{-A}$. However, this assumption suffices for the purpose of constructing the quiver algebroid stack. It is important to keep track of these area terms when considering the maps between Maurer-Cartan spaces.
	
	We now construct a quiver stack $\hat{\mathcal{Y}}$ as follows:
	\begin{enumerate}
		\item The underlying topological space of $\hat{\mathcal{Y}}$ is the dual polyhedral set  $B$ of the fan of $A_{n}$-resolution.
		\begin{figure}[htb!]
			\includegraphics[scale=0.7]{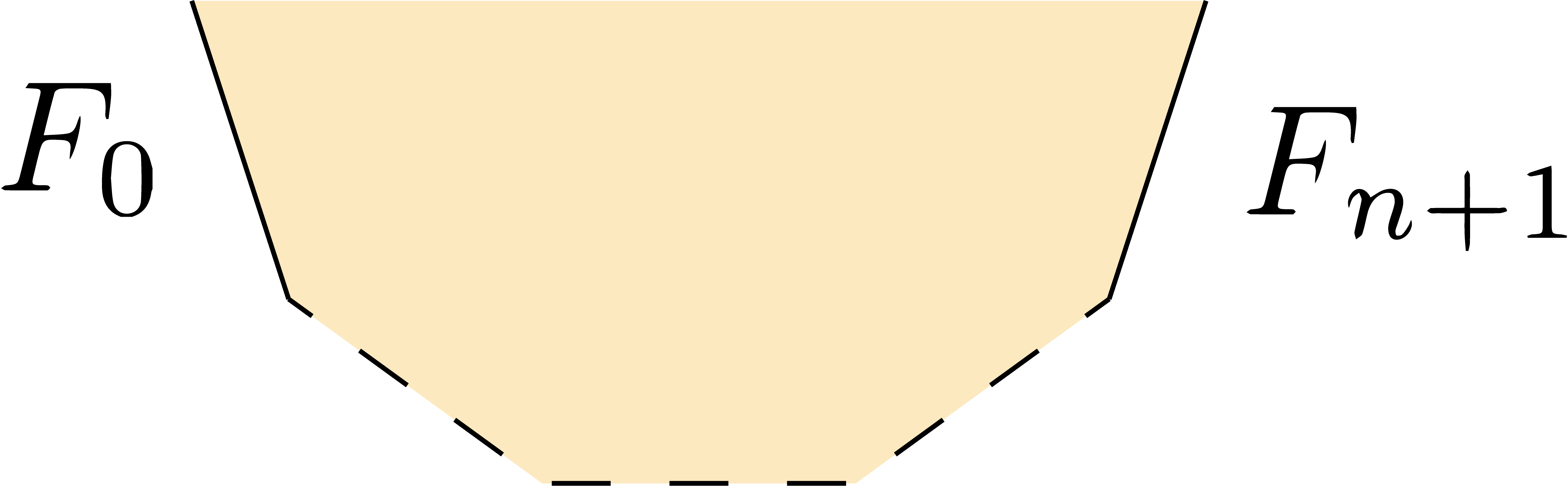}
			\caption[$A_{n}$ polyhedral set]{An illustration of the polyhedral set for $A_{n}$ resolution.}
			\label{fig:An_polyhedral}
		\end{figure}
		
		We endow $B$ with a topology $\left\{\emptyset,B, F_{1}^c,\cdots, F_{n+1}^c\right\}$  where $F_{i}^c$ is the complement of the $i$-th facet $F_{i}$. We take $\left\{U_{i}=F_0^c \cap \cdots \cap F_{i-2}^c\cap F_{i+1}^c\cap \cdots F_{n+1}^c \right\}_{1\leq i \leq n+1}$ together with $U_0:=B$,  which serve as the open cover of the quiver stack.

		\item We associate each open set $U_{i}$ a sheaf by localizing the quiver algebras $\mathbb{A}_{0}$ for $i=0$, and $\mathscr{A}_{i}$ for $1\leq i\leq n+1$. So for convenience, if there is no ambiguity we will use $\mathbb{A}_{0}$ or $\mathscr{A}_{i}$ to denote the sheaves. Then $\mathbb{A}_0(U_0\cap U_{i})$ is defined by localizing $\mathbb{A}_0$ at $\{v_{1}^L,\cdots, v_{i-1}^L,u_{i+1}^L,\cdots, u_{n+1}^L\}$; on the intersection it's the algebra localized at the union of corresponding elements.  
		
		\item The representation maps  $G_{i0}:\mathbb{A}_{0}|_{U_{0i}}\rightarrow\mathscr{A}_{i}|_{U_{0i}}$ for $1\leq i \leq n+1$ and $G_{0i}$ are defined as $$G_{i0}:= \begin{cases} 
			u_{j}^L\rightarrow  v_{i}u_{i}& \forall 1\leq j \leq i-1 \\
			u_{i}^L\rightarrow u_{i} &  \\ 
			u_{j}^L\rightarrow 1 & \forall i+1 \leq j \leq n+1  \\
			v_{j}^L\rightarrow 1 & \forall 1 \leq j \leq i-1  \\
			v_{i}^L\rightarrow v_{i} &  \\
			v_{j}^L \rightarrow  u_{i}v_{i}& \forall i+1 \leq j \leq n+1 
			\end{cases},\quad G_{0i}:=\begin{cases}
			u_{i}\rightarrow (u_{n+1}^L\cdots u_{i+1}^L)u_{i}^L(v_{1}^L\cdots v_{i-1}^L)^{-1} &  \\
			v_{i}\rightarrow (v_{1}^L\cdots v_{i-1}^L)v_{i}^L(u_{n+1}^L\cdots u_{i+1}^L)^{-1}
			\end{cases}.$$
		\item The gerbe terms of the form $c_{0i0}$  is defined by  $$c_{0i0}(h(u_{j}^L))= \begin{cases}
			v_{1}^L\cdots v_{j}^L & 1\leq j \leq i-1   \\
			u_{n+1}^L\cdots u_{i+1}^L &  j=i  \\
			u_{n+1}^L\cdots u_{j+1}^L & i+1\leq j\leq n+1 
			\end{cases}  \quad c_{0i0}(t(u_{j}^L))=\begin{cases}
			v_{1}^L\cdots v_{j-1}^L & 1\leq j \leq i-1  \\
			v_{1}^L\cdots v_{i-1}^L & j=i \\
			u_{n+1}^L\cdots u_{j}^L & i+1 \leq j\leq n+1 
			\end{cases}.$$Since $h(u_{j}^L) =t(v_{j}^L)$  and $t(u_{j}^L)=h(v_{j}^L)$,  we have the following identities $c_{0i0}(h(u_{j}^L)) =c_{0i0}(t(v_{j}^L))$  and $c_{0i0}(t(u_{j}^L))=c_{0i0}(h(v_{j}^L)).$ 
			
			\item We define the transition maps between $i$-th chart and $k$-th chart to be $G_{ik}(x):=G_{i0}\circ G_{0k}(x)$, so the corresponding gerbe terms are always trivial.
	\end{enumerate}
	In particular, the transition map between $i$-th chart and $(i+1)$-th chart is the following:
	\begin{align*}
		G_{i,i+1}=G_{i0}\circ G_{0,i+1}=\begin{cases}
			u_{i+1}\rightarrow v_{i}^{-1} &  \\
			v_{i+1}\rightarrow v_{i}u_{i}v_{i}=u_{i}v_{i}^2
		\end{cases},
	\end{align*}
	which are exactly the transition functions of the minimal resolution of $A_n$-singularity as a complex variety.

    An explicit computation shows the above data indeed defines a quiver algebroid stack over $P$. We take the composition $G_{0i}\circ G_{i0}$ as an example: 
	\begin{enumerate}
		\item $1\leq j \leq i-1:$  Using the substitution, $v_{j}^Lu_{j}^L = u_{j-1}^Lv_{j-1}^L$, we find \begin{align*}G_{0i}\circ G_{i0}(u_{j}^L)& =(v_{1}^L\cdots v_{i-1}^L)v_{i}^Lu_{i}^L(v_{1}^L\cdots v_{i-1}^L)^{-1}\\
			& = v_{1}^L\cdots v_{j}^Lu_{j}^L (v_{j-1}^L)^{-1}\cdots (v_{1}^L)^{-1}\\
			& = (v_{1}^L\cdots v_{j}^L)u_{j}^L(v_{1}^L\cdots v_{j-1}^L)^{-1}.\end{align*} 
		\item $j=i:$ \begin{align*}G_{0i}\circ G_{i0}(u_{i}^L)& =(u_{n+1}^L\cdots u_{i+1}^L)u_{i}^L(v_{1}^L\cdots v_{i-1}^L)^{-1}.\end{align*}
		\item $i+1\leq j \leq n+1:$ \begin{align*}G_{0i}\circ G_{i0}(u_{j}^L)& =1 \\
			& = (u_{n+1}^L\cdots u_{j+1}^L)u_{j}^L(u_{n+1}^L\cdots u_{j}^L)^{-1}.\end{align*}
	\end{enumerate}
	Therefore, they satisfy the equality $G_{0i}\circ G_{i0}(x) = c_{0i0}(h(x))\cdot G_{00}(x)\cdot c_{0i0}^{-1}(t(x))$. 
	
	In the above construction, we didn't assume $\cA_i$ is commutative. However, The existence of representations $G_{0i}$ imposes commutativity on $\cA_i$. This arises because $G_{0i}$ possesses a left inverse and they satisfy $G_{0i}(u_iv_i)=G_{0i}(v_iu_i)$ for all $i$.  
\end{proof}

Furthermore, there exists an isomorphism pair between the family of special Lagrangian tori $(T_i, \nabla^{(x_i,y_i)})$ and $(\bL,\bb_0)$ over the same quiver algebroid stack $\hat{\cY}$.

\begin{thm}\label{thm:qstack2}
	There exist preisomorphism pairs between the family of special Lagrangian tori $(T_i,\bb_i:=\nabla^{(x_i,y_i)}),i=1,\cdots, n$ and $(\bL,\bb_0)$:
	$$\alpha_i \in \CF_{\A_0(U_{0i}')\otimes \cA_i}((T_i,\bb_i),(\bL,\bb_0)),\beta_i \in \CF_{\cA_i \otimes \A_0(U_{0i}')}((\bL,\bb_0),(T_i,\bb_i ))$$
	that solves the Fukaya isomorphism equations  over the quiver algebroid stack $\hat{\cY}$ corresponding to the minimal resolution of $A_n$ singularity (see Theorem \ref{thm:qstack}):
	\begin{align}
		\label{equ:stare1}
		m_{1,\hat{\cY}}^{\bb_i,\bb_0}(\alpha_i) =& 0, m_{1,\hat{\cY}}^{\bb_0,\bb_i}(\beta_i) = 0;\\
		\label{equ:stare2}
		m_{2,\hat{\cY}}^{\bb_i,\bb_0,\bb_i}(\alpha_i,\beta_i) =& \one_{T_i}, m_{2,\hat{\cY}}^{\bb_0,\bb_i,\bb_0}(\beta_i,\alpha_i) = \one_{\bL},
	\end{align}
	where $U_{0i}':= U_{0i} \cap U_{0(i+1)}$, and $\A_0(U_{0i}')$ is the localization of $\A_0$ at the set of arrows
	$$\{v_1^{L},\cdots, v_{i}^L, u_{i+1}^{L}, u_{i+2}^{L}, \cdots, u_{n+1}^L\}$$ 
	for $i=1,\cdots,n$ respectively. 
\end{thm}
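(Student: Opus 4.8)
The plan is to build the pair $(\alpha_i,\beta_i)$ out of the geometry of the intersection $T_i\cap\bL$ and then to solve the isomorphism equations by the same polygon-counting bookkeeping used for the immersed spheres in Theorem \ref{thm:qstack}, transported through the Auroux wall-crossing change of variables \cite{Aur07,HKL23}. First I would locate the degree-zero generators. The family $T_i$ is a torus fibre over the annulus $|a_i|<r<|a_{i+1}|$, sitting between the two singular fibres $\cS_i$ and $\cS_{i+1}$; the circle $|z|=r$ meets the loop $c$ carrying $\bL$ in two points, producing two transverse intersection points of $T_i$ with the adjacent spheres $\bS^2_i$ and $\bS^2_{i+1}$. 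These are the degree-zero intersections from which $\alpha_i,\beta_i$ are assembled. As in the immersed-sphere case, their coefficients are products of the localized arrows $u^L_\bullet,v^L_\bullet$ dressed by the holonomy monomials of $\nabla^{(x_i,y_i)}$, so that the chart $\cA_i$ is accessed through its torus coordinates $(x_i,y_i)$ rather than the immersed coordinates $(u_i,v_i)$ of $\cS_i$, the two being related precisely by the wall-crossing transformation.

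Next I would verify \eqref{equ:stare1} by computing $m_{1,\hat{\cY}}^{\bb_i,\bb_0}(\alpha_i)$ and $m_{1,\hat{\cY}}^{\bb_0,\bb_i}(\beta_i)$. The contributing strips have corners at the two intersection points and at the immersed sectors of $\bL$ (weighted by $\bb_0$), and acquire holonomy weights of $\nabla^{(x_i,y_i)}$ along their boundary on $T_i$. Imposing that these outputs cancel yields relations identifying the holonomy variables with localized products of the arrows of $\bL$; I expect these to be exactly the substitutions $G_{0i},G_{i0}$ already recorded in Theorem \ref{thm:qstack}, so that \eqref{equ:stare1} holds over the same stack $\hat{\cY}$ with no new data. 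For \eqref{equ:stare2} I would count the triangles whose output is the fundamental class, obtaining $m_{2,\hat{\cY}}^{\bb_i,\bb_0,\bb_i}(\alpha_i,\beta_i)=\one_{T_i}$ and $m_{2,\hat{\cY}}^{\bb_0,\bb_i,\bb_0}(\beta_i,\alpha_i)=\one_\bL$; as in Theorem \ref{thm:qstack} the existence of such a pair forces $\cA_i$ to be commutative, consistent with its identification with a Laurent algebra in $x_i,y_i$.

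The reason the localization is $U_{0i}'=U_{0i}\cap U_{0(i+1)}$, i.e.\ inversion of $\{v_1^L,\dots,v_i^L,u_{i+1}^L,\dots,u_{n+1}^L\}$, is geometric: $T_i$ lies between the two walls $\cS_i$ and $\cS_{i+1}$ and degenerates onto each of them in the two limits $r\to|a_i|^+$ and $r\to|a_{i+1}|^-$, so it is comparable with $\bL$ only over the overlap of the two sphere charts. Crossing $\cS_i$ forces $v_i^L$ to be invertible (an arrow inverted in $U_{0(i+1)}$ but not in $U_{0i}$), while crossing $\cS_{i+1}$ forces $u_{i+1}^L$ to be invertible (inverted in $U_{0i}$ but not in $U_{0(i+1)}$); their union is exactly $\A_0(U_{0i}')$. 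I would check that over this localization the coefficients of $\alpha_i,\beta_i$ become invertible and that the cocycle identities of $\hat{\cY}$ verified in Theorem \ref{thm:qstack} supply the remaining compatibility.

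The hard part will be the disc-counting and wall-crossing step: pinning down precisely which holomorphic polygons bounded by $T_i\cup\bL$ occur, with their symplectic areas $A$ (the $T^{-A}$ factors) and orientation signs, and showing that the geometric series of wall-crossing and constant discs resums, over the Novikov field $\Lambda$, to the holonomy-to-arrow substitution $G_{0i}$. This is where the identification with the Auroux--HKL wall-crossing transformation and the convergence over $\Lambda$ must be made precise; once this dictionary is in place, the verification of \eqref{equ:stare1} and \eqref{equ:stare2} reduces to the cocycle relations already established for $\hat{\cY}$, together with the technique of solving Fukaya isomorphism equations over a quiver algebroid stack from \cite{LNT23}.
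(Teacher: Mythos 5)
Your high-level strategy (assemble $(\alpha_i,\beta_i)$ from degree-zero generators, determine chart transitions by forcing $m_1=0$, then verify the $m_2$ equations over the localized algebra $\A_0(U_{0i}')$) matches the paper's, and your account of why the localization is $U_{0i}'=U_{0i}\cap U_{0(i+1)}$ is correct. However, there are genuine gaps. First, the geometric setup is wrong: $T_i$ does not meet $\bL$ in two \emph{transverse} points. Both $T_i$ and $\bL$ are unions of the orbit circles $C_q$ of the Hamiltonian $S^1$-action at $\mu=0$, so $T_i\cap\bL$ is a \emph{clean} intersection along two circles. The paper's proof accordingly works in the Morse/pearl model for clean intersections: the complexes $\CF((T_i,\bb_i),(\bL,\bb_0))$ and $\CF((\bL,\bb_0),(T_i,\bb_i))$ are generated by critical points of Morse functions on these circles, $\alpha_i^1,\beta_i^1$ are the degree-zero maxima, and the degree-one generators (denoted $\bar{Y}_i^1$, $\beta_i^2$ in the paper) are exactly where the outputs of $m_1^{\bb_0,\bb_i}(\beta_i)$ land. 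With a transverse-point model you would not even have the correct set of generators in which to write the equations whose vanishing determines the gluing.

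Second, your expectation that the required substitutions are ``exactly the substitutions $G_{0i},G_{i0}$ already recorded in Theorem \ref{thm:qstack}, so that \eqref{equ:stare1} holds over the same stack $\hat{\cY}$ with no new data'' is false. The transitions of Theorem \ref{thm:qstack} land in the sphere charts with variables $u_i,v_i$; here one must attach a \emph{new} sheaf, the torus chart $\cA_i=\Lambda_0[x_i^{\pm1},y_i^{\pm1}]$, to the open set $U_{0i}'$, together with new representations and new gerbe terms, the key formulas being
$$G_{0i}(x_i)=v_1^Lu_1^L+e_1,\qquad G_{0i}(y_i)=(v_1^L\cdots v_i^L)(u_{n+1}^L\cdots u_{i+1}^L)^{-1},$$
where the unit shift $+e_1$ is the wall-crossing phenomenon you allude to. There is also a nontrivial well-definedness check you omit entirely: since $\cA_i$ is commutative, one must verify $G_{0i}(x_iy_i)=G_{0i}(y_ix_i)$ inside the noncommutative localized algebra $\A_0(U_{0i}')$, and this is precisely where the weakly unobstructed (preprojective) relations $v_j^Lu_j^L=u_{j+1}^Lv_{j+1}^L$ and $u_{n+1}^Lv_{n+1}^L=v_1^Lu_1^L$ of $\bL$ enter. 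Finally, the ``hard part'' you defer --- resumming a geometric series of wall-crossing discs --- is not how the argument runs: in the clean-intersection pearl model the relevant $m_1$ outputs are finite explicit expressions, e.g. $((v_1^L\cdots v_i^L)+y_i(u_{n+1}^L\cdots u_{i+1}^L))\bar{Y}_i^1+((v_1^Lu_1^L+1)+x_i)\beta_i^2$ up to sign, and the transition maps are simply read off by requiring these coefficients to vanish; no resummation is needed.
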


\begin{figure}[htb!]
	\includegraphics[scale=2]{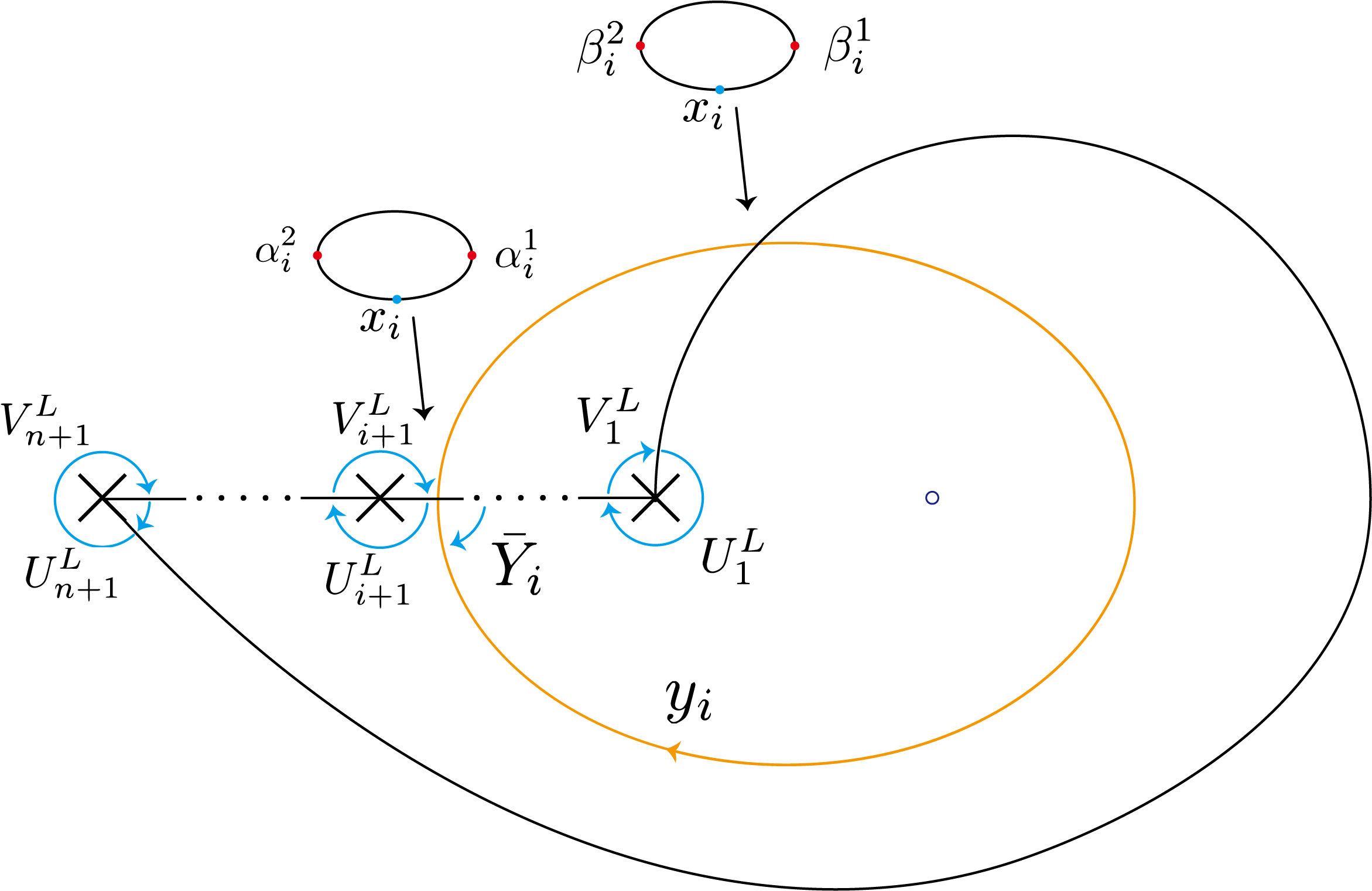}
	\caption{Base of the conic fibration in affine $A_{n}$ case. The holonomy variables of $T_i$ are denoted by $x_i,y_i$. $\bar{Y}_i \in \CF^1(\bS^2_i,(T_i,\bb_i)) \subset \CF^1((\bL,\bb_0),(T_i,\bb_i)).$}
	\label{fig:Anclean}
  \end{figure}
\begin{proof}
	In this proof, we will use the same notations for the quiver algebroid stack as in Theorem \ref{thm:qstack} and denote the deformation space of the tori $T_i$ by $\cA_i:=\Lambda_0[x_i,x_i^{-1},y_i,y_i^{-1}].$
	
	The torus $T_i$ cleanly intersects $\bL$ at two connected circles. Using Morse model, the critical points of the Morse functions generate the Lagrangian Floer complexes $\CF((T_i,\bb_i),(\bL,\bb_0))$ and $\CF((\bL,\bb_0),(T_i,\bb_i))$, among which the maximal points $\alpha_{i}^1 \in \CF^0((T_i,\bb_i),(\bL,\bb_0))$ and $\beta_{i}^1 \in \CF^0((\bL,\bb_0),(T_i,\bb_i))$ have degree zero. We observe that the normalized intersection points form a Fukaya isomorphism pair: $$\alpha_i:= T^{-A_i'} (v_1^L \cdots v_{i}^L)^{-1} \alpha_i^1, \quad \beta_i:= \beta_i^1.$$
	
	Similar as before, we set $A_i=A_i'=0$. The quiver algebroid stack $\hat{\cY}$ obtained in theorem \ref{thm:qstack} also provides a solution to the Fukaya isomorphism equations:
	\begin{enumerate}
		\item The underlying topological space of $\hat{\mathcal{Y}}$ is the polyhedral set $P$ of the minimal resolution of $A_{n}$ singularity, which is the same as in Theorem \ref{thm:qstack}. But in this proof, we will focus on $U_{0i}':= U_{0i} \cap U_{0(i+1)}$. 
		\item We associate each open set $U_i$ a sheaf of path algebras as above, for $i=0,1,\cdots, n.$ In addition, we also assign the sheaf $\tilde{\cA_i}$ associated to $\cA_i$ to $U_{0i}'$. 
		\item The transition maps  $G_{i0}:\mathbb{A}_{0}|_{U_{0i}'}\rightarrow\cA_{i}|_{U_{0i}'}$ and $G_{0i}: \cA_{i}|_{U_{0i}'} \to \mathbb{A}_{0}|_{U_{0i}'}$ for $1\leq i \leq n$  are defined by $$G_{i0}:= \begin{cases} 
			u_{j}^L\rightarrow  x_i-1 & \forall 1\leq j \leq i \\
			u_{i+1}^L\rightarrow y_i^{-1} &  \\ 
			u_{j}^L\rightarrow 1 & \forall i+2 \leq j \leq n+1  \\
			v_{j}^L\rightarrow 1 & \forall 1 \leq j \leq i  \\
			v_{i+1}^L\rightarrow (x_i-1)y_i &  \\
			v_{j}^L \rightarrow  x_i-1& \forall i+2 \leq j \leq n +1
		\end{cases},\quad G_{0i}:=\begin{cases}
			x_{i}\rightarrow (v_{1}^L\cdots v_{i+1}^L)u_{i+1}^L(v_{1}^L\cdots v_{i}^L)^{-1} + e_1 &  \\
			y_{i}\rightarrow (v_{1}^L\cdots v_{i}^L)(u_{n+1}^L\cdots u_{i+1}^L)^{-1}
		\end{cases}.$$ Recall that $(\bL,\bb_0)$ is weakly unobstructed. Namely, we have $v_j^Lu_j^L= u_{j+1}^L v_{j+1}^L \, \forall 1 \leq j \leq n$ and $u_{n+1}^L v_{n+1}^L=v_1^L u_1^L$.  Hence, $G_{0i}(x_i)=(v_{1}^L\cdots v_{i+1}^L)u_{i+1}^L(v_{1}^L\cdots v_{i}^L)^{-1} + e_1= v_1^L u_1^L+e_1$ by the weakly unobstructed equations.
		\item The gerbe terms $c_{0i0}$ at the vertices of $Q$  is defined by  $$c_{0i0}(a_j)= \begin{cases}
			e_1 & j=1\\
			v_{1}^L\cdots v_{j-1}^L & \forall 2\leq j \leq i+1   \\
			u_{n}^L\cdots u_{j}^L &  \forall i+2 \leq j \leq n+1  	 
		\end{cases} $$ where $a_j$ stands for the $j-$th vertex of $Q$. The other gerbe terms are trivial.
	\end{enumerate}
Notice that $G_{0i}$ is well-defined in the sense that $G_{0i}$ is intertwining, i.e. $G_{0i}(x_iy_i)=G_{0i}(y_ix_i).$ We have \begin{align*}
	G_{0i}(x_iy_i)=& (v_{1}^L\cdots v_{i+1}^L) (u_{n+1}^L \cdots u_{i+2}^L)^{-1} + (v_1^L \cdots v_{i}^{L})(u_{n+1}^L \cdots u_{i+1}^L)^{-1}.
\end{align*}
On the other hand, \begin{align*}
	G_{0i}(y_ix_i)=& (v_{1}^L\cdots v_{i}^L) (u_{n+1}^L \cdots u_{i+1}^L)^{-1}v_1^L u_1^L + (v_1^L \cdots v_{i}^{L})(u_{n+1}^L \cdots u_{i+1}^L)^{-1} \\
	=& (v_{1}^L\cdots v_{i}^L) (u_{n+1}^L \cdots u_{i+1}^L)^{-1} u_{n+1}^L v_{n+1}^L + (v_{1}^L\cdots v_{i}^L) (u_{n+1}^L \cdots u_{i+1}^L)^{-1} \\
	=& (v_{1}^L\cdots v_{i}^L) (u_{n-1}^L \cdots u_{i+1}^L)^{-1} (u_{n}^{L})^{-1} v_{n+1}^L + (v_{1}^L\cdots v_{i}^L) (u_{n+1}^L \cdots u_{i+1}^L)^{-1}\\
	=& (v_{1}^L\cdots v_{i}^L) (u_{n-1}^L \cdots u_{i+1}^L)^{-1} v_{n}^{L} (u_{n+1}^L)^{-1} + (v_{1}^L\cdots v_{i}^L) (u_{n+1}^L \cdots u_{i+1}^L)^{-1} \\
	=& (v_{1}^L\cdots v_{i+1}^L) (u_{n+1}^L \cdots u_{i+2}^L)^{-1} + (v_{1}^L\cdots v_{i}^L) (u_{n+1}^L \cdots u_{i+1}^L)^{-1}.
\end{align*} On the first line, we use another expression of $G_{0i}(x_i).$ On the fifth line, we apply the weakly unobstructed equations until it gets to the desired form. 

One can check that $G_{0i}\circ G_{i0}(x) = c_{0i0}(h(x))\cdot G_{00}(x)\cdot c_{0i0}^{-1}(t(x))$ for all $x$.  For instance, $$G_{0i}\circ G_{i0}(u_{i+1}^L)= (u_{n+1}^L \cdots u_{i+2}^L) u_{i+1}^L(v_1^L \cdots v_{i}^L)^{-1}= c_{0i0}(a_{i+2}) u_{i+1}^L c_{0i0}(a_{i+1}).$$

Furthermore, we can solve the isomorphism equations for $(\alpha_i,\beta_{i})$ over the quiver stack explicitly. More precisely, we get \begin{align*}
	m_{1,\hat{\cY}}^{\bb_0,\bb_i}(\beta_i) &= ((v_1^L \cdots v_{i}^L)+ y_i(u_{n+1}^L \cdots u_{i+1}^L))\bar{Y}_i^1 + ((v_1^Lu_1^L+1)+x_i) \beta_{i}^2
\end{align*}
\begin{align*}
	m_{2,\hat{\cY}}^{\bb_0,\bb_i,\bb_0}(\beta_i,\alpha_i)&=(v_1^L \cdots v_{i}^L (v_1^L \cdots v_{i}^L)^{-1})\one_{\bS^2_1}+ \cdots + (v_{i}^L(v_1^L \cdots v_{i}^L)^{-1}v_1^L \cdots v_{i-1}^L)\one_{\bS^2_{i}}       \\
	&((v_1^L \cdots v_{i}^L)^{-1}y_i(u_{n+1}^L \cdots u_{i+1}^L))\one_{\bS^2_{i+1}} + \cdots + ((u_{n}^L \cdots u_{i+1}^L)(v_1^L \cdots v_{i}^L)^{-1}y_iu_{n+1}^L)\one_{\bS^2_{n+1}}.
\end{align*}
Using the transition representations $G_{0i}: \cA_{i}|_{U_{0i}'} \to \mathbb{A}_{0}|_{U_{0i}'},$ 
\begin{align*}
	m_{1,\hat{\cY}}^{\bb_0,\bb_i}(\beta_i) &= ((v_1^L \cdots v_{i}^L)+ y_i(u_{n+1}^L \cdots u_{i+1}^L))\bar{Y}_i^1 + ((v_1^Lu_1^L+1)+x_i) \beta_{i}^2=0
\end{align*}
\begin{align*}
	m_{2,\hat{\cY}}^{\bb_0,\bb_i,\bb_0}(\beta_i,\alpha_i)&= \one_{\bS^2_1} + \cdots \one_{\bS^2_{i-1}}+ \one_{\bS^2_i} + \cdots \one_{\bS^2_{n+1}}= \one_{\bL}.
\end{align*}
The computations of the remaining isomorphism equations are similar. Hence, $(\alpha_i,\beta_i)$ forms a Fukaya isomorphism pair. 

\end{proof}



By the above theorem, the transition maps for the Fukaya isomorphism pair $(\alpha_i,\beta_i)$ give a map between (subsets of) the Maurer-Cartan spaces of the Lagrangian branes $\bL$ and special Lagrangian tori $T_i$.  Note that it is important to take localizations for the Maurer-Cartan space of $\bL$ for existence of such maps.  Indeed, these localizations come from a stability condition.  Below, we make this more explicit.

Implicitly the Lagrangian brane $(\bL,\cE)$ has rank $\vec{\delta}=(1,1,\cdots,1)$. Let $\zeta \in \R^{n+1}$ be a generic character such that $\zeta \cdot \vec{\delta}=0$ and only $\zeta_1$ is positive. In addition, recall that the subvariety of $\A_0$-representations in $\mathrm{Rep}(Q,\vec{\delta})$ is denoted by $M(Q,\vec{\delta})$ .

Let's denote the moduli space of $\zeta$-stable deformation of $(\bL,\cE)$ by $\textrm{Def}(\bL),$ which is a subset of $MC((\bL,\cE)).$ More precisely, $\textrm{Def}(\bL)$ is defined as a coset with $\Lambda_0$-values
\begin{center}
	$\textrm{Def}(\bL):= \{b \in M(Q,\vec{\delta}) |$ $b$ is $\zeta$-stable $\}/GL(\vec{\delta}).$ 
\end{center} 

Let's define $\Lambda_U:= \C^\times \oplus \Lambda_+.$ $\Lambda_U$ is a valuation zero subset of $\Lambda$ which forms a multiplicative group. Besides, we will consider the following subsets of the Maurer-Cartan spaces:
\begin{align*}
	\textrm{Def}(S_1):=& \{(u_1,v_1) \in (\Lambda_0)^2 \mid \, \val(u_1) \geq A_1 \} \\	\textrm{Def}(S_j):=& \{(u_j,v_j) \in (\Lambda_0)^2 \mid \, \val(u_j), \val(v_j) \geq \max\{|A_j-A_j'|,0\} \textrm{ and } \val(u_jv_j)>0\} \textrm{ for } j=2, \cdots, n\\
	\textrm{Def}(S_{n+1}):=& \{(u_{n+1},v_{n+1}) \in (\Lambda_0)^2 \mid \, \val(v_{n+1}) \geq A_{n+1}' \} \\
	\textrm{Def}(T_i):=& \{(x_i,y_i) \in (\Lambda_U)^2 \mid \, x_i \in -1+ \Lambda_{\geq |A_i-A_i'|} \}.
\end{align*}

Here we use the same notations as in the quiver algebroid stack $\hat{\cY}$, but the variables stand for the $\Lambda_0$-valued solutions of the unobstructed equations.

\begin{prop}
	There exists a correspondence between the isomorphism class of $\zeta$-stable deformation of $\bL$ and the deformation spaces of special Lagrangians $T_{r,0}.$ In other words, $$\textrm{Def}(\bL) \cong \bigcup_{r \in [a_1,a_{n+1}]} \textrm{Def} (T_{r,0})$$ In particular, when $r= a_i $, $T_{r,0}$ is the immersed sphere $\cS_i$ for $i=1, \cdots, n+1$. 
\end{prop}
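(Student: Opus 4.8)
The plan is to establish the claimed isomorphism $\textrm{Def}(\bL) \cong \bigcup_{r \in [a_1,a_{n+1}]} \textrm{Def}(T_{r,0})$ by exploiting the Fukaya isomorphism pairs $(\alpha_i,\beta_i)$ constructed in Theorem \ref{thm:qstack} and Theorem \ref{thm:qstack2}, together with the stability condition. First I would observe that the isomorphism pairs induce well-defined maps between (subsets of) the Maurer-Cartan spaces: the transition representations $G_{i0}$ and $G_{0i}$ from the quiver algebroid stack $\hat{\cY}$, when applied to $\Lambda_0$-valued solutions of the unobstructed equations rather than to formal arrows, carry a $\zeta$-stable deformation $b$ of $\bL$ to a deformation of $\cS_i$ (via Theorem \ref{thm:qstack}) and to a deformation of the torus $T_i$ (via Theorem \ref{thm:qstack2}). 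The key point is that these maps are only defined after localizing $\A_0$ at the appropriate set of arrows $\{v_1^L,\cdots,v_i^L,u_{i+1}^L,\cdots,u_{n+1}^L\}$, and this localization is precisely what the stability condition enforces: $\zeta$-stability with only $\zeta_1$ positive forces the corresponding matrix entries to be invertible, so that the localized chart $\A_0(U_{0i}')$ is exactly the locus where the $i$-th transition map is defined.

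Next I would set up the correspondence chart by chart. For each $i$, the stable locus in $M(Q,\vec\delta)$ that lies in $U_{0i}'$ maps isomorphically, via $G_{0i}$, to the deformation space $\textrm{Def}(T_i)$ of the torus, using the explicit formulas $x_i \mapsto v_1^L u_1^L + e_1$ and $y_i \mapsto (v_1^L\cdots v_i^L)(u_{n+1}^L\cdots u_{i+1}^L)^{-1}$; conversely $G_{i0}$ provides the inverse on this chart. I would check that the valuation constraints defining $\textrm{Def}(T_i)$ (namely $x_i \in -1 + \Lambda_{\geq |A_i-A_i'|}$ and $x_i,y_i \in \Lambda_U$) match exactly the image of the $\zeta$-stable locus under these formulas, which amounts to tracking the area terms $A_i, A_i'$ that were suppressed in the stack construction but must be reinstated here. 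The union over $r \in [a_1,a_{n+1}]$ then corresponds to gluing these charts: the overlaps are governed by the transition maps $G_{i,i+1}$, which are the transition functions of the $A_n$ resolution, so the $\textrm{Def}(T_i)$ patch together compatibly with how the stable locus decomposes according to which arrows are invertible.

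The degenerate values $r = a_i$ require separate attention: there the torus $T_{r,0}$ degenerates to the immersed sphere $\cS_i$, and I would verify that the boundary of $\textrm{Def}(T_i)$ (where $y_i$ or $x_i+1$ hits valuation zero, i.e. leaves $\Lambda_U$) corresponds under the transition maps to the deformation spaces $\textrm{Def}(\cS_i)$ given by the valuation inequalities in the statement. This uses Theorem \ref{thm:qstack} in place of Theorem \ref{thm:qstack2}, and the matching of $\textrm{Def}(\cS_i)$ with the appropriate wall in $\textrm{Def}(\bL)$ follows from the same localization-equals-stability principle, now with the relevant invertible arrows sitting on the boundary chart $U_{0i}$ rather than $U_{0i}'$.

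The main obstacle I expect is the careful bookkeeping of the area terms and valuation constraints. In the stack constructions of Theorems \ref{thm:qstack} and \ref{thm:qstack2} the authors set $A_i = A_i' = 0$ for simplicity, noting that this only affects transition maps up to a constant $T^{-A}$; but for the present statement about actual $\Lambda_0$-valued Maurer-Cartan spaces these area terms genuinely determine the domains $\textrm{Def}(S_j)$ and $\textrm{Def}(T_i)$, so I must reinstate them and confirm that the $T$-power normalizations in $\alpha_i = T^{-A_i}(\cdots)\alpha_i^1 + T^{-A_i'}(\cdots)\alpha_i^2$ translate precisely into the valuation thresholds $\val(u_j),\val(v_j)\geq \max\{|A_j-A_j'|,0\}$ and $x_i \in -1+\Lambda_{\geq|A_i-A_i'|}$. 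Getting the $\zeta$-stability-to-invertibility dictionary exactly right across all the charts, and verifying that the resulting bijection is globally consistent (not merely chart-by-chart), is where the real work lies; the underlying gluing is already guaranteed to be that of the $A_n$ minimal resolution by the quiver algebroid stack $\hat{\cY}$.
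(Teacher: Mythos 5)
Your proposal follows essentially the same route as the paper's proof: covering the $\zeta$-stable locus by the charts where stability forces the arrows $v_1^L,\ldots,v_{i-1}^L,u_{i+1}^L,\ldots,u_{n+1}^L$ to be invertible, restricting the transition representations $G_{i0}, G_{0i}$ of Theorems \ref{thm:qstack} and \ref{thm:qstack2} to $\Lambda_0$-valued deformations (with the area terms $T^{A_i-A_i'}$ reinstated), sending the deep part of each chart to $\textrm{Def}(\cS_i)$ and the overlaps to $\textrm{Def}(T_i)$, and using that the gerbe terms lie in $GL(\vec{\delta})$ so the maps descend to mutually inverse correspondences on isomorphism classes. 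The approach and all key ingredients match; no gap to report.
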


\begin{proof}
	Let $U_i:=U_{0i}$ be the open subsets introduced in the quiver algebroid stack $\hat{\cY}$. Notice that $\textrm{Def}(\bL)$ can be covered by \begin{center}
	 $\textrm{Def}(\bL,U_{i}):=\{(u^L,v^L)\in M(Q,\vec{\delta})|$  $v_1^{L},\cdots, v_{i-1}^L, u_{i+1}^{L}, \cdots, u_{n+1}^L\in \Lambda_0^\ast \}/GL(\vec{\delta}),$
	\end{center} for $i=1, \cdots,n+1$, because the representation is $\zeta$-stable. Thus, given an element $[b] \in \textrm{Def}(\bL)$, there exists some $i$ such that $[b] \in \textrm{Def}(\bL,U_i)$. Using the group action, $[b]$ has a representative satisfies $v_1^L=\cdots=v_{i-1}^L=u_{i+1}^L=\cdots= u_{n+1}^L=1$. In the following, we will use the restriction of transition maps, found in Theorem \ref{thm:qstack}, to give a correspondence between the isomorphism classes of Maurer-Cartan subspaces.
	
	First, since $\val(u_i^L v_i^L)>0$, $\textrm{Def}(\bL,U_i) \cap \textrm{Def}(\bL,U_j)= \emptyset$ for $j \neq i-1,i,i+1$.   We can define
	the correspondence as follow. Given $[b] \in \textrm{Def}(\bL,U_i) \setminus (\textrm{Def}(\bL,U_{i-1}) \cup \textrm{Def}(\bL,U_{i+1}))$ for $i=1, \cdots, n+1$, the correspondence $G$ is defined as the restriction of $G_{i0}: \mathbb{A}_{0}|_{U_{0i}}\rightarrow\mathscr{A}_{i}|_{U_{0i}}$, where $\mathscr{A}_{i}$ is the formal deformation space of $\cS_i$. Namely,  $$G_{i0}:= \begin{cases} 
		u_{j}^L\mapsto v_{i}u_{i}& \forall 1\leq j \leq i-1 \\
		u_{i}^L\mapsto T^{A_i'-A_i}u_{i} &  \\ 
		u_{j}^L\mapsto 1 & \forall i+1 \leq j \leq n+1  \\
		v_{j}^L\mapsto 1 & \forall 1 \leq j \leq i-1  \\
		v_{i}^L\mapsto T^{A_i-A_i'}v_{i} &  \\
		v_{j}^L \mapsto  u_{i}v_{i}& \forall i+1 \leq j \leq n+1 
	\end{cases}, \quad G_{0i}:=\begin{cases}
		u_{i}\mapsto T^{A_i-A_i'}(u_{n+1}^L\cdots u_{i+1}^L)u_{i}^L(v_{1}^L\cdots v_{i-1}^L)^{-1} &  \\
		v_{i}\mapsto T^{A_i'-A_i}(v_{1}^L\cdots v_{i-1}^L)v_{i}^L(u_{n+1}^L\cdots u_{i+1}^L)^{-1}
	\end{cases}.$$

For $[b] \in \textrm{Def}(\bL,U_i) \cap \textrm{Def}(\bL,U_{i+1})$, the correspondence $G$ is defined as the restriction $ G_{i0}:\mathbb{A}_{0}|_{U_{0i}'}\rightarrow\cA_{i}|_{U_{0i}'}$, where $\cA_i$ is the formal deformation space of $T_i$ for $i=1,\cdots,n$. This gives a well-defined map from $\textrm{Def}(\bL)$ to $\bigcup_{r \in [a_1,a_{n+1}]} \textrm{Def} (T_{r,0})$. Notice that $G_{0i} \circ G_{0i}= id$, $G_{0i} \circ G_{i0}(a)= c_{0i0}(h(a)) \cdot a \cdot c_{0i0}^{-1}(t(a))$ and the gerbe terms are elements in $GL(\vec{\delta})$ when considering $\Lambda_0$-values deformations. Hence, $G_{0i}$ and $G_{i0}$ descend to well-defined mutually inverse representations. This provides the desired correspondence. 
\end{proof}

\begin{rem}
	The transition map, as constructed in Theorems \ref{thm:qstack} and \ref{thm:qstack2}, identifies the $\zeta$-stable deformations of $\bL$ with special Lagrangian tori on the moment map level zero with a flat connection. By restricting to complex-valued deformations, the moduli space of $\zeta$-stable Lagrangian branes $(\bL,\cE)$ forms a minimal resolution of $A_n$-singularity.  If we enlarge the deformation space of $\bL$ over $\Lambda$, the transition map identifies the $\zeta$-stable deformations of $\bL$ with special Lagrangian tori in other levels as well.
\end{rem}


\subsection{Extended Lagrangian deformations and derived Nakajima schemes}

In \cite[Chapter 9]{CHL21}, the noncommutative mirror functor was extended to the space of formal deformations in general degrees for a Lagrangian immersion.  The resulting mirror is a curved dg algebra $(\tilde{\A},d,W)$, where $\tilde{\A}$ is a quiver algebra that has arrows in non-positive degrees (while the quiver appeared in previous sections has arrows in degree $0$), $W \in \tilde{\A}$ is the extended disc potential, and $d: \A \to \A$ is a derivation of degree $1$, which satisfy the equalities 
$$dW = 0 \textrm{ and } d^2 = [W,\cdot].$$

We apply this construction to the two-dimensional framed Lagrangian immersion in Section \ref{sec:preproj}.  Our situation is graded, and hence $\tilde{W}=0$.  The result is similar to \cite[Prop. 9.10]{CHL21} for CY3 Ginzburg algebras.  In \cite{EL17}, dg algebras were constructed for Legendrian cohomology.  In our situation, the Lagrangians are framed.  Moreover, constant disc bubbles with more than two inputs of immersed sectors occur in the dimension-two situation, and we have to perform a change of coordinates as in Section \ref{sec:preproj} to standardize the resulting dg algebra.

Recall that the dg algebra is constructed from the generators of $\CF^\bullet(\bL^{\textrm{fr}})$ in all degrees, other than the unit of each component.  In our case, we have the degree one generators $X_a, X_{\bar{a}}, I_v, J_v$, and the degree two generators $T_v$ (the minimal points of the Morse functions on the sphere components).  Note that the Morse function on the framing $\R^2$ only has a single maximal point and does not have any degree two generators.  The resulting quiver $\tilde{Q}$ is the framed framed Nakajima quiver added with one self loop $\theta_v$ of degree $(-1)$ at each unframed vertex $v$.  Figure \ref{fig:extended-ADHM} shows $\tilde{Q}$ in the case of ADHM quiver.

\begin{figure}[ht]
	\includegraphics[scale=0.6]{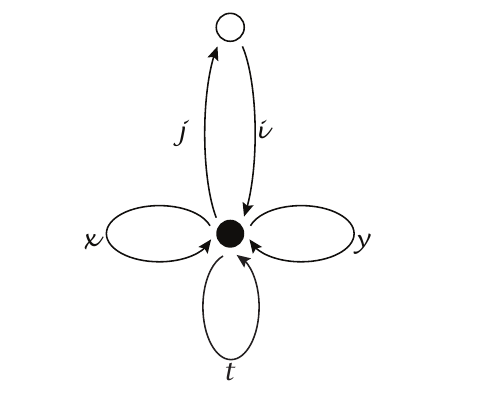}
	\caption{}
	\label{fig:extended-ADHM}
\end{figure}

Let
$$\tilde{b} = \sum_a (x_a X_a + x_{\bar{a}}X_{\bar{a}}) + \sum_v t_v T_v  $$
be the formal deformations.  Formally $\tilde{b}$ is made to be degree one by setting $\deg x_a = \deg x_{\bar{a}} = 0$ and $\deg t_v = -1$.  
Then $m_0^{\tilde{b}}$ is in degree $2$ and takes the form
$$m_0^{\tilde{b}} = \sum_v p_v T_v.  $$
Note that $\one_{L_v}$ and $X_a,X_{\bar{a}}$ do not appear in the above expression by degree reason.  Moreover, $p_v$ must have degree $0$ and hence are series only in $x_a,x_{\bar{a}},i,j$.

The derivation $d$ is defined by $d x_a = d x_{\bar{a}} = 0$ and $d t_v = p_v$, and then extended by Leibniz rule.  The $A_\infty$ equations imply that$(\Lambda \tilde{Q}, d)$ is a dg algebra.

The only non-trivial term $dt_v = p_v$ counts Floer contributions to the minimal point of the Morse function in each sphere component.  Its expression follows from definition and the change of coordinates in Section \ref{sec:preproj}.  Thus we can summarize as follows.

\begin{prop} \label{prop:extended}
	Let $\bL^{\mathrm{fr}}$ be the framed Lagrangian immersion as in Section \ref{sec:preproj}, whose (possibly immersed) sphere components are equipped with perfect Morse functions.   Then its extended localized mirror is $(\tilde{Q},d)$, where $\tilde{Q}$ is the corresponding framed Nakajima quiver with one more self loop $t_v$ of degree $(-1)$ attached to each unframed vertex $v$; 
	$$d: \widehat{\Lambda \tilde{Q}} \to \widehat{\Lambda \tilde{Q}}$$ 
	is the differential, which sends the degree-zero generator $\tilde{x}_a,\tilde{x}_{\bar{a}}, i_v,j_v$ of the path algebra to $0$ and
	$$ d t_v = \sum_{h(\tilde{x}_a)=v} \epsilon(a) \tilde{x}_a \tilde{x}_{\bar{a}} + i_vj_v$$
	where $\tilde{x}_a, \tilde{x}_{\bar{a}}$ are given in Equation \eqref{eq:coord} as a change of coordinates on $x_a,x_{\bar{a}}$.
\end{prop}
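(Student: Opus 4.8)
The plan is to apply the extended localized mirror construction of \cite{CHL21} (Chapter 9) to $\bL^{\fr}$ and to read off the differential by a degree count combined with the constant-polygon computation already carried out in Theorem \ref{thm:preproj} and Proposition \ref{prop:nc-framed}. First I would list the generators of $\CF^\bullet(\bL^{\fr})$ by degree: apart from the fundamental classes, the only generators are the degree-one immersed sectors $X_a,X_{\bar a},I_v,J_v$ and the degree-two minimal points $T_v$ of the perfect Morse functions on the sphere components (the framing $\mathbb{R}^n$ contributes only its unique maximum, in degree zero). With the degree shift used in the extended construction, each degree-one generator yields a degree-zero arrow ($x_a,x_{\bar a},i_v,j_v$) and each degree-two generator yields a degree-$(-1)$ self-loop $t_v$; this produces exactly the framed Nakajima quiver $\tilde Q$ with the extra loops $t_v$, and makes $\tilde b$ homogeneous of degree one.

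The key structural step is the degree argument showing $m_0^{\tilde b}=\sum_v p_v T_v$ with $\deg p_v=0$. Since the arrows of $\tilde Q$ have degrees $0$ and $-1$, the completed path algebra $\widehat{\Lambda\tilde Q}$ has no elements of positive degree; hence the coefficient of the degree-zero unit (which would be the extended potential $W$) and the coefficients of the degree-one generators $X_a,\dots$ must vanish, so $W=0$ and no degree-one output survives. The only possible output sits at the degree-two $T_v$, with a degree-zero coefficient $p_v$. Moreover, by Equation \eqref{eq:mk} the coefficient of an output produced by inputs $f_1 Y_1,\dots,f_k Y_k$ is the ordered product of the $f_i$, so $\deg p_v=\sum_i\deg f_i$; as each $t_w$ contributes $-1$ and cannot be compensated by any positive-degree arrow, no input of the form $t_w T_w$ can occur. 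Thus $p_v$ counts precisely the pearl trajectories to the minimal point $T_v$ whose corners are the immersed sectors $X_a,X_{\bar a},I_v,J_v$ — exactly the configurations analyzed in Proposition \ref{prop:nc-framed}.

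It then remains to recall that these configurations are the constant polygons contributing to $m_0^{\bb^\fr}$, and to apply the change of coordinates \eqref{eq:coord} of Section \ref{sec:preproj} absorbing the Kuranishi-dependent corrections $1+\sum_j a_j(x_{\bar a}x_a)^j$. After this substitution the coefficient becomes $p_v=\sum_{h(\tilde x_a)=v}\epsilon(a)\tilde x_a\tilde x_{\bar a}+i_vj_v$, giving $dt_v$ as stated; setting $dx_a=dx_{\bar a}=di_v=dj_v=0$ and extending by the Leibniz rule defines $d$. Finally I would verify that $(\widehat{\Lambda\tilde Q},d)$ is a dg algebra: $d$ raises degree by one by construction, and since $W=0$ the general identity $d^2=[W,\,\cdot\,]$ from \cite{CHL21} reduces to $d^2=0$; directly, $d^2 t_v=d(p_v)=0$ because $p_v$ lies in the subalgebra generated by the $d$-closed degree-zero arrows.

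The main obstacle I expect is not the final algebra but the justification that the \emph{extended} construction behaves as cleanly in this two-dimensional, framed setting as the degree count suggests. Unlike the CY3 Ginzburg case of \cite[Prop.~9.10]{CHL21}, here constant discs with arbitrarily many immersed inputs bubble off, so one must confirm that the Morse--Bott and pearl moduli defining $m_k$ with the degree-two minimal-point insertions are set up compatibly with the chosen virtual perturbations, so that the contributions involving a $T_w$ insertion are genuinely excluded (the degree argument gives this formally, but it must survive perturbation), and that the single coordinate change of Section \ref{sec:preproj} simultaneously standardizes every $p_v$.
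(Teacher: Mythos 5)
Your proposal is correct and follows essentially the same route as the paper: list the generators by degree, use the degree count to conclude that $m_0^{\tilde b}$ can only output at the degree-two points $T_v$ with degree-zero coefficients (so $W=0$ and no $t_w$-insertions contribute), identify those coefficients with the constant-polygon counts of Proposition \ref{prop:nc-framed}, and standardize them via the coordinate change \eqref{eq:coord}. The only cosmetic difference is that you spell out the exclusion of $t_w$-inputs and the $d^2=0$ verification explicitly, which the paper leaves implicit in ``by degree reason'' and the $A_\infty$ equations.
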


In \cite{DA22}, the dga $(\tilde{Q},d)$ was used as an explicit cofibrant resolution of the affine Nakajima variety and serves as a derived representation scheme model.  In a general dimension $d$, $(2-d)$-shifted symplectic structures \cite{MR2294224, BKR13, PTVV} are important in this subject.



\bibliography{mybib}{}
\bibliographystyle{alpha} 

\end{document}